  \definecolor{dark-red}{rgb}{0.6,0.15,0.15}
   \definecolor{dark-blue}{rgb}{0.15,0.15,0.6}
   \definecolor{medium-blue}{rgb}{0,0,0.5}
\numberwithin{equation}{section}
\theoremstyle{plain} 
\newtheorem{thm}{Theorem}[section]
\newtheorem{cor}[thm]{Corollary}
\newtheorem{prop}[thm]{Proposition}
\newtheorem{lem}[thm]{Lemma}
\newtheorem*{thm*}{Theorem}
\newtheorem*{cor*}{Corollary}
\newtheorem*{prop*}{Proposition}
\theoremstyle{definition}
\newtheorem{defn}[thm]{Definition}
\newtheorem{ex}[thm]{Example} 
\newtheorem{rem}[thm]{Remark}
\let\c@equation=\c@equation
\let\c@lem=\c@thm
\let\c@cor=\c@thm
\let\c@conj=\c@thm
\let\c@prop=\c@thm
\let\c@lem=\c@thm
\let\c@defn=\c@thm
\let\c@notation=\c@thm
\let\c@note=\c@thm
\let\c@exmp=\c@thm
\let\c@ex=\c@thm
\let\c@exmps=\c@thm
\let\c@rem=\c@thm
\let\c@warn=\c@thm
\let\c@claim=\c@thm
\let\c@convention=\c@thm
\let\c@conventions=\c@thm
\let\c@quest=\c@thm
\let\c@facts=\c@thm
\newcommand{\N}{\mathbb{N}}
\newcommand{\Q}{\mathbb{Q}}
\DeclareMathOperator{\id}{id}
\DeclareMathOperator{\Aut}{\mathsf{Aut}}
\DeclareMathOperator{\Iso}{\mathsf{Iso}}
\DeclareMathOperator{\Unitary}{\mathsf{U}}
\DeclareMathOperator{\GL}{\mathsf{GL}}
\renewcommand{\Tr}{\mathrm{Tr}}
\newcommand{\Z}{\mathbb{Z}}
\newcommand{\C}{\mathbb{C}}
\newcommand{\R}{\mathbb{R}}
\newcommand{\fA}{\mathfrak{A}}
\newcommand{\fB}{\mathfrak{B}}
\newcommand{\fN}{\mathfrak{N}}
\newcommand{\fR}{\mathfrak{R}}
\newcommand{\fC}{\mathfrak{C}}
\newcommand{\frL}{\mathfrak{L}}
\newcommand{\cC}{\mathcal{C}}
\newcommand{\cH}{\mathcal{H}}
\newcommand{\cE}{\mathcal{E}}
\newcommand{\cF}{\mathcal{F}}
\newcommand{\x}{\mathbf{x}}
\newcommand{\pset}{\wp}
\renewcommand{\Im}{\operatorname{Im}}
\newcommand{\sA}{\mathscr{A}}
\newcommand{\sC}{\mathscr{C}}
\newcommand{\sD}{\mathscr{D}}
\newcommand{\sF}{\mathscr{F}}
\newcommand{\sH}{\mathscr{H}}
\newcommand{\sL}{\mathscr{L}}
\newcommand{\sS}{\mathscr{S}}
\newcommand{\sP}{\mathscr{P}}
\newcommand{\sT}{\mathscr{T}}
\newcommand{\hilbH}{\mathcal{H}}
\newcommand{\hilbK}{\mathcal{K}}
\newcommand{\agnes}[1]{\todo[color=cyan]{#1}}
\newcommand{\daniel}[1]{\todo[color=red]{#1}}
\DeclareMathAlphabet{\mathboondoxcal}{U}{BOONDOX-cal}{m}{n}
\SetMathAlphabet{\mathboondoxcal}{bold}{U}{BOONDOX-cal}{b}{n}
\DeclareMathAlphabet{\mathbboondoxcal} {U}{BOONDOX-cal}{b}{n}
\newcommand{\scrj}{\mathboondoxcal{j}}
\newcommand{\scrk}{\mathboondoxcal{k}}
\newcommand{\scrl}{\mathboondoxcal{l}}
\DeclareFontFamily{OMX}{MnSymbolE}{}
\DeclareSymbolFont{MnLargeSymbols}{OMX}{MnSymbolE}{m}{n}
\DeclareFontShape{OMX}{MnSymbolE}{m}{n}{
    <-6>  MnSymbolE5
   <6-7>  MnSymbolE6
   <7-8>  MnSymbolE7
   <8-9>  MnSymbolE8
   <9-10> MnSymbolE9
  <10-12> MnSymbolE10
  <12->   MnSymbolE12
}{}
\DeclareFontShape{OMX}{MnSymbolE}{b}{n}{
    <-6>  MnSymbolE-Bold5
   <6-7>  MnSymbolE-Bold6
   <7-8>  MnSymbolE-Bold7
   <8-9>  MnSymbolE-Bold8
   <9-10> MnSymbolE-Bold9
  <10-12> MnSymbolE-Bold10
  <12->   MnSymbolE-Bold12
}{}
\let\llangle\@undefined
\let\rrangle\@undefined
\DeclareMathDelimiter{\llangle}{\mathopen}%
                     {MnLargeSymbols}{'164}{MnLargeSymbols}{'164}
\DeclareMathDelimiter{\rrangle}{\mathclose}%
                     {MnLargeSymbols}{'171}{MnLargeSymbols}{'171}
\newcommand{\ltrans}{|\!\langle}
\newcommand{\rtrans}{\rangle\!|}
\newcommand{\ssH}{\mathscr{H}}
\newcommand{\frA}{\mathfrak{A}}
\newcommand{\frB}{\mathfrak{B}}
\newcommand{\fF}{\mathfrak{F}}
\newcommand{\frT}{\mathfrak{T}}
\newcommand{\cA}{\mathcal{A}}
\newcommand{\cB}{\mathcal{B}}
\newcommand{\cG}{\mathcal{G}}
\newcommand{\cK}{\mathcal{K}}
\newcommand{\cN}{\mathfrak{N}}
\newcommand{\cO}{\mathcal{O}}
\newcommand{\cU}{\mathcal{U}}
\newcommand{\cV}{\mathcal{V}}
\newcommand{\bbK}{\mathbb{K}}
\newcommand{\bbB}{\mathbb{B}}
\newcommand{\bbS}{\mathbb{S}}
\newcommand{\bbP}{\mathbb{P}}
\newcommand{\bbR}{\mathbb{R}}
\newcommand{\bbZ}{\mathbb{Z}}
\newcommand{\bbN}{\mathbb{N}}
\newcommand{\tn}[1]{\textup{#1}}
\newcommand{\be}{\mathbf{e}}
\renewcommand{\r}{\mathbf{r}}
\newcommand{\s}{\mathbf{s}}
\renewcommand{\u}{\mathbf{u}}
\renewcommand{\x}{\mathbf{x}}
\newcommand{\y}{\mathbf{y}}
\newcommand{\z}{\mathbf{z}}
\newcommand{\bbC}{\mathbb{C}}
\newcommand{\defeq}{\vcentcolon=}
\newcommand{\vecspan}{\tn{span}}
\DeclareMathOperator{\Log}{Log}
\newcommand{\Un}{\mathsf{U}}
\newcommand{\ssec}{\operatorname{Sec}}
\newcommand{\net}{\mathscr{I}}
\newcommand{\topology}{\mathscr{O}}
\newcommand{\locreg}{\mathscr{L}}
\newcommand{\projAStar}{p\raisebox{1pt}{$_{{}_{\fA^*}}$\!}}
\newcommand{\projAPure}{p\raisebox{1pt}{$_{{}_{\sP(\fA)}}$\!}}
\newcommand{\projGNS}{p_{_{\sH}}}
\newcommand{\projLine}{p_{_{\sL}}}
\newcommand{\Rho}{\mathrm{P}}
\newcommand{\colim}{\operatornamewithlimits{colim}}
\DeclareMathOperator{\diag}{diag}
\newcommand{\hide}[1]{}
\title[Continuity of Kadison Transitivity and of the GNS Construction]{Continuous Dependence on the Initial Data in the Kadison Transitivity Theorem and GNS Construction}
\author[Spiegel]{Daniel Spiegel$^{1,2,3}$}
\author[Moreno]{Juan Moreno$^{1}$}
\author[Qi]{Marvin Qi$^{2,3}$}
\author[Hermele]{Michael Hermele$^{2,3}$}
\author[Beaudry]{Agn\`es Beaudry$^{1}$}
\author[Pflaum]{Markus J. Pflaum$^{1,3}$}
\address{$^{1}$Department of Mathematics, University of Colorado, Boulder, CO 80309, USA}
\address{$^{2}$Department of Physics, University of Colorado, Boulder, CO 80309, USA}
\address{$^{3}$Center for Theory of Quantum Matter, University of Colorado,  \newline
  \mbox{ }\hspace{4.5mm}Boulder, CO 80309, USA}
\begin{document}

\begin{abstract}
We consider how the outputs of the Kadison transitivity theorem and Gelfand-Naimark-Segal construction may be obtained in families when the initial data are varied. More precisely, for the Kadison transitivity theorem, we prove that for any nonzero irreducible representation $(\cH, \pi)$ of a $C^*$-algebra $\fA$ and $n \in \bbN$, there exists a continuous function $A:X \rightarrow \fA$ such that $\pi(A(\x, \y))x_i = y_i$ for all $i \in \qty{1, \ldots, n}$, where $X$ is the set of pairs of $n$-tuples $(\x, \y) \in \cH^n \times \cH^n$ such that the components of $\x$ are linearly independent. Versions of this result where $A$ maps into the self-adjoint or unitary elements of $\fA$ are also presented. Regarding the Gelfand-Naimark-Segal construction, we prove that given a topological $C^*$-algebra fiber bundle $p:\fA \rightarrow Y$, one may construct a topological fiber bundle $\sP(\fA) \rightarrow Y$ whose fiber over $y \in Y$ is the space of pure states of $\fA_y$ (with the norm topology), as well as bundles $\sH \rightarrow \sP(\fA)$ and $\mathscr{N} \rightarrow \sP(\fA)$  whose fibers $\sH_\omega$ and $\mathscr{N}_\omega$ over $\omega \in \sP(\fA)$ are the GNS Hilbert space and closed left ideal, respectively, corresponding to $\omega$. When $p:\fA \rightarrow Y$ is a smooth fiber bundle, we show that $\sP(\fA) \rightarrow Y$ and $\sH\rightarrow  \sP(\fA)$ are also smooth fiber bundles; this involves proving that the group of $*$-automorphisms of a $C^*$-algebra is a Banach-Lie group. In service of these results, we review the topology and geometry of the pure state space. A simple non-interacting quantum spin system is provided as an example illustrating the physical meaning of some of these results.
\end{abstract}

\maketitle
\tableofcontents

\newpage
%!TEX root = FamiliesGNS.tex

\section*{Introduction}

The primary goal of this paper is to detail how one can do classical maneuvers in the theory of $C^*$-algebras, namely the Kadison transitivity theorem and the Gelfand-Naimark-Segal (GNS) construction, in a way that depends continuously or smoothly on the input data. In the Kadison transitivity theorem, the initial data is a set of vectors in the Hilbert space of an irreducible representation of a $C^*$-algebra $\fA$; these may be taken to represent pure states on $\fA$. In the GNS construction the initial datum is a state on $\fA$ which, again, we will usually take to be pure. We therefore find it necessary and appropriate to hold a second goal in service of the first: to review and elaborate on the topology and geometry of the pure state space of $\fA$.

We are inspired by the connection of $C^*$-algebras to quantum many-body physics. In quantum systems with infinitely many degrees of freedom, one represents observable quantities as self-adjoint elements of a $C^*$-algebra and quantum states as states on the $C^*$-algebra, i.e., normalized positive linear functionals. These $C^*$-algebras typically have a quasi-local structure. Intuitively, if one continuously deforms such a quantum system in a local region, the state of the system is expected to change continuously with respect to the norm topology, while a global deformation of the system yields merely weak$^*$ continuity of the state. This intuition is developed in \S\ref{subsec:1d_system} where a trivial example of a parametrized quantum system is investigated from the point of view of topology. In this paper we focus on the case of norm-continuity since it is more tractable mathematically, but comparisons are made to the weak$^*$ topology in \S\ref{subsec:connectedness} and \S\ref{subsec:1d_system}.

This paper begins with a review of classical results on the topology of the state space in \S\ref{sec:topstructure}, unraveling the relationship between the theory of superselection sectors and
topological properties of the pure state space. In \S\ref{sec:diffstructure}, we turn to the geometry of the pure state space.  We do not present any new theorems in \S\ref{sec:topstructure} or \S\ref{sec:diffstructure}, but rather provide a self-contained and complete presentation of results that are otherwise scattered in the literature. In \S\ref{sec:background-projective-hilbert-spaces} we detail the complex manifold structure of projective Hilbert space and show how to pass from the pure state space to a projective Hilbert space using an irreducible representation. In \S\ref{sec:diffstructure}, we prove that the pure state space carries the structure of a K\"ahler manifold, a result
which goes back to \cite{CirelliPureStatesQMKaehlerBundles}:

\begin{thm*}
  The pure state space $\sP(\fA)$ of a $C^*$-algebra $\fA$ carries in a natural way
  the structure of a (possibly infinite-dimensional) K\"ahler manifold.
  % with a natural hermitian metric $h$.
  The underlying topology is given by the norm topology. Each connected component is
  open and given by the set  $\sP_\pi(\fA)$ of vector states of some 
  irreducible representation $(\hilbH, \pi)$ of $\fA$.  
  The set $\sP_\pi(\fA)$ carries a unique complex manifold structure such that the
  canonical map
  $r: \bbP\hilbH \to \sP_\pi(\fA)$ given by $r(\C\Psi) (A) =  \ev{\Psi, \pi(A)\Psi}$ for
  $\Psi \in \bbS\hilbH$ and $A \in \fA$ is biholomorphic.
  The hermitian metric $h$ is uniquely determined by the requirement that, for every irreducible representation $(\hilbH, \pi)$ of $\fA$, the canonical projection
  $p_{\bbS \hilbH}:\bbS \hilbH \rightarrow \sP_\pi(\fA)$, $\Psi \mapsto r(\C\Psi)$ 
  is a riemannian submersion. 
\end{thm*}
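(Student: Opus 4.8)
The plan is to reduce the global assertion to a statement about a single irreducible representation and then transport every structure from the projective Hilbert space $\bbP\hilbH$ along the map $r$. First I would invoke the topological results reviewed in \S\ref{sec:topstructure}: the theory of superselection sectors identifies the connected components of $\sP(\fA)$ in the norm topology with the sets $\sP_\pi(\fA)$ of vector states of irreducible representations, and shows that these components are open. It therefore suffices to endow each $\sP_\pi(\fA)$ with a Kähler structure and to verify that the structure so obtained is independent of the representation realizing a given component.

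The key step is to show that $r: \bbP\hilbH \to \sP_\pi(\fA)$ is a homeomorphism for the norm topology. Bijectivity is the easy part: surjectivity holds by definition of $\sP_\pi(\fA)$, while injectivity follows from irreducibility, since if $\C\Psi \neq \C\Phi$ then the weak density of $\pi(\fA)$ in $B(\hilbH)$ separates the corresponding rank-one projections and hence yields an $A \in \fA$ with $\ev{\Psi, \pi(A)\Psi} \neq \ev{\Phi, \pi(A)\Phi}$. The homeomorphism property rests on the classical norm formula $\| r(\C\Psi) - r(\C\Phi) \| = 2\sqrt{1 - |\ev{\Psi, \Phi}|^2}$ for unit vectors $\Psi, \Phi$, which I would derive from the explicit form of the difference of two vector states. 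Since the right-hand side is a continuous, strictly decreasing function of the transition probability $|\ev{\Psi, \Phi}|^2$, this identity matches the norm topology on $\sP_\pi(\fA)$ with the manifold topology on $\bbP\hilbH$, giving continuity of both $r$ and $r^{-1}$ at once.

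Once $r$ is known to be a homeomorphism, the complex manifold structure on $\sP_\pi(\fA)$ is forced: there is a unique complex structure making $r$ biholomorphic, namely the pushforward of the structure on $\bbP\hilbH$ described in \S\ref{sec:background-projective-hilbert-spaces}. For the metric, I would equip the unit sphere $\bbS\hilbH$ with the fixed round metric induced by $\Re\ev{\cdot, \cdot}$ and recall that the fibration $\bbS\hilbH \to \bbP\hilbH$ is a Riemannian submersion onto the Fubini--Study metric. Transporting Fubini--Study to $\sP_\pi(\fA)$ along $r$ then exhibits $p_{\bbS\hilbH}$ as the composite of a Riemannian submersion with an isometry, hence again a Riemannian submersion. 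Conversely, a Riemannian submersion determines the metric on its base from the metric on the total space by requiring the differential to restrict to an isometry on horizontal subspaces; since $p_{\bbS\hilbH}$ is surjective this determines $h$ everywhere, yielding the claimed uniqueness. The Hermitian and Kähler conditions, namely compatibility of $h$ with the complex structure and closedness of the associated fundamental two-form, are inherited from the corresponding properties of Fubini--Study geometry on $\bbP\hilbH$.

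It remains to check naturality: if $(\hilbH, \pi)$ and $(\hilbH', \pi')$ realize the same component, they are unitarily equivalent, and the implementing unitary $U$ descends to a biholomorphic isometry $\bbP U: \bbP\hilbH \to \bbP\hilbH'$ commuting with the two copies of $r$, so the transported structures agree. I expect the main obstacle to be the homeomorphism claim together with the passage to infinite dimensions: one must treat the smooth Hilbert-manifold structures on $\bbS\hilbH$ and $\bbP\hilbH$ with care, and confirm that the Riemannian submersion and Kähler computations, which are elementary in finite dimensions, carry over without convergence issues to an infinite-dimensional Hilbert space.
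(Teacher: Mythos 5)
Your proposal is correct and follows essentially the same route as the paper: reduction to open superselection sectors, transport of the complex structure of $\bbP\hilbH$ along the bi-Lipschitz bijection $r$ (whose key input is the identity $\|\psi-\omega\| = 2\sqrt{1-|\ev{\Psi,\Omega}|^2}$), verification of independence of the representation via unitary intertwiners, and characterization of the metric by the Riemannian-submersion property of $p_{\bbS\hilbH}$. The only cosmetic difference is that the paper builds the Hermitian metric by pulling back the flat metric on $\hilbH$ along the local sections $s_\omega$ of Corollary \ref{cor:purestate_mapsto_SH} and then gluing, whereas you descend the round metric through the quotient $\bbS\hilbH\to\bbP\hilbH$; these yield the same structure.
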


A review of infinite dimensional manifolds and fiber bundles is provided in the appendix.

The materials of \S\ref{sec:topstructure} and \S\ref{sec:diffstructure} give us the control we need to address our primary goal. In \S\ref{sec:contkadison}, we establish a generalization of the Kadison transitivity theorem \cite[Thm.~5.2.2]{MurphyCAOT} that admits selections of operators that depend continuously on the initial data. This is a new result which we call the \emph{continuous Kadison transitivity theorem}.  

 \begin{thm*}[Continuous Kadison Transitivity]
Let $\fA$ be a $C^*$-algebra and let $(\cH, \pi)$ be an irreducible representation. Let $n$ be a positive integer and let
\begin{align*}
X &= \qty{(\x, \y) \in \cH^{2n}: x_1,\ldots, x_n \tn{ are linearly independent}}, 
\end{align*}
equipped with the subspace topology inherited from $\cH^{2n}$, where $\x= (x_1,\ldots,x_n)$ and $\y=(y_1, \ldots, y_n)$. There exists a continuous map $A \colon X \rightarrow \fA$ such that 
\[
\pi(A(\x,\y))x_i = y_i
\]
for all $(\x, \y) \in X$ and $i=1,\ldots, n$. 

On  the subspace $X_\tn{sa} \subset X$ of pairs $(\x, \y) $ such that there is a self-adjoint $T \in \fB(\cH)$ with $Tx_i = y_i $ for $i =1,\ldots, n$, there is a continuous map  $A \colon X_\tn{sa} \rightarrow \fA_\tn{sa}$ satisfying the same property. If $\fA$ is unital, then on the subspace $X_\tn{u}$ of pairs $(\x, \y) $ such that there is a unitary $T \in \Unitary(\cH)$ with $Tx_i = y_i $ for $i =1,\ldots, n$, every point $(\x_0, \y_0)$ has a  neighborhood $O \subset X_\tn{u}$ for which there exists a continuous map  $A\colon O \rightarrow \Unitary(\fA)$ which agains satisfies the same property.
\end{thm*}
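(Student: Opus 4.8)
The plan is to first upgrade the classical Kadison transitivity theorem to a \emph{continuous approximate} selection, and then to promote approximate selections to exact ones by a geometric series, exploiting throughout that the defining condition $\pi(A)x_i = y_i$ is affine in $A$. Concretely, I would fix a point $\x_0$ and a neighborhood $W$ of it in the space of linearly independent $n$-tuples on which the Gram matrix $\langle x_i, x_j\rangle$ is uniformly bounded below and above; this gives uniform control on the dual-basis realizer $T(\x,\z) = \sum_i |z_i\rangle\langle \tilde{x}_i|$ with $T x_i = z_i$, and hence, via the norm estimate $\|A\|\le\|T\|$, on the element furnished by the classical theorem. The key observation is that for a \emph{fixed} $A_\ast \in \fA$ the map $(\x,\z)\mapsto \pi(A_\ast)x_i - z_i$ is continuous, so $\{\,\|\pi(A_\ast)x_i - z_i\|<\varepsilon\,\}$ is open. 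Covering the parameter space (points $\x\in W$ and targets $\z$ on the unit sphere of $\cH^n$) by such sets, choosing on each a constant selection supplied by classical Kadison transitivity, and gluing by a continuous partition of unity (which exists, the parameter space being metric and hence paracompact) produces a continuous $B$ with $\|\pi(B(\x,\z))x_i - z_i\|<\varepsilon$ and $\|B\|\le C$ uniformly; convex combinations preserve the estimate since $\sum_\alpha \rho_\alpha\big(\pi(A_\alpha)x_i - z_i\big)$ is controlled termwise. Extending by homogeneity in $\z$ then puts the approximate selection in the form I want.

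Next I would iterate: setting $A_1 = B(\x,\y)$, $\y^{(1)} = \y - (\pi(A_1)x_i)_i$, and continuing, the choice $\varepsilon = 1/(2\sqrt n)$ forces $\|\y^{(k)}\|\le 2^{-k}\|\y\|$, so $A = \sum_k A_k$ converges in norm locally uniformly, is continuous, and satisfies $\pi(A(\x,\y))x_i = y_i$ exactly for all $\x\in W$ and all $\y$. This yields a local exact selection; because the exact condition is affine in $A$, these local selections glue over all of $X$ by a partition of unity, giving the first statement. For the self-adjoint refinement I would run the same scheme with self-adjoint approximants (available from the self-adjoint form of the density/transitivity theorem): the crucial point is that if $S=S^*$ realizes the target and $A_1=A_1^*$, then the residual is realized by $S-\pi(A_1)$, again self-adjoint, so the iteration stays inside $\fA_\tn{sa}$. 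Real partitions of unity preserve self-adjointness, so the same gluing works over $X_\tn{sa}$.

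The unitary statement is the genuinely hard and merely local part, and is where I expect the main obstacle. Convex combinations destroy unitarity, so global gluing is unavailable; moreover $\dim\big(\operatorname{span}(x_i)+\operatorname{span}(y_i)\big)$ can jump arbitrarily close to any point of $X_\tn{u}$, which blocks the naive strategy of realizing a finite-rank self-adjoint $T_0$ with $\exp(iT_0)x_i=y_i$ on a continuously varying frame. Instead I would set $\Phi(b;\x,\y) = (\pi(\exp(ib))x_i)_i$ for $b\in\fA_\tn{sa}$ and solve $\Phi = \y$ by an implicit function / submersion argument in the Banach setting developed in the appendix. At the base point the classical unitary Kadison theorem supplies $b_0$ with $\|b_0\|<\pi$; the differential of $\exp$ is then $\Xi = g(\mathrm{ad}_{b_0})$ with $g(t)=(1-e^{-it})/(it)$ nonvanishing on the spectrum of $\mathrm{ad}_{b_0}$, which is contained in $(-2\pi,2\pi)$, so $\Xi$ is invertible; and the self-adjoint transitivity result shows $c\mapsto(\pi(c)x_i)_i$ hits every self-adjointly realizable target. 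Combining these, $D_b\Phi$ surjects onto the tangent space of the reachable set $\{(\x,\y): \y = (Sx_i)_i,\ S\in\Unitary(\cH)\}$, so a parametrized implicit function theorem yields a continuous local solution $b(\x,\y)$ and $A=\exp(ib)\colon O\to\Unitary(\fA)$. Verifying that the reachable set is a Banach submanifold and that the differential is onto its tangent space, so that the submersion theorem genuinely applies with joint continuity in the parameters, is the delicate step, and it is precisely here that the self-adjoint result and the Banach--Lie structure do the work.
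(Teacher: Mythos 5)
Your treatment of the general and self-adjoint cases is correct, but it is worth recognizing that it amounts to inlining the proof of the Michael selection theorem for this particular carrier: the paper's route is to verify that $\phi(\x,\y)=\{A\in\fA:\pi(A)x_i=y_i\}$ is a lower semicontinuous carrier with closed convex values and then cite Michael's theorem, and the lower semicontinuity verification there is exactly your step of producing, near any point and any exact solution, nearby elements that are $\varepsilon$-good; your partition-of-unity gluing of constant approximate selections followed by the geometric-series iteration is the standard proof of Michael's theorem specialized to an affine constraint. Both arguments lean on the same two nontrivial inputs: the norm-controlled Kadison transitivity ($\|A\|\le\|T\|$, Pedersen's theorem) and uniform control of the inverse Gram matrix on a neighborhood of $\x_0$. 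So for the first two statements your proof is sound and differs from the paper's only in that it does not quote Michael's theorem as a black box; the self-adjoint bookkeeping (residuals realized by $S-\pi(A_1)-\cdots$, real partitions of unity) is also fine.

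The unitary case is where your proposal has a genuine gap. The Banach-space implicit function theorem (and the submersion/regular value theorem in the Banach-manifold setting) requires the partial derivative $D_b\Phi(b_0)$ to be a \emph{split} surjection onto the relevant tangent space, i.e.\ its kernel must admit a closed complement in $\fA_\tn{sa}$. That kernel is, up to the invertible operator $\Xi=g(\mathrm{ad}_{b_0})$, the closed real subspace $\{a\in\fA_\tn{sa}:\pi(a)x_i=0 \text{ for all } i\}$, and complementability of such subspaces in a general $C^*$-algebra is not known — indeed the $n=1$ instance is precisely the complementability of the Gelfand ideal, which the paper explicitly records as an open question in the remark following the first continuous transitivity theorem. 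Surjectivity of $D_b\Phi(b_0)$ onto the tangent space of the unitary orbit (which you correctly deduce from the self-adjoint transitivity statement) is therefore not enough to invoke a parametrized implicit function theorem in its standard form.

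The argument can in principle be repaired by replacing the implicit function theorem with a Lyusternik--Graves-type surjection theorem combined with a Bartle--Graves continuous (nonlinear, positively homogeneous) right inverse of $D_b\Phi(b_0)$, running the Newton--Picard iteration $b_{k+1}=b_k+R\big(\y-\Phi(b_k)\big)$ with continuous dependence on $(\x,\y)$, and checking that surjectivity persists (with a uniform openness constant) under small perturbations of the base point and of $\x$. But none of this is in your write-up, and it is exactly the point where the analysis is delicate; note also that Bartle--Graves is itself a continuous-selection theorem, so this detour does not avoid selection arguments. The paper sidesteps the issue entirely by an explicit construction: it reduces to orthonormal frames, writes down the planar rotation $U_{x,y}$ and its self-adjoint generator $T_{\x,y}$ with $\|T_{\x,y}\|=\theta(\x,y)$ in closed form, applies Michael's theorem to the carrier of norm-bounded self-adjoint lifts of $T_{\x,y}$, exponentiates, and then inducts on $n$ — obtaining the same local statement you aim for without ever needing a complemented kernel.
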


This theorem states the existence of a continuous selection for the function $X \rightarrow \wp(\fA)$ mapping a point $(\x, \y) \in X$ to the set of all $A \in \fA$ satisfying $\pi(A)x_i = y_i$ for all $i=1,\ldots, n$. Thus, the key ingredient in the proof is the Michael selection theorem \cite[Thm.~3.2$''$]{MichaelSelection}, as it provides conditions under which such a selection may be found. A consequence of the continuous Kadison transitivity theorem is that local trivializations may be found for the action of the unitary group of $\fA$ on a fixed pure state. This is studied in \S\ref{sec:principal_fiber_bundles}.

\begin{cor*}
  Let $\fA$ be a unital $C^*$-algebra and let $\omega \in \sP(\fA)$
  be a pure state. The
  map $p_{\Unitary(\fA)}: \Unitary(\fA) \rightarrow \sP_\omega(\fA)$
  defined by $p_{\Unitary(\fA)}(U) (A) = \omega(U^*AU)$
  then is a locally trivial principal $\Unitary_\omega(\fA)$-bundle, where $\sP_\omega(\fA)$ is the set of states of the form $p_{\Unitary(\fA)}(U)$ and
  $\Unitary_\omega(\fA)$ is the isotropy group
  $\qty{U \in \Unitary(\fA): p_{\Unitary(\fA)}(U) = \omega}$.
\end{cor*}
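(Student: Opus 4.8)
The plan is to reduce the statement to the existence of continuous local sections of $p_{\Unitary(\fA)}$ and to produce such sections by feeding the data of the problem into the unitary form of the continuous Kadison transitivity theorem. First I would set up the group‑theoretic skeleton. Fix an irreducible representation $(\hilbH, \pi)$ in which $\omega$ is a vector state, say $\omega = r(\C\Psi_0)$ for a unit vector $\Psi_0 \in \bbS\hilbH$, using the biholomorphism $r \colon \bbP\hilbH \to \sP_\pi(\fA)$ of the K\"ahler theorem. For $U \in \Unitary(\fA)$ one computes
\[
p_{\Unitary(\fA)}(U)(A) = \omega(U^*AU) = \langle \pi(U)\Psi_0, \pi(A)\pi(U)\Psi_0\rangle = r\bigl(\C\,\pi(U)\Psi_0\bigr)(A),
\]
so that $p_{\Unitary(\fA)} = r \circ \bigl(\C\,\pi(-)\Psi_0\bigr)$, exhibiting the orbit map as intertwined with the projective action $U \mapsto \C\,\pi(U)\Psi_0$; in particular $\sP_\omega(\fA) \subseteq \sP_\pi(\fA)$, so every point of the base is of the form $r(\C\Psi_0')$ with $\Psi_0' \in \bbS\hilbH$.

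Next I would record the coset structure of the fibers. A direct check shows that right multiplication by $\Unitary_\omega(\fA)$ preserves the fibers of $p_{\Unitary(\fA)}$, that this action is free, and that $p_{\Unitary(\fA)}(U_1) = p_{\Unitary(\fA)}(U_2)$ holds iff $U_1^*U_2 \in \Unitary_\omega(\fA)$; thus the fibers are exactly the right cosets of the closed subgroup $\Unitary_\omega(\fA)$. By the standard characterization of principal bundles it then suffices to construct a continuous local section of $p_{\Unitary(\fA)}$ through each point of $\sP_\omega(\fA)$: given such a section $s \colon O \to \Unitary(\fA)$, the map $(\omega', V) \mapsto s(\omega')V$ is a continuous bijection $O \times \Unitary_\omega(\fA) \xra{\sim} p_{\Unitary(\fA)}^{-1}(O)$, and its inverse $U \mapsto \bigl(p_{\Unitary(\fA)}(U),\, s(p_{\Unitary(\fA)}(U))^{-1}U\bigr)$ is continuous because $p_{\Unitary(\fA)}$, the section $s$, and the algebra operations of $\fA$ are continuous. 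This yields the local trivializations.

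The core step is the construction of the section near $\omega = r(\C\Psi_0)$ (the same argument applies verbatim at any base point $r(\C\Psi_0')$). Since $p_{\bbS\hilbH} \colon \bbS\hilbH \to \sP_\pi(\fA)$ is a locally trivial principal $\Unitary(1)$‑bundle, I can choose a continuous local section $\sigma$ of it on a neighborhood of $\omega$ with $\sigma(\omega) = \Psi_0$ (concretely, for $\C\Psi$ near $\C\Psi_0$ one has $\langle\Psi_0,\Psi\rangle \neq 0$ and may normalize the phase so that $\langle\Psi_0,\sigma\rangle > 0$). For $\omega'$ near $\omega$ the pair $(\Psi_0, \sigma(\omega'))$ consists of two unit vectors of $\hilbH$; since any two unit vectors are related by a unitary of $\hilbH$, this pair lies in $X_{\tn{u}}$ for $n = 1$, reducing at $\omega' = \omega$ to the diagonal pair $(\Psi_0, \Psi_0)$. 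The unitary continuous Kadison transitivity theorem then furnishes a neighborhood of $(\Psi_0,\Psi_0)$ in $X_{\tn{u}}$ and a continuous map $A$ into $\Unitary(\fA)$ with $\pi(A(\x,\y))x_1 = y_1$. Setting $s(\omega') = A(\Psi_0, \sigma(\omega'))$ on the open set where this is defined gives a continuous map with $\pi(s(\omega'))\Psi_0 = \sigma(\omega')$, hence $p_{\Unitary(\fA)}(s(\omega')) = r(\C\,\sigma(\omega')) = \omega'$, which is the desired section.

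The main obstacle is essentially encapsulated in the two tools being invoked: the continuous Kadison transitivity theorem (already established) and the local triviality of $\bbS\hilbH \to \bbP\hilbH$. Granting these, the remaining work is bookkeeping — checking that the diagonal pair lands in the domain returned by the Kadison theorem, verifying the coset description of the fibers, and confirming continuity of the candidate trivialization together with its inverse. I would be careful to emphasize that the section construction goes through at every point of $\sP_\omega(\fA)$, so that local sections exist over an open cover of the base and the principal $\Unitary_\omega(\fA)$‑bundle structure is obtained.
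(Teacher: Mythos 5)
Your proof is correct and follows essentially the same route as the paper: reduce to the existence of continuous local sections of $p_{\Unitary(\fA)}$, and build those sections by composing the phase-normalized lift $\sP_\pi(\fA) \supset \bbB_2(\cdot) \to \bbS\hilbH$ of Corollary \ref{cor:purestate_mapsto_SH} with the unitary continuous Kadison transitivity machinery. The only cosmetic difference is that you re-derive inline what the paper packages as Corollary \ref{cor:continuous_Kadison_automorphism} (using Theorem \ref{thm:continuous_Kadison_unitary} with $n=1$ where the paper invokes the $n=0$ case of Lemma \ref{lem:unitary_Kadison_lemma}), and you build a section at each base point directly rather than translating a single section by a fixed unitary $V$ as the paper does.
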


By showing that $\pi_1(\Unitary(\fA)) \not\cong \pi_1(\Unitary_\omega(\fA) \times \sP_\omega(\fA))$, we show that this bundle is nontrivial in a few examples. The most interesting example considered is when $\fA$ is a UHF algebra. While the homotopy groups of the unitary group of a  UHF algebra are known
\cite{SchroederUHFalgebras},
% \cite{HandelmanK0_AF_Algebras,Zhang_homotopy_type,Schroder1998topology_invertible_elements},
we present a new method for computing these groups, relying on a theorem of Gl\"ockner \cite[Thm.~1.13]{glockner2010homotopy}. This method also allows for the calculation of the homotopy groups of $\Unitary_\omega(\fA)$ in Theorem \ref{thm:UHF_Uomega}, which to our knowledge have not been previously computed. 

In \S\ref{sec:fiberwiseGNS} we turn our attention to the GNS construction. There, ``continuous dependence'' of the GNS construction on its initial data is construed as the ability to create topological fiber bundles out of the Hilbert spaces and left ideals associated to a norm-continuous family of pure states. Precisely, given a topological $C^*$-algebra bundle $p_\fA:\fA \rightarrow X$, we construct the fiber bundle of pure state spaces $p_{\sP(\fA)}:\sP(\fA) \rightarrow X$ and show that the sets
\[
\sH = \bigsqcup_{\omega \in \sP(\fA)} \hilbH_\omega \qqtext{and} \mathscr{N} = \bigsqcup_{\omega \in \sP(\fA)} \fN_\omega
\]
have natural structures as fiber bundles over $\sP(\fA)$, where $\fN_\omega$ is the left ideal associated to a pure state $\omega$ and $\hilbH_\omega = \fA_\omega /\fN_\omega$ the corresponding Hilbert space. We call this the \textit{fiberwise GNS construction}. If one is given a preferred family of pure states, i.e.\ a section $\omega:X \rightarrow \sP(\fA)$, then $\sH$ and $\mathscr{N}$ may be pulled back to obtain bundles $\omega^*\sH$ and $\omega^*\mathscr{N}$ over $X$. The construction of $\mathscr{N}$ relies on the continuous Kadison transitivity theorem, while the construction of $\sH$ relies on Proposition \ref{prop:continuous-dependancy-unitary-automorphism}, reproduced in a simplified form below.

%\begin{prop*}
%Let $\fB$ and $\fC$ be $C^*$-algebras with irreducible representations $(\hilbH, \pi)$ and $(\cK, \rho)$ respectively. Let
%\[
%O = \qty{(\alpha, \Omega, \Psi) \in \Iso(\fB, \fC)_{\textup{n}} \times \bbS \cH \times \bbS \cK : p_{\bbS \cH}(\Omega) \circ \alpha^{-1} = p_{\bbS \cK}(\Psi)},
%\]
% where $\Iso(\fB, \fC)_{\textup{n}}$ is the set of $*$-isomorphisms $\alpha:\fB \rightarrow \fC$ with the norm topology. The unique map $U:O \rightarrow \Unitary(\hilbH, \cK)$ satisfying
%\[
%U(\alpha, \Omega, \Psi)\pi(B)\Omega = \rho(\alpha(B))\Psi
%\]
%for all $B \in \fB$ and $(\alpha, \Omega, \Psi) \in O$ is continuous, where $\Unitary(\cH, \cK)$ is equipped with the norm topology.
%\end{prop*}

\begin{prop*}
  Let $\fB$ be a $C^*$-algebra with irreducible representation $(\hilbH, \pi)$. For each
  unit vector $\Omega$ denote by  $\xi_\Omega$ the projection $\fB \to \hilbH$, $B\mapsto \pi (B)\Omega$
  and by $\omega_\Omega$ the state $B \mapsto \langle \Omega,\pi(B)\Omega\rangle$.
  Let $O\subset \Aut (\fB) \times \bbS \hilbH$ be the set of all pairs $(\alpha,\Omega)$ for which there exists a unique
  vector $\Phi = \Phi (\alpha,\Omega) \in \bbS \hilbH$ such that
  $\omega_\Omega\circ \alpha^{-1} = \omega_{\Phi}$
  and $\langle \Omega,\Phi \rangle > 0$.
  Then % $O$ is open in $\Aut (\fB) \times \bbS \hilbH$ and
  the map $U:O \rightarrow \Unitary(\hilbH)$ which associates to each pair $(\alpha,\Omega)$
  the unique unitary $U_{\alpha,\Omega}$ making the diagram
  \[
  \begin{tikzcd}
  \fB \arrow[r,"\alpha"] \arrow[d,"\xi_\Omega"']& \fB \arrow[d,"\xi_{\Phi (\alpha,\Omega)}"]\\
  \hilbH \arrow[r,"U_{\alpha,\Omega}"']& \hilbH 
  \end{tikzcd}
  \]
  commute is continuous with respect to the norm topologies on $\Aut (\fB)$ and $\Unitary (\hilbH)$. 
\end{prop*}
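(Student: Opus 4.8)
The plan is to first describe $U_{\alpha,\Omega}$ explicitly, observe that it is nothing but the unitary implementer of $\alpha$ in $\pi$ with a fixed phase, and then prove continuity by reducing to automorphisms near the identity, where the genuine difficulty is upgrading the easy strong convergence to the required norm convergence.

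\emph{Step 1: $U_{\alpha,\Omega}$ implements $\alpha$.} Since $\pi$ is irreducible every nonzero vector is cyclic, so $\pi(\fB)\Omega$ is dense in $\hilbH$. The computation $\|\pi(\alpha(B))\Phi\|^2 = \omega_\Phi(\alpha(B^*B)) = (\omega_\Omega\circ\alpha^{-1})(\alpha(B^*B)) = \omega_\Omega(B^*B) = \|\pi(B)\Omega\|^2$ shows that $\pi(B)\Omega \mapsto \pi(\alpha(B))\Phi$ extends to a well-defined isometry $U_{\alpha,\Omega}$; its range contains the dense set $\pi(\fB)\Phi$, so it is unitary, and it is unique by density. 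A one-line computation on $\pi(\fB)\Omega$ gives the intertwining relation $U_{\alpha,\Omega}\,\pi(B) = \pi(\alpha(B))\,U_{\alpha,\Omega}$, so $U_{\alpha,\Omega}$ implements $\alpha$; and $U_{\alpha,\Omega}\Omega = \Phi$ (run $B$ through an approximate identity if $\fB$ is non-unital). Thus $U_{\alpha,\Omega}$ is the unique unitary implementing $\alpha$ with $U_{\alpha,\Omega}\Omega = \Phi(\alpha,\Omega)$, equivalently with $\langle \Omega, U_{\alpha,\Omega}\Omega\rangle > 0$.

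\emph{Step 2: continuity of $\Phi$ and reduction to the identity.} Automorphisms are isometric, so $\Aut(\fB)$ is a topological group in the norm topology with $\|\alpha^{-1}-\beta^{-1}\| \le \|\alpha - \beta\|$; together with $\|\omega_\Omega - \omega_{\Omega'}\| \le 2\|\Omega - \Omega'\|$ this shows the pure state $\omega_\Phi = \omega_\Omega \circ \alpha^{-1}$ depends norm-continuously on $(\alpha,\Omega)$. Applying the homeomorphism $r:\bbP\hilbH \to \sP_\pi(\fB)$ from the K\"ahler theorem, followed by the local section of $\bbS\hilbH \to \bbP\hilbH$ normalized by $\langle \Omega,\Phi\rangle > 0$ (available on $O$, where the overlap is strictly positive), shows that $\Phi:O \to \bbS\hilbH$ is norm-continuous. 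Now fix $(\alpha_0,\Omega_0)\in O$ with $\Phi_0 = \Phi(\alpha_0,\Omega_0)$ and let $(\alpha,\Omega) \to (\alpha_0,\Omega_0)$ in $O$. Set $W = U_{\alpha_0,\Omega_0}^* U_{\alpha,\Omega}$, a unitary implementing $\gamma = \alpha_0^{-1}\alpha$ with $\|\gamma - \id\| \le \|\alpha - \alpha_0\| \to 0$. Since $\|U_{\alpha,\Omega} - U_{\alpha_0,\Omega_0}\| = \|W - 1\|$, it suffices to prove $\|W - 1\| \to 0$.

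\emph{Step 3 (the crux): strong-to-norm.} For $a = \pi(B) \in \pi(\fB)$ the implementation gives $\|[W,a]\| = \|WaW^* - a\| = \|\pi(\gamma(B) - B)\| \le \|\gamma - \id\|\,\|B\|$; lifting $a$ with $\|B\| \le \|a\| + 1$ yields $\sup_{a \in \pi(\fB),\,\|a\|\le 1}\|[W,a]\| \le 2\|\gamma - \id\|$. By Kaplansky density the unit ball of $\pi(\fB)$ is strongly dense in that of $\pi(\fB)'' = \cB(\hilbH)$ (irreducibility), and the commutator bound passes to the strong closure by lower semicontinuity of the norm, so $\|\delta_W\| = \sup_{\|X\|\le 1}\|[W,X]\| \le 2\|\gamma - \id\|$. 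Stampfli's theorem $\|\delta_W\| = 2\operatorname{dist}(W,\C 1)$ then forces $\operatorname{dist}(W,\C 1) \le \|\gamma - \id\| \to 0$, whence $\|W - e^{i\theta}1\| \to 0$ for suitable phases. Finally $U_{\alpha,\Omega}\Omega_0 = \Phi(\alpha,\Omega) + U_{\alpha,\Omega}(\Omega_0 - \Omega) \to \Phi_0$, so $\langle \Omega_0, W\Omega_0\rangle = \langle \Phi_0, U_{\alpha,\Omega}\Omega_0\rangle \to 1$, which pins $e^{i\theta} \to 1$ and gives $\|W - 1\| \to 0$. The main obstacle is precisely this last upgrade: strong convergence $W \to 1$ is automatic from Step 1, but norm convergence fails for general strongly convergent unitaries and is recovered here only because implementing an automorphism close to the identity forces the inner derivation $\delta_W$ to be small, which by Stampfli's theorem confines $W$ to a neighborhood of the scalars.
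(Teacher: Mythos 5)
Your proof is correct, but it takes a genuinely different route from the paper's. The paper (Proposition \ref{prop:continuous-dependancy-unitary-automorphism}) first establishes continuity of $\Phi$ much as you do, and then proves continuity of $U$ by a direct vector-by-vector estimate: every unit vector $v$ is written as $v=\pi(B)\Omega$ with $\|B\|\leq 1$ using the norm-controlled transitivity result (Theorem \ref{thm:Pedersen}), and $\|U(\alpha,\Omega)v-U(\alpha',\Omega')v\|$ is telescoped through intermediate terms into $\|\alpha-\alpha'\|+\|\Omega-\Omega'\|$ plus increments of $\Phi$, uniformly in $v$. You instead reduce to the unitary $W=U_{\alpha_0,\Omega_0}^*U_{\alpha,\Omega}$ implementing $\gamma=\alpha_0^{-1}\alpha$, bound the inner derivation $\|\delta_W\|\leq 2\|\gamma-\id\|$ by combining the lifting lemma (Lemma \ref{lem:rep_almost_isometry}), Kaplansky density and lower semicontinuity of the norm, invoke Stampfli's theorem $\|\delta_W\|=2\operatorname{dist}(W,\bbC I)$ to place $W$ near the scalars, and pin the phase with the continuity of $\Phi$. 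Both arguments are sound. The paper's is more self-contained (everything it uses is proved earlier in the paper); yours imports Stampfli's theorem but isolates a clean quantitative fact of independent interest --- any unitary implementing an automorphism $\gamma$ lies within $\|\gamma-\id\|$ of the scalars --- which explains conceptually why the easy strong convergence upgrades to norm convergence here. Two minor points: in Step 2 the normalizing section $s_\Omega$ depends on the moving base point $\Omega$, so you are tacitly using joint continuity of $(\Omega,\bbC\Psi)\mapsto \overline{\langle\Omega,\Psi\rangle}|\langle\Omega,\Psi\rangle|^{-1}\Psi$ on the locus $\langle\Omega,\Psi\rangle\neq 0$ (true, and proved with explicit estimates in the paper's part (iii), but worth a sentence); and since $\bbC I$ is finite dimensional the distance in Stampfli's theorem is attained, which slightly cleans up the passage from $\mu$ to the unimodular $e^{i\theta}$.
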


In \S\ref{sec:smooth_case} we show that when $p_{\fA}:\fA \rightarrow X$ is a smooth $C^*$-algebra bundle, the bundles $p_{\sP(\fA)}:\sP(\fA) \rightarrow X$ and $\sH$ are smooth as well. The transition functions of the bundle $p_{\fA}:\fA \rightarrow X$ map into the automorphism group of the model fiber, thus our definition of a smooth $C^*$-algebra bundle relies on the fact that the automorphism group of a $C^*$-algebra carries the structure of a Banach-Lie group. We provide this structure in Proposition \ref{prop:Aut_Lie_group}. In Proposition \ref{prop:smooth-dependancy-unitary-automorphism} we prove that the map $U:O \rightarrow \Unitary(\cH)$ in the proposition displayed above is in fact smooth, and this leads to a smooth structure on $\sH$. We do not have a smooth analog of the continuous Kadison transitivity theorem, so we do not endow $\mathscr{N}$ with a smooth structure.

We conclude in \S\ref{sec:examples} with a few simple examples of parametrized quantum systems. In section \S\ref{subsec:0d_example} we consider a finite-dimensional $C^*$-algebra and a family of states over $S^2$ representing the ground states of a single spin-$\frac{1}{2}$ particle in a rotatable magnetic field. We show that the first Chern class of the line bundle of ground states of this system, corresponding to the Berry curvature 2-form on $S^2$ \cite{Berry}, may be recovered from the first Chern class of the Hilbert bundle obtained in the fiberwise GNS construction. In section \S\ref{subsec:1d_system} we consider a non-interacting quantum system in one spatial dimension obtained by copying the above system at each point of $\bbZ$. In this case the $C^*$-algebra is infinite-dimensional, hence the weak$^*$ and norm topology on $\sP(\fA)$ are distinct, and we find it interesting to study which types of continuity arise in the family of ground states and under what conditions even in this physically trivial system. We find that it is possible to obtain norm-continuity of the ground states if one enlarges the natural parameter space and uses a finer topology there.

\subsection*{Acknowledgements}
This material is based upon work supported by the National Science Foundation
under Grant No.~DMS 2055501 awarded to A.~Beaudry, M.~Hermele and M.J.~Pflaum.
The research of M.~Qi is supported by the NDSEG program.
A.~Beaudry, M.~Hermele and M.J.~Pflaum also acknowledge support by
a University of Colorado seed grant. 

%
%Furthermore, for the subspace of $X$ defined by
%\begin{align*}
%X_\tn{sa} &= \qty{(\x, \y) \in X:\text{there is a self-adjoint $T$ s.t. $Tx_i = y_i ,  \  i =1,\ldots, n$}}, \\
%X_\tn{u} &= \qty{(\x, \y) \in X:  \text{there is a unitary $T$ s.t.  $Tx_i = y_i, \  i =1,\ldots, n$}},
%\end{align*}
%Then
%\begin{enumerate}[(1)]
%\item there is a continuous map $A \colon X_\tn{sa} \rightarrow \fA_\tn{sa}$ satisfying \eqref{eq:selection_criterion} for all $(\x, \y) \in X_\tn{sa}$.
%\item for every $(\x_0, \y_0) \in X_\tn{u}$, there is a neighborhood $(\x_0, \y_0) \in \cO \subset X_\tn{u}$ and a continuous map $A\colon\cO \rightarrow \Un(\fA)$ satisfying \eqref{eq:selection_criterion} 
%for all $(\x, \y) \in \cO$.
%\end{enumerate}
%\end{thm*}

%!TEX root = FamiliesGNS.tex
\section{Topological structures on spaces of states and representations}\label{sec:topstructure}

\subsection{Notational preliminaries}
The following notational conventions will be used throughout this paper. The complex conjugate of a number $z \in \C$ will
be denoted both by $z^*$ or  $\overline{z}$. 
Hilbert spaces are always understood over the field of complex numbers with inner product being linear in the second
variable. Representations of a $C^*$-algebra $\fA$ will be denoted as pairs $(\hilbH,\pi)$ where
$\hilbH$ is the Hilbert space on which $\fA$ acts and $\pi: \fA \to \fB (\hilbH)$ is the representing $C^*$-homomorphism.
A cyclic representation with a distinguished cyclic vector $\Omega$ will be written as a triple
$(\hilbH,\pi,\Omega)$. The unitary groups of a Hilbert space $\cH$ and a $C^*$-algebra $\fA$ will be denoted $\Unitary (\hilbH)$ and $\Unitary(\fA)$, respectively.

\subsection{Folia}
\label{sec:folia}
Let $\fA$ be a $C^*$-algebra. We denote by $\sS (\fA)\subset \fA^*$ the
space of states on $\fA$, that is, the space of normalized positive
linear functionals $\fA \to \C$. It is well known that $\sS(\fA)$ is nonempty
and convex when $\fA$ is nonzero. When $\fA$ is unital, $\sS(\fA)$ is weakly$^*$ closed,
hence weakly$^*$ compact by the Banach--Alaoglu theorem.
The norm on $\fA^*$ endows $\sS (\fA)$ in a natural way with a metric
$d_{\|\cdot\|}$ given by  
\[
d_{\|\cdot\|}(\psi, \omega) = \| \psi -\omega \|
\] 
for $\psi,\omega \in \sS (\fA)$. We will refer to this metric as the \emph{canonical metric}.
The state space $\sS (\fA)$ also carries two natural uniform structures: the
\emph{metric} or \emph{norm defined uniformity} induced by the canonical metric and the
\emph{weak$^*$ uniformity}. A basis of the latter is given by the entourages
\[
  U_{\varepsilon, A_1,\ldots,A_k}= \big\{ (\psi,\omega)\in \sS (\fA)^2 \, : \:
  |(\psi -\omega)(A_1)| < \varepsilon , \ldots, |(\psi -\omega) (A_k)| < \varepsilon \big\} ,
\]
where $\varepsilon >0$ and $A_1,\ldots , A_k \in \fA$. The metric uniformity is finer 
than the weak$^*$ uniformity. We will denote $\sS (\fA)$ endowed with the norm defined or the weak$^*$ uniformity by
$\sS (\fA)_{\textup{n}}$ and $\sS (\fA)_{\textup{w}^*}$, respectively. Later we will give a more detailed
account of these uniform spaces.

Recall that by the GNS construction every state $\omega$ gives rise to a nontrivial cyclic % (hence non-degenerate)
representation $(\hilbH_\omega, \pi_\omega,\Omega_\omega)$, called the \emph{GNS representation} of $\omega$.
The Hilbert space $\hilbH_\omega$ is the completion of the quotient $\fA/\fN_\omega$ by the left ideal 
\[\fN_\omega = \{A \in \fA : \: \omega (A^*A)=0\}\] 
with respect to the inner product
\[
\langle A + \fN_\omega ,B+\fN_\omega \rangle = \omega(A^*B)
\]
for all $A,B\in \fA$. Following Haag \cite{HaagLocalQP2}, we call $\fN_\omega$ the Gelfand ideal associated to  $\omega$. The representation $\pi_\omega : \fA \to \fB(\hilbH_\omega)$
is the $*$-homomorphism defined by $\pi_\omega(A)(B +\fN_\omega) = AB +\fN_\omega$.  If $\fA$ is unital and $I$
denotes the identity, the cyclic unit vector $\Omega_\omega $ coincides by  definition with $I+\fN_\omega$.
In the non-unital case, $\Omega_\omega$ is the limit of the net $(E_\lambda + \fN_\omega)_{\lambda \in \Lambda}$ for any approximate identity $(E_\lambda)_{\lambda \in \Lambda}$ in $\fA$ (all such limits exist and coincide). 

Given a non-degenerate representation $(\hilbH,\pi)$, the $\pi$-\emph{normal} states of $\fA$ are
those of the form $\omega (A) = \Tr (\varrho \pi(A))$, where $\varrho \in \fB (\hilbH)$ is a
\emph{density matrix}, that is, a positive trace class operator of trace $1$.
We denote the $\pi$-normal states of $\fA$ by $\sS_\pi (\fA)$. In particular, the vector states of $\pi$ are those states of the form $\omega(A) = \ev{\Omega, \pi(A)\Omega}$, where $\Omega \in \hilbH$ is a unit vector; these form a subset of $\sS_\pi(\fA)$.
\hide{Observe that $\Omega$ and $e^{i\theta} \Omega$ define the same state for any $\theta \in \bbR$.}

\begin{lem}\label{lem:vector_state}
Let $(\cH, \pi)$ be a non-degenerate representation of a $C^*$-algebra $\frA$. If $\Psi, \Omega \in \hilbH$ are unit vectors representing the vector states $\psi, \omega \in \sS_\pi(\frA)$, then
\[
d_{\|\cdot\|} (\psi,\omega) = \norm{\psi - \omega} \leq 2 \norm{\Psi - \Omega}.
\]
Thus, the map $\bbS\hilbH \rightarrow \sS_\pi(\frA)_{\textup{n}}$ which assigns to a unit vector
the corresponding vector state is uniformly continuous.
\end{lem}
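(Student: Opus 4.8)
The plan is to work directly from the definition of the dual norm on $\fA^*$, namely $\norm{\psi-\omega} = \sup_{\norm{A}\le 1}\abs{(\psi-\omega)(A)}$, and to estimate the quantity $(\psi-\omega)(A) = \ev{\Psi,\pi(A)\Psi} - \ev{\Omega,\pi(A)\Omega}$ pointwise in $A$. The natural device is to insert a mixed term and split the difference into two pieces, each of which sees the vector difference $\Psi - \Omega$ in only one slot:
\begin{align*}
\ev{\Psi,\pi(A)\Psi} - \ev{\Omega,\pi(A)\Omega} = \ev{\Psi,\pi(A)(\Psi-\Omega)} + \ev{\Psi-\Omega,\pi(A)\Omega}.
\end{align*}

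First I would bound each summand by Cauchy--Schwarz together with the fact that a $*$-homomorphism is norm-decreasing, so that $\norm{\pi(A)}\le\norm{A}$. Using $\norm{\Psi}=\norm{\Omega}=1$, this gives
\begin{align*}
\abs{\ev{\Psi,\pi(A)(\Psi-\Omega)}} \le \norm{A}\,\norm{\Psi-\Omega}, \qquad \abs{\ev{\Psi-\Omega,\pi(A)\Omega}} \le \norm{A}\,\norm{\Psi-\Omega},
\end{align*}
whence $\abs{(\psi-\omega)(A)} \le 2\,\norm{A}\,\norm{\Psi-\Omega}$. Taking the supremum over $A$ in the closed unit ball of $\fA$ yields exactly $\norm{\psi-\omega}\le 2\,\norm{\Psi-\Omega}$, which is the asserted inequality (the equality $d_{\|\cdot\|}(\psi,\omega)=\norm{\psi-\omega}$ being the definition of the canonical metric). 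I do not expect any genuine obstacle here: the only content beyond elementary estimation is the telescoping decomposition above and the standard contractivity of representations, so the bound falls out immediately.

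For the final assertion, the estimate shows that the assignment $\Omega \mapsto \omega$ defines a map $\bbS\hilbH \to \sS_\pi(\fA)_{\textup{n}}$ that is Lipschitz with constant $2$, measuring distances by $\norm{\cdot}$ on the unit sphere $\bbS\hilbH$ and by $d_{\|\cdot\|}$ on $\sS_\pi(\fA)$. Uniform continuity is then immediate: given $\varepsilon>0$ one takes $\delta = \varepsilon/2$, a choice independent of the base point on the sphere, and concludes that $\norm{\Psi-\Omega}<\delta$ forces $d_{\|\cdot\|}(\psi,\omega)<\varepsilon$.
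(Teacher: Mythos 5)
Your proof is correct and follows essentially the same route as the paper's: the same telescoping decomposition $\ev{\Psi,\pi(A)\Psi}-\ev{\Omega,\pi(A)\Omega}=\ev{\Psi,\pi(A)(\Psi-\Omega)}+\ev{\Psi-\Omega,\pi(A)\Omega}$, followed by Cauchy--Schwarz and $\norm{\pi}\leq 1$. The concluding remark on uniform (indeed Lipschitz) continuity matches the paper's intent as well.
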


\begin{proof}
This follows from the triangle inequality and the Cauchy-Schwarz inequality:
\begin{align*}
\norm{\psi - \omega} &= \sup_{\norm{A}\leq 1} \abs{\ev{\Psi, \pi(A)\Psi} - \ev{\Omega, \pi(A)\Omega}}\\
&\leq \sup_{\norm{A}\leq 1} \abs{\ev{\Psi, \pi(A)(\Psi - \Omega)}} + \sup_{\norm{A} \leq 1} \abs{\ev{\Psi - \Omega, \pi(A)\Omega}}\\
&\leq 2 \norm{\Psi - \Omega},
\end{align*}
where, in the last line, we have used
% the facts that $\norm{\Psi} = \norm{\Omega} = 1$ and
that $\norm{\pi} \leq 1$. 
% since $\pi$ is a $*$-homomorphism.
\end{proof}

As mentioned in the proof above, a representation $\pi$ of a $C^*$-algebra always satisfies $\norm{\pi} \leq 1$. Furthermore, $\pi$ is an isometry if and only if it is faithful. When this is not the case, the following lemma may be of service. The proof of this lemma is explained as part of the proof of Lemma 4.4 in \cite{RobertsRoepstorffSBCAQT}. 

\begin{lem}\label{lem:rep_almost_isometry}
Let $\fA$ and $\fB$ be $C^*$-algebras and let $\pi:\fA \rightarrow \fB$ be $*$-homomorphism. Given $A \in \fA$ and $\varepsilon > 0$, there exists $B \in \fA$ such that $\pi(A) = \pi(B)$ and
\[
\norm{B} < \norm{\pi(A)} + \varepsilon.
\]
\end{lem}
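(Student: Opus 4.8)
The plan is to reduce the statement to the standard structure theory of $*$-homomorphisms, specifically the fact that a $*$-homomorphism of $C^*$-algebras induces an \emph{isometric} isomorphism onto its image after quotienting by its kernel. First I would set $\fI = \ker \pi$. Since $\pi$ is a $*$-homomorphism, $\fI$ is a closed two-sided $*$-ideal of $\fA$, so the quotient $\fA / \fI$ is again a $C^*$-algebra when equipped with the quotient norm $\norm{A + \fI} = \inf_{C \in \fI} \norm{A - C}$. By the first isomorphism theorem, $\pi$ factors as $\pi = \tilde\pi \circ q$, where $q : \fA \to \fA/\fI$ is the canonical quotient map and $\tilde\pi : \fA/\fI \to \fB$ is an \emph{injective} $*$-homomorphism.

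The key step is then to invoke the classical result that an injective $*$-homomorphism of $C^*$-algebras is isometric (see, e.g., \cite{MurphyCAOT}). Applied to $\tilde\pi$, this yields $\norm{\pi(A)} = \norm{\tilde\pi(q(A))} = \norm{q(A)} = \norm{A + \fI}$ for every $A \in \fA$; that is, the norm of $\pi(A)$ is exactly the quotient norm $\inf_{C \in \fI} \norm{A - C}$.

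With this identification in hand, the conclusion is immediate from the definition of the infimum: given $\varepsilon > 0$, I would choose $C \in \fI$ with $\norm{A - C} < \norm{\pi(A)} + \varepsilon$, and set $B = A - C$. Then $\pi(B) = \pi(A) - \pi(C) = \pi(A)$ since $C \in \ker \pi$, and $\norm{B} = \norm{A - C} < \norm{\pi(A)} + \varepsilon$, as required. I do not expect any genuine obstacle: the entire content of the lemma sits in the two standard $C^*$-algebraic facts being invoked, namely that the quotient of a $C^*$-algebra by a closed two-sided ideal is a $C^*$-algebra carrying the quotient norm, and that injective $*$-homomorphisms are isometric. Everything else is a one-line selection argument, and the point of isolating the lemma is precisely to record this near-isometry of $\pi$ for later use when transferring norm estimates (e.g.\ from Lemma \ref{lem:vector_state}) between $\fA$ and its representation.
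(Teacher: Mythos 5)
Your proof is correct and follows essentially the same route as the paper's: factor $\pi$ through $\fA/\ker\pi$, use that the induced injective $*$-homomorphism is isometric to identify $\norm{\pi(A)}$ with the quotient norm $\inf_{C\in\ker\pi}\norm{A-C}$, and then select $B = A - C$ within $\varepsilon$ of the infimum. No gaps.
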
 

A strengthened version for unital $C^*$-algebras in which $\norm{B} \leq \norm{\pi(A)}$ may be found in \cite[Cor.~10.1.8]{KadRinFTOAII}, but Lemma \ref{lem:rep_almost_isometry} in the form stated above suffices for our purposes.

%\daniel{K-R has a strengthened version of this theorem where $\norm{B} \leq \norm{\pi(A)}$. However, K-R define $C*$-algebras to be unital (very annoying). Their argument stretches back through many propositions, making it very difficult to tell if we can achieve $\norm{B} \leq \norm{\pi(A)}$ in the non-unital case. Therefore I hesitate to cite them for this theorem. I'm happy to cite R-R.}

\begin{proof}
  Since $\ker \pi$ is a closed two-sided ideal in $\fA$, the quotient  $\fA/\ker \pi$ is a $C^*$-algebra, which
  satisfies the commutative diagram
  \begin{displaymath} 
    \begin{tikzcd}[ampersand replacement=\&]
    \fA \arrow[r,"\pi"] \arrow[d,"q"'] \& \pi(\fA)\\
    \fA/\ker \pi \arrow[ur,"\overline \pi"']
    \end{tikzcd}
  \end{displaymath}
  where $q$ is the quotient map and $\overline \pi:\fA/\ker \pi \rightarrow \pi(\fA)$ is a $*$-isomorphism, hence an
  isometry. By definition of the norm on $\fA/\ker \pi$, we then have
  \[
    \norm{\pi(A)} = \norm{\overline \pi(q(A))} = \norm{q(A)} = \inf_{C \in \ker \pi} \norm{A - C} =
    \inf_{B \in \fA}\qty{\norm{B} : \pi(A) = \pi(B)},
  \]
  and the result follows.
  \end{proof}

The next result shows that this lemma entails that the space of  normal states with respect to a representation $\pi$
can be identified with the space of normal states of the induced von Neumann algebra even if
$\pi$ is not faithful.

\begin{prop}[\emph{cf.} {\cite[Lem.~4.4]{RobertsRoepstorffSBCAQT}}]\label{prop:pullback-representation-isometric-isomorphism-normal-state-spaces}
  Let $(\hilbH,\pi)$ be a non-degenerate representation and
  $\fR = \pi(\fA)'' \subset \fB(\hilbH)$ the induced von Neumann algebra.
  Denote by $\sS_*(\fR)$ the space of normal states on $\fR$.
  Then the pullback map
  $\pi^* : \sS_* (\fR)_{\textup{n}} \to \sS_\pi(\fA)_{\textup{n}}$, $\omega \mapsto \omega \circ \pi$
  is an isometric isomorphism with respect to the canonical metrics,
  so in particular an isomorphism of the corresponding uniform spaces. 
\end{prop}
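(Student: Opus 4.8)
The plan is to verify that $\pi^*$ is a well-defined isometric bijection of metric spaces; since an isometric bijection is automatically a uniform isomorphism, the final clause of the statement will then follow for free.

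First I would settle well-definedness and surjectivity. Given a normal state $\omega \in \sS_*(\fR)$, I write $\omega(T) = \Tr(\varrho\, T)$ for a density matrix $\varrho \in \fB(\hilbH)$ (using $\fR \subseteq \fB(\hilbH)$ and that the restriction to $\fR$ of a predual element of $\fB(\hilbH)$ is normal); then $\pi^*\omega(A) = \Tr(\varrho\, \pi(A))$ is manifestly a $\pi$-normal state, so $\pi^*$ lands in $\sS_\pi(\fA)$. Conversely, every $\pi$-normal state has the form $A \mapsto \Tr(\varrho\, \pi(A))$ for some density matrix $\varrho$, and the functional $T \mapsto \Tr(\varrho\, T)$ restricts to a normal state on $\fR$ whose pullback is the given state; here non-degeneracy of $\pi$ guarantees $1_{\fB(\hilbH)} \in \fR$ acts as the unit, so this restriction is genuinely normalized. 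Hence $\pi^*$ is onto.

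The heart of the argument is the isometry $\norm{\pi^*\omega_1 - \pi^*\omega_2} = \norm{\omega_1 - \omega_2}$ for normal states $\omega_1, \omega_2 \in \sS_*(\fR)$. One inequality is immediate: since $\pi(\fA) \subseteq \fR$ and $\norm{\pi(A)} \leq \norm{A}$, the image of the unit ball of $\fA$ sits inside the unit ball of $\fR$, giving $\norm{\pi^*\omega_1 - \pi^*\omega_2} \leq \norm{\omega_1 - \omega_2}$. For the reverse inequality I would use that $\phi \defeq \omega_1 - \omega_2$ is a \emph{normal}, hence $\sigma$-weakly continuous, functional on $\fR$, so by the Kaplansky density theorem the supremum defining $\norm{\phi}$ over the unit ball of $\fR$ equals the supremum over the unit ball of $\pi(\fA)$. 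Then for each $B = \pi(A)$ with $\norm{B} \leq 1$ I would invoke Lemma \ref{lem:rep_almost_isometry} to produce, for every $\varepsilon > 0$, a lift $A' \in \fA$ with $\pi(A') = B$ and $\norm{A'} < 1 + \varepsilon$; rescaling by $1+\varepsilon$ and letting $\varepsilon \to 0$ yields $\abs{\phi(B)} \leq \norm{\pi^*\omega_1 - \pi^*\omega_2}$, and taking the supremum over such $B$ gives $\norm{\phi} \leq \norm{\pi^*\omega_1 - \pi^*\omega_2}$. Being an isometry, $\pi^*$ is in particular injective.

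I expect the reverse norm inequality to be the one subtle step, since it forces two facts to be combined carefully: the passage from the unit ball of $\fR$ to that of $\pi(\fA)$ rests on normality of $\phi$ together with the $\sigma$-weak density supplied by Kaplansky, while the lift back to $\fA$ with controlled norm is exactly the content of Lemma \ref{lem:rep_almost_isometry}, invoked precisely because $\pi$ may fail to be isometric when it is not faithful. Once the isometry is established, the assertion that $\pi^*$ is an isomorphism of the associated uniform spaces is a formality.
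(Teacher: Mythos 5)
Your proof is correct and follows the same overall strategy as the paper's: the contraction direction is immediate, and the reverse inequality is obtained by combining the Kaplansky density theorem with Lemma \ref{lem:rep_almost_isometry} to lift unit-ball elements of $\fR$ back to $\fA$ with norm at most $1+\varepsilon$. The one place where you genuinely diverge is the execution of the Kaplansky step: the paper represents the two normal states by families of vectors coming from their density matrices, truncates to finite subfamilies, and runs an explicit $7\varepsilon$ estimate, whereas you observe abstractly that $\phi = \omega_1 - \omega_2$ is $\sigma$-weakly continuous, hence strongly continuous on the unit ball (where the $\sigma$-weak and weak operator topologies coincide and the latter is coarser than the strong one), so that the supremum of $\abs{\phi}$ over the strongly dense unit ball of $\pi(\fA)$ already equals $\norm{\phi}$. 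This is shorter and equally rigorous; the paper's version makes the same continuity statement quantitative but buys nothing extra. Your remarks on well-definedness and surjectivity, including the observation that $1 \in \fR$ so the restricted functional is genuinely a state, fill in a step the paper dispatches with ``by definition,'' and are fine.
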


%\daniel{This Proposition is essentially \cite[Lem.~4.4]{RobertsRoepstorffSBCAQT}. The only difference is that they do not require a non-degenerate representation and they do not specify the range, but they prove this is an isometry.}

\begin{proof}
  By definition, $\pi^*$ is surjective. So it suffices to show that $\pi^*$ is an isometry.
  Clearly, for all $\omega \in \fR^*$
  \[
    \| \pi^* \omega \| = \sup\limits_{A \in \fA, \: \| A\|\leq 1}  | \omega (\pi (A)) |
    \leq \sup\limits_{B \in \fR, \: \| B\|\leq 1}  | \omega (B) | = \| \omega \| ,  
  \]
  hence $\pi^*$ is contraction. It remains to show that for all $\psi, \omega \in  \sS_* (\fR)$
  \begin{equation}
    \label{eq:pistar-norm-increasing}
    d_{\|\cdot\|} (\psi, \omega) = \| \psi - \omega \| \leq \| \pi^* \psi - \pi^* \omega \|  \ .
  \end{equation}
  To prove this recall that the state $\psi$ is induced by a density matrix which means that
  there exists a collection of vectors $(x_i)_{i \in I}$ such that
  $\sum_{i\in I} \norm{x_i}^2 = 1$ and
  $\psi (B) = \sum_{i\in I} \ev{x_i, Bx_i}$ for all $B \in \fR$.
  Hereby, each of the sums is the limit of the net of finite partial sums. 
  Likewise, there exists for $\omega$ a family
  $(y_i)_{i \in I} \subset \hilbH$ such that  $\sum_{i\in I} \norm{y_i}^2 = 1$ and
  $\omega (B) = \sum_{i\in I} \ev{y_i, By_i}$ for all $B \in \fR$.
  Now let $\varepsilon >0$ and choose $B\in \fR$ with $\norm{B} \leq 1$ such that 
  \begin{equation}
    \label{eq:canonical-metric-estimate-value-unit-ball-element}
     \| \psi - \omega  \| \leq |\psi(B) -\omega(B)| + \varepsilon \ .
  \end{equation}
  Next choose a finite subset $J\subset I$ so that
  $\sum_{i \in I\setminus J} \norm{x_i}^2 < \varepsilon $ and
  $\sum_{i\in I\setminus J} \norm{y_i}^2 < \varepsilon $. By the Kaplansky density theorem
  there exists $A^\prime \in \fA$ with $\norm{\pi(A^\prime)} \leq 1$ such that
  \[
    \sum_{j\in J} \abs{\ev{x_j, (B - \pi(A^\prime))x_j}} < \varepsilon
    \quad\text{and}\quad
    \sum_{j\in J} \abs{\ev{y_j, (B - \pi(A^\prime))y_j}} < \varepsilon \ .
  \]
  Hence
  \begin{equation}
    \label{eq:estimate-kaplansky}
    \abs{\psi (B) - \psi (\pi(A^\prime))} < 3 \varepsilon
    \quad\text{and}\quad
    \abs{\omega (B) - \omega (\pi(A^\prime))} < 3 \varepsilon \ .
  \end{equation}
  By Lemma \ref{lem:rep_almost_isometry}, there exists $A \in \fA$ such that
  $ \pi (A) = \pi (A^\prime)$ and
  \[ \| A \| < \| \pi (A^\prime)\| + \varepsilon \leq 1 +\varepsilon \ . \]
  Together with the estimates \eqref{eq:canonical-metric-estimate-value-unit-ball-element}
  and \eqref{eq:estimate-kaplansky} this finally entails
  \begin{equation*}
  \begin{split}  
    \| \psi - \omega \| & \leq |\psi (B) -\omega  (B)| + \varepsilon  <
    | \psi (\pi(A^\prime)) - \omega (\pi(A^\prime))|  + 7  \varepsilon = \\
    & = | \psi (\pi(A)) - \omega (\pi(A))| + 7  \varepsilon <
    (1 +\varepsilon) \| \pi^*\psi - \pi^* \omega \| + 7  \varepsilon \ .
  \end{split}  
  \end{equation*}
  By passing to the limit $\varepsilon \searrow 0$, the estimate 
  \eqref{eq:pistar-norm-increasing} follows.
\end{proof}

The $C^*$-algebra $\fA$ acts in a natural way  on the dual $\fA^*$ by
associating to a pair $(B,\omega)\in \fA \times \fA^*$ the continuous linear functional
$B\cdot \omega : \fA \to\C$, $A \mapsto  \omega (B^*AB) $.
Note that if $\omega$ is a state and $B$ fulfills $\omega (B^*B)=1$, then $B\cdot \omega$ is again
a state called a \emph{quasi-local perturbation} of $\omega$.  This motivates the notion of a folium, introduced by Haag--Kadison--Kastler
in \cite{HaaKadKasNCSACS} as a tool for the classification of states in local
quantum physics; see also \cite[Sec.~8.6]{LanFQT}.

\begin{defn}
  By a \emph{folium} in the state space $\sS(\fA)$ one understands a non-empty subspace  $\sF\subset \sS(\fA)$ which is
  \begin{enumerate}[(F1)]
  \item\label{ite:folium-norm-closed} norm closed, % in the metric uniformity,
  \item\label{ite:folium-convex} convex, and
  \item\label{ite:folium-invariance} invariant under the action of $\fA$ in the sense that if
    $\omega \in \sF$ and $B \in \fA$ with  $\omega (B^*B)=1$, then the quasi-local perturbation
    $B\cdot\omega$ lies again in $\sF$.
  \end{enumerate}
\end{defn}

An important observation of Haag--Kadison--Kastler in \cite[\S 1]{HaaKadKasNCSACS}, summarized below, is that the $\pi$-normal states of a non-degenerate representation form
a folium. The proof is omitted in \cite{HaaKadKasNCSACS}, so we provide it;
cf.\ also \cite[Lemma 5.6]{StormerPOVSDASC}.

\begin{thm}
  For every nonzero non-degenerate representation $(\hilbH,\pi)$ of a $C^*$-algebra $\fA$, the space of $\pi$-normal states $\sS_\pi(\fA)$ is the smallest folium containing the vector states of $\pi$. Furthermore, any folium coincides with the space $\sS_\pi(\fA)$ for some non-degenerate representation $\pi$ of $\fA$.
\end{thm}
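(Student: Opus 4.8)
The plan is to prove the two assertions in turn. For the first, I would begin by checking that $\sS_\pi(\fA)$ satisfies conditions (F1)--(F3). Convexity (F2) is immediate since a convex combination of density matrices is again a density matrix. For the invariance (F3), given $\omega(A) = \Tr(\varrho\,\pi(A))$ and $B \in \fA$ with $\omega(B^*B) = 1$, a direct trace manipulation gives $(B\cdot\omega)(A) = \Tr\big(\pi(B)\varrho\,\pi(B)^*\,\pi(A)\big)$, and $\pi(B)\varrho\,\pi(B)^*$ is a positive trace-class operator whose trace equals $\omega(B^*B) = 1$; hence $B\cdot\omega \in \sS_\pi(\fA)$. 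For norm-closedness (F1) I would invoke Proposition \ref{prop:pullback-representation-isometric-isomorphism-normal-state-spaces}, which identifies $\sS_\pi(\fA)$ isometrically with the normal state space $\sS_*(\fR)$ of $\fR = \pi(\fA)''$; since the predual $\fR_*$ is a norm-closed subspace of $\fR^*$ and the states form a norm-closed set, $\sS_*(\fR)$ and hence $\sS_\pi(\fA)$ is norm closed. Finally, $\sS_\pi(\fA)$ contains every vector state, since the vector state of a unit vector $\Omega$ corresponds to the rank-one density matrix $\ketbra{\Omega}{\Omega}$.

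To prove minimality, I would let $\sF$ be any folium containing the vector states of $\pi$ and show $\sS_\pi(\fA) \subseteq \sF$. Given $\omega \in \sS_\pi(\fA)$, I would write its density matrix via the spectral theorem as $\varrho = \sum_i \lambda_i \ketbra{\xi_i}{\xi_i}$ with $\{\xi_i\}$ orthonormal, $\lambda_i > 0$, $\sum_i \lambda_i = 1$; then $\omega = \sum_i \lambda_i \omega_i$, where each $\omega_i$ is the vector state of $\xi_i$ and so $\omega_i \in \sF$. The normalized finite partial sums $s_N^{-1}\sum_{i=1}^N \lambda_i \omega_i$, with $s_N = \sum_{i=1}^N \lambda_i$, are finite convex combinations of elements of $\sF$ and hence lie in $\sF$ by (F2); a short estimate shows they converge to $\omega$ in norm, so $\omega \in \sF$ by norm-closedness (F1). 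This ``infinite convex combination'' closedness of a folium will be reused below.

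For the converse, given a folium $\sF$ I would form the non-degenerate representation $\pi_\sF = \bigoplus_{\omega \in \sF}\pi_\omega$, the direct sum of the GNS representations of all states in $\sF$ (non-degenerate as a direct sum of cyclic representations over the nonempty set $\sF$). Each $\omega \in \sF$ is the vector state of the cyclic vector $\Omega_\omega$ inside the summand $\hilbH_\omega$, so $\sF \subseteq \sS_{\pi_\sF}(\fA)$. For the reverse inclusion I would use the first assertion: $\sS_{\pi_\sF}(\fA)$ is the smallest folium containing the vector states of $\pi_\sF$, so it suffices to show every such vector state lies in $\sF$. A unit vector in $\bigoplus_\omega \hilbH_\omega$ yields a vector state that is an infinite convex combination of vector states of the individual $\pi_\omega$, so by the closedness noted above it is enough to treat a vector state of a single $\pi_\omega$ with $\omega \in \sF$. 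By cyclicity of $\Omega_\omega$, an arbitrary unit vector $\Psi \in \hilbH_\omega$ is a norm-limit of vectors $\pi_\omega(B_k)\Omega_\omega$; after normalizing so that $\omega(B_k^*B_k) = 1$, the associated vector states are exactly the quasi-local perturbations $B_k \cdot \omega \in \sF$ by (F3), and by Lemma \ref{lem:vector_state} they converge in norm to the vector state of $\Psi$. Norm-closedness of $\sF$ then places the vector state of $\Psi$ in $\sF$, giving $\sS_{\pi_\sF}(\fA) \subseteq \sF$ and hence equality.

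The main obstacle is precisely this last step: realizing an arbitrary vector state of $\pi_\sF$ inside $\sF$. The subtlety is that $\sF$ is only assumed invariant under the algebraically defined perturbations $B\cdot\omega$, whereas a general vector state of $\pi_\omega$ corresponds to a genuine Hilbert-space vector not literally of the form $\pi_\omega(B)\Omega_\omega$; bridging this gap requires combining cyclicity, the continuity estimate of Lemma \ref{lem:vector_state}, and norm-closedness, together with careful bookkeeping of the normalizations when passing from finite to infinite convex combinations.
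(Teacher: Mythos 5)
Your proposal is correct and follows essentially the same route as the paper: verifying (F1)--(F3) for $\sS_\pi(\fA)$ via Proposition \ref{prop:pullback-representation-isometric-isomorphism-normal-state-spaces} and the trace computation, proving minimality by norm-approximating a normal state with normalized finite convex combinations of vector states, and realizing an arbitrary folium as $\sS_\pi(\fA)$ for the direct sum of GNS representations, with the key step being that arbitrary vector states are norm-limits of quasi-local perturbations $B\cdot\omega$ via cyclicity, the normalization trick, and Lemma \ref{lem:vector_state}. The only cosmetic difference is that you absorb the paper's explicit handling of finite linear combinations across distinct GNS summands into your ``infinite convex combination'' closedness observation, which is a legitimate shortcut.
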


%\daniel{The first sentence of this theorem is Lemma 5.6 in \cite{StormerPOVSDASC} in slightly different words.}

\begin{proof}
  We first show that for every nonzero non-degenerate representation $\pi$ the
  space $\sS_\pi(\fA)$ is a folium. 
  Denote by  $\fR = \pi(\fA)'' \subset \fB(\hilbH)$ the von Neumann algebra induced by
  the representation $\pi$. According to
  \cite[7.1.13]{KadisonRingroseII}, the space $\sS_* (\fR)$ of normal states on $\fR$
  is norm closed in $\fR^*$. In particular  $\sS_* (\fR)$ is then complete
  in the metric uniformity. 
  Since the pullback $\pi^*: \sS_* (\fR) \to \sS_\pi(\fA)$ is an isometric isomorphism by
  Proposition \ref{prop:pullback-representation-isometric-isomorphism-normal-state-spaces},
  its image has to be complete as well. Hence $\sS_\pi(\fA)$ is closed, proving
  (F\ref{ite:folium-norm-closed}).
  
  The set of density matrices on $\fB(\hilbH)$ is convex, hence $\sS_\pi(\fA)$ is so, too,
  and (F\ref{ite:folium-convex}) holds. Now let $\omega \in \sS_\pi(\fA)$ and 
  $\varrho \in \fB (\hilbH)$ a density matrix such that $\omega (A) = \Tr (\varrho \pi(A))$
  for $ A \in \fA$. Let $B \in \fA$ such that $\omega (B^*B)=1$. Then
  the operator $\pi (B) \varrho \pi(B)^*$ is self-adjoint, has trace $1$
  by the equality $\Tr \big(\pi (B) \varrho \pi(B)^*\big) = \omega (B^*B)=1$ and is positive
  since $\langle v , \pi (B)\varrho\pi(B)^*v\rangle =
  \langle\pi (B)^* v , \varrho\pi(B)^*v\rangle\geq 0$ for all $v\in \hilbH$.
  Moreover, 
  \[
    B \cdot \omega (A) =\omega (B^*AB) = \Tr \big( \left(\pi (B) \varrho \pi(B)^*\right) \pi(A) \big)
     \quad \text{for } A \in \fA \ ,
  \]
  which shows that $B\cdot \omega $ is a $\pi$-normal state and  (F\ref{ite:folium-invariance})
  is fulfilled. Hence  $\sS_\pi(\fA)$ is a folium. 

  Now let $\sF$ be a folium containing the vector states of the representation $\pi$. Consider a positive trace class operator $\varrho \in \fB(\hilbH)$ of trace 1, and let $\omega \in \sS_\pi(\fA)$ be the corresponding $\pi$-normal state. There exists an orthonormal set $(\Omega_i)_{i \in I}$ in $\hilbH$ such that $\varrho = \sum_{i \in I} \lambda_iP_i$, where $P_i$ is the projection onto $\mathbb{C} \Omega_i$ and $\lambda_i > 0$ with $\sum_{i \in I} \lambda_i = 1$. In particular, for any $A \in \fA$,
 \[
 \omega(A) = \Tr(\varrho \pi(A)) = \sum_{i \in I} \lambda_i \ev{\Omega_i, \pi(A)\Omega_i} \ .
 \]
 % where, again, the sum is meant in the sense of net convergence.
 Given $\varepsilon > 0$, choose a finite subset $J \subset I$ such that for any finite subset
 $K \subset I$ with $J \cap K = \varnothing$ the estimate $\sum_{k \in K} \lambda_k < \varepsilon$ holds true.
 If $\omega_i$ is the vector state corresponding to $\Omega_i$, then
 \[
 \norm{\sum_{k \in K} \lambda_k \omega_k} \leq \sum_{k \in K} \lambda_k \norm{\omega_k} < \varepsilon
 \]
 since $\norm{\omega_k} = 1$. Denoting by $\wp_{\textup{f}} (I)$ the set of all finite subsets of $I$ we see that
 the net
 $\left( \sum_{j \in J} \lambda_j \omega_j\right)_{J \in \wp_{\textup{f}}(I)}$ converges in norm to $\omega$.
 Moreover, the net
 \begin{align}\label{eq:net1}
  \left( \frac{\sum_{j \in J} \lambda_j \omega_j}{\sum_{j \in J} \lambda_j}\right)_{J \in \wp_{\textup{f}} (I)}
 \end{align}
 converges to $\omega$ since the denominators converge to one. Since $\omega_i \in \sF$ for all $i$ and each element of the net \eqref{eq:net1} is a convex combination of $\omega_i$, we conclude from (F\ref{ite:folium-convex}) and (F\ref{ite:folium-norm-closed}) that $\omega \in \sF$. This proves that $\sS_\pi(\fA) \subset \sF$, proving the claim that $\sS_\pi(\fA)$ is the smallest folium containing the vector states of $\pi$.
  
 Finally let $\sF$ be a folium and let $\hilbH = \bigoplus_{\omega \in \sF} \hilbH_\omega$ and $\pi = \bigoplus_{\omega \in \sF} \pi_\omega$, where $(\hilbH_\omega, \pi_\omega, \Omega_\omega)$ denotes the GNS representation of $\omega$.  Clearly, every $\omega \in \sF$ is the $\pi$-normal vector state corresponding to $\Omega_\omega \in \hilbH_\omega \subset \hilbH$, so $\sF \subset \sS_\pi(\fA)$. It remains to be shown that $\sS_\pi(\frA) \subset \sF$. To this end it suffices to prove that every vector state is in $\sF$, since $\sS_\pi(\fA)$ is the minimal folium containing the vector states of $\pi$.
 
If $\omega \in \sF$ and $B \in \frA$ with $\omega(B^*B) = 1$, then the $\pi$-normal vector state corresponding to $\pi(B)\Omega_\omega$ is $B \cdot \omega$, which is in $\sF$ by (F\ref{ite:folium-invariance}). Given a unit vector $\Psi \in \hilbH_\omega$ and $\varepsilon > 0$, cyclicity of $\pi_\omega$ yields $C \in \frA$ such that 
 \begin{equation}\label{eq:vector_approximate}
 \norm{\Psi - \pi(C)\Omega_\omega} < \min(\varepsilon/4, 1).
 \end{equation}
 Then $\norm{\pi(C)\Omega_\omega}^2 = \omega(C^*C) > 0$, so we may define $B = C/\sqrt{\omega(C^*C)}$, for which $\norm{\pi(B)\Omega_\omega}^2 = \omega(B^*B) = 1$. Note that by \eqref{eq:vector_approximate} and the
 reverse triangle inequality
 \begin{equation}\label{eq:normalized_vector_approximate}
 \norm{\pi(B)\Omega_\omega - \pi(C)\Omega_\omega} = \abs{1 - \sqrt{\omega(C^*C)}} < \frac{\varepsilon}{4} \ .
 \end{equation}
 If $\psi \in \sS(\frA)$ is the $\pi$-normal vector state corresponding to $\Psi$, then Lemma \ref{lem:vector_state} entails
 \begin{align}\label{eq:state_approximate}
 \norm{\psi - B \cdot \omega} &\leq 2\norm{\Psi - \pi(B)\Omega_\omega}. 
 \end{align}
 Thus, \eqref{eq:vector_approximate}, \eqref{eq:normalized_vector_approximate}, and \eqref{eq:state_approximate} together imply that $\norm{\psi - B \cdot \omega} < \varepsilon$. Since $\sF$
 is norm closed and $B \cdot \omega \in \sF$, one concludes that $\psi \in \sF$.
 
 Next consider a unit vector of the form $\Psi = \sum_{i=1}^n \lambda_i \Psi_{i}$, where $\lambda_i \in \mathbb{C}$ and $\Psi_{i} \in \hilbH_{\omega_i}$ are unit vectors with distinct $\omega_i \in \sF$. If $\psi$ is the $\pi$-normal vector state corresponding to $\Psi$, then
 \[
 \psi(A) = \sum_{i=1}^n \abs{\lambda_i}^2 \ev{\Psi_i, \pi(A)\Psi_i}.
 \]
 Since $\norm{\Psi}^2 = \sum_{i=1}^n \abs{\lambda_i}^2 = 1$, we see that $\psi$ is a convex combination of elements of $\sF$, so $\psi \in \sF$ by (F\ref{ite:folium-convex}). Since any unit vector in $\hilbH$ is the limit of a net of such finite linear combinations, $\sF$ contains all vector states of $\pi$ by  Lemma \ref{lem:vector_state} and (F\ref{ite:folium-norm-closed}). This proves that $\sS_\pi(\fA) \subset \sF$. 
\end{proof}

\subsection{Pure states and superselection sectors}
\label{sec:pure-states}
The extreme points of $\sS(\fA)$ are called pure states, the set of which we denote by $\sP(\fA)$. It is well-known that $\sP(\fA)$ is nonempty when $\fA$ is nonzero and that $\sS (\fA)$ coincides with the weakly$^*$ closed convex hull of $\sP (\fA)$ when $\fA$ is unital (both being consequences of the Krein--Milman theorem). The GNS representation $(\cH_\omega, \pi_\omega)$ of a state $\omega$ is irreducible if and only if $\omega$ is pure. In this case, the quotient $\fA/\fN_\omega$ is already complete, hence $\cH_\omega = \fA/\fN_\omega$; this follows from the Kadison transitivity theorem \cite[Thm.~5.2.4]{MurphyCAOT}.

Given a non-degenerate representation $\pi$, we denote the set of pure $\pi$-normal states on $\fA$ by $\sP_\pi(\fA) = \sP(\fA) \cap \sS_\pi(\fA)$.

\begin{prop}\label{prop:pure_folia_vector_state}
Let $\fA$ be a $C^*$-algebra with a non-degenerate representation $(\hilbH,\pi)$. 
\begin{enumerate}[{\rm (i)}]
\item\label{ite:vector-state} If $\omega \in \sP_\pi(\fA)$, then $\omega$ is a vector state. 
\item\label{ite:irreducible-vector-state-pure} If $\pi$ is irreducible and $\omega$ is a vector state, then $\omega \in \sP_\pi(\fA)$. 
\item\label{ite:vectors-states-same-state-linearly-dependent} If $\pi$ is irreducible and $\Psi, \Omega \in \hilbH$ are unit vectors defining the same vector state, then $\Psi$ and $\Omega$ are linearly dependent.
\end{enumerate}
\end{prop}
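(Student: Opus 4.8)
The plan is to treat the three statements in turn, using the density-matrix description of $\pi$-normal states together with the extreme-point characterization of pure states for (i), the identification of the GNS representation of a vector state for (ii), and the Kadison transitivity theorem for (iii). For (i), I would start from the fact that a $\pi$-normal state $\omega$ is given by a density matrix, $\omega(A) = \Tr(\varrho\,\pi(A))$, and spectrally decompose $\varrho = \sum_{i\in I}\lambda_i P_i$ with $\lambda_i > 0$, $\sum_i \lambda_i = 1$, and $P_i$ the rank-one projection onto $\C\Omega_i$ for an orthonormal family $(\Omega_i)_{i \in I}$. This exhibits $\omega = \sum_i \lambda_i \omega_i$ as a convex combination of the vector states $\omega_i$ of the $\Omega_i$, each of which is itself $\pi$-normal and hence a genuine element of $\sS_\pi(\fA)$. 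If the family has a single index, then $\omega = \omega_1$ is already a vector state. Otherwise, fixing one index gives $\omega = \lambda_1\omega_1 + (1-\lambda_1)\omega'$ with $0 < \lambda_1 < 1$ and $\omega'$ a state; since $\omega$ is an extreme point of $\sS(\fA)$, this forces $\omega = \omega_1$, again a vector state.

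For (ii), write $\omega(A) = \ev{\Omega,\pi(A)\Omega}$ with $\Omega \in \bbS\hilbH$. It is immediately $\pi$-normal, with density matrix the rank-one projection onto $\C\Omega$. For purity I would identify its GNS representation: since $\pi$ is irreducible, the nonzero vector $\Omega$ is cyclic, so $\overline{\pi(\fA)\Omega} = \hilbH$, and the assignment $A + \fN_\omega \mapsto \pi(A)\Omega$ extends to a unitary $\hilbH_\omega \to \hilbH$ intertwining $\pi_\omega$ with $\pi$. Thus $\pi_\omega$ is unitarily equivalent to the irreducible representation $\pi$, and by the equivalence ``GNS representation irreducible $\iff$ state pure'' recalled just above the proposition, we conclude $\omega \in \sP(\fA)$, hence $\omega \in \sP_\pi(\fA)$.

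For (iii), I would argue by contraposition. Suppose $\Psi$ and $\Omega$ are linearly independent. By the Kadison transitivity theorem \cite[Thm.~5.2.2]{MurphyCAOT} applied to the irreducible representation $\pi$, there exists $A \in \fA$ with $\pi(A)\Omega = \Omega$ and $\pi(A)\Psi = 0$. Then the two vector states disagree at $A$, since $\ev{\Omega,\pi(A)\Omega} = 1$ while $\ev{\Psi,\pi(A)\Psi} = 0$, so $\Psi$ and $\Omega$ define different states. Equivalently, and without invoking transitivity, the identity $\ev{\Psi,\pi(A)\Psi} = \ev{\Omega,\pi(A)\Omega}$ for all $A$ says that the normal functionals $\Tr(P_\Psi\,\cdot\,)$ and $\Tr(P_\Omega\,\cdot\,)$ agree on $\pi(\fA)$; since $\pi$ is irreducible, $\pi(\fA)$ is ultraweakly dense in $\pi(\fA)'' = \fB(\hilbH)$, so these normal functionals agree on all of $\fB(\hilbH)$, whence evaluating on the projection onto $\C\Omega$ gives $\abs{\ev{\Omega,\Psi}}^2 = 1$, and the equality case of Cauchy--Schwarz forces $\Psi \in \C\Omega$.

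Parts (i) and (ii) are routine once the density-matrix picture and the GNS identification are in place; the only point requiring genuine care is (iii), where the passage from ``the two vector states agree on $\pi(\fA)$'' to ``the rank-one projections coincide'' must use irreducibility in an essential way — either through Kadison transitivity, to produce a separating element $A$, or through ultraweak density of $\pi(\fA)$ in $\fB(\hilbH)$. I expect the transitivity route to be the cleanest and most in keeping with the surrounding material.
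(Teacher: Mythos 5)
Your proposal is correct and follows essentially the same route as the paper: part (i) rests on the same spectral decomposition of the density matrix (the paper invokes purity via the domination property $\lambda_j\psi \le \omega \Rightarrow \lambda_j\psi = \lambda\omega$, while you use the extreme-point decomposition directly — an equivalent and equally routine step), part (ii) is the standard GNS identification that the paper simply cites from Murphy, and part (iii) is the identical Kadison-transitivity contrapositive. No gaps.
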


\begin{proof}
(\ref{ite:vector-state}) Suppose $\omega \in \sP_\pi(\fA)$ and let $\varrho \in \fB(\hilbH)$ be a density matrix such that $\omega(A) = \Tr(\varrho \pi(A))$. There exists an orthonormal set $\qty{\Omega_i}_{i \in I}$ in $\hilbH$ such that $\varrho = \sum_{i \in I} \lambda_i P_i$, where $P_i$ is the projection onto $\bbC \Omega_i$ and $\lambda_i > 0$ with $\sum_{i \in I} \lambda_i = 1$. Fix $j \in I$ and let $\psi$ be the vector state corresponding to $\Omega_j$. Then for all positive $A \in \fA$,
\[
\lambda_j \psi(A) = \lambda_j \ev{\Omega_j, \pi(A)\Omega_j} \leq \sum_{i \in I} \lambda_i \ev{\Omega_i, \pi(A)\Omega_i} = \omega(A).
\]
Since $\omega$ is pure, there exists $\lambda \in [0,1]$ such that $\lambda_j \psi = \lambda \omega$. Since $\norm{\psi} = \norm{\omega} = 1$, we see that $\lambda_j = \lambda$, which implies that $\omega = \psi$, a vector state.

(\ref{ite:irreducible-vector-state-pure}) This is given by \cite[Thm.~5.1.7]{MurphyCAOT}.

% If $\Omega \in \hilbH$ is a unit vector representing $\omega$,  then the GNS representation $\pi_\omega$ of $\omega$ is unitarily equivalent to $\pi$ by uniqueness (up to unitary equivalence) of the GNS representation. It follows that $\pi_\omega$ is irreducible, which implies that $\omega$ is pure.

(\ref{ite:vectors-states-same-state-linearly-dependent}) If $\Psi$ and $\Omega$ are linearly independent, then the Kadison transitivity theorem yields $B \in \fA$ such that $\pi(B)\Psi = \Psi$ and $\pi(B) \Omega = 0$. This contradicts the assumption that $\ev{\Psi, \pi(A)\Psi} = \ev{\Omega, \pi(A)\Omega}$ for all $A \in \fA$.
\end{proof}

\begin{defn}\label{def:superselection_sector}
  Given a $C^*$-algebra $\fA$, call two pure states $\psi,\omega \in \sP(\fA)$
  \emph{equivalent} %, in signs $\psi\sim\omega$,
  if their GNS representations $\pi_\psi$ and $\pi_\omega$ are unitarily equivalent.
  Let $\sim$ denote the corresponding equivalence relation  on $\sP(\fA)$.
  By a \emph{superselection sector} of $\fA$ we understand an equivalence class of pure states with respect
  to $\sim$. 

  Given $\omega \in \sP(\fA)$, we denote its superselection sector by $\sP_\omega(\fA)$. Conveniently, it follows from Proposition \ref{prop:pure_folia_vector_state} that $\sP_{\pi_\omega}(\fA) = \sP_\omega(\fA)$. Indeed, if $\psi \in \sP_{\pi_\omega(\fA)}$, then $\psi$ is a vector state of $\pi_\omega$, hence $\pi_\psi$ is unitarily equivalent to $\pi_\omega$ by uniqueness of the GNS representation up to unitary equivalence, so $\psi \in \sP_\omega(\fA)$. Conversely, if $\psi \in \sP_\omega(\fA)$, then unitary equivalence of $\pi_\psi$ and $\pi_\omega$ implies that $\psi$ is a vector state of $\pi_\omega$, hence $\psi \in \sP_{\pi_\omega}(\fA)$.
\end{defn}

\begin{rem}
Every nonzero irreducible representation is unitarily equivalent to the GNS representation of some pure state, by uniqueness of the GNS representation (up to unitary equivalence). Accordingly, the collection $\widehat{\fA}$ of unitary
equivalence classes of nonzero irreducible representations of $\fA$ is a set (rather than a class), called the (\emph{representation})  \emph{spectrum} of $\fA$. We provide a brief description of $\widehat{\fA}$ below and refer the reader to Chapter 3 of Dixmier \cite{DixCA} for a complete exposition. 

Usually $\widehat{\fA}$  is  endowed with the \emph{Jacobson} or \emph{hull-kernel topology}, which is the
unique topology having the hull-kernel operation 
\[
  \pset (\widehat{\fA}) \to \pset (\widehat{\fA}) , \quad
   E \mapsto \overline{E} := \operatorname{hull}(\ker (E) )= \Big\{ [\pi] \in\widehat{\fA} :
  {\bigcap}_{[\varrho] \in E}\ker \varrho \subset \ker  \pi \Big\}
\]
as its closure operation, cf.~\cite[Sec.~3.1]{DixCA}. Note that the hull-kernel operation
satisfies Kuratowski's axioms for a closure operation and that, for $\fA$ unital, the space
$\widehat{\fA}$ with the Jacobson topology is quasi-compact by \cite[3.1.8.~Prop.]{DixCA}.
In general, the representation spectrum is locally quasi-compact, see \cite[3.3.8.~Cor.]{DixCA}. 

%\begin{defn}
  %  Given a $C^*$-algebra $\fA$, call two pure states $\psi,\omega \in \sP(\fA)$
%  \emph{equivalent} %, in signs $\psi\sim\omega$,
%  if their GNS representations $\pi_\psi$ and $\pi_\omega$ are unitarily equivalent.
%  Let $\sim$ denote the corresponding equivalence relation  on $\sP(\fA)$.
%  By a \emph{superselection sector} of $\fA$ we understand an equivalence class with respect
%  to $\sim$. The set of superselection sectors $\sP(\fA)\!/\!\! \sim$ endowed with the
%  the quotient topology from $\sP(\fA)$ carrying the weak$^*$ topology will be denoted by
%   $\sP(\fA)_{\textup{w}^*}\!/\!\! \sim$ or by $\ssec (\fA)$ and called the \emph{space of superselection sectors}.
%   When  $\sP(\fA)$ is equipped with the norm topology, though, the resulting
%   quotient space $\sP(\fA)_{\textup{n}} /\!\! \sim$ still coincides as a set with the set of superselection sectors
%   but its topology becomes discrete since a superselection sector is open in  $\sP(\fA)_{\textup{n}}$
%   by Corollary \ref{cor:sector_is_open}.
%\end{defn}

%%%%%%%REWORKED THE DEFN INTO DEFN + REM%%%%%%

Letting $\sP(\fA)_{\textup{w}^*}$ denote  $\sP(\fA)$ endowed with the weak$^*$ topology, 
%the \emph{space of superselection sectors} is defined to be the quotient    
%\[\ssec (\fA) :=\sP(\fA)_{\textup{w}^*}\!/\!\! \sim.\] 
the map $\kappa: \sP(\fA)_{\textup{w}^*}\to \widehat{\fA}$ which associates to each
  pure state the unitary equivalence class of its GNS representation is surjective, continuous
  and open by \cite[Thm.~3.4.11.]{DixCA}. Hence, the induced map on quotients
    \[
    \overline{\kappa} : \sP(\fA)_{\textup{w}^*}/{\sim} 
    \rightarrow \widehat{\fA}, \quad \sP_\omega(\fA) \mapsto [\pi_\omega] 
  \]
is a homeomorphism. 
%between the space of superselection sectors of $\fA$ and the representation spectrum
 % $\widehat{\fA}$

Note that when considering $\sP(\fA)_{\textup{n}}$ instead (i.e., $\sP(\fA)$ endowed with the norm topology), the  quotient space $\sP(\fA)_{\textup{n}} /\!\! \sim$ is a discrete topological space. 
%still coincides as a set with $\ssec (\fA)$, but as a space it is discrete. 
This will follow from Corollary \ref{cor:sector_is_open} below, where it is shown that superselection sectors are open in  $\sP(\fA)_{\textup{n}}$.
\end{rem}

%\begin{rem}
%  By construction, the set of superselection sectors is in bijective correspondence with the
%  representation spectrum of $\fA$  via the canonical map
%  \[
%    \overline{\kappa} : \ssec (\fA) 
%    = \sP(\fA)_{\textup{w}^*}\!/\!\! \sim 
%    \: \to \widehat{\fA}, \quad [\omega] \mapsto [\pi_\omega] \ . 
%  \]
%  Note that the map $\kappa: \sP(\fA)_{\textup{w}^*}\to \widehat{\fA}$ which associates to each
%  pure state the unitary equivalence class of its GNS representation is surjective, continuous
%  and open by \cite[Thm.~3.4.11.]{DixCA}, hence $\overline{\kappa}$ even provides a homeomorphism
%  between the space of superselection sectors of $\fA$ and the representation spectrum
%  $\widehat{\fA}$. 
%\end{rem}

In the remainder of this section we will give various characterizations of superselection sectors and show that our definition 
coincides with the interpretation  of superselection sectors in physics as maximal regions in the pure state space 
where any two states have coherent superpositions. Most of the mathematical results below can be found in
Roberts--Roepstorff \cite{RobertsRoepstorffSBCAQT}, Glimm--Kadison \cite{GlimmKadisonUOCA}, and Pedersen \cite{PedersenCAlgAutomorphisms}. 

%The proposition is important  the interpretation of superselection sectors in physics.
Before we start with the mathematical definition of coherent superpositions let us note that given two states 
$\omega$ and $\psi$ there always exists a non-degenerate representation $(\hilbH,\pi)$ such that both  $\omega$ and $\psi$ become
vector states with respect to that representation. For example, one can take $(\hilbH,\pi)$ as the
direct sum of the GNS representations of  $\omega$ and $\psi$. 

\begin{defn}
  Two distinct pure states  $\psi$ and $\omega$  of a $C^*$-algebra $\fA$  are said to
  \emph{fulfill the superposition principle} or to be \emph{coherently superposable} if there exists
  a non-degenerate representation $(\hilbH,\pi)$  in which $\psi$ and $\omega$ are represented by the unit
  vectors  $\Psi$ and $\Omega$, respectively, and for all $\alpha,\beta \in \C$ the vector state
  $\varphi$ corresponding to the unit vector
  \[\Phi = \frac{\alpha\Psi + \beta\Omega}{\|\alpha\Psi + \beta\Omega\|}\] %\widehat{\phantom{\omega}}
  is a pure state. If this is the case, one calls each of the states $\varphi$ obtained in that way a
  \emph{coherent superposition} of $\omega$ and $\psi$. 
  %\[
  %  \fA \to \langle \alpha\Omega + \beta\Psi , \pi(A)( \alpha\Omega + \beta\Psi)\rangle
  %\]
\end{defn}

\hide{
  \begin{prop}\label{prop:intertwiners_iff_equivalent}
  Let $\frA$ be a nonzero $C^*$-algebra and let $(\hilbH_1, \pi_1)$ and $(\hilbH_2, \pi_2)$ be nonzero irreducible representations
  of $\frA$. If $T: (\hilbH_1, \pi_1) \rightarrow (\hilbH_2,\pi_2)$ is a nonzero intertwiner, that is,
  if  $T:\hilbH_1\to \hilbH_2$ is a bounded linear map  such that 
  \[
    T\pi_1(A) = \pi_2(A)T \quad \text{for all } A \in \frA \ ,
  \]
  then there exists $\lambda > 0$  such that $\lambda T$ is unitary.
  In particular, $\pi_1$ and $\pi_2$ are unitarily equivalent.
  \end{prop}

  \begin{proof}
  First we take adjoints to obtain
  \[
    \pi_1(A^*)T^* = T^* \pi_2(A^*)
  \]
  for all $A \in \frA$. Replacing $A$ with $A^*$, we see that $\pi_1(A)T^* = T^* \pi_2(A)$ for all $A \in \frA$. Therefore,
  \[
    \pi_1(A)T^*T = T^*\pi_2(A)T = T^*T \pi_1(A)
  \]
  and
  \[
    \pi_2(A)TT^* = T\pi_1(A)T^* = TT^*\pi_2(A)
  \]
  for all $A \in \fA$. Since $\pi_1$ and $\pi_2$ are irreducible, it follows from Schur's lemma that $T^*T = \alpha I$ and
  $TT^* = \beta I$ for some $\alpha, \beta \in \bbC$. 

  Observe that
  \[
    \alpha T \pi_1(A) = T \pi_1(A)T^*T = TT^* \pi_2(A)T = \beta \pi_2(A)T = \beta T\pi_1(A),
  \]
  so either $\alpha = \beta$ or $T\pi_1(A) = 0$ for all $A \in \frA$. Since $\pi_1$ is a nonzero irreducible representation,
  we know $\pi_1(\frA)\hilbH_1$ is dense in $\hilbH_1$, so $T\pi_1(A) =0$ for all $A \in \frA$ implies that $T = 0$, which is a
  contradiction by hypothesis. Therefore $\alpha = \beta$, and since $T \neq 0$ implies $\norm{T^*T} = \norm{T}^2 > 0$,
  we know $\alpha$ is nonzero. Furthermore, for any $x \in \hilbH_1$ we have
  \[
    \norm{Tx}^2 = \ev{T^*Tx, x} = \alpha^* \norm{x}^2,
  \]
  which implies that $\alpha$ is real and positive.

  Let $\lambda = \alpha^{-1/2}$ and set $U = \lambda T$. For all $x, y \in \hilbH_1$ we have
  \[
    \ev{Ux, Uy} = \ev{\alpha^{-1} T^*Tx, y} = \ev{x, y},
  \]
  so $U$ is an isometry. Furthermore, since $\pi_2$ is nonzero and irreducible, every nonzero vector in $\hilbH_2$ is cyclic,
  which implies that $\pi_2(\frA)U \hilbH_1 = U\pi_1(\frA)\hilbH_1$ is dense in $\hilbH_2$. Thus, $U(\hilbH_1)$ is dense in $\hilbH_2$, and
  since  $U$ is an isometry, we know $U(\hilbH_1)$ is closed. We conclude that $U$ is bijective, hence unitary.
  \end{proof}
}

The following proposition is a rephrasal and clarification of the ``sufficient'' implication of \cite[Thm.~6.1]{ArakiMTQF}. We restrict our attention here to pure states, although \cite[Thm.~6.1]{ArakiMTQF} is stated for general states.

\begin{prop}\label{prop:inequivalent_superposition}
Let $\frA$ be a nonzero $C^*$-algebra and let $\psi$ and $\omega$ be pure states in different superselection sectors. If $(\hilbH, \pi)$ is a non-degenerate representation with unit vectors $\Psi, \Omega \in \hilbH$ representing $\psi$ and $\omega$, respectively, then $\ev{\Psi, \Omega} = 0$. Furthermore, if $\varphi$ is the vector state corresponding to $\Phi = \alpha \Psi + \beta \Omega$ for any nonzero $\alpha, \beta \in \bbC$ with $\abs{\alpha}^2 + \abs{\beta}^2 = 1$, then 
\[
\varphi = \abs{\alpha}^2 \psi + \abs{\beta}^2 \omega.
\]
In particular, $\varphi \notin \sP(\fA)$ and  $\psi$ and $\omega$ are not coherently superposable.
\end{prop}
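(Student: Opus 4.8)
The plan is to reduce both displayed identities to a single geometric fact: the cyclic subspaces generated by $\Psi$ and $\Omega$ are orthogonal. Once that is in hand, the formula for $\varphi$ is a one-line inner product expansion and the failure of purity is immediate. First I would set $\hilbH_\Psi = \overline{\pi(\fA)\Psi}$ and $\hilbH_\Omega = \overline{\pi(\fA)\Omega}$. Each is a closed $\pi(\fA)$-invariant subspace, and $\Psi$ (resp.\ $\Omega$) is a cyclic vector for the subrepresentation $\pi|_{\hilbH_\Psi}$ (resp.\ $\pi|_{\hilbH_\Omega}$) with $\langle\Psi,\pi(\cdot)\Psi\rangle = \psi$ (resp.\ $\langle\Omega,\pi(\cdot)\Omega\rangle = \omega$). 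By uniqueness of the GNS construction, $\pi|_{\hilbH_\Psi} \cong \pi_\psi$ and $\pi|_{\hilbH_\Omega} \cong \pi_\omega$; since $\psi$ and $\omega$ are pure these subrepresentations are irreducible, and since $\psi,\omega$ lie in different superselection sectors they are \emph{inequivalent} irreducible representations.

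Next I would prove orthogonality. Let $P$ denote the orthogonal projection of $\hilbH$ onto $\hilbH_\Psi$. Because $\hilbH_\Psi$ is $\pi(\fA)$-invariant and $\pi$ is a $*$-representation, its orthocomplement is invariant too, so $P \in \pi(\fA)'$. The compression $T := P|_{\hilbH_\Omega} \colon \hilbH_\Omega \to \hilbH_\Psi$ is then an intertwiner of $\pi|_{\hilbH_\Omega}$ with $\pi|_{\hilbH_\Psi}$, since for $x \in \hilbH_\Omega$ one has $T\pi(A)x = P\pi(A)x = \pi(A)Px = \pi(A)Tx$. Schur's lemma applied to inequivalent irreducible representations forces $T = 0$, i.e.\ $\hilbH_\Omega \subseteq \ker P = \hilbH_\Psi^{\perp}$; in particular $\langle\Psi,\Omega\rangle = 0$, which is the first claim. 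I expect this orthogonality step to be the main obstacle, as it is the only place the hypothesis of distinct sectors enters essentially, and the whole argument hinges on recognizing $T$ as an intertwiner so that Schur's lemma applies.

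Finally, granted $\hilbH_\Psi \perp \hilbH_\Omega$, I would expand $\varphi(A) = \langle\Phi, \pi(A)\Phi\rangle$ with $\Phi = \alpha\Psi + \beta\Omega$. The two cross terms $\langle\Psi,\pi(A)\Omega\rangle$ and $\langle\Omega,\pi(A)\Psi\rangle$ vanish, because $\pi(A)\Omega \in \hilbH_\Omega$ and $\pi(A)\Psi \in \hilbH_\Psi$ while $\Psi \in \hilbH_\Psi$ and $\Omega \in \hilbH_\Omega$; orthogonality also gives $\|\Phi\|^2 = |\alpha|^2 + |\beta|^2 = 1$, confirming that $\Phi$ is a unit vector and $\varphi$ its vector state. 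What survives is $\varphi(A) = |\alpha|^2\langle\Psi,\pi(A)\Psi\rangle + |\beta|^2\langle\Omega,\pi(A)\Omega\rangle = |\alpha|^2\psi(A) + |\beta|^2\omega(A)$, the second claim. Since $\alpha,\beta \neq 0$ we have $|\alpha|^2,|\beta|^2 \in (0,1)$, so $\varphi$ is a nontrivial convex combination of the distinct states $\psi$ and $\omega$ and is therefore not an extreme point of $\sS(\fA)$; hence $\varphi \notin \sP(\fA)$. As the representation $(\hilbH,\pi)$ was arbitrary among those in which $\psi$ and $\omega$ occur as vector states, no choice of representing vectors can render every superposition pure, so $\psi$ and $\omega$ are not coherently superposable.
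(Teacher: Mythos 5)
Your proposal is correct and follows essentially the same route as the paper: pass to the cyclic subspaces $\hilbH_\Psi$ and $\hilbH_\Omega$, identify the subrepresentations with the (inequivalent, irreducible) GNS representations of $\psi$ and $\omega$, kill the compression of the orthogonal projection by Schur's lemma to get $\hilbH_\Psi \perp \hilbH_\Omega$, and then expand the inner product. The only point worth making explicit is that $\Psi \in \hilbH_\Psi$ (needed for $\Psi$ to be cyclic) is a consequence of the non-degeneracy hypothesis, which the paper states and you use tacitly.
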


\begin{proof}
Define $\hilbH_\Psi = \overline{\pi(\frA)\Psi}$ and note that $\hilbH_\Psi$ is a closed invariant subspace. Let $P_\Psi :\hilbH \rightarrow \hilbH_\Psi$ be the orthogonal projection and $\iota_\Psi :\hilbH_\Psi \rightarrow \hilbH$ the inclusion. Lastly, define $\pi_\Psi : \frA \rightarrow \fB(\hilbH_\Psi)$ by
\[
\pi_\Psi(A) = P_\Psi \pi(A)\iota_\Psi
\] 
for all $A \in \frA$. Since $\hilbH_\Psi$ is an invariant subspace, we see that $\iota_\Psi \pi_\Psi(A) = \pi(A)\iota_\Psi$ for all $A \in \fA$. The orthogonal complement of an invariant subspace is invariant, so it further follows that $\pi_\Psi(A)P_\Psi = P_\Psi \pi(A)$ for all $A \in \fA$. Note also that $P_\Psi \iota_\Psi = id_{\hilbH_{\Psi}}$. Thus, for all $A, B \in \fA$,
\begin{align*}
\pi_\Psi(AB) &= P_\Psi \pi(A) \pi(B)\iota_\Psi P_\Psi \iota_\Psi = P_\Psi \pi(A) \iota_\Psi \pi_\Psi(B) P_\Psi \iota_\Psi \\
&= P_\Psi \pi(A)\iota_\Psi P_\Psi \pi(B) \iota_\Psi = \pi_\Psi(A)\pi_\Psi(B).
\end{align*}
Furthermore, since $\iota_\Psi$ is an isometry, for any $x, y \in \hilbH_\Psi$ we have
\begin{align*}
\ev{\pi_\Psi(A^*)x, y} &= \ev{\iota_\Psi \pi_\Psi(A^*)x, \iota_\Psi y} = \ev{\pi(A^*)\iota_\Psi x, \iota_\Psi y} \\
&= \ev{\iota_\Psi x, \pi(A)\iota_\Psi y} = \ev{\iota_\Psi x, \iota_\Psi \pi_\Psi(A)y} = \ev{x, \pi_\Psi(A)y}.
\end{align*}
Therefore $\pi_\Psi(A^*) = \pi_\Psi(A)^*$, and $(\hilbH_\Psi, \pi_\Psi)$ is a representation of $\frA$.
Identical arguments and the corresponding notation apply with $\Psi$ replaced by $\Omega$.

The non-degeneracy of $\pi$ implies that $\Psi \in \hilbH_\Psi$, so $\Psi$ is a cyclic vector for $\pi_\Psi$. Furthermore,
\[
\psi(A) = \ev{\Psi, \pi(A)\Psi} = \ev{\iota_\Psi \Psi, \pi(A)\iota_\Psi \Psi} = \ev{\iota_\Psi \Psi, \iota_\Psi \pi_\Psi(A)\Psi} = \ev{\Psi, \pi_\Psi(A)\Psi}.
\]
It follows that $\pi_\Psi$ is unitarily equivalent to the GNS representation of $\psi$. Thus, $\pi_\Psi$ is not unitarily equivalent to $\pi_\Omega$. Since $\psi$ and $\omega$ are pure, their GNS representations are irreducible, and so are $\pi_\Psi$ and $\pi_\Omega$.

Finally, observe that
\[
P_\Omega \iota_\Psi \pi_\Psi(A) = P_\Omega \pi(A)\iota_\Psi = \pi_\Omega(A)P_\Omega \iota_\Psi \quad\text{for all } A \in \frA \ .
\] 
Since $\pi_\Psi$ and $\pi_\Omega$ are not unitarily equivalent, Schur's lemma
implies that $P_\Omega \iota_\Psi= 0$; see \cite[2.2.2]{DixCA}.
It follows that $\hilbH_\Psi$ and $\hilbH_\Omega$ are mutually orthogonal. In particular, $\ev{\Psi, \Omega} = 0$ and
\begin{align*}
\varphi(A) = \ev{\alpha \Psi + \beta \Omega, \pi(A)\qty(\alpha \Psi + \beta \Omega)}  = \abs{\alpha}^2 \psi(A) + \abs{\beta}^2 \omega(A)
\end{align*}
as desired. Since $\varphi$ is a nontrivial convex combination of pure states, $\varphi \notin \sP(\fA)$.
\end{proof}

The following lemma is a rephrasal of Proposition 3.13.4 in \cite{PedersenCAlgAutomorphisms}. It was originally stated for unital $C^*$-algebras in \cite{GlimmKadisonUOCA}.

\begin{lem}\label{lem:sector_is_open}
Let $\frA$ be a $C^*$-algebra and let $\psi, \omega \in \sP(\fA)$. If $\psi$ and $\omega$ are in different superselection sectors, then $\norm{\psi - \omega} = 2$.
\end{lem}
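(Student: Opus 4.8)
The plan is to prove the two inequalities $\norm{\psi - \omega} \le 2$ and $\norm{\psi - \omega} \ge 2$ separately, the first being immediate and the second carrying all the content. For the upper bound, observe that for every $A \in \fA$ with $\norm{A} \le 1$ one has $\abs{(\psi - \omega)(A)} \le \abs{\psi(A)} + \abs{\omega(A)} \le 2$, since $\psi$ and $\omega$ are states of norm one; taking the supremum gives $\norm{\psi - \omega} \le 2$.

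For the lower bound I would realize both states as vector states in a single representation and exploit the disjointness forced by the hypothesis. Concretely, form the direct sum $\pi = \pi_\psi \oplus \pi_\omega$ of the GNS representations on $\hilbH = \hilbH_\psi \oplus \hilbH_\omega$, so that $\psi$ and $\omega$ are the vector states associated to $\Psi = \Omega_\psi \oplus 0$ and $\Omega = 0 \oplus \Omega_\omega$. Because $\psi$ and $\omega$ are pure and lie in different superselection sectors, $\pi_\psi$ and $\pi_\omega$ are irreducible and not unitarily equivalent (Definition \ref{def:superselection_sector}). The same Schur-lemma argument used in the proof of Proposition \ref{prop:inequivalent_superposition} then shows that the only intertwiner between $\pi_\psi$ and $\pi_\omega$ is zero and that each commutant is $\C I$; hence $\pi(\fA)' = \C I_{\hilbH_\psi} \oplus \C I_{\hilbH_\omega}$ and the von Neumann algebra $\fM = \pi(\fA)''$ equals the full direct sum $\fB(\hilbH_\psi) \oplus \fB(\hilbH_\omega)$. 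In particular the central projections $E_\psi = I_{\hilbH_\psi} \oplus 0$ and $E_\omega = 0 \oplus I_{\hilbH_\omega}$ lie in $\fM$.

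With disjointness in hand I would produce elements of $\fA$ separating the two states as sharply as possible. Set $S = E_\psi - E_\omega \in \fM$, a self-adjoint operator with $\norm{S} = 1$, $S\Psi = \Psi$ and $S\Omega = -\Omega$. By the self-adjoint form of the Kaplansky density theorem there is a net of self-adjoint elements $B_\lambda$ in the unit ball of $\pi(\fA)$ with $B_\lambda \to S$ in the strong operator topology; lifting through $\pi$ and invoking Lemma \ref{lem:rep_almost_isometry}, I may choose $A_\lambda \in \fA$ with $\pi(A_\lambda) = B_\lambda$ and $\norm{A_\lambda} < 1 + \varepsilon_\lambda$ for a prescribed $\varepsilon_\lambda \searrow 0$. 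Since $\psi$ and $\omega$ are vector states of $\pi$, strong convergence on the vectors $\Psi,\Omega$ gives
\[
(\psi - \omega)(A_\lambda) = \ev{\Psi, B_\lambda \Psi} - \ev{\Omega, B_\lambda \Omega} \longrightarrow \ev{\Psi, S\Psi} - \ev{\Omega, S\Omega} = 2 .
\]
Normalizing $A_\lambda$ and letting $\varepsilon_\lambda \searrow 0$, this yields $\norm{\psi - \omega} \ge 2$, and combining the two bounds gives $\norm{\psi - \omega} = 2$.

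The conceptual heart of the argument — and the step I expect to require the most care — is the passage to disjointness: verifying that inequivalence of the two irreducible GNS representations forces the central projections $E_\psi, E_\omega$ into $\fM$, so that the separating operator $S$ is genuinely approximable within $\pi(\fA)$. Everything else (the Kaplansky approximation and the norm-controlled lift via Lemma \ref{lem:rep_almost_isometry}) is routine; one could alternatively replace this two-step approximation by a single application of a joint Kadison transitivity theorem for finitely many inequivalent irreducible representations, obtaining an exact $A$ with $\norm{A} \le 1$, $\psi(A) = 1$ and $\omega(A) = -1$.
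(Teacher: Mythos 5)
Your proof is correct and follows essentially the same route as the paper: the paper also forms $\pi_\psi\oplus\pi_\omega$, uses the Schur/commutant computation to show that the operator $U(x,y)=(x,-y)$ — which is exactly your $S=E_\psi-E_\omega$ — lies in $\pi(\fA)''$, and then combines the Kaplansky density theorem with Lemma~\ref{lem:rep_almost_isometry} to produce a norm-controlled $A\in\fA$ with $(\psi-\omega)(A)$ close to $2$. The only differences are cosmetic (you invoke the self-adjoint form of Kaplansky and phrase the key step via central projections and disjointness rather than writing $U$ explicitly), so no changes are needed.
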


\begin{proof}
Let $\hilbH = \hilbH_\psi \oplus \hilbH_\omega$ and $\pi = \pi_{\psi} \oplus \pi_{\omega}$ be the direct sum of the two GNS representations. Note that $\pi$ is non-degenerate since $\pi_{\psi}$ and $\pi_{\omega}$ are non-degenerate. Let $P_\psi :\hilbH \rightarrow \hilbH_\psi$ and $\iota_\psi :\hilbH_\psi \rightarrow \hilbH$ be the usual projections and inclusions, and define $P_\omega$ and $\iota_\omega$ similarly. Observe that 
\[
\pi_{\psi}(A) P_\psi = P_\psi \pi(A) \qqtext{and} \iota_\psi \pi_{\psi}(A) = \pi(A) \iota_\psi
\]
for all $A \in \frA$, and similarly with $\psi$ replaced by $\omega$.

Define $U \in \fB(\hilbH)$ by $U(x, y) = (x, -y)$ for all $(x,y) \in \hilbH$. Our goal is to show that $U \in \pi(\fA)''$. Suppose $T \in \pi(\frA)'$. For $i, j \in \qty{\psi, \omega}$, we compute
\[
\pi_{i}(A) P_i T \iota_j  = P_i \pi(A) T \iota_j  = P_i T \pi(A) \iota_j = P_i T \iota_j \pi_{j}(A).
\]
For $i = j$, this implies that $P_i T \iota_i \in \pi_{i}(\frA)'$. Since $\pi_{i}$ is irreducible, we know $P_i T\iota_i = \lambda_i I$ for some $\lambda_i \in \bbC$. For $i \neq j$, Schur's lemma and the assumption that $\pi_{\psi}$ and $\pi_{\omega}$ are not unitarily equivalent imply, as in \cite[2.2.2]{DixCA}, that $P_i T\iota_j = 0$. Thus, $T(x,y) = (\lambda_\psi x, \lambda_\omega y)$ for all $(x,y) \in \hilbH$, so
\[
UT(x, y) = (\lambda_\psi x, - \lambda_\omega y) = TU(x, y).
\]
This implies that $U \in \pi(\frA)''$, as desired. Note that clearly $U \in \pi(\frA)'$ as well.

Fix $\varepsilon > 0$. By the von Neumann bicommutant theorem, we know the closure of $\pi(\frA)$ in the strong operator topology on $\fB(\hilbH)$ is equal to $\pi(\frA)''$. Since $U \in \pi(\fA)''$ and $\norm{U} = 1$, the Kaplansky density theorem implies that there exists $A \in \frA$ with $\norm{\pi(A)} \leq 1$ and 
\[
\norm{\pi(A)(\Psi,\Omega) - U(\Psi,\Omega)} < \varepsilon,
\]
where $\Psi$ and $\Omega$ are the cyclic unit vectors corresponding to $\psi$ and $\omega$ in the GNS construction. By Lemma \ref{lem:rep_almost_isometry}, there exists $B \in \fA$ such that $\pi(A) = \pi(B)$ and $\norm{B} < 1 + \varepsilon$. Thus,
\begin{align*}
\abs{(\psi - \omega)\qty(B)} &=  \abs{\ev{\Psi, \pi_\psi(B)\Psi} - \ev{\Omega, \pi_\omega(B)\Omega}}\\
&=  \abs{\ev{U(\Psi, \Omega), \pi(B)(\Psi, \Omega)}}\\
&\geq \abs{\ev{U(\Psi, \Omega),U(\Psi, \Omega)}} - \abs{\ev{U(\Psi, \Omega), (\pi(A) - U)(\Psi, \Omega)}}\\
&\geq 2 - \sqrt{2} \varepsilon.
\end{align*}
Thus,
\[
\norm{\psi - \omega} \geq \abs{(\psi - \omega)\qty(\frac{B}{1 + \varepsilon})} \geq \frac{2 - \sqrt{2}\varepsilon}{1 + \varepsilon}.
\]
Since $\varepsilon$ was arbitrary, this implies that $\norm{\psi - \omega} \geq 2$. Since $\norm{\psi - \omega} \leq 2$ by the triangle inequality, the result is proven.
\end{proof}

%The following corollary is immediate from the contrapositive of Lemma \ref{lem:sector_is_open}.

\begin{cor}\label{cor:sector_is_open}
Let $\fA$ be a $C^*$-algebra. Then the superselection sectors of $\fA$ are open in $\sP(\fA)_{\textup{n}}$, i.e., in $\sP(\fA)$ endowed with the norm topology.
\end{cor}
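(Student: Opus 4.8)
The plan is to read off the corollary directly from Lemma \ref{lem:sector_is_open}, which has already done all the substantive work: it pins the norm-distance between pure states in distinct superselection sectors to the extreme value $2$. The only thing that remains is a short point-set argument organizing this distance estimate into a statement about openness.

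First I would recall that the superselection sectors are precisely the equivalence classes of the relation $\sim$ on $\sP(\fA)$, so they partition $\sP(\fA)$. Fix a sector and write it as $\sP_\omega(\fA)$ for some $\omega \in \sP(\fA)$. To show this set is open in $\sP(\fA)_{\textup{n}}$, I would take an arbitrary point $\psi \in \sP_\omega(\fA)$ and exhibit a norm-neighborhood of $\psi$ contained in the sector. The natural candidate is the open ball
\[
B(\psi, 2) = \qty{\varphi \in \sP(\fA) : \norm{\varphi - \psi} < 2}
\]
of radius $2$ about $\psi$ in the canonical metric. The claim is that $B(\psi,2) \subseteq \sP_\omega(\fA)$.

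To verify the claim I would argue by contraposition using the lemma. Suppose $\varphi \in \sP(\fA)$ lies in a superselection sector different from that of $\psi$. Then Lemma \ref{lem:sector_is_open} gives $\norm{\varphi - \psi} = 2$, so $\varphi \notin B(\psi,2)$. Equivalently, every $\varphi \in B(\psi, 2)$ lies in the same sector as $\psi$; since $\psi \sim \omega$, this common sector is exactly $\sP_\omega(\fA)$, whence $B(\psi,2) \subseteq \sP_\omega(\fA)$. As $\psi$ was an arbitrary element of the sector, every point of $\sP_\omega(\fA)$ is interior, and the sector is open in $\sP(\fA)_{\textup{n}}$.

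I do not expect any genuine obstacle here: the corollary is an immediate packaging of Lemma \ref{lem:sector_is_open}, and the entire difficulty of the result lives in establishing that distinct sectors are at maximal distance $2$ (which the lemma accomplishes via the bicommutant and Kaplansky density theorems). The only care needed is the trivial bookkeeping that $\psi \in \sP_\omega(\fA)$ implies $\psi$ and $\omega$ share a sector, so that the ball about $\psi$ lands in $\sP_\omega(\fA)$ rather than merely in some sector. One could equally phrase the conclusion by noting that the complement of a sector is a union of other sectors, each of which stays at distance $2$ from the chosen sector, but the ball argument above is the cleanest route.
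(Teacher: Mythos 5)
Your argument is correct and is exactly the (unwritten) argument the paper intends: the corollary is stated without proof immediately after Lemma \ref{lem:sector_is_open} precisely because the open ball of radius $2$ about any pure state must lie in its sector once distinct sectors are known to be at distance exactly $2$. Your bookkeeping about which sector the ball lands in is fine, and no gap remains.
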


The following theorem is a rephrasal of Proposition 4.6 in \cite{RobertsRoepstorffSBCAQT}. However, we give a new proof for the case when $\psi$ and $\omega$ are in the same superselection sector. 

\begin{thm}\label{thm:transition_probability}
Let $\fA$ be a $C^*$-algebra, let $\psi, \omega \in \sP(\fA)$,  and let $(\hilbH, \pi)$ be a non-degenerate representation with unit vectors $\Psi, \Omega \in \hilbH$ representing $\psi$ and $\omega$, respectively. If $\psi$ and $\omega$ are in different superselection sectors, or if they are in the same superselection sector and $(\hilbH, \pi)$ is irreducible, then
\[
\abs{\ev{\Psi, \Omega}}^2 = 1 - \frac{1}{4}\norm{\psi - \omega}^2.
\]
\end{thm}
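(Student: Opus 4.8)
The plan is to treat the two hypotheses separately: the case of different superselection sectors follows immediately from results already proved, while the case of the same sector with $(\hilbH,\pi)$ irreducible requires the promised new argument, which I would model on the proof of Lemma~\ref{lem:sector_is_open}.

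First I would dispose of the case where $\psi$ and $\omega$ lie in different superselection sectors. By Lemma~\ref{lem:sector_is_open} one has $\norm{\psi-\omega}=2$, so the right-hand side equals $1-\tfrac14\cdot 4 = 0$; and by Proposition~\ref{prop:inequivalent_superposition} one has $\ev{\Psi,\Omega}=0$, so the left-hand side vanishes as well. Thus the identity holds trivially in this case.

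For the case where $\psi$ and $\omega$ lie in the same superselection sector and $(\hilbH,\pi)$ is irreducible, the strategy is to identify $\norm{\psi-\omega}$ with the trace norm of a finite-rank operator and then evaluate that norm by elementary linear algebra. I would write $P_\Psi$ and $P_\Omega$ for the orthogonal projections onto $\C\Psi$ and $\C\Omega$ and set $S = P_\Psi - P_\Omega$, a self-adjoint trace-class operator satisfying $(\psi-\omega)(A) = \Tr(S\,\pi(A))$ for all $A\in\fA$. The key claim is that $\norm{\psi-\omega} = \norm{S}_1$, where $\norm{\cdot}_1$ is the trace norm. The inequality $\norm{\psi-\omega}\le\norm{S}_1$ is immediate from $\abs{\Tr(S\,\pi(A))} \le \norm{S}_1\norm{\pi(A)} \le \norm{S}_1\norm{A}$. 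For the reverse inequality I would argue as in Lemma~\ref{lem:sector_is_open}: since $\pi$ is irreducible, $\pi(\fA)'' = \fB(\hilbH)$, so by the Kaplansky density theorem the unit ball of $\pi(\fA)$ is strongly dense in the unit ball of $\fB(\hilbH)$; because $S$ is finite rank the functional $T\mapsto\Tr(S\,T)$ is strongly continuous on bounded sets, so one may select $A\in\fA$ with $\norm{\pi(A)}\le 1$ for which $\Tr(S\,\pi(A))$ approximates the trace-pairing norm $\sup_{\norm{T}\le1}\abs{\Tr(S\,T)} = \norm{S}_1$. Lemma~\ref{lem:rep_almost_isometry} then lifts $A$ to $B\in\fA$ with $\pi(B)=\pi(A)$ and $\norm{B}<1+\varepsilon$, and testing $\psi-\omega$ against $B/(1+\varepsilon)$ and letting $\varepsilon\searrow0$ yields $\norm{\psi-\omega}\ge\norm{S}_1$.

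It then remains to compute $\norm{S}_1$. The operator $S$ vanishes on $\{\Psi,\Omega\}^\perp$ and preserves the two-dimensional space $\vecspan\{\Psi,\Omega\}$, so it suffices to diagonalize it there. From $\Tr(S)=0$ and $\Tr(S^2) = 2\bigl(1-\abs{\ev{\Psi,\Omega}}^2\bigr)$ — the latter using $\Tr(P_\Psi P_\Omega)=\abs{\ev{\Psi,\Omega}}^2$ — the eigenvalues of $S$ must be $\pm\sqrt{1-\abs{\ev{\Psi,\Omega}}^2}$, whence $\norm{S}_1 = 2\sqrt{1-\abs{\ev{\Psi,\Omega}}^2}$. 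Squaring the resulting identity $\norm{\psi-\omega} = 2\sqrt{1-\abs{\ev{\Psi,\Omega}}^2}$ and rearranging gives the claim. I expect the main obstacle to be the middle step, the identification $\norm{\psi-\omega}=\norm{S}_1$: the delicate point is that $\pi$ need not be isometric, so passing from a near-optimal operator in $\fB(\hilbH)$ back to an element of $\fA$ of controlled norm genuinely requires both the Kaplansky density theorem and Lemma~\ref{lem:rep_almost_isometry}, exactly as in the proof of Lemma~\ref{lem:sector_is_open}. It is also worth noting that irreducibility of $\pi$ is essential here and not merely convenient, since in a reducible representation $\abs{\ev{\Psi,\Omega}}$ is not determined by $\norm{\psi-\omega}$.
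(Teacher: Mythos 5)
Your proof is correct, and for the main case (same sector, irreducible representation) it takes a genuinely different route from the paper's. The paper proves the two inequalities separately: for the lower bound it builds an explicit unitary $U$ rotating $\Psi$ and $\Omega$ inside their span and approximates it via Kaplansky density plus Lemma~\ref{lem:rep_almost_isometry}, and for the upper bound it uses a phase-adjustment trick together with Cauchy--Schwarz applied to $\ev{\Psi'\pm\Omega',\pi(A')(\Psi'\mp\Omega')}$, with no density argument needed. You instead prove the single identity $\norm{\psi-\omega}=\norm{P_\Psi-P_\Omega}_1$ and finish by diagonalizing the rank-two traceless operator $S=P_\Psi-P_\Omega$ via $\Tr(S)=0$ and $\Tr(S^2)=2\bigl(1-\abs{\ev{\Psi,\Omega}}^2\bigr)$; all the steps check out, including the weak-operator continuity of $T\mapsto\Tr(ST)$ on bounded sets that lets Kaplansky density do its work. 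What your approach buys is conceptual economy: both inequalities drop out of one trace-norm duality statement, and the eigenvalue computation replaces the paper's somewhat ad hoc sign-choice argument. What it costs is reliance on the duality $\sup_{\norm{T}\le1}\abs{\Tr(ST)}=\norm{S}_1$, whereas the paper's upper bound is entirely elementary. One remark: your key identification $\norm{\psi-\omega}=\norm{S}_1$ is essentially already available in the paper as Proposition~\ref{prop:pullback-representation-isometric-isomorphism-normal-state-spaces} --- for $\pi$ irreducible one has $\pi(\fA)''=\fB(\hilbH)$, the functional $\psi-\omega$ is the pullback of the normal functional $\Tr(S\,\cdot)$ on $\fB(\hilbH)$, and that proposition (whose proof establishes the isometry on differences of normal states) gives $\norm{\psi-\omega}=\norm{\Tr(S\,\cdot)}_{\fB(\hilbH)^*}=\norm{S}_1$ directly, so you could cite it rather than rerun the Kaplansky argument. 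Your closing observation that irreducibility is essential is also correct: in a reducible representation one can place $\Psi$ and $\Omega$ in orthogonal invariant subspaces carrying equivalent subrepresentations, so $\ev{\Psi,\Omega}=0$ while $\norm{\psi-\omega}<2$.
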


\begin{proof}
If $\psi$ and $\omega$ are in different superselection sectors, then $\norm{\psi - \omega} = 2$ by Lemma \ref{lem:sector_is_open}, so the right hand side is zero, and $\ev{\Psi, \Omega} = 0$ by Proposition \ref{prop:inequivalent_superposition}.

We now consider the case where $\psi$ and $\omega$ are in the same superselection sector and $(\hilbH, \pi)$ is irreducible. If $\Psi$ and $\Omega$ are linearly dependent, then $\psi = \omega$ and $\abs{\ev{\Psi, \Omega}}^2 = 1$, so the identity holds. To prove the case where $\Psi$ and $\Omega$ are linearly independent, we will show inequality in both directions. 

Let $\lambda = 1 - \abs{\ev{\Psi, \Omega}}^2$ (which is strictly positive since $\Psi$ and $\Omega$ are linearly independent unit vectors) and define
\[
e_\Psi = \lambda^{-1/2}\qty(\Omega - \ev{\Psi, \Omega}\Psi) \qqtext{and} e_\Omega = \lambda^{-1/2}\qty(\Psi - \ev{\Omega, \Psi} \Omega).
\]
Then $\qty{\Psi, e_\Psi}$ and $\qty{\Omega, e_\Omega}$ are both orthonormal systems for $\vecspan\qty{\Psi, \Omega}$. We may construct a unitary $U \in \fB(\hilbH)$ such that $U\Psi = e_\Omega$ and $Ue_\Psi = -\Omega$ by extending $U$ linearly on $\vecspan\qty{\Psi, \Omega}$ and having it act as the identity on the orthogonal complement. Of course, $\norm{U} = 1$ since $U$ is unitary. Since $\pi$ is irreducible, we know $U \in \pi(\fA)'' = \fB(\hilbH)$. Therefore, given $\varepsilon > 0$, the von Neumann bicommutant theorem and the Kaplansky density theorem yields $A \in \fA$ such that $\norm{\pi(A)} \leq 1$, 
\begin{align*}
\norm{\pi(A)\Psi - U\Psi} < \varepsilon \qqtext{and} \norm{\pi(A)\Omega - U\Omega} < \varepsilon.
\end{align*}
By Lemma \ref{lem:rep_almost_isometry} we may assume $\norm{A} < 1 + \varepsilon$. Note that
\[
U\Omega = U\qty(\lambda^{1/2}e_\Psi + \ev{\Psi, \Omega}\Psi) = -\lambda^{1/2}\Omega + \ev{\Psi, \Omega}e_\Omega.
\]
Then we can compute
\begin{align*}
\abs{(\psi - \omega)(A)} &= \abs{\ev{\Psi, \pi(A)\Psi} - \ev{\Omega, \pi(A)\Omega}}\\
&\geq \abs{\ev{\Psi, U\Psi} - \ev{\Omega, U\Omega}} - \abs{\ev{\Psi, \pi(A)\Psi - U\Psi}} - \abs{\ev{\Omega, \pi(A)\Omega - U\Omega}}\\
& \geq 2\lambda^{1/2} - 2\varepsilon.
\end{align*}
Thus,
\[
\norm{\psi - \omega} \geq \abs{(\psi - \omega)\qty(\frac{A}{1 + \varepsilon})} \geq \frac{2\lambda^{1/2} - 2\varepsilon}{1 + \varepsilon}.
\]
Since $\varepsilon > 0$ was arbitrary, this implies that $\norm{\psi - \omega} \geq 2\lambda^{1/2}$, which rearranges to
\[
\abs{\ev{\Psi, \Omega}}^2 \geq 1 - \frac{1}{4} \norm{\psi - \omega}^2.
\]

For the reverse inequality, let $A \in \frA$ with $\norm{A}\leq 1$. Given $\alpha, \beta, \gamma \in \bbC$ with $\abs{\alpha} = \abs{\beta} = \abs{\gamma} = 1$,  define $A' = \alpha A$, $\Psi' = \beta \Psi$, and $\Omega' = \gamma \Omega$, we note that $\Psi'$ and $\Omega'$ define the same states as $\Psi$ and $\Omega$, respectively, and $\abs{(\psi - \omega)(A)} = \abs{(\psi - \omega)(A')}$. We choose $\alpha$ so that $(\psi - \omega)(A') \geq 0$ and we choose $\beta$ and/or $\gamma$ such that $\ev{\Psi', \Omega'}$ is real. We compute
\begin{align*}
\ev{\Psi' \pm \Omega', \pi(A')(\Psi' \mp \Omega')} = (\psi  - \omega)(A')  \pm \qty[\ev{\Omega', \pi(A')\Psi'} - \ev{\Psi', \pi(A')\Omega'}].
\end{align*}
For one of the sign choices, the term in square brackets will have nonnegative real part, and therefore the magnitude of the left hand side will be greater than $\abs{(\psi - \omega)(A)}$. For either sign choice, the Cauchy-Schwarz inequality gives
\begin{align*}
\abs{\ev{\Psi' \pm \Omega', \pi(A')\qty(\Psi' \mp \Omega')}} &\leq \norm{\Psi' + \Omega'} \norm{\Psi' - \Omega'} \\
&= 2\sqrt{1 - \qty(\Re \ev{\Psi', \Omega'})^2}\\
&= 2 \sqrt{1 - \abs{\ev{\Psi, \Omega}}^2}.
\end{align*}
Thus, we have
\[
\abs{(\psi - \omega)(A')} = \abs{(\psi - \omega)(A)} \leq 2\sqrt{1 - \abs{\ev{\Psi, \Omega}}^2}.
\]
Since $A$ was arbitrary, we have $\norm{\psi - \omega} \leq 2 \sqrt{1 - \abs{\ev{\Psi, \Omega}}^2}$, which rearranges to the desired inequality.
\end{proof}

\begin{thm}\label{thm:superselection_sector_equivalences}
Let $\fA$ be a $C^*$-algebra and let $\psi, \omega \in \sP(\fA)$ be pure states. The following are equivalent:
\begin{enumerate}[{\rm (i)}]
\item\label{ite:superselection-sector} $\psi$ and $\omega$ are in the same superselection sector,
\item\label{ite:irreducible-representation} there exists a nonzero irreducible representation $(\hilbH, \pi)$ such that $\psi, \omega \in \sP_\pi(\fA)$,
\item\label{ite:quasilocal-perturbation} there exists $B \in \fA$ such that $\psi(B^*B)  = 1$ and $\omega = B \cdot \psi$,
\item\label{ite:normal-states} for any non-degenerate representation $(\hilbH, \pi)$, we have $\psi \in \sP_\pi(\fA)$ if and only if $\omega \in \sP_\pi(\fA)$,
\item\label{ite:path-component} $\psi$ and $\omega$ are in the same path component of
  $\sP(\fA)_{\textup{n}}$,
\item\label{ite:superposable} $\psi$ and $\omega$ are  coherently superposable.
\end{enumerate}
If $\fA$ is unital, then the element $B \in \fA$ in {\rm (\ref{ite:quasilocal-perturbation})} may be chosen to be unitary. 
\end{thm}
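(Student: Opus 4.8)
The plan is to take condition (ii) as a central hub, route every other condition through it, and only at the end close the topological loop involving (v). I would use freely the identification $\sP_{\pi_\omega}(\fA) = \sP_\omega(\fA)$ from the remark after Definition \ref{def:superselection_sector}, together with the standard fact that a cyclic representation is determined up to unitary equivalence by the state of its cyclic vector. Throughout I may assume $\psi \neq \omega$, since for $\psi = \omega$ every statement holds trivially (reading (vi) with the convention that a state superposes with itself).

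For (i) $\Leftrightarrow$ (ii): if $\psi \sim \omega$ then $\psi \in \sP_\omega(\fA) = \sP_{\pi_\omega}(\fA)$ while $\omega \in \sP_{\pi_\omega}(\fA)$ trivially, so (ii) holds with $\pi = \pi_\omega$; conversely, if $\psi,\omega \in \sP_\pi(\fA)$ with $\pi$ irreducible, then both are vector states of $\pi$ by Proposition \ref{prop:pure_folia_vector_state}(\ref{ite:vector-state}), and the subrepresentation generated by a cyclic unit vector realizing a state is unitarily equivalent to that state's GNS representation, whence $\pi_\psi \cong \pi \cong \pi_\omega$. The equivalence (i) $\Leftrightarrow$ (iv) then follows once I observe that, for a pure state $\rho$, membership $\rho \in \sP_\pi(\fA)$ is equivalent to $\pi_\rho$ being unitarily equivalent to a cyclic subrepresentation of $\pi$, and hence depends only on the class $[\pi_\rho]$; taking $\pi = \pi_\omega$ recovers (i) from (iv).

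For (ii) $\Leftrightarrow$ (iii): writing $\psi = \omega_\Psi$ and $\omega = \omega_\Omega$ as vector states of an irreducible $\pi$, the classical Kadison transitivity theorem supplies $B \in \fA$ with $\pi(B)\Psi = \Omega$, so that $\psi(B^*B) = \norm{\Omega}^2 = 1$ and $(B\cdot\psi)(A) = \ev{\pi(B)\Psi, \pi(A)\pi(B)\Psi} = \omega(A)$, giving (iii). Conversely, given (iii) I realize $\psi$ in its own GNS representation $(\hilbH_\psi,\pi_\psi,\Omega_\psi)$ and set $\Phi = \pi_\psi(B)\Omega_\psi$; then $\norm{\Phi}^2 = \psi(B^*B) = 1$ and $\omega_\Phi = B\cdot\psi = \omega$, exhibiting $\omega$ as a vector state of the irreducible $\pi_\psi$, which is (ii). For the unital refinement I instead invoke the unitary form of Kadison transitivity: since $\Psi,\Omega$ are unit vectors there is a unitary operator on $\hilbH$ carrying $\Psi$ to $\Omega$, hence a unitary $U \in \Unitary(\fA)$ with $\pi(U)\Psi = \Omega$, and then $\psi(U^*U) = 1$ and $\omega = U\cdot\psi$.

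The last two conditions are of a different flavor. For (ii) $\Rightarrow$ (vi), the normalized superpositions $\Phi = (\alpha\Psi + \beta\Omega)/\norm{\alpha\Psi+\beta\Omega}$ are unit vectors in the irreducible representation $\pi$, so their vector states are pure by Proposition \ref{prop:pure_folia_vector_state}(\ref{ite:irreducible-vector-state-pure}), making $\psi$ and $\omega$ coherently superposable; and (vi) $\Rightarrow$ (i) is exactly the contrapositive of Proposition \ref{prop:inequivalent_superposition}. I expect the topological equivalence (v) to be the main obstacle, since it must be squeezed between two facts pulling in opposite directions. On one side, each sector $\sP_\omega(\fA) = \sP_\pi(\fA)$ is path-connected because the unit sphere $\bbS\hilbH$ is path-connected and the map $\bbS\hilbH \to \sP_\pi(\fA)_{\textup{n}}$ of Lemma \ref{lem:vector_state} is continuous, giving (ii) $\Rightarrow$ (v). On the other side, Corollary \ref{cor:sector_is_open} together with Lemma \ref{lem:sector_is_open} shows the sectors partition $\sP(\fA)_{\textup{n}}$ into norm-open sets, each of which is then also closed (being the complement of the union of the remaining open sectors), so every path component lies in a single sector, giving (v) $\Rightarrow$ (i). Assembling these implications closes the circle of equivalences.
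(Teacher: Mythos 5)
Your proof is correct and uses essentially the same ingredients as the paper's: Kadison transitivity for (ii)$\Leftrightarrow$(iii), purity of vector states in irreducible representations for (vi), Proposition \ref{prop:inequivalent_superposition} for (vi)$\Rightarrow$(i), and the openness of sectors (Corollary \ref{cor:sector_is_open}) for (v)$\Rightarrow$(i). The only differences are organizational — you route everything through (ii) rather than following the paper's cycle, you prove (i)$\Rightarrow$(iv) directly from the unitary-equivalence characterization of $\sP_\pi(\fA)$ instead of deriving (iii)$\Rightarrow$(iv) from the folium axiom (F3), and you close (v)$\Rightarrow$(i) by the clopen-partition argument rather than the norm-closedness of $\sS_{\pi_\psi}(\fA)$ — none of which changes the substance of the argument.
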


The equivalence (\ref{ite:superselection-sector}) $\Leftrightarrow$ (\ref{ite:irreducible-representation}) $\Leftrightarrow$ (\ref{ite:quasilocal-perturbation}) $\Leftrightarrow$ (\ref{ite:path-component}) was stated by Roberts--Roepstorff in the case where $\fA$ is unital and the element $B \in \fA$ in (\ref{ite:quasilocal-perturbation}) is unitary \cite[Prop.~4.2 \& Thm.~4.5]{RobertsRoepstorffSBCAQT}. The equivalence (\ref{ite:superselection-sector}) $\Leftrightarrow$ (\ref{ite:superposable}) is implied by \cite[Thm.~6.1]{ArakiMTQF}, which gives a more general equivalence for states which are not necessarily pure. The equivalence of (\ref{ite:normal-states}) with the others is easily proved. We clarify that the result holds for non-unital $C^*$-algebras as well.

\begin{proof}
Denote by $(\hilbH_\psi, \pi_\psi,\Omega_\psi)$ and $(\hilbH_\omega, \pi_\omega,\Omega_\omega)$ the GNS representations of $\psi$ and $\omega$, respectively. 
Now verify the following.

\eqref{ite:superselection-sector} $\Rightarrow$ \eqref{ite:irreducible-representation}. Let $U:\hilbH_\psi \rightarrow \hilbH_\omega$ be a unitary 
intertwining the representations $(\hilbH_\psi, \pi_\psi)$ and $(\hilbH_\omega, \pi_\omega)$. Then 
\begin{align*}
\ev{U\Omega_\psi, \pi_\omega(A)U\Omega_\psi} = \ev{U\Omega_\psi, U\pi_\psi(A)\Omega_\psi} = \ev{\Omega_\psi, \pi_\psi(A)\Omega_\psi} = \psi(A)  
\end{align*}
for all $A \in \fA$, which implies that $\psi \in \sP_{\pi_\omega}(\fA)$ and, of course, $\omega \in \sP_{\pi_{\omega}}(\fA)$ as well. 

\eqref{ite:irreducible-representation} $\Rightarrow$ \eqref{ite:quasilocal-perturbation}. Let $\Psi, \Omega \in \hilbH$ be unit vectors representing $\psi$ and $\omega$. By the Kadison transitivity theorem, there exists $B \in \fA$ such that $\pi(B)\Psi = \Omega$. Then 
\[
\psi(B^*B) = \ev{\Psi, \pi(B^*B)\Psi} = \ev{\Omega, \Omega} = 1
\]
and
\[
(B \cdot \psi)(A) = \psi(B^*AB) = \ev{\Omega, \pi(A)\Omega} = \omega(A),
\]
as desired. Note that if $\fA$ is unital, then Kadison's transitivity theorem allows $B$ to be chosen unitary. 

\eqref{ite:quasilocal-perturbation} $\Rightarrow$ \eqref{ite:superselection-sector}. In the GNS representation of $\psi$ one has
\[
  \omega(A) = \ev{\pi_\psi(B)\Omega_\psi, \pi_\psi(A)\pi_\psi(B)\Omega_\psi} \quad\text{for all } A \in \fA \ .
\] 
Since $\pi_\psi(B)\Omega_\psi$ is a cyclic unit vector, uniqueness of the GNS representation up to unitary equivalence implies that $\omega$ and $\psi$ are in the same superselection sector.

\eqref{ite:quasilocal-perturbation} $\Rightarrow$ \eqref{ite:normal-states}. Given any non-degenerate representation $(\hilbH, \pi)$, $\psi \in \sP_\pi(\fA)$ implies $\omega \in \sP_\pi(\fA)$ by (F\ref{ite:folium-invariance}). Since \eqref{ite:quasilocal-perturbation} $\Rightarrow$ \eqref{ite:superselection-sector} and \eqref{ite:superselection-sector} is symmetric in $\psi$ and $\omega$, we also have \eqref{ite:quasilocal-perturbation} with $\psi$ and $\omega$ switched, hence $\omega \in \sP_\pi(\fA)$ implies $\psi \in \sP_\pi(\fA)$.

\eqref{ite:normal-states} $\Rightarrow$ \eqref{ite:path-component}. Let $(\hilbH_\psi, \pi_\psi, \Psi)$ be the GNS representation of $\psi$. By \eqref{ite:normal-states}, there exists a unit vector $\Omega \in \hilbH_\psi$ representing $\omega$. Since the unit sphere of $\hilbH_\psi$ is path connected, there exists a continuous path in the unit sphere from $\Psi$ to $\Omega$, hence there exists a continuous path in $\sP(\fA)_{\textup{n}}$ from $\psi$ to $\omega$ by Lemma \ref{lem:vector_state}.

\eqref{ite:path-component} $\Rightarrow$ \eqref{ite:superselection-sector}. By Corollary \ref{cor:sector_is_open}, the superselection sector containing $\psi$ is open in $\sP(\fA)$ with respect to the norm topology. If $(\hilbH_\psi, \pi_\psi)$ is the GNS representation of $\psi$, then \eqref{ite:irreducible-representation}  $\Rightarrow$ \eqref{ite:superselection-sector} and \eqref{ite:superselection-sector} 
$\Rightarrow$ \eqref{ite:normal-states} implies that $\sP_{\pi_\psi}(\fA)$ is the superselection sector containing $\psi$. But $\sP_{\pi_\psi}(\fA) = \sS_{\pi_\psi}(\fA) \cap \sP(\fA)$ is norm closed in $\sP(\fA)$ since $\sS_{\pi_\psi}(\fA)$ is norm closed in $\sS(\fA)$. Thus, the superselection sector containing $\psi$ is both an open and closed subset of $\sP(\fA)$ in the norm
topology, so it contains the path component of $\sP(\fA)_{\textup{n}}$ containing $\psi$.

\eqref{ite:superselection-sector} $\Rightarrow$ \eqref{ite:superposable}. 
  Assume that $\psi$ and $\omega$ are in the same superselection sector. Let $(\hilbH,\pi)$
  be an irreducible representation which is unitarily equivalent to $\pi_\omega$ and hence to $\pi_\psi$
  as well. Then there exist unit vectors $\Psi\in\hilbH$ and $\Omega \in \hilbH$ which induce
  the states $\psi$ and $\omega$, respectively. Let $\alpha,\beta \in \C^*$ and put
  $\Phi = (\alpha\Psi + \beta\Omega)\widehat{\phantom{\omega}}$. The vector state $\varphi$
  corresponding to the unit vector $\Phi$ is pure by
  Proposition \ref{prop:pure_folia_vector_state} (\ref{ite:irreducible-vector-state-pure}),
  hence $\psi$ and $\omega$ are coherently superposable. 
  
\eqref{ite:superposable} $\Rightarrow$ \eqref{ite:superselection-sector}. This is contained in Proposition \ref{prop:inequivalent_superposition}.
\end{proof}

\begin{rem}\label{rem:causal_structures}
  The equivalence \eqref{ite:superselection-sector} $\Rightarrow$ \eqref{ite:superposable} ties together the mathematical notion of superselection sector defined in Definition \ref{def:superselection_sector} and the physical concept of superposition.
  However, when $\fA$ is the algebra of observables of a quantum mechanical  system or a quantum field theory, it may not be the case that all the
  superselection sectors in the sense of Definition \ref{def:superselection_sector} above are physically relevant. In their ground breaking work \cite{DHRI,DHRII},
  Doplicher--Haag--Roberts (DHR) introduced a general theory of superselection sectors for algebraic
  quantum field theory. More precisely, they defined   superselection sectors for a local net of $C^*$-algebras over Minkowski space
  fullfilling the axioms by Haag--Kastler \cite{HaagKastlerAAQFT,HaagLocalQP}. Adapted to more
  general situations in algebraic quantum mechanics and algebraic quantum field theory, their approach can be
  described as follows.    
  Assume to be given a quasi-local algebra $\fA$. This means that $\fA$ coincides
  with the the colimit % $\lim_{\Lambda \in \net}\fA_\Lambda$
  %of some \emph{causal net of $C^*$-algebras}  $(\fA_{\Lambda})_{\Lambda \in \net}$.
  of a \emph{causal net of algebras} $(\fA_{\Lambda})_{\Lambda \in \net}$ 
  which is  a particular kind of inductive system of $C^*$-algebras indexed over 
  a so-called \emph{causal index set} $\net$. 
  Let us explain in more detail what one understands by these notions.
  Assume to be given a topological space $M$.
  In most applications $M$ is either a lorentzian manifold which models the underlying
  spacetime or a discrete lattice like $\Z^d$.   
  A \emph{causal complement} then is an operation ${}^\perp :\topology(M) \to \topology (M)$
  on the topology of $M$ such that the
  following conditions hold true, cf.~\cite{WollenbergCausalNets,KeylCausal,nLabCausalComplement}:
  \begin{enumerate}[{\rm (i)}] 
  \item\label{ite:C1}
    $O \subset O^{\perp\perp}$ for all open $O \subset M$.
  \item
    $ O \cap O^\perp = \emptyset$ for all open $O \subset M$.  
  \item\label{ite:C3}
    $\big( O_1 \cup O_2 \big)^\perp =  O_1^\perp \cap O_2^\perp$
    for all open $O_1,O_2\subset M$.     
  \end{enumerate}
  Sets $O\in \topology (M)$ with the property that $O= (O^\perp )^\perp$ are called
  \emph{causally complete}.
  In addition to axioms (\ref{ite:C1}) to (\ref{ite:C3}) it is assumed that there are enough
  causally complete subsets which means that we require: 
  \begin{enumerate}[{\rm (i)}]
  \setcounter{enumi}{3}
  \item\label{ite:C4}
    There exists a countable family $\net$ of causally complete non-empty open subsets
    $\Lambda \subset M$ which is a basis of the topology, upward directed by inclusion and such that
    $\Lambda^\perp \neq \emptyset$ for all $\Lambda\in \net$. 
  \end{enumerate}
  The last property in particular guarantees that $M = \bigcup_{\Lambda \in \net} \Lambda$.
  A family $\net$ which satisfies the condition in axiom (\ref{ite:C4}) is called
  a \emph{causal index set}. 
  By a \emph{causal net of algebras} one now understands a strict inductive system of $C^*$-algebras
  $(\fA_{\Lambda})_{\Lambda \in \net}$ where the index set $\net$ is a causal index set and where the
  commutation relation $[\fA_{\Lambda} , \fA_{\Omega}] =0$ holds for all causally disjoint
  $\Lambda,\Omega \in \net$ that is for all $\Lambda,\Omega$ in $\net$ such
  $\Omega \subset \Lambda^\perp$ or, equivalently,  $\Lambda \subset \Omega^\perp$.
  Note that the strictness condition implies that $\fA_{\Lambda}$ and $\fA_{\Omega}$ are subalgebras of
  $\fA_{\Gamma}$ where $\Gamma \in \net$ has been chosen so that $\Omega \cup \Lambda \subset \Gamma$.
  Therefore, the commutation relation for $\fA_{\Lambda}$ and $\fA_{\Omega}$ makes sense, indeed. 
  
  Since a causal index set comprises a basis for the topology, one can define for every open
  $O\subset M$ the $C^*$-algebra $\fA_O$ as the $C^*$-algebra colimit of the inductive system 
   $(\fA_{\Lambda})_{\Lambda \in \net, \Lambda \subset O}$.
  The \emph{quasi-local algebra} $\fA$ coincides by definition with the $C^*$-algebra $\fA_M$.
  
  In physically interesting examples, $\net$ might be the  set of finite subsets of a countable
  discrete lattice such as $\Z^ d$ with the set-theoretic complement as causal complement or the set of open double cones in Minkowski space together with
  the relativistic  causal complement as in the work of DHR.
  Usually, the system comes equipped with a symmetry group $G$ which acts simultaneously on the
  space $M$ and on the inductive system $(\fA_\Lambda)_{\Lambda \in \net}$ in a compatible fashion and so that the
  causal complement is preserved. This means that
  $(g O)^\perp = g O^\perp $ for open $O \subset M$ and 
  $g\fA_\Lambda \subset \fA_{g\Lambda}$ for $\Lambda \in \net$. In the original DHR
  setup the symmetry group is the Poincar\'e group with its natural action on Minkowski space.
  In addition to the causal index set $\net$ one sometimes assumes to be given a second family 
  $\locreg \subset \topology (M)$ of \emph{localizable regions}. The elements of $\locreg$ and their
  causal complements are assumed to be non-empty, and the union of all $\Lambda \in \locreg$ is
  assumed to coincide with the space $M$.
  % By the basis property of the causal index set,
  % the localizable regions are unions of elements of the causal index set $\net$. 
  The set of localizable regions may coincide with the causal index set $\net$ as for example in the original work by DHR. 
  The final ingredient in the DHR analysis is a distinguished reference state $\omega$ on $\fA$. That
  state is assumed to be pure and invariant under the action of the symmetry group if there is one.
  Usually, the reference state is a vacuum state or  a ground state of some Hamiltonian of
  the system. The physically allowable sectors are now defined as those equivalence classes of pure
  states  $\varrho$ on $\fA$
  which are invariant under the $G$-action  and satisfy the following condition:
  \begin{enumerate}[{\rm (DHR)}]
  \item
    There exists a $\Lambda \in \locreg$ such that over the causal complement $\Lambda^\perp$
    the restricted GNS representations $\pi_\omega\big|_{\fA_{\Lambda^\perp}}$ and
    $\pi_\varrho\big|_{\fA_{\Lambda^\perp}}$ are  unitarily equivalent.
  \end{enumerate}
  In the situation where the quasi-local algebra is defined by a net of
  algebras over Minkowski space fulfilling the Haag--Kastler axioms and the localizable regions are the
  double cones in Minkowski space, the superselection sectors which satisfy the DHR condition are called
  the DHR sectors. 
  In the $C^*$-algebraic formulation of quantum spin systems the
  described approach to superselection sectors has been advocated in the work by Naaijkens,
  Cha, and Nachtergaele, see
  \cite{NaaijkensLEKTCP,NaaijkensKQDMLQPPV,ChaNaaijkensNachtergaeleSCIQSS}. In a quantum spin system over a countably
  infinite lattice, the localizable regions are infinite cones which in general are not finite subsets of the
  lattice anymore. This means that  in this case  the space of localizable regions differs
  from the causal index set defining the quasi-local algebra. A similar phenomenon appears also in the 
  approach by Buchholz--Fredenhagen \cite{BuchholzFredenhagenLocalityParticleStates}
  to superselection sectors describing relativistic massive particles. There, the localizable regions are
  given by infinite cones as well and the elements of the causal index sets are relatively compact double cones
  in Minkowski space. 
\end{rem}

The last result in this subsection is of a more categorical nature about the naturality of the
GNS construction and the functoriality of the sector space. 

\begin{prop}\label{prop:functorial-GNS}
Let $\alpha: \fB \to \fC$ be a $*$-isomorphism between $C^*$-algebras $\fB$ and $\fC$.
Then the following holds true.
\begin{enumerate}[{\rm (i)}] 
\item\label{ite:naturality-GNS}
The GNS construction is natural in the sense that for every
pure state $\omega$ on $\fB$ with corresponding GNS  representation
$(\hilbH_\omega,\pi_\omega,\Omega_\omega)$ and Gelfand ideal
$\cN_\omega = \{ B\in \fB : \omega (B^* B) = 0 \}$ the state
$\psi := \alpha_* \omega $ is a pure state on $\fC$ with Gelfand ideal given by
$\cN_\psi = \alpha (\cN_\omega)$. Moreover,
if $(\hilbK_\psi,\rho_\psi,\Omega_\psi  )$ is a cyclic representation of $\fC$
so that $\Omega_\psi $ represents the state $\psi$, 
then there is a commutative diagram
\[
  \begin{tikzcd}
  \fB \arrow[r,"\alpha"] \arrow[d,"\xi_\omega"]& \fC \arrow[d,"\xi_\psi"]\\
  \hilbH_\omega \arrow[r,"U_\alpha"]& \hilbK_\psi 
 \end{tikzcd}
\]
where $\xi_\omega :\fB \to \hilbH_\omega$ and $\xi_\psi :\fC \to \hilbK_\psi$
are the maps $B \mapsto \pi_\omega(B) \Omega_\omega$ and
$C \mapsto \varrho_\psi(C) \Omega_\psi$, respectively. 
The bottom arrow $U_\alpha:\hilbH_\omega \rightarrow \hilbK_\psi$ is
unitary and uniquely determined by the equality
$U_\alpha (\Omega_\omega) = \Omega_\psi$.
Finally, the maps $\xi_\omega$ and $\xi_\psi$ are both surjective, and their kernels coincide with
$\fN_\omega$ and $\fN_\psi$, respectively. 
\item\label{ite:functoriality-sector-spaces}
  There is a homeomorphism $\overline{\alpha} :  {\ssec}(\fB) \to {\ssec} (\fC)$
  between spaces of superselection sectors endowed with the quotient
  weak$^*$ topologies which is uniquely determined by the
  requirement that for every pure state $\omega$ on $\fB$ its sector
  $S_\omega$ is mapped under $\overline{\alpha}$ to the sector $S_\psi$ of $\fC$
  containing $\psi = \alpha_*(\omega)$. 
\end{enumerate}
\end{prop}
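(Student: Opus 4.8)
The plan is to prove the two parts in turn, treating the construction of the unitary $U_\alpha$ in part~(\ref{ite:naturality-GNS}) as the technical heart and the identification $\pi_\psi \cong \pi_\omega \circ \alpha^{-1}$ as the bridge to part~(\ref{ite:functoriality-sector-spaces}). For the opening assertions of (\ref{ite:naturality-GNS}), I would first observe that $\alpha_*\colon \sS(\fB) \to \sS(\fC)$, $\omega \mapsto \omega \circ \alpha^{-1}$, is an affine bijection that is a homeomorphism for the weak$^*$ topologies, since $\omega \mapsto \omega(\alpha^{-1}(C))$ is weak$^*$ continuous for each fixed $C$, and symmetrically for $\alpha^{-1}$. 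Being affine and bijective, it carries extreme points to extreme points, so $\psi = \alpha_*\omega$ is pure. The identity $\cN_\psi = \alpha(\cN_\omega)$ is then immediate: because $\alpha^{-1}$ is a $*$-homomorphism, $\psi(C^*C) = \omega(\alpha^{-1}(C)^*\alpha^{-1}(C))$, so $\psi(C^*C) = 0$ exactly when $\alpha^{-1}(C) \in \cN_\omega$. The final claims of (\ref{ite:naturality-GNS})---that $\xi_\omega$ and $\xi_\psi$ are surjective with kernels $\fN_\omega$ and $\fN_\psi$---follow from $\|\xi_\omega(B)\|^2 = \omega(B^*B)$ for the kernel computation, together with the fact recorded after the Kadison transitivity theorem that for a pure state the quotient $\fB/\fN_\omega$ is already complete, so $\pi_\omega(\fB)\Omega_\omega$ fills $\hilbH_\omega$ (and likewise for $\psi$).

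For $U_\alpha$ I would define it on the subspace $\pi_\omega(\fB)\Omega_\omega = \Im \xi_\omega$, which is dense in $\hilbH_\omega$, by $U_\alpha(\xi_\omega(B)) = \xi_\psi(\alpha(B))$. The crucial point is the isometry identity
\[
\| \xi_\psi(\alpha(B)) \|^2 = \psi\big(\alpha(B)^*\alpha(B)\big) = \psi\big(\alpha(B^*B)\big) = \omega(B^*B) = \| \xi_\omega(B) \|^2 ,
\]
which simultaneously shows that the assignment is well defined (independent of the representative $B$) and isometric, hence extends to an isometry $\hilbH_\omega \to \hilbK_\psi$. Surjectivity follows because $\alpha$ is onto and $\Omega_\psi$ is cyclic, so the image contains the dense set $\rho_\psi(\fC)\Omega_\psi$; thus $U_\alpha$ is unitary. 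Evaluating on $\xi_\omega(B')$ gives $U_\alpha \pi_\omega(B) = \rho_\psi(\alpha(B)) U_\alpha$, so $U_\alpha$ intertwines $\pi_\omega$ with $\rho_\psi \circ \alpha$ and the diagram commutes. To get $U_\alpha(\Omega_\omega) = \Omega_\psi$ I would use $\Omega_\omega = \xi_\omega(I)$ in the unital case, and in the non-unital case pass to an approximate identity $(E_\lambda)$, using that $(\alpha(E_\lambda))$ is an approximate identity in $\fC$ so that $\xi_\psi(\alpha(E_\lambda)) = \rho_\psi(\alpha(E_\lambda))\Omega_\psi \to \Omega_\psi$. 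For uniqueness, the commuting diagram already pins $U_\alpha$ down on the dense image of $\xi_\omega$; equivalently, since $\omega$ is pure $\pi_\omega$ is irreducible, so an intertwiner is unique up to a phase and the condition $U_\alpha(\Omega_\omega) = \Omega_\psi$ fixes that phase.

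For part~(\ref{ite:functoriality-sector-spaces}) the bridge is that $(\hilbH_\omega, \pi_\omega \circ \alpha^{-1}, \Omega_\omega)$ is a cyclic representation of $\fC$ whose cyclic vector induces $\psi$, so by uniqueness of the GNS representation $\pi_\psi \cong \pi_\omega \circ \alpha^{-1}$. Consequently, for pure states $\omega, \omega'$ on $\fB$, a unitary intertwines $\pi_\omega$ and $\pi_{\omega'}$ if and only if it intertwines $\pi_\omega \circ \alpha^{-1}$ and $\pi_{\omega'} \circ \alpha^{-1}$, i.e.\ $\omega \sim \omega'$ if and only if $\alpha_*\omega \sim \alpha_*\omega'$. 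Hence the weak$^*$ homeomorphism $\alpha_*\colon \sP(\fB)_{\textup{w}^*} \to \sP(\fC)_{\textup{w}^*}$ carries $\sim$-classes bijectively to $\sim$-classes, and $q_\fC \circ \alpha_* = \overline{\alpha} \circ q_\fB$ defines a bijection $\overline{\alpha}$ on sector spaces with $\overline{\alpha}(S_\omega) = S_\psi$; this requirement determines $\overline{\alpha}$ uniquely because the $S_\omega$ exhaust ${\ssec}(\fB)$. Continuity of $\overline{\alpha}$ and of $\overline{\alpha}^{-1} = \overline{\alpha^{-1}}$ then follows from the universal property of the quotient topology applied to the continuous maps $q_\fC \circ \alpha_*$ and $q_\fB \circ (\alpha_*)^{-1}$, so $\overline{\alpha}$ is a homeomorphism.

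I expect the main obstacle to be the careful handling of $U_\alpha(\Omega_\omega) = \Omega_\psi$ in the non-unital case, where $\Omega_\omega$ is only the limit of $\xi_\omega(E_\lambda)$ along an approximate identity: one must verify that $\alpha$ sends approximate identities to approximate identities and that $\rho_\psi(\alpha(E_\lambda))\Omega_\psi$ converges to $\Omega_\psi$. The remaining ingredients---the isometry identity, the descent to quotients, and the weak$^*$ bookkeeping---are routine once the identification $\pi_\psi \cong \pi_\omega \circ \alpha^{-1}$ is in hand.
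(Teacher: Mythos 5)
Your proposal is correct and follows essentially the same route as the paper: the paper obtains $U_\alpha$ by observing that $(\hilbK_\psi,\rho_\psi\circ\alpha,\Omega_\psi)$ is a cyclic representation of $\fB$ representing $\omega$ and invoking uniqueness of the GNS representation, which is exactly the isometry identity $\psi(\alpha(B)^*\alpha(B))=\omega(B^*B)$ you unpack explicitly, and it likewise deduces surjectivity and the kernels of $\xi_\omega$, $\xi_\psi$ from completeness of $\fB/\fN_\omega$ and unitarity of $U_\alpha$. For part (\ref{ite:functoriality-sector-spaces}) the paper phrases the argument via $\alpha_*$ preserving path components of $\sP(\cdot)_{\textup{n}}$ rather than your direct check that $\pi_{\alpha_*\omega}\cong\pi_\omega\circ\alpha^{-1}$ preserves the equivalence relation, but by Theorem \ref{thm:superselection_sector_equivalences} these are the same argument.
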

  \begin{proof}
    First we show (\ref{ite:naturality-GNS}). Since $\alpha$ is a $*$-isomorphism,
    it is clear that the state $\psi = \alpha_* \omega $ is pure whenever $\omega$ is
    and that its  Gelfand ideal $\cN_\psi$ coincides with $\alpha (\cN_\omega)$.
    Now observe that the triple $(\hilbK_\psi,\rho_\psi\circ \alpha,\Omega_\psi  )$
    is a cyclic representation of $\cB$ such that for all $B\in \cB$
    \[
      \ev{\Omega_\psi,  \rho_\psi\circ \alpha (B)\Omega_\psi} =
      \psi (\alpha (B)) = \omega (B) \ .
    \]
    By uniqueness of GNS-representations there exists a unique unitary operator
    $U_\alpha:\hilbH_\omega \to \hilbK_\psi$ making the above diagram commute
    and such that $U_\alpha (\Omega_\omega) = \Omega_\psi$.
    
    As already observed before, the Hilbert space $\hilbH_\omega$ of the GNS representation
    of $\omega$ coincides with $\fA/\fN_\omega$, hence the map
    $\xi_\omega: \fA \to  \hilbH_\omega$, $A \mapsto \pi_\omega (A)\Omega_\omega = A + \fN_\omega$
    is surjective and has kernel $\fN_\omega$. By unitarity of $U_\alpha$,
    the map  $\xi_\psi$ has to be surjective as well and its kernel is given by
    $\alpha (\fN_\omega) =\fN_\psi$. 
   
    % Furthermore, if $(E_\lambda)_{\lambda \in \Lambda}$ is an approximate unit for
    % $\fB$, then
    % \begin{align*}
    %   U_\alpha \Omega_\beta = U_\alpha\qty(\lim_\lambda (E_\lambda + \cN_\beta)) =
    %   \lim_\lambda U_\alpha(E_\lambda + \cN_\beta) =
    %   \lim_\lambda \qty(U_\alpha(E_\lambda) + \cN_\gamma).
    % \end{align*}
    % Since $(U_\alpha(E_\lambda))_{\lambda \in \Lambda}$ is an approximate unit for
    % $\fC$, this shows that $U_\alpha\Omega_\beta$ is the cyclic vector in the GNS
    % representation of $\gamma$.

    To verify (\ref{ite:functoriality-sector-spaces}), observe first that the
    $*$-isomorphism  $\alpha$ induces a homeomorphism
    $\alpha_* : \sP (\fB)\to\sP(\fC)$
    between pure state spaces endowed both with either the norm or the
    weak$^*$ topologies. In particular $\alpha_*$ therefore maps path components
    of $\sP (\fB)_{\textup{n}}$ to those of  $\sP (\fC)_{\textup{n}}$ and vice versa.
    The claim now follows.   
  \end{proof}

\subsection{Connectedness properties}
\label{subsec:connectedness}

\begin{prop}\label{prop:decomposition-pure-state-space-superselection-sectors}
  Given a  non-zero $C^*$-algebra $\fA$, the  space $\sP(\frA)_{\textup{n}}$ of pure states endowed with the norm topology is locally path connected.
  The superselection sectors are the path components of $\sP(\frA)_{\textup{n}}$ and coincide with its connected components.
\end{prop}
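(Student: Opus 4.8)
The statement splits into three assertions: local path connectedness of $\sP(\fA)_{\textup{n}}$, the identification of superselection sectors with path components, and the coincidence of path components with connected components. The plan is to obtain the last two almost for free and to concentrate the effort on local path connectedness. Indeed, the equivalence \textup{(i)}~$\Leftrightarrow$~\textup{(v)} of Theorem~\ref{thm:superselection_sector_equivalences} says exactly that two pure states lie in a common superselection sector if and only if they lie in a common path component of $\sP(\fA)_{\textup{n}}$; as both notions partition $\sP(\fA)$ into equivalence classes, the superselection sectors are precisely the path components. Granting local path connectedness, each path component is open and hence, being the complement of the union of the remaining path components, also closed; a clopen path-connected (thus connected) set equals the connected component containing it, so path components and connected components coincide.

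For local path connectedness I would produce, around an arbitrary $\omega \in \sP(\fA)$, a neighborhood basis of path-connected open sets. Let $(\hilbH_\omega, \pi_\omega, \Omega_\omega)$ be the GNS representation of $\omega$, which is irreducible since $\omega$ is pure, and let $q \colon \bbS\hilbH_\omega \to \sP_{\pi_\omega}(\fA)$ be the continuous surjection sending a unit vector to its vector state; continuity is Lemma~\ref{lem:vector_state}, while surjectivity and the fact that the target is the whole sector of $\omega$ come from Proposition~\ref{prop:pure_folia_vector_state}. For $0 < r < 2$ the open ball $B_r(\omega) = \{\psi \in \sP(\fA) : \norm{\psi - \omega} < r\}$ lies entirely in the sector of $\omega$, since any pure state in a different sector is at distance exactly $2$ by Lemma~\ref{lem:sector_is_open}. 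The crucial input is Theorem~\ref{thm:transition_probability}, which for $\psi = q(\Psi)$ gives $\norm{\psi - \omega}^2 = 4\bigl(1 - \abs{\langle \Omega_\omega, \Psi\rangle}^2\bigr)$; this identifies $q^{-1}(B_r(\omega))$ with the spherical cap $C_r = \{\Psi \in \bbS\hilbH_\omega : \abs{\langle \Omega_\omega, \Psi\rangle}^2 > 1 - r^2/4\}$ and shows $q(C_r) = B_r(\omega)$.

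It then suffices to check that each cap $C_r$ is path connected, for then $B_r(\omega) = q(C_r)$ is path connected as a continuous image, and the balls $B_r(\omega)$ form the desired open neighborhood basis. To connect $\Psi_1, \Psi_2 \in C_r$, I would first traverse the phase paths $t \mapsto e^{-it\theta_j}\Psi_j$ to arrange $\langle \Omega_\omega, \Psi_j\rangle$ real and positive; these keep $\abs{\langle \Omega_\omega, \cdot\rangle}$ constant and so remain in $C_r$. I would then follow the normalized segment $t \mapsto \bigl((1-t)\Psi_1 + t\Psi_2\bigr)/\norm{(1-t)\Psi_1 + t\Psi_2}$: here $\langle \Omega_\omega, (1-t)\Psi_1 + t\Psi_2\rangle$ is a convex combination of numbers exceeding $\sqrt{1 - r^2/4}$ while the denominator is at most $1$, so the path never vanishes and stays inside $C_r$.

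The main obstacle is this local step. It is easy to produce path-connected sets containing $\omega$ --- for instance images under $q$ of small balls of $\bbS\hilbH_\omega$ about $\Omega_\omega$, using only Lemma~\ref{lem:vector_state} --- but it is not clear a priori that such sets are neighborhoods of $\omega$. Theorem~\ref{thm:transition_probability} is exactly what overcomes this: it pins $B_r(\omega)$ down as $q(C_r)$, converting our path-connected sets into genuine open neighborhoods, after which the identifications of the first paragraph complete the proof.
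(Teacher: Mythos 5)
Your proof is correct and follows the same overall route as the paper's: balls of radius less than $2$ stay inside a single sector by Lemma~\ref{lem:sector_is_open}, the identification of sectors with path components comes from the equivalence (i)~$\Leftrightarrow$~(v) of Theorem~\ref{thm:superselection_sector_equivalences}, and connected components then coincide with path components by the standard fact about locally path-connected spaces. The one place you genuinely diverge is the local step, and your version is the tighter one. The paper connects $\psi,\omega\in B_r(\omega)$ by lifting to representing unit vectors and joining them by an \emph{arbitrary} path in $\bbS\hilbH$; the image under the map of Lemma~\ref{lem:vector_state} is then a path in the whole sector $\sP_\pi(\fA)$, which need not remain inside $B_r(\omega)$, so strictly speaking the paper's argument shows the sector is path connected rather than the ball. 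You instead invoke Theorem~\ref{thm:transition_probability} to identify $q^{-1}(B_r(\omega))$ with the spherical cap $C_r$, so that $q(C_r)=B_r(\omega)$, and you verify that $C_r$ itself is path connected via the phase adjustment followed by the normalized segment (the convexity estimate $\langle\Omega_\omega,(1-t)\Psi_1+t\Psi_2\rangle>\sqrt{1-r^2/4}$ together with $\norm{(1-t)\Psi_1+t\Psi_2}\le 1$ keeps the path in the cap). This guarantees the connecting path lies in $B_r(\omega)$ and hence that the balls really do form a basis of path-connected neighborhoods, which is exactly what local path connectedness requires. In short: same skeleton, but your use of the transition-probability formula closes a small gap that the paper's own proof glosses over.
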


\begin{proof}
  We show every open ball $B_r(\omega) \subset \sP(\fA)_{\textup{n}}$ with $r \leq 2$ is path connected. If $\psi \in B_r(\omega)$, then Lemma \ref{lem:sector_is_open} implies $\psi$ and $\omega$ are in the same superselection sector, so by the equivalence of (\ref{ite:superselection-sector}) and (\ref{ite:irreducible-representation}) in  Theorem \ref{thm:superselection_sector_equivalences}, there exists a nonzero irreducible representation of $(\cH, \pi)$ such that $\psi, \omega \in \sP_\pi(\fA)$. Let $\Psi, \Omega \in \bbS \cH$ represent $\psi$ and $\omega$. There exists a path $\gamma :[0,1] \rightarrow \bbS \cH$ such that $\gamma(0) = \Psi$ and $\gamma(1) = \Omega$, and we can compose this with the continuous map from Lemma \ref{lem:vector_state} to get a continuous path in $\sP(\fA)_{\textup{n}}$ between $\psi$ and $\omega$. Thus, $B_r(\omega)$ is path-connected, and this implies that $\sP(\fA)_{\textup{n}}$ is locally path-connected. 

  The equivalence of (\ref{ite:superselection-sector}) and (\ref{ite:path-component}) in Theorem \ref{thm:superselection_sector_equivalences} implies
  that the superselection sectors are just the path components of $\sP(\frA)_{\textup{n}}$. The connected components coincide with the path components since this is always true in a locally path-connected space.
\end{proof}

For infinite-dimensional $C^*$-algebras of interest in the study of quantum lattice spin systems, $\sP(\frA)_{\textup{n}}$ has uncountably many components (we give an example in Section \ref{sec:examples}). In contrast, there is often only one component of $\sP(\frA)_{\textup{w}^*}$, i.e.\ the pure state space with the weak$^*$ topology. This can be seen as a consequence of the theorem below. Following \cite{KadTSOTD} (in the version of \cite{AarFSSCSA}), we call a set $\sF$ of states \emph{full}
if for all $A\in \fA$ the relation $A\geq 0$ is equivalent to $\omega (A)\geq 0$ for all
$\omega \in \sF$. The theorem below gives a crucial characterization of when a set of states is full.

\begin{thm}[{\cite[Thm.~2.2]{KadTSOTD} \& \cite[Thm.~1 \& 2]{AarFSSCSA}}] \label{thm:full_set}
  For a set $\sF$ of states on a unital $C^*$-algebra $\fA$, the following are equivalent:
  \begin{enumerate}[{\rm (i)}]
  \item\label{ite:full} $\sF$ is full,
  \item\label{ite:full_weak*_cch} the weak$^*$ closure of the convex hull of  $\sF$  contains  $\sS(\fA)$,
  \item\label{ite:full_weak*_closure} the weak$^*$ closure of  $\sF$ contains $\sP (\fA)$,
  \item\label{ite:full_norm} $\| A \| =\sup\limits_{\varrho \in \sF} \varrho (A)$ for every positive element $A \in \fA$. 
  \end{enumerate}
  If $\fA$ is non-unital and nonzero, then {\rm (ii) $\Leftrightarrow$ (iii) $\Leftrightarrow$ (iv) $\Rightarrow$ (i)}.
\end{thm}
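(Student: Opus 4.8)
The plan is to prove, for unital $\fA$, the cycle (i)~$\Rightarrow$~(iv)~$\Rightarrow$~(ii)~$\Rightarrow$~(iii)~$\Rightarrow$~(i), and then to track which arrows survive when $\fA$ has no unit. Three standard facts carry the argument: (a)~for every positive $A\in\fA$ one has $\|A\|=\max_{\varrho\in\sS(\fA)}\varrho(A)$, and since $\varrho\mapsto\varrho(A)$ is affine and weak$^*$ continuous while $\sS(\fA)$ is weak$^*$ compact, this maximum is attained at an extreme point, i.e.\ $\|A\|=\max_{\varrho\in\sP(\fA)}\varrho(A)$; (b)~the Krein--Milman theorem, giving $\sS(\fA)=\cch^{\,\textup{w}^*}(\sP(\fA))$, together with its partial converse (Milman's theorem): if $\sS(\fA)=\cch^{\,\textup{w}^*}(\mathscr{G})$ then $\sP(\fA)=\operatorname{ext}\sS(\fA)\subseteq\overline{\mathscr{G}}^{\,\textup{w}^*}$; and (c)~Hahn--Banach separation in $(\fA^*,\textup{w}^*)$, whose continuous dual is $\fA$, so that a weak$^*$ compact convex set and an exterior point are separated by evaluation at some self-adjoint element of $\fA$.

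For (i)~$\Rightarrow$~(iv), given $A\ge0$ set $s=\sup_{\varrho\in\sF}\varrho(A)\le\|A\|$; then $sI-A$ is nonnegative on $\sF$, so fullness forces $sI-A\ge0$, i.e.\ $A\le sI$ and $\|A\|\le s$. For (iv)~$\Rightarrow$~(ii), if some state $\varrho_0$ lies outside the weak$^*$ compact convex set $K=\cch^{\,\textup{w}^*}(\sF)\subseteq\sS(\fA)$, separate it by a self-adjoint $A$ with $\varrho_0(A)>\sup_{\varrho\in\sF}\varrho(A)$; applying (iv) to the positive element $\|A\|I+A$ gives $\sup_{\varrho\in\sF}\varrho(A)=\max\sigma(A)\ge\varrho_0(A)$, a contradiction, so $K=\sS(\fA)$. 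Then (ii)~$\Rightarrow$~(iii) is immediate from Milman's theorem. Finally (iii)~$\Rightarrow$~(i): it suffices to detect any self-adjoint $A$ with $A\not\ge0$; the minimum of the weak$^*$ continuous affine functional $\varrho\mapsto\varrho(A)$ over $\sS(\fA)$ is attained at an extreme point, so there is a pure state $\varrho$ with $\varrho(A)=\min\sigma(A)<0$, and since $\varrho\in\overline{\sF}^{\,\textup{w}^*}$ and evaluation at $A$ is weak$^*$ continuous, some $\omega\in\sF$ has $\omega(A)<0$. (By the same token, fact~(a) together with $\sP(\fA)\subseteq\overline{\sF}^{\,\textup{w}^*}$ gives (iii)~$\Rightarrow$~(iv) directly.)

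For a non-unital nonzero $\fA$ the state space is not weak$^*$ compact, so I would run the Milman step in the quasi-state space $Q(\fA)$ of positive functionals of norm at most $1$, which is weak$^*$ compact and convex with $\operatorname{ext} Q(\fA)=\sP(\fA)\cup\{0\}$; since (ii) forces $K=Q(\fA)$, this yields (ii)~$\Rightarrow$~(iii). The reverse (iii)~$\Rightarrow$~(ii) is unit-free: every state lies in $\cch^{\,\textup{w}^*}(\sP(\fA))$ (separate and use $\varrho(A)\le\max\sigma(A)$), and $\sP(\fA)\subseteq\overline{\sF}^{\,\textup{w}^*}$ then gives $\sS(\fA)\subseteq\cch^{\,\textup{w}^*}(\sF)$; likewise (iii)~$\Rightarrow$~(iv) and (iii)~$\Rightarrow$~(i) go through with $\sS(\fA)$ replaced by $Q(\fA)$ and use no unit. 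The delicate point is (iv)~$\Rightarrow$~(iii): the unital trick fails because $\|A\|I+A\notin\fA$. Instead I would peak at a pure state: if a pure state $\varrho_0$ had a weak$^*$ neighborhood $N$ missing $\sF$, then---using that $\varrho_0$ is pure, hence $\pi_{\varrho_0}$ irreducible, via the Kadison transitivity machinery of \S\ref{sec:contkadison} (Glimm's lemma)---one produces a positive $A\in\fA$ with $\|A\|=\varrho_0(A)=1$ and $\sup\{\varrho(A):\varrho\text{ a state},\ \varrho\notin N\}<1$; since $\sF\subseteq\{\text{states}\}\setminus N$, this gives $\sup_{\varrho\in\sF}\varrho(A)<\|A\|$, contradicting (iv). (Concretely such a peaking element can be built from a functional-calculus expression of the form $(B-s'I)_+$, which lies in $\fA$ because the defining function vanishes at $0$.) What is irretrievably lost without a unit is (i)~$\Rightarrow$~(iv), whose proof formed $sI-A$; this is exactly why the statement weakens to (ii)~$\Leftrightarrow$~(iii)~$\Leftrightarrow$~(iv)~$\Rightarrow$~(i).

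The step I expect to be the main obstacle is this last non-unital implication (iv)~$\Rightarrow$~(iii): the clean unital separation collapses once one may no longer add scalar multiples of the identity, and one must instead manufacture an honest element of $\fA$ that peaks sharply enough at a prescribed pure state---precisely the content of Glimm's lemma, where the irreducibility supplied by the Kadison transitivity theorem is essential. A secondary point needing care is the verification that Milman's theorem applies in $Q(\fA)$, i.e.\ that its extreme points are exactly $\sP(\fA)\cup\{0\}$ and that the various weak$^*$ closures are computed in the correct compact set.
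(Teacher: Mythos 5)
The paper does not prove this theorem: it is stated as a citation of Kadison and Aarnes, with a pointer to \cite[Thm.~5.1.14]{MurphyCAOT} for (i)~$\Rightarrow$~(ii),(iii) and to \cite{AarFSSCSA} for the counterexample showing (i) is strictly weaker in the non-unital case. So there is no in-paper argument to compare against; what you have written is a reconstruction of the classical Kadison--Aarnes proof, and it is correct in outline and in all the steps you spell out. The unital cycle (i)~$\Rightarrow$~(iv)~$\Rightarrow$~(ii)~$\Rightarrow$~(iii)~$\Rightarrow$~(i) via Bauer's principle, Krein--Milman/Milman, and weak$^*$ separation against $\|A\|I+A$ is exactly right, and your diagnosis of the non-unital situation --- that only (i)~$\Rightarrow$~(iv) genuinely dies, because it is the one place where $sI-A$ is formed --- matches the statement.

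Three points deserve slightly more care than the sketch gives them, though none is a gap in the sense of a wrong idea. First, in (iii)~$\Rightarrow$~(i) the reduction to self-adjoint $A$ needs the observation that if every $\omega\in\sF$ takes real values on $A$ then so does every pure state (by weak$^*$ density from (iii)), whence $\operatorname{Im}A=0$; fullness quantifies over all of $\fA$, not just $\fA_\tn{sa}$. Second, for the non-unital (ii)~$\Rightarrow$~(iii) you need $\cch\,^{\textup{w}^*}(\sS(\fA))=Q(\fA)$, i.e.\ $0\in\overline{\sS(\fA)}^{\,\textup{w}^*}$; this is true precisely because $\fA$ has no unit (if some positive $B$ had $\inf_{\omega\in\sS(\fA)}\omega(B)>0$, its spectral projection $\chi_{[\varepsilon,\infty)}(B)$ would be a unit), but it is the hinge on which the Milman step in $Q(\fA)$ turns. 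Third, the non-unital (iv)~$\Rightarrow$~(iii) peaking element: your plan works, but note that excision/Glimm's lemma hands you $a\geq 0$, $\|a\|=1$, with $\varrho_0(a^2)>1-\delta'$ and $\|aA_ia-\varrho_0(A_i)a^2\|<\varepsilon'$, and one must still pass from ``the perturbed state $a\cdot\varrho/\varrho(a^2)$ lies near $\varrho_0$'' to ``$\varrho$ itself lies in $N$''; this follows from the Cauchy--Schwarz estimate $|\varrho(A_i-aA_ia)|\leq 2\|A_i\|\sqrt{\varrho((1-a)^2)}\leq 2\|A_i\|\sqrt{\delta}$ computed in the unitization, and it is fine to settle for $\varrho_0(A)>1-\delta'$ rather than $\varrho_0(A)=1$ since the contradiction only needs $\sup_{\varrho\in\sF}\varrho(A)<\|A\|$. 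With those elaborations the proof is complete and is, as far as I can tell, the same argument as in the cited sources.
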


A nice proof of (\ref{ite:full}) $\Rightarrow$ (\ref{ite:full_weak*_cch}) and (\ref{ite:full}) $\Rightarrow$ (\ref{ite:full_weak*_closure}) can be found in \cite[Thm.~5.1.14]{MurphyCAOT}. We caution that \cite[Prop.~3.2.10]{BratteliRobinsonOAQSMII} falsely states (\ref{ite:full}) $\Rightarrow$ (\ref{ite:full_weak*_cch}) without requiring $\fA$ to be unital; a counterexample is provided in \cite{AarFSSCSA}. 

%This theorem is not new, however we provide a new elementary proof of (\ref{ite:full}) $\Rightarrow$ (\ref{ite:full_weak*_cch}) when $\fA$ is unital, in contrast to proofs in the literature which are more technical \cite[Thm.~2.2]{KadTSOTD} \& \cite[Prop.~3.2.10]{BratteliRobinsonOAQSMII} or lacking in details \cite[Lem.~3.4.1]{DixCA}. We caution that \cite[Prop.~3.2.10]{BratteliRobinsonOAQSMII} falsely states (\ref{ite:full}) $\Rightarrow$ (\ref{ite:full_weak*_cch}) without requiring $\fA$ to be unital; a counterexample is provided in \cite{AarFSSCSA}. 

%\begin{proof}
%(\ref{ite:full}) $\Rightarrow$ (\ref{ite:full_weak*_cch}). Let $\cch(\sF)$ be the weak$^*$ closure of the convex hull of $\sF$ and suppose there exists $\psi \in \sS(\fA) \setminus \cch(\sF)$. By the Hahn-Banach theorem (in the form of \cite[Thm.~3.4 (b)]{RudinFA}), there exists $A \in \frA$ and $\gamma_1, \gamma_2 \in \bbR$ such that
%\[
%\omega(B) = \Re \omega(A) < \gamma_1 < \gamma_2 < \Re \psi(A) = \psi(B)
%\]
%for all $\omega \in \cch(\sF)$, where $B = (A+A^*)/2$. Rearranging this, we see that
%\[
%\omega\qty(\psi(B) - \gamma_2 + \gamma_1 - B) \geq 0
%\]
%for all $\omega \in \sF$, so $\psi(B) - \gamma_2 + \gamma_1 - B \geq 0$ since $\sF$ is full. But applying $\psi$ yields
%\[
%\psi(\psi(B) - \gamma_2 + \gamma_1 - B) = -\gamma_2 + \gamma_1 < 0,
%\]
%which contradicts that $\psi$ is positive.
%\end{proof}

\begin{cor}
\label{cor:connectedness-cstaralgebra-faithful-irred-rep}  
If $\fA$ is a nonzero $C^*$-algebra with a faithful irreducible representation, then $\sP(\fA)_{\textup{w}^*}$ is connected.
\end{cor}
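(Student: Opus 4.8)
The plan is to exhibit a single superselection sector that is simultaneously weak$^*$ dense in $\sP(\fA)$ and connected, and then conclude that the whole space is connected. Let $(\cH,\pi)$ be the given faithful irreducible representation and set $\sF = \sP_\pi(\fA)$, the set of pure $\pi$-normal states. By Proposition \ref{prop:pure_folia_vector_state}, since $\pi$ is irreducible, $\sF$ consists exactly of the vector states of $\pi$, i.e.\ the image of the map $\bbS\cH \to \sP_\pi(\fA)$, $\Psi \mapsto \langle \Psi, \pi(\cdot)\Psi\rangle$.

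First I would show that $\sF$ is full and hence weak$^*$ dense in $\sP(\fA)$. For a positive element $A \in \fA$ the operator $\pi(A)$ is positive, so $\norm{\pi(A)} = \sup_{\Psi \in \bbS\cH}\langle \Psi, \pi(A)\Psi\rangle = \sup_{\omega \in \sF}\omega(A)$. Because $\pi$ is faithful it is isometric, so $\norm{\pi(A)} = \norm{A}$, giving $\norm{A} = \sup_{\omega \in \sF}\omega(A)$ for every positive $A$. This is precisely condition (\ref{ite:full_norm}) of Theorem \ref{thm:full_set}, and the implication (\ref{ite:full_norm}) $\Rightarrow$ (\ref{ite:full_weak*_closure}) holds for both unital and non-unital $\fA$. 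Thus $\sP(\fA)$ is contained in the weak$^*$ closure of $\sF$; equivalently, $\sF$ is weak$^*$ dense in $\sP(\fA)$.

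Next I would check that $\sF$ is connected in the weak$^*$ topology. Since $\bbS\cH$ is path connected and the map $\bbS\cH \to \sP_\pi(\fA)_{\textup{n}}$ of Lemma \ref{lem:vector_state} is continuous and surjective onto $\sF$, the set $\sF$ is path connected in the norm topology (this is also the content of $\sF$ being a single superselection sector, cf.\ Proposition \ref{prop:decomposition-pure-state-space-superselection-sectors}). As the norm topology is finer than the weak$^*$ topology, the identity $\sF_{\textup{n}} \to \sF_{\textup{w}^*}$ is continuous, so $\sF$ is connected in the weak$^*$ topology as well.

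Finally, a connected subset whose closure is the ambient space forces the ambient space to be connected. Since $\sF$ is weak$^*$ connected and weak$^*$ dense in $\sP(\fA)$, the subspace $\sP(\fA)_{\textup{w}^*}$ equals the weak$^*$ closure of the connected set $\sF$ and is therefore connected, completing the proof. The only point requiring care is the verification of the fullness condition, together with the observation that the relevant implication of Theorem \ref{thm:full_set} survives in the non-unital case; the remaining topological steps are routine.
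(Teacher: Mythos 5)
Your proof is correct and follows essentially the same route as the paper: verify fullness of $\sP_\pi(\fA)$ via the norm condition (iv) of Theorem \ref{thm:full_set} using that a faithful representation is isometric, deduce weak$^*$ density, and then pass from norm connectedness of the sector to weak$^*$ connectedness of its dense closure. Your added care about the non-unital case of Theorem \ref{thm:full_set} is a correct and worthwhile observation, but the argument is the same as the paper's.
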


\begin{proof}
Let $(\cH, \pi)$ be a faithful irreducible representation. Then for every $A \in \fA_+$,
\[
\norm{A} = \norm{\pi(A)} = \sup_{\Psi \in \cH} \ev{\Psi, \pi(A)\Psi} = \sup_{\psi \in \sP_\pi(\fA)} \psi(A),
\]
so $\sP_\pi(\fA)$ is weak$^*$ dense in $\sP(\fA)$ by Theorem \ref{thm:full_set}. Since $\sP_\pi(\fA)$ is connected in the norm topology, it is connected in the weak$^*$ topology, and density of $\sP_\pi(\fA)$ now implies that $\sP(\fA)$ is connected.
\end{proof}

Following \cite[Defs.~1.8 \& 5.7]{EilCCCA}, we call a $C^*$-algebra $\fA$ \emph{connected},
respectively \emph{locally connected}, whenever the corresponding pure state space $\sP(\fA)_{\textup{w}^*}$
has that property. Using this terminology, Cor.~\ref{cor:connectedness-cstaralgebra-faithful-irred-rep} then says that a
$C^*$-algebra with a faithful irreducible representation is connected. So in particular every simple  $C^*$-algebra
is connected. In fact, every simple $C^*$-algebra is locally connected, as implied by Theorem 5.6 in \cite{EilCCCA}.

Finally, in this section we will provide a few criteria which entail that the pure state space is
path-connected or locally path-connected in the weak$^*$ topology. The essential tool is the following.

\begin{thm}[{\cite[Prop.~5.9]{EilCCCA}}]\label{thm:locally_path_connected}
If $\fA$ is separable, connected, and locally connected, then $\sP(\fA)_{\textup{w}^*}$ is path-connected and locally path-connected.
\end{thm}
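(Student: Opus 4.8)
The plan is to deduce this from a purely topological principle, namely the classical theorem of Mazurkiewicz and Moore that a connected, locally connected, \emph{completely metrizable} space is path-connected and locally path-connected. The hypotheses of connectedness and local connectedness of $\sP(\fA)_{\textup{w}^*}$ are given (they are, by definition, what \emph{connected} and \emph{locally connected} mean for $\fA$), so they feed directly into that theorem. Thus the entire problem reduces to a single point: verifying that $\sP(\fA)_{\textup{w}^*}$ is completely metrizable, i.e.\ Polish. This is where separability of $\fA$ will be used, and it is the only hypothesis not already packaged into the topological statement.

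To obtain complete metrizability I would proceed in two steps. First, separability metrizes the weak$^*$ topology on bounded sets: choosing a countable dense subset $(A_n)_{n \in \bbN}$ of the unit ball of $\fA$, the formula $d(\varphi,\psi)=\sum_n 2^{-n}\abs{(\varphi-\psi)(A_n)}$ defines a metric inducing the weak$^*$ topology on any norm-bounded subset of $\fA^*$. The quasi-state space $Q(\fA)=\qty{\varphi\in\fA^*:\varphi\geq 0,\ \norm{\varphi}\leq 1}$ is norm-bounded and weak$^*$ closed, hence weak$^*$ compact by Banach--Alaoglu and, by the above, compact metrizable (in the unital case one may work with $\sS(\fA)$ directly). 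A compact metric space is in particular Polish.

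Second, I would locate the pure states inside $Q(\fA)$ as a $G_\delta$ set. The extreme boundary of $Q(\fA)$ is exactly $\sP(\fA)\cup\qty{0}$, and a classical result of Choquet theory guarantees that the extreme boundary of a \emph{metrizable} compact convex set is a $G_\delta$; this is precisely the place where metrizability of $Q(\fA)$, and hence separability of $\fA$, is essential. Deleting the single closed point $0$ preserves the $G_\delta$ property, so $\sP(\fA)$ is a $G_\delta$ subset of the Polish space $Q(\fA)$. Since a subspace of a Polish space is Polish exactly when it is $G_\delta$, we conclude that $\sP(\fA)_{\textup{w}^*}$ is Polish, and the Mazurkiewicz--Moore theorem applies to give path-connectedness. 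Local path-connectedness follows by applying the same theorem to each connected open neighborhood: such a neighborhood exists by local connectedness, is again locally connected and completely metrizable (open subsets of Polish spaces are $G_\delta$, hence Polish), so it is itself path-connected.

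The main obstacle I anticipate is not any one computation but correctly marshalling the two nontrivial external inputs and making sure each hypothesis is consumed where needed: the $G_\delta$-ness of the extreme boundary, which genuinely fails without metrizability, and the Mazurkiewicz--Moore theorem, whose proof uses completeness in an essential way to assemble an arc as the limit of a nested sequence of connected neighborhoods. Once these are in hand, the roles of the hypotheses become transparent --- separability supplies metrizability and hence the Polish structure, while connectedness and local connectedness are exactly the data the topological theorem requires.
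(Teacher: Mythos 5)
Your proposal is correct and follows essentially the same route as the paper: reduce to the facts that $\sP(\fA)_{\textup{w}^*}$ is Polish when $\fA$ is separable (which the paper cites from Pedersen rather than reproving via the $G_\delta$ extreme-boundary argument you sketch) and that a connected, locally connected, completely metrizable space is path-connected and locally path-connected. The only difference is that you supply the details of both ingredients, whereas the paper treats the statement as a synthesis of cited results.
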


This theorem is a trivial synthesis of the facts that $\sP(\fA)_{\textup{w}^*}$ is a Polish space when $\fA$ is separable \cite[Prop.~4.3.2]{PedersenCAlgAutomorphisms}, and that a locally connected complete metric space is locally path-connected. It is unfortunately difficult to find a correct proof of the latter fact in the literature. One may find a roadmap of common errors along with corrections in \cite{PersistenceofErrors}, where several references to both correct and incorrect proofs are provided. Nonetheless, Theorem \ref{thm:locally_path_connected} applies in many cases of interest in physics.

\begin{thm}
  Let $\fA$ be a $C^*$-algebra. 
  \begin{enumerate}[{\rm (i)}]
  \item\label{ite:simple-separable}
    If $\fA$ is simple and separable, then $\fA$ is path-connected and locally path-connected.
  \item\label{ite:direct-sum-separables}
    If $\fA = \bigoplus_{n=1}^N \fA_n$, $N \in \mathbb{N} \cup \qty{\infty}$ is the direct sum of separable, simple $C^*$-algebras $\fA_n$, then $\fA$ is locally path-connected.
  \item\label{ite:colimit-injective-direct-system}
    If $\fA$ is the colimit of an injective direct system of countably many separable, simple $C^*$-algebras, then $\fA$ is path-connected and locally path-connected.
  \end{enumerate}  
\end{thm}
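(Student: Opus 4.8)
The plan is to reduce all three parts to Theorem~\ref{thm:locally_path_connected}, which guarantees that a separable, connected, locally connected $C^*$-algebra has path-connected and locally path-connected weak$^*$ pure state space. Recall that, by the conventions fixed after Corollary~\ref{cor:connectedness-cstaralgebra-faithful-irred-rep}, the adjectives ``connected'', ``locally connected'', ``path-connected'', and ``locally path-connected'' applied to $\fA$ refer to the corresponding property of $\sP(\fA)_{\textup{w}^*}$. For part (i), a nonzero simple $C^*$-algebra has a faithful irreducible representation, since any nonzero irreducible representation has trivial kernel by simplicity; hence it is connected by Corollary~\ref{cor:connectedness-cstaralgebra-faithful-irred-rep} and locally connected by \cite[Thm.~5.6]{EilCCCA}, exactly as observed in the discussion preceding this theorem. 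Together with the standing separability hypothesis, Theorem~\ref{thm:locally_path_connected} then applies directly and delivers both conclusions.

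For part (ii) I would show that $\sP(\fA)_{\textup{w}^*}$ is the topological coproduct $\coprod_n \sP(\fA_n)_{\textup{w}^*}$. Each $\fA_n$ is a closed two-sided ideal, and the summands are mutually orthogonal, so for any irreducible representation $(\cH,\pi)$ of $\fA$ the invariant subspace $\overline{\pi(\fA_n)\cH}$ equals $\cH$ for exactly one index $n$ and $\pi$ annihilates the remaining summands. Consequently the pure states of $\fA$ are in natural bijection with $\bigsqcup_n \sP(\fA_n)$, a pure state concentrated on $\fA_n$ restricting to a pure state of $\fA_n$. Since the spectrum of an ideal is open in $\widehat{\fA}$, one has $\widehat{\fA} = \bigsqcup_n \widehat{\fA_n}$ with each piece clopen, and pulling back along the continuous open surjection $\kappa:\sP(\fA)_{\textup{w}^*}\to\widehat{\fA}$ exhibits each $\sP(\fA_n) = \kappa^{-1}(\widehat{\fA_n})$ as clopen in $\sP(\fA)_{\textup{w}^*}$. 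A short verification identifies the subspace weak$^*$ topology on $\sP(\fA_n)$ with its intrinsic one, since states concentrated on $\fA_n$ are determined on all of $\fA$ by their values on $\fA_n$. By part (i) each summand is locally path-connected, and a coproduct of locally path-connected spaces is locally path-connected, which is the claim; note that path-connectedness genuinely fails once $N\geq 2$, which is why only the local statement is asserted.

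For part (iii) the key observation is that the colimit $\fA = \overline{\bigcup_k \fA_k}$ of simple $C^*$-algebras along injective connecting maps is itself simple, after which (iii) is immediate from (i). After passing to a cofinal sequence we may regard the $\fA_k$ as an increasing chain of closed subalgebras with dense union. Let $J\trianglelefteq\fA$ be a closed two-sided ideal with quotient map $q:\fA\to\fA/J$; each $J\cap\fA_k$ is a closed ideal of the simple algebra $\fA_k$, hence $0$ or $\fA_k$. If $J\cap\fA_k=\fA_k$ for some $k$, then $\fA_{k'}\subseteq J$ for all $k'\geq k$, so $J$ contains the dense set $\bigcup_{k'\geq k}\fA_{k'}$ and $J=\fA$. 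Otherwise $J\cap\fA_k=0$ for every $k$, so each $q|_{\fA_k}$ is an injective $*$-homomorphism, hence isometric, and by density $q$ is isometric on $\fA$, forcing $J=0$. Since $\fA$ is separable as the closure of a countable union of separable subalgebras, it is simple and separable, and part (i) applies.

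The main obstacle I anticipate is the topological bookkeeping in part (ii): the careful argument that the summand pure-state spaces are clopen and carry their intrinsic weak$^*$ topology, reconciling the global and local weak$^*$ structures. I expect to route this through the spectrum map $\kappa$ rather than through central identity projections, so that the non-unital case (where $\fA_n$ contributes no projection to $\fA$) is handled uniformly. Everything else is a routine application of the cited structural results.
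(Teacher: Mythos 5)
Your proposal is correct, and parts (i) and (iii) reach the same conclusions as the paper, but the route differs in two places worth noting. For part (ii), the paper works entirely by hand with the canonical projections $\pi_n:\fA\to\fA_n$ and inclusions $\iota_n:\fA_n\to\fA$: it shows $\omega\circ\iota_n\circ\pi_n=\omega$ for exactly one $n$ using positivity and purity, and then verifies directly that each $f_n(\omega)=\omega\circ\pi_n$ is continuous and open by exhibiting explicit basic weak$^*$ neighborhoods, yielding the homeomorphism $\sP(\fA)_{\textup{w}^*}\cong\bigsqcup_n\sP(\fA_n)_{\textup{w}^*}$. You instead route the clopenness through the spectrum: $\widehat{\fA}=\bigsqcup_n\widehat{\fA_n}$ with each piece open (spectra of ideals are open), pulled back along the continuous map $\kappa$. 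Both work; the paper's argument is more self-contained, while yours leans on Dixmier's ideal--spectrum machinery and still owes the "short verification" that the subspace and intrinsic weak$^*$ topologies on $\sP(\fA_n)$ agree (which in the direct-sum setting reduces to the continuity of $\omega\mapsto\omega\circ\pi_n$ and of restriction, so it is genuinely short). For part (iii), the paper simply asserts that the colimit is simple and separable; your ideal-intersection argument supplies the proof, with the one small point to make explicit that $J\cap\fA_k=\fA_k$ forces $J\cap\fA_{k'}=\fA_{k'}$ for $k'\geq k$ because $J\cap\fA_{k'}$ is then a nonzero closed ideal of the simple algebra $\fA_{k'}$. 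Part (i) is identical to the paper's.
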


By the direct sum $\bigoplus_{n=1}^\infty \fA_n$, we mean the set of sequences $(A_n)_{n \in \bbN} \in \prod_{n =1}^\infty \fA_n$ such that $\norm{A_n} \rightarrow 0$, equipped with the max norm and componentwise algebraic operations. 

\begin{proof}
If $\fA$ is simple and separable, then $\sP(\fA)_{\textup{w}^*}$ is connected, locally connected, and completely metrizable, hence path-connected and locally path-connected by the preceding remarks. The type of $C^*$-algebra described in (\ref{ite:colimit-injective-direct-system}) is itself simple and separable, and therefore path-connected and locally path-connected by (\ref{ite:simple-separable}).

It is stated without proof in \cite{EilCCCA} that the pure state space of $\bigoplus_{n=1}^\infty \fA_n$ is homeomorphic to the disjoint union $\bigsqcup_{n=1}^\infty \sP(\fA_n)$, from which it follows that (\ref{ite:direct-sum-separables}) holds since each $\sP(\fA_n)$ is locally path-connected. We give a proof of this fact. For each $n \in \bbN$ let $\pi_n:\fA \rightarrow \fA_n$ be the canonical projection and let $\iota_n :\fA_n \rightarrow \fA$ be the canonical inclusion $\iota_n(A) = (B_m)_{m \in \bbN}$ where $B_m = 0$ if $m \neq n$ and $B_n = A$. Given $\omega \in \sP(\fA_n)$, the fact that $\pi_n$ is a surjective $*$-homomorphism implies $\omega \circ \pi_n \in \sP(\fA)$. Thus, we have a map $\bigsqcup_{n=1}^\infty \sP(\fA_n) \rightarrow \sP(\fA)$. If $\psi \in \sP(\fA_m)$ and $\omega \in \sP(\fA_n)$ such that $\psi \circ \pi_m = \omega \circ \pi_n$, then $m = n$ must hold, hence surjectivity of the projections implies $\psi = \omega$, so this map is injective. For surjectivity, suppose $\omega \in \sP(\fA)$. It cannot happen that $\omega \circ \iota_n = 0$ for all $n \in \bbN$ because the span of $\bigcup_{n \in \bbN} \iota_n\qty(\fA_n)$ is dense in $\fA$, therefore there exists $n \in \bbN$ such that $\omega \circ \iota_n \neq 0$. By definition of the algebraic operations on $\fA$, we see that $(\iota_n \circ \pi_n)(A) \leq A$ for all $A \in \fA_+$. Therefore $\omega \circ \iota_n \circ \pi_n$ is a positive linear functional on $\fA$ dominated by $\omega$. Since $\omega$ is pure, there exists $t \in [0,1]$ such that $\omega \circ \iota_n \circ \pi_n = t \omega$. Composing with $\iota_n$ on the right and using the fact that $\omega \circ \iota_n \neq 0$ implies $t = 1$. Purity of $\omega \circ \iota_n$ follows easily from purity of $\omega$.

We have established a bijective correspondence $\bigsqcup_{n=1}^\infty \sP(\fA_n)_{\textup{w}^*} \rightarrow \sP(\fA)_{\textup{w}^*}$. Continuity of this map follows from the universal property of the disjoint union and the fact that each map $f_n:\sP(\fA_n)_{\textup{w}^*} \rightarrow \sP(\fA)_{\textup{w}^*}$, $f_n(\omega) = \omega \circ \pi_n$ is weak$^*$ continuous. We show that each $f_n$ is open, from which it follows from the definition of the disjoint union topology that the map $\bigsqcup_{n=1}^\infty \sP(\fA_n)_{\textup{w}^*} \rightarrow \sP(\fA)_{\textup{w}^*}$ is open, and therefore a homeomorphism. Given $\omega \in \sP(\fA_n)$, choose $A \in \fA_n$ such that $\abs{\omega(A)} > 1/2$. Then $U = \qty{\psi \in \sP(\fA): \abs{\psi(\iota_n(A)) - \omega(A)} < 1/2}$ is a neighborhood of $\omega \circ \pi_n$ contained in $f_n(\sP(\fA_n))$ since $\psi \in U$ implies $\psi \circ \iota_n \neq 0$. Thus, $f_n(\sP(\fA_n))$ is open. For any basis neighborhood $U_{\varepsilon, \omega, A_1,\ldots, A_k} \subset \sP(\fA_n)_{\textup{w}^*}$ around $\omega \in \sP(\fA_n)_{\textup{w}^*}$, we have  
\[
f_n(U_{\varepsilon, \omega, A_1,\ldots, A_k}) = f_n(\sP(\fA_n)) \cap U_{\varepsilon, \omega \circ \pi_n, \iota_n(A_1),\ldots, \iota_n(A_k)}.
\]
This exhibits $f_n(U_{\varepsilon, \omega, A_1,\ldots, A_k})$ as an open set in $\sP(\fA)_{\textup{w}^*}$, so $f_n$ is open.
\end{proof}

Given that there may be unphysical superselection sectors for a particular physical system, it is reasonable to ask whether $\sP(\fA)_{\textup{w}^*}$ remains path-connected after some superselection sectors are removed. For unital, separable, simple $C^*$-algebras, the answer is yes. In fact, one may find a path between arbitrary pure states $\psi, \omega \in \sP(\fA)$ that remains in $\sP_\psi(\fA)$ until the path reaches $\omega$ at the endpoint. This is a trivial consequence of Theorem \ref{thm:superselection_sector_equivalences} and the following remarkable theorem of Kishimoto, Ozawa, and Sakai.

\begin{thm}[{\cite[Thm.~1.1]{kishimoto_ozawa_sakai_2003}}]
Let $\fA$ be a separable $C^*$-algebra. If $\psi, \omega \in \sP(\fA)$ and $\ker \pi_\psi = \ker \pi_\omega$, then there exists an automorphism $\alpha \in \Aut(\fA)$ and a continuous family of unitaries $U:[0,\infty) \rightarrow \Unitary(\fA)$, $t \mapsto U_t$ such that $U_0 = I$, $\psi \circ \alpha = \omega$, and
\[
\alpha(A) = \lim_{t \rightarrow \infty} U_t^*A U_t 
\]
for all $A \in \fA$.
\end{thm}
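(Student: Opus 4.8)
The plan is to read this as a homogeneity statement for the pure state space and to realize the desired $\alpha$ as an \emph{asymptotically inner} automorphism obtained from an inductive construction of unitaries, packaged into the continuous family $U_t$ by a path-connectedness argument. The hypothesis $\ker \pi_\psi = \ker \pi_\omega =: \fI$ is exactly what makes this feasible: it says that the irreducible representations $\pi_\psi$ and $\pi_\omega$ both have $\fI$ as primitive ideal, hence both factor through a faithful irreducible representation of the separable $C^*$-algebra $\fA/\fI$. By the noncommutative Weyl--von Neumann theorem of Voiculescu, two representations of a separable $C^*$-algebra with the same kernel are approximately unitarily equivalent; this is the deep input that distinguishes the situation $\ker\pi_\psi = \ker\pi_\omega$ from the general case. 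Throughout I write $\mathrm{Ad}_u(A) = u^* A u$, so that the target reads $\alpha = \lim_{t\to\infty}\mathrm{Ad}_{U_t}$ and $\psi \circ \alpha = \omega$; that is, the conjugated states $\psi\circ\mathrm{Ad}_{U_t}$ must converge to $\omega$.

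The technical heart is a one-step lemma: for any finite $F \subset \fA$ and $\varepsilon > 0$ there is a unitary $u$ (in the unitization or multiplier algebra when $\fA$ is non-unital) with $\abs{(\psi\circ\mathrm{Ad}_u - \omega)(A)} < \varepsilon$ for all $A \in F$, and with $u$ prescribed to be close to $1$ on a previously chosen finite set. To produce such a $u$ I would realize $\psi$ and $\omega$ as vector states $A \mapsto \ev{\Psi, \pi(A)\Psi}$ and $A \mapsto \ev{\Omega, \pi(A)\Omega}$ in a single representation $\pi$ containing both $\pi_\psi$ and $\pi_\omega$, use approximate unitary equivalence to bring the cyclic vector of one approximately onto that of the other, and then invoke the Kadison transitivity theorem to implement the required vector moves by an element of $\fA$, upgraded to a unitary when $\fA$ is unital exactly as in the unitary clause of the continuous Kadison transitivity theorem. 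With the one-step lemma in hand I would run a two-sided Elliott-type intertwining: fix an exhausting chain $F_1 \subset F_2 \subset \cdots$ with dense union and a summable error budget $\sum_n \varepsilon_n < \infty$, and inductively choose $u_n$ so that the partial products $V_n = u_n \cdots u_1$ satisfy $\max_{A\in F_n}\abs{(\psi\circ\mathrm{Ad}_{V_n} - \omega)(A)} < \varepsilon_n$ while $\norm{\mathrm{Ad}_{V_{n+1}}(A) - \mathrm{Ad}_{V_n}(A)} < \varepsilon_n$ for $A \in F_n$. Summability forces $\mathrm{Ad}_{V_n}$ to converge pointwise in norm to a $*$-homomorphism $\alpha$, and the symmetric bookkeeping of the two-sided intertwining makes $\alpha$ invertible, hence an automorphism, with $\psi\circ\alpha = \omega$.

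It remains to convert the discrete sequence $(V_n)$ into a continuous family $U:[0,\infty)\to\Unitary(\fA)$ with $U_0 = I$ and $\lim_t U_t^* A U_t = \alpha(A)$. Because each $u_n$ can be arranged to lie close to $1$, it is connected to $1$ by the short path $s \mapsto \exp(i s h_n)$ furnished by the continuous functional calculus, where $u_n = \exp(i h_n)$ with $\norm{h_n}$ small; concatenating these paths so that $U_n = V_n$ at integer times $t = n$ and reparametrizing $[0,\infty)$ produces the desired continuous family, and the same summability estimates show $U_t^* A U_t \to \alpha(A)$ uniformly on each $F_m$ and hence for every $A \in \fA$. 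The main obstacle is the one-step lemma together with the convergence control: the genuinely hard content is Voiculescu's approximate unitary equivalence for same-kernel representations and the requirement that the infinite product of conjugations converge to an honest automorphism satisfying $\psi\circ\alpha = \omega$ \emph{exactly}, rather than merely to a unital completely positive limit. For this reason the result is invoked here as the cited theorem of Kishimoto, Ozawa, and Sakai.
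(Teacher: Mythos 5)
The paper does not prove this statement: it is imported verbatim as a citation to Kishimoto--Ozawa--Sakai, and the surrounding text uses it only as a black box (to conclude that pure states with equal GNS kernels can be joined by a path staying in one sector until the endpoint). So there is no in-paper proof to compare your argument against.

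Your outline is a fair description of the strategy actually used in the cited reference --- reduce to a faithful irreducible representation of $\fA/\fI$, use Voiculescu-type approximate unitary equivalence as the one-step engine, run an intertwining argument with a summable error budget, and thread the resulting unitaries into a continuous path via $\exp(ish_n)$ --- and you correctly identify where the genuine difficulty lies. Two points deserve flagging, though, since as written they are gaps rather than routine verifications. First, your one-step lemma demands that the new unitary simultaneously improve the state match on a large finite set and be close to $1$ on a previously fixed finite set; this tension is not resolved by Kadison transitivity alone (which the present paper does make quantitative, e.g.\ Theorem \ref{thm:Pedersen} and Lemma \ref{lem:Stiefel_section}), and it is precisely what forces the appeal to Voiculescu's theorem and the homogeneity machinery of the earlier Kishimoto--Sakai and Futamura--Kataoka--Kishimoto papers. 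Second, a pointwise norm limit of inner automorphisms is automatically an injective $*$-endomorphism but not automatically surjective; the phrase ``symmetric bookkeeping of the two-sided intertwining'' is doing real work there and would need to be spelled out to yield an honest automorphism with $\psi\circ\alpha=\omega$ exactly rather than a proper endomorphism with approximate matching. Since you explicitly defer these to the cited theorem, your treatment is consistent with the paper's own use of the result, but it should not be read as a self-contained proof.
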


\section{Geometry of the pure state space}\label{sec:diffstructure}

In \S\ref{subsec:kaehlerstruct} below, we will prove that the pure state space $\sP(\fA)$ has the structure of a K\"ahler manifold. However, we start by giving some background on the topology and geometry of infinite dimensional projective Hilbert spaces.

\subsection{Background on projective Hilbert spaces}
\label{sec:background-projective-hilbert-spaces}
Quantum mechanical states are traditionally described as rays in a Hilbert space $\hilbH$ or in other words as elements of 
the \emph{projective Hilbert space} 
\[\bbP \hilbH = \qty{\bbC \Psi: \Psi \in \hilbH \setminus \qty{0}}\] 
consisting  of
one-dimensional subspaces. 
%In light of the representation theory explored in the previous subsections, this is still a useful perspective. 
The topology and differential geometry of projective Hilbert space will play an important role in the
fiberwise GNS construction of Section~\ref{sec:hilbert-bundle}. We review some fundamental concepts and constructions here. See \cite{CirelliNormalPureStatesVonNeumannAlgebra,CirelliHamiltonianVectorFieldsQM,CirelliPureStatesQMKaehlerBundles,AshtekarSchilling,FreedWignersTheorem,CiaglaJostSchwachhoefer} for related results. 

We endow $\bbP \hilbH$ with the quotient topology with respect to the canonical projection
$p: \hilbH \setminus \qty{0} \rightarrow \bbP \hilbH$, $p(\Psi) = \bbC \Psi$; this is the same as the quotient topology obtained from
the restricted projection $p: \bbS \hilbH \rightarrow \bbP \hilbH$, where $\bbS \hilbH$ denotes the unit sphere of $\hilbH$.
% The elements of projective Hilbert space are referred to as \emph{rays}.
Given two rays $\bbC \Psi, \bbC \Omega \in \bbP \hilbH$, we define the \emph{ray product} by
\[
\ltrans \bbC \Psi, \bbC \Omega \rtrans = \frac{\abs{\ev{\Psi, \Omega}}}{\norm{\Psi} \norm{\Omega}},
\]
which is clearly independent of the representatives $\Psi, \Omega \in \hilbH \setminus \qty{0}$. The square of the ray product is called the \emph{transition probability} between $\bbC \Psi$ and $\bbC \Omega$.
We now define three metrics on $\bbP \hilbH$ and relate them to each other and to the ray product. The first
one is the \emph{chordial distance} or \emph{chordial metric} which for two  rays $\bbC \Psi, \bbC \Omega$ with
representing  vectors $\Psi, \Omega$ of norm $1$ is given by
\[
  d_{\textup{chd}} (\bbC \Psi, \bbC \Omega) =
  \inf_{\lambda \in \Unitary(1)} \norm{\Psi - \lambda \Omega} \ . 
\]
The second metric is the \emph{Fubini--Study distance}. It is defined by
\[
  d_{\textup{FS}} (\bbC \Psi, \bbC \Omega) = \arccos \ltrans \bbC \Psi, \bbC \Omega \rtrans \ . \quad 
\]
Finally, we define the \emph{gap metric}
\[
d_{\textup{gap}}(\bbC\Psi, \bbC \Omega) = \norm{P(\bbC \Psi) - P(\bbC \Omega)}
\]
where $P(\bbC \Psi) \in \fB(\hilbH)$ is the orthogonal projection onto $\bbC \Psi$.

\begin{prop}\label{prop:PH_gap_metric}
  Let $\hilbH$ be a nonzero complex Hilbert space. Then the following holds true for the metric
  structures on $\bbP\hilbH$. 
  \begin{enumerate}[{\rm (i)}] 
  \item\label{ite:gap}
    The chordial metric $d_{\textup{chd}}$ is complete and
    induces the quotient topology on $\bbP\hilbH$  with respect to the canonical projection
    $p: \bbS \hilbH \rightarrow \bbP \hilbH$. Moreover, the chordial metric  satisfies the formula 
  \begin{equation}
    \label{eq:relation-transition-metric-amplitudes}
    d_{\textup{chd}}(\bbC \Psi, \bbC \Omega)^2 = 2 \big( 1 - \ltrans \bbC \Psi, \bbC \Omega \rtrans \big)
    \quad \text{for all } \Psi, \Omega \in \hilbH \setminus \qty{0} \ .
  \end{equation}
  \item\label{ite:FS}
  The Fubini--Study distance is equivalent to the chordial metric. More precisely,
  \begin{equation}
  \label{eq:equivalence-metric-projective-space-fubini-study-distance}
    d_{\textup{chd}}(\scrk, \scrl) \leq  d_{\textup{FS}}  (\scrk, \scrl)
    \leq  \frac{\sqrt{2} \pi}{4} \, d_{\textup{chd}} (\scrk, \scrl)  \quad \text{for all } \scrk, \scrl \in \bbP \hilbH \ .
  \end{equation}
  \item\label{ite:op}
    The map $P : \bbP \hilbH \to \frB (\hilbH)$ which associates to every ray $\scrl$ the
    orthogonal projection onto it is a bi-Lipschitz embedding when $\bbP \hilbH$ is endowed
    with the chordial metric and the image is endowed with the  metric induced by the operator norm
    on $\frB (\hilbH)$. In other words, the chordial metric and the gap metric $d_{\textup{gap}}$
    on $\bbP \hilbH$ are equivalent. More precisely, the following estimate and equality holds
   for all $\scrk, \scrl \in \bbP \hilbH$:
   \begin{equation}
   \label{eq:equivalence-metric-projective-space-gap-metric}
    \frac{1}{\sqrt{2}} d_{\textup{chd}}(\scrk, \scrl) \leq  d_{\textup{gap}}  (\scrk, \scrl)
    = \sqrt{1 - \ltrans \scrk, \scrl\rtrans^2} 
    \leq  d_{\textup{chd}} (\scrk, \scrl)  \ .
    %\quad \text{for all } \scrk, \scrl \in \bbP \hilbH \ .
  \end{equation}
  \end{enumerate}
\end{prop}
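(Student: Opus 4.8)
The plan is to reduce all three parts to elementary one-variable estimates in the ray product $t = \ltrans \scrk, \scrl \rtrans \in [0,1]$, so I would begin by establishing the master identity of \eqref{ite:gap} and then rewrite every metric as a function of $t$.

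\textbf{Part \eqref{ite:gap}.} Fix unit representatives $\Psi, \Omega$ of $\scrk, \scrl$. Expanding the norm and using $\abs{\lambda} = 1$ gives $\norm{\Psi - \lambda\Omega}^2 = 2 - 2\Re\!\big(\lambda \ev{\Psi, \Omega}\big)$, which is minimized over $\lambda \in \Unitary(1)$ precisely when $\lambda$ cancels the phase of $\ev{\Psi,\Omega}$; the minimal value is $2(1 - \abs{\ev{\Psi,\Omega}}) = 2(1 - t)$, yielding \eqref{eq:relation-transition-metric-amplitudes}. For the topology, the estimate $d_{\textup{chd}}(\scrk,\scrl) \le \norm{\Psi - \Omega}$ (take $\lambda = 1$) shows $p\colon \bbS\hilbH \to (\bbP\hilbH, d_{\textup{chd}})$ is continuous, so the metric topology is coarser than the quotient topology. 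Conversely I would show $p$ is open: if $V \subset \bbS\hilbH$ is open with $B_\varepsilon(\Psi)\cap\bbS\hilbH \subset V$, then any $\scrl$ with $d_{\textup{chd}}(\scrk,\scrl) < \varepsilon$ has a representative $\lambda\Omega \in B_\varepsilon(\Psi)\cap\bbS\hilbH \subset V$, so the chordial $\varepsilon$-ball around $\scrk$ lies in $p(V)$. Thus $p$ is an open continuous surjection, i.e.\ a quotient map onto the metric space, so the two topologies coincide. For completeness, passing to a subsequence with $d_{\textup{chd}}(\scrk_n, \scrk_{n+1}) < 2^{-n}$ and choosing representatives $\Psi_n$ inductively so that $\norm{\Psi_n - \Psi_{n+1}} < 2^{-n}$ makes $(\Psi_n)$ Cauchy in the complete sphere $\bbS\hilbH$; its limit projects to a chordial limit of the subsequence, hence of the original Cauchy sequence.

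\textbf{Part \eqref{ite:FS}.} Writing $t = \cos\theta$ with $\theta \in [0, \pi/2]$, identity \eqref{eq:relation-transition-metric-amplitudes} gives $d_{\textup{chd}} = \sqrt{2(1 - \cos\theta)} = 2\sin(\theta/2)$, while $d_{\textup{FS}} = \theta$. The bounds \eqref{eq:equivalence-metric-projective-space-fubini-study-distance} then become $\sin s \le s \le \tfrac{\pi}{2\sqrt 2}\sin s$ for $s = \theta/2 \in [0, \pi/4]$. The left inequality is standard; for the right, $s \mapsto s/\sin s$ is increasing on $(0,\pi/4]$ (since $\sin s/s$ decreases), so it attains its maximum $\tfrac{\pi/4}{\sin(\pi/4)} = \tfrac{\pi}{2\sqrt 2}$ at the endpoint.

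\textbf{Part \eqref{ite:op}.} The equality $d_{\textup{gap}} = \sqrt{1 - t^2}$ I would get from the two-projections computation: $D = P(\scrk) - P(\scrl)$ is self-adjoint, has range in $\vecspan\{\Psi,\Omega\}$ and vanishes on its orthogonal complement, so $\norm{D}$ is the spectral radius of $D$ on this at-most-two-dimensional space. There $\Tr D = 0$ forces eigenvalues $\pm\mu$, and $\Tr(D^2) = 2 - 2\Tr\!\big(P(\scrk)P(\scrl)\big) = 2(1 - t^2)$ gives $\mu = \sqrt{1 - t^2}$. Feeding $d_{\textup{chd}} = \sqrt{2(1-t)}$ and $d_{\textup{gap}} = \sqrt{(1-t)(1+t)}$ into \eqref{eq:equivalence-metric-projective-space-gap-metric} and cancelling $\sqrt{1-t}$ reduces both inequalities to $1 \le \sqrt{1+t} \le \sqrt 2$, immediate from $t \in [0,1]$.

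I expect the only genuinely non-routine steps to be the identification of the metric topology with the quotient topology in \eqref{ite:gap}, where openness of $p$ is the crucial point, and the eigenvalue computation for $D$ in \eqref{ite:op}; everything else is elementary once \eqref{eq:relation-transition-metric-amplitudes} has recast each metric as a function of $t$.
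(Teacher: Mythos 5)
Your proof is correct and follows the same overall strategy as the paper's (establish the identity $d_{\textup{chd}}^2 = 2(1-t)$ with $t = \ltrans\scrk,\scrl\rtrans$, then compare metrics as functions of $t$), but two of your sub-arguments are genuinely different and arguably cleaner. In part (\ref{ite:FS}) the paper studies $f(x) = \arccos x/\sqrt{2(1-x)}$ and computes $f'$ explicitly, reducing to the sign of $\theta/2 - \tan(\theta/2)$; your half-angle substitution $d_{\textup{chd}} = 2\sin(\theta/2)$, $d_{\textup{FS}} = \theta$ turns the whole statement into the monotonicity of $s/\sin s$ on $(0,\pi/4]$, which is the same fact in a more transparent form. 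In part (\ref{ite:op}) the paper expands $\|(P(\scrk)-P(\scrl))\Phi\|^2$ by hand in an adapted orthonormal basis of the two-dimensional span; your trace argument ($\Tr D = 0$ forces eigenvalues $\pm\mu$, and $\Tr(D^2) = 2(1-t^2)$ pins down $\mu$) gets the equality $d_{\textup{gap}} = \sqrt{1-t^2}$ with less computation, and the degenerate case $\scrk = \scrl$ is trivially covered. Your topology and completeness arguments in (\ref{ite:gap}) are the same as the paper's: openness of $p$ via the observation that the chordial $\varepsilon$-ball around $\bbC\Psi$ is exactly $p(\bbB_\varepsilon(\Psi)\cap\bbS\hilbH)$, and completeness via a rapidly Cauchy subsequence lifted to representatives whose successive distances realize the chordial infimum (which is attained by compactness of $\Unitary(1)$). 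The only thing you omit that the paper includes is the verification that $d_{\textup{chd}}$ is actually a metric (positive definiteness via attainment of the infimum, and the triangle inequality via the symmetric two-sided infimum description $d_{\textup{chd}}(\scrk,\scrl) = \inf\{\|\Psi-\Omega\| : \Psi\in\scrk\cap\bbS\hilbH,\ \Omega\in\scrl\cap\bbS\hilbH\}$); this is routine but worth a line, since nothing earlier in the paper establishes it.
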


\begin{proof}
(\ref{ite:gap})
    By definition, $d_{\textup{chd}}$ is non-negative and symmetric.
    To show positive definiteness assume that $d_{\textup{chd}}(\C\Psi,\C\Omega) = 0$ with
    $\Psi,\Omega\in \bbS\hilbH$. By compactness
    of $\Unitary(1)$, there exists $\lambda \in \Unitary(1)$ such that
    $\| \Psi -\lambda \Omega\|=0$. Hence $\C\Psi = \C\Omega$. To verify the triangle inequality
    let $\scrk,\scrl, \scrj\in \bbP\hilbH$ and choose a representative $\Phi\in \scrj\cap\bbS\hilbH$.
    Then 
    \begin{align}
    \nonumber  
      d_{\textup{chd}} (\scrk, \scrl) & = 
      \inf \left\{ \| \Psi -\Omega\| :\Psi\in\scrk\cap \bbS\hilbH , \: \Omega \in \scrl\cap \bbS\hilbH \right\} \\
      \nonumber  & \leq 
      \inf \left\{ \| \Psi -\Phi\| : \Psi \in \scrk\cap \bbS\hilbH \right\}
      + \inf \left\{ \| \Phi -\Omega\| : \: \Omega \in \scrl\cap \bbS\hilbH \right\} \\
      \nonumber  & =   d_{\textup{chd}} (\scrk, \scrj) +  d_{\textup{chd}} (\scrj, \scrl) \ . 
    \end{align}
    Hence $d_{\textup{chd}}$ is a metric. 
    Next we prove formula \eqref{eq:relation-transition-metric-amplitudes}.
    Given $\Psi,\Omega\in  \bbS \cH$, observe that for all $\lambda \in \Unitary(1)$ the estimate
  \[
    \|\Psi - \lambda \Omega\|^2 =
    2 \left( 1-  \Re \left( \lambda \langle {\Psi}, {\Omega} \rangle \right) \right)
    \geq  2 \left( 1 -  | \langle {\Psi}, {\Omega} \rangle | \right) 
  \]
  holds true. Putting
  \[
    \lambda_0 =
    \begin{cases}
      \frac{\ev{\Omega, \Psi}}{\abs{\ev{\Psi, \Omega}}} & \text{for }
      \langle {\Psi}, {\Omega} \rangle \neq 0 \ , \\
      1 & \text{for } \langle {\Psi}, {\Omega} \rangle = 0 \  ,
    \end{cases}
  \]
  then minimizes the functional $\| {\Psi} -\lambda {\Omega} \|$, hence 
  \[
    d_{\textup{chd}}(\bbC \Psi, \bbC \Omega)^2 = \|{\Psi} -\lambda_0 {\Omega}\|^2 =
    2\left( 1 -  |\langle {\Psi}, {\Omega} \rangle | \right) =
    2\big( 1 -  \ltrans \bbC \Psi, \bbC \Omega \rtrans \big) \ . 
  \]

  % With formula \eqref{eq:relation-transition-metric-amplitudes} we can show that $d_{\textup{chd}}$ induces the quotient topology on $\bbP \cH$.
  Given $\Psi \in \bbS \cH$, the function $d_{\textup{chd}}(\bbC \Psi, -):\bbP \cH \rightarrow \qty[0,\sqrt{2}]$ is continuous  because by \eqref{eq:relation-transition-metric-amplitudes}
  composition with $p:\bbS \cH \rightarrow \bbP \cH$ is continuous. Hence the metric topology is coarser than the quotient topology. Conversely, it follows from the definition of $d_{\textup{chd}}$ that 
  \[
  \bbB_{\varepsilon,{\textup{chd}}}(\bbC \Psi) = p\qty(\bbB_{\varepsilon}(\Psi) \cap \bbS \cH)
  \]
  for any $\Psi \in \bbS \cH$ and $\varepsilon > 0$, where $\bbB_{\varepsilon,{\textup{chd}}}(\bbC \Psi)$ is the ball of radius $\varepsilon$ centered on $\bbC \Psi$ with respect to
  the chordial metric and similarly $\bbB_\varepsilon(\Psi)$ is the ball of radius $\varepsilon$ centered on $\Psi$.
  % Thus, for any open set $U \subset \bbP \cH$ and $\bbC \Psi \in U$ with $\Psi \in \bbS \cH$, we can find $\varepsilon > 0$ such that $\bbB_\varepsilon(\Psi) \cap \bbS \cH \subset p^{-1}(U)$, hence $\bbC \Psi \in  \bbB_{\varepsilon,{\textup{chd}}}(\bbC \Psi) \subset U$.
  This proves that the quotient topology is coarser than the metric topology, and also that $p$ is an open map.

  To verify completeness observe first that for a given element $\Psi \in  \bbS \hilbH$ and ray
  $\scrl \in \bbP \hilbH$ with $\ltrans \bbC \Psi, \scrl \rtrans \neq 0$ there exists a unique representative
  $\Omega_{\Psi,\scrl} \in \bbS \hilbH\cap \scrl$ % which we call distinguished
  such that $\langle \Psi, \Omega_{\Psi,\scrl} \rangle = \ltrans \bbC \Psi, \scrl \rtrans$.
  Now assume that $(\scrl_n)_{n\in \bbN}$   is a Cauchy sequence in $\bbP \hilbH$.
  Then there exists a strictly increasing sequence of natural numbers $(n_k)_{k\in \bbN}$ such that 
    \[
       d_{\textup{chd}} (\scrl_n ,  \scrl_m) < \frac{1}{2^{k+1}} \quad \text{for all } n,m \geq n_k \ .
    \]
  Pick a representative $\Omega_0 \in  \scrl_{n_0} \cap \bbS \hilbH$ and
  % let $\Omega_1\in \bbS \hilbH$ be the distinguished representative $\Omega_{\Omega_0,\scrl_{n_1}}$ of $ \scrl_{n_1}$.
  define the sequence $(\Omega_k)_{k\in \bbN}$ of unit vectors recursively by
  \[
    \Omega_{k+1} =  \Omega_{\Omega_k,\scrl_{n_{k+1}}} \ .
  \]
  Then, for all $k\in \N$,
  \begin{equation*}
    \begin{split}
      \| \Omega_{k+1}- \Omega_k \| \, & =
      \sqrt{2(1-\Re \langle \Omega_{k+1},\Omega_k \rangle)} =
      \sqrt{2(1- \ltrans \scrl_{n_{k+1}}, \scrl_{n_k} \rtrans )} = \\
      & = d_{\textup{chd}}( \scrl_{n_{k+1}}, \scrl_{n_k} ) < \frac{1}{2^{k+1}} \ . 
    \end{split}
  \end{equation*}
  So $(\Omega_k)_{k\in \bbN}$ is a Cauchy sequence in $\bbS\hilbH$, hence convergent to a vector $\Omega \in \bbS\hilbH$.
  Let $\scrl =\bbC\Omega$. Since $d_{\textup{chd}}(\scrl_{n_k},\scrl)\leq \| \Omega_k - \Omega \|$, the subsequence
  $(\scrl_{n_k})_{k\in \bbN}$  converges to $\scrl$, hence $(\scrl_n)_{n\in \N}$ does so too. Therefore
  $(\bbP\hilbH,d_{\textup{chd}})$ is  a complete metric space.
%  
%(\ref{ite:FS})
%  To prove that the gap metric is equivalent to the Fubini--Study distance
%  consider the functions 
%   \[
%     f : [ 0,\sqrt{2}] \to \R , \: s \mapsto \arccos\left( 1 -\frac{s^2}{2} \right) \quad\text{and}\quad 
%     g: [ 0,\frac{\pi}{2} ] \to \R , \:  t \mapsto \sqrt{2( 1 -\cos t) } \ .
%   \] 
%   Both functions are continuous and differentiable on the interior of
%   their domains. Observe that $f(0)=g(0)=0$ and compute
%   \[
%     f'(s) = \frac{s}{\sqrt{1-\left(1-\frac{s^2}{2}\right)^2}} = \frac{s}{\sqrt{s^2-\frac{s^4}{4}}} = \frac{2}{\sqrt{4-s^2}}
%     \leq \sqrt{2} \quad\text{for } s\in ( 0,\sqrt{2}) 
%   \]
%   and 
%   \[
%     g'(t) = \frac{\sqrt{2}}{2} \, \frac{\sin t}{\sqrt{1-\cos t}} = \frac{\sqrt{2}}{2} \, \sqrt{1+\cos t} \leq 1
%     \quad\text{for } t\in ( 0,\frac{\pi}{2}) \ .
%   \]
%   By definition of $d_\textup{FS}$ and \eqref{eq:relation-transition-metric-amplitudes},
%   the mean-value theorem then entails for all $\scrk, \scrl \in \bbP\hilbH$ 
%   \[
%      d_{\textup{gap}}( \scrk, \scrl) = g \left( d_\textup{FS} ( \scrk, \scrl) \right) \leq  d_\textup{FS} ( \scrk, \scrl)  =% 
%      f \left( d_{\textup{gap}} ( \scrk, \scrl) \right) \leq \sqrt{2} \, d_{\textup{gap}}( \scrk, \scrl) \ .
%   \] 
%   Hence the estimate \eqref{eq:equivalence-metric-projective-space-fubini-study-distance}
%  is proved and the metrics $d_{\textup{gap}}$ and $d_\textup{FS}$ are equivalent. 
%
  
  (\ref{ite:FS}) Consider the function 
  \[
    f:[0,1] \rightarrow \bbR, \quad f(x)  =
    %\begin{cases}
       \frac{\arccos x}{\sqrt{2(1 - x)}} \text{ if } x \in [0, 1) \: \text{ and }  \:
       f (1) = 1 \ .
    %\end{cases}  
  \]
  This function is continuous on $[0,1]$ and differentiable on $(0,1)$, with
  \[
    f'(x) = \frac{1}{\sqrt{2}} \, (1 - x)^{- 3/2} \left( \frac{\arccos x}{2} - \sqrt{\frac{1 - x}{1 + x}} \right) \ .
    % \: \text{ for } x \in (0, 1)\ .
  \]
  Given $x \in (0, 1)$, put $\theta = \arccos x$.
  %there exists $\theta \in (0, \pi/2)$ such that $x = \cos \theta$.
  Using trigonometric power reducing identities yields
  \[
  f'(x) = \frac{1}{\sqrt{2}} \, (1 - x)^{- 3/2} \left( \frac{\theta}{2} - \tan\qty(\frac{\theta}{2}) \right) < 0 \ .
  \]
  Therefore $f$ is monotonically decreasing, so $1 = f(1) \leq f(x) \leq f(0) = \frac{\sqrt{2}\pi}{4}$ for all $x \in [0,1]$.
  Since
  \[
    f(\ltrans \bbC \Psi, \bbC \Omega \rtrans) =
    \frac{d_\tn{FS}( \bbC \Psi, \bbC \Omega )}{d_\tn{chd}( \bbC \Psi, \bbC \Omega )}
  \]
  provided $\ltrans \bbC \Psi, \bbC \Omega \rtrans < 1$, this proves
  \eqref{eq:equivalence-metric-projective-space-fubini-study-distance}.

  (\ref{ite:op})
   The operator norm distance of  $P( \scrk)$ and $P( \scrl)$ is given by 
   \begin{equation}
     \label{eq:definition-operator-norm-difference-projections}
     \left\| P( \scrk) - P( \scrl) \right\| = \sup\limits_{\Phi \in \bbS\hilbH}
     \left\| \big( P( \scrk) - P( \scrl)\big)\Phi \right\| \ . 
   \end{equation}
   Pick normalized representatives $\Psi \in  \scrk$ and  $\Omega \in  \scrl$. After 
   possibly multiplying $\Omega$ by a complex number of modulus $1$ one can assume that 
   $\ev{\Psi,\Omega} = \ltrans \scrk, \scrl \rtrans \geq 0$.
   % If $\ev{v,w} =1$ or in other words
   If  $\Psi$ and $\Omega$ are linearly dependent then  $ \scrk$ and $ \scrl$ coincide 
   and \eqref{eq:equivalence-metric-projective-space-gap-metric} is trivial. So assume that $\Psi$ and $\Omega$ are linearly independent. 
   First we want to show that 
   \begin{equation}
     \label{eq:estimate-operator-norm-difference-projections-normalized-vectors}
     \left\| \big( P( \scrk) - P( \scrl)\big)\Phi \right\|^2 \leq 1 - \ev{\Psi, \Omega}^2
     \quad \text{for all } \Phi \in \bbS\hilbH \ .
   \end{equation}
   To this end expand $\Phi = \Phi^\parallel + \Phi^\perp$, where 
   $\Phi^\parallel$ lies in the plane spanned by $\Psi$ and  $\Omega$ and $\Phi^\perp$ is perpendicular to that
   plane. Then 
   \[
      \big( P( \scrk) - P( \scrl)\big)\Phi =   \ev{\Psi,\Phi} \Psi  - \ev{\Omega,\Phi} \Omega =
      \langle \Psi,\Phi^\parallel \rangle \Psi  - \langle \Omega,\Phi^\parallel\rangle \Omega = \big( P( \scrk) - P( \scrl)\big)\Phi^\parallel \ .
   \]
   Hence it suffices to verify \eqref{eq:estimate-operator-norm-difference-projections-normalized-vectors} for 
   $\Phi \in \bbS\hilbH \cap \vecspan\qty{\Psi, \Omega}$. Observe that there exist unique elements
   $ \theta \in [0, \frac \pi 2]$ and  $\mu\in \bbS^1$ such that 
   $\ev{\Psi,\Phi} = \overline{\mu} \cos \theta$. One can then find a normalized vector
   $\Omega^\perp \in \vecspan\qty{\Psi,\Omega}$ perpendicular to $\Psi$ such that 
   \[
     \mu \Phi = \cos \theta \, \Psi + \sin \theta \, \Omega^\perp \ . 
   \]
   Note that
   \[
     \Omega = \ev{\Psi,\Omega} \Psi + \ev{\Omega^\perp,\Omega} \Omega^\perp \quad\text{and}\quad 
     \abs{\ev{\Omega^\perp ,\Omega}}^2 = 1 - \ev{\Psi,\Omega}^2 \ .
   \]
   Now compute 
   \begin{equation*}
     \begin{split}
       \big\| \big( P( \scrk) & - P( \scrl)\big)\Phi \big\|^2  =  
       \left\| \big( P( \scrk) - P( \scrl)\big)\mu \Phi \right\|^2 = 
       \abs{  \ev{\Psi,\mu \Phi} \Psi  - \ev{\Omega,\mu \Phi} \Omega }^2 = \\
       = \, & \abs{\ev{\Psi, \mu \Phi}}^2 - 2 \ev{\Psi,\Omega} \, \Re \left( \ev{\Psi, \mu \Phi} \ev{\mu \Phi, \Omega} \right) +
       \abs{\ev{\Omega, \mu \Phi}}^2 = \\
        = \, & \cos^2 \! \theta -2  \cos \theta \, \ev{\Psi,\Omega}\left( \cos \theta \,  \ev{\Psi,\Omega}  
         +  \sin \theta \, \Re \ev{\Omega^\perp, \Omega} \right) \, \\ 
         & +  \cos^2 \! \theta \, \ev{\Psi,\Omega}^2 + 2  \cos \theta \, \ev{\Psi,\Omega} \, \sin \theta \, 
         \Re \ev{\Omega^\perp,\Omega}  + \sin^2 \! \theta \, \abs{\ev{\Omega^\perp,\Omega}}^2  = \\
        = \, &  1 -   \ev{\Psi,\Omega}^2   \ . 
     \end{split}
   \end{equation*}
   This proves \eqref{eq:estimate-operator-norm-difference-projections-normalized-vectors}, but also implies
   by \eqref{eq:definition-operator-norm-difference-projections} that  
   \[
    \left\| P( \scrk) - P( \scrl) \right\|^2 =  1 -   \ev{\Psi,\Omega}^2 = 1 - \ltrans \scrk, \scrl \rtrans^2 \ .
   \]
   The claim \eqref{eq:equivalence-metric-projective-space-gap-metric} now follows by \eqref{eq:relation-transition-metric-amplitudes}
   and the inequality
   \[
      1 - x \leq 1 - x^2 \leq 2 (1-x) \quad \text{for all } x \in [0,1] \ . \qedhere
   \]
\end{proof}

\begin{cor}\label{cor:PH_superselection_metric_equivalence}
  Let $\fA$ be a nonzero $C^*$-algebra and $(\hilbH, \pi)$ be a nonzero irreducible representation.
  Given $\Psi \in \bbS \hilbH$ denote by $\psi$ the vector state represented by $\Psi$.
  Then the  map $r: \bbP \hilbH \rightarrow \sP_\pi(\fA)_{\textup{n}}$, $\bbC \Psi \mapsto \psi$ is a bi-Lipschitz
  bijection, hence an isomorphism of uniform spaces, where $\bbP\hilbH$ and $\sP_\pi(\fA)$ are endowed with the gap
  metric and the canonical metric, respectively. 
\end{cor}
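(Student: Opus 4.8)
The plan is to reduce the whole statement to a single explicit identity relating the canonical metric on vector states to the ray product, from which it will emerge that $r$ is not merely bi-Lipschitz but an exact homothety of ratio $2$. First I would dispatch well-definedness and the target: the functional $A \mapsto \ev{\Psi, \pi(A)\Psi}$ is unchanged when $\Psi$ is rescaled by a phase, so it depends only on the ray $\bbC\Psi$, and by Proposition \ref{prop:pure_folia_vector_state} (\ref{ite:irreducible-vector-state-pure}) a vector state of the irreducible representation $\pi$ is pure and $\pi$-normal, hence lies in $\sP_\pi(\fA)$. Surjectivity is then immediate from Proposition \ref{prop:pure_folia_vector_state} (\ref{ite:vector-state}): every element of $\sP_\pi(\fA)$ is a vector state and so has the form $r(\bbC\Psi)$.

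The key step is the metric identity. Because $\pi$ is irreducible, all its vector states constitute the single superselection sector $\sP_\pi(\fA)$, so any two of them, $\psi = r(\bbC\Psi)$ and $\omega = r(\bbC\Omega)$, satisfy the hypotheses of Theorem \ref{thm:transition_probability} in the ``same sector, irreducible representation'' branch. That theorem yields $\abs{\ev{\Psi,\Omega}}^2 = 1 - \tfrac14\norm{\psi-\omega}^2$, i.e.
\[
  \norm{\psi - \omega}^2 = 4\big(1 - \ltrans \bbC\Psi, \bbC\Omega\rtrans^2\big).
\]
Comparing this with the equality $d_{\textup{gap}}(\bbC\Psi,\bbC\Omega) = \sqrt{1 - \ltrans\bbC\Psi,\bbC\Omega\rtrans^2}$, which is the middle term of \eqref{eq:equivalence-metric-projective-space-gap-metric} in Proposition \ref{prop:PH_gap_metric} (\ref{ite:op}), gives at once
\[
  d_{\|\cdot\|}(\psi,\omega) = \norm{\psi-\omega} = 2\, d_{\textup{gap}}(\bbC\Psi,\bbC\Omega).
\]

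Finally I would read off the remaining claims from this displayed identity. It shows $r$ is bi-Lipschitz, with both Lipschitz constants equal to $2$; in particular it is distance-reflecting, so $d_{\|\cdot\|}(\psi,\omega)=0$ forces $d_{\textup{gap}}(\bbC\Psi,\bbC\Omega)=0$ and hence $\bbC\Psi = \bbC\Omega$, giving injectivity. Combined with the surjectivity established above, $r$ is a bi-Lipschitz bijection, and a bi-Lipschitz bijection is automatically an isomorphism of the associated uniform structures, completing the proof. I do not anticipate a real obstacle, since the analytic content is already carried by Theorem \ref{thm:transition_probability} and Proposition \ref{prop:PH_gap_metric}; the only point demanding care is the observation that the two vector states always lie in one and the same superselection sector, which is precisely what licenses invoking the ``same sector'' case of Theorem \ref{thm:transition_probability}. (Injectivity could alternatively be deduced directly from Proposition \ref{prop:pure_folia_vector_state} (\ref{ite:vectors-states-same-state-linearly-dependent}), but the metric identity makes it automatic.)
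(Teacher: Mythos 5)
Your proposal is correct and follows essentially the same route as the paper: well-definedness and bijectivity via Proposition \ref{prop:pure_folia_vector_state}, and the exact identity $d_{\textup{gap}}(\bbC\Psi,\bbC\Omega)^2 = 1 - \ltrans\bbC\Psi,\bbC\Omega\rtrans^2 = \tfrac14\norm{\psi-\omega}^2$ obtained by combining Theorem \ref{thm:transition_probability} with Proposition \ref{prop:PH_gap_metric}~(\ref{ite:op}), which is precisely the paper's equation \eqref{eq:gap_vs_norm}. Your version merely makes explicit some details the paper leaves implicit (e.g.\ that two vector states of the same irreducible representation lie in one superselection sector, licensing the relevant branch of Theorem \ref{thm:transition_probability}).
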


  For later purposes, we call   $r: \bbP \hilbH \rightarrow \sP_\pi(\fA)_{\textup{n}}$
  the uniform isomorphism \emph{associated} to the irreducible representation  $(\hilbH, \pi)$.

\begin{proof}
Note that the map $\bbC \Psi \mapsto \psi$ is well-defined, since multiplying $\Psi \in \bbS \hilbH$ by a unimodular complex number does not change the corresponding vector state. By Proposition \ref{prop:pure_folia_vector_state}, the map is a bijection. By Theorem \ref{thm:transition_probability} and Proposition \ref{prop:PH_gap_metric}, we have
\begin{equation}\label{eq:gap_vs_norm}
  d_{\textup{gap}} (\bbC \Psi, \bbC \Omega)^2 = 1 - \ltrans \bbC \Psi, \bbC \Omega \rtrans^2 =
  \frac 14 \norm{\psi - \omega}^2,
\end{equation}
where $\omega$ is the vector state corresponding to $\Omega \in \bbS \hilbH$. This implies that the map
$r$ is bi-Lipschitz.
%and its inverse are uniformly continuous.
\end{proof}

\begin{cor}\label{cor:superselection_sector_closed}
Let $\fA$ be a nonzero $C^*$-algebra. The union of an arbitrary collection of superselection sectors of $\fA$ is norm-closed in $\fA^*$.
\end{cor}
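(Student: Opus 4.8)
The plan is to exploit two facts established above: that distinct superselection sectors sit at canonical distance exactly $2$ from one another (Lemma \ref{lem:sector_is_open}), and that each individual sector is a \emph{complete} metric space in the canonical metric (Corollary \ref{cor:PH_superselection_metric_equivalence} combined with the completeness of the chordial/gap metric on $\bbP\hilbH$ from Proposition \ref{prop:PH_gap_metric}). Since the norm topology on $\fA^*$ is induced by a metric, it suffices to test closedness with sequences. So I would fix a family $\qty{\sP_{\omega_\alpha}(\fA)}_\alpha$ of superselection sectors, set $S = \bigcup_\alpha \sP_{\omega_\alpha}(\fA)$, and take an arbitrary sequence $(\psi_n)$ in $S$ converging in the norm of $\fA^*$ to some $\varphi \in \fA^*$; the goal is to show $\varphi \in S$.

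The crucial step, which I expect to carry the real weight of the argument, is that the tail of $(\psi_n)$ collapses into a single sector. Indeed, because $(\psi_n)$ converges it is Cauchy, so there is an index $N$ with $\norm{\psi_n - \psi_m} < 2$ for all $n,m \geq N$. By the contrapositive of Lemma \ref{lem:sector_is_open}, any two such $\psi_n,\psi_m$ lie in the same superselection sector; hence the entire tail $(\psi_n)_{n \geq N}$ is contained in one sector. Writing this sector as $\sP_\omega(\fA)$ and using the identity $\sP_\omega(\fA) = \sP_{\pi_\omega}(\fA)$ recorded in Definition \ref{def:superselection_sector}, we obtain a nonzero irreducible representation $(\hilbH,\pi_\omega)$ with $\psi_n \in \sP_{\pi_\omega}(\fA)$ for all $n \geq N$.

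Finally I would invoke completeness. By Corollary \ref{cor:PH_superselection_metric_equivalence} the sector $\sP_{\pi_\omega}(\fA)_{\textup{n}}$ is bi-Lipschitz equivalent to $\bbP\hilbH$ equipped with the gap metric, and the latter is complete by Proposition \ref{prop:PH_gap_metric}; since a bi-Lipschitz bijection sends Cauchy sequences to Cauchy sequences and preserves limits in both directions, $\sP_{\pi_\omega}(\fA)_{\textup{n}}$ is itself a complete metric space. The tail $(\psi_n)_{n \geq N}$ is Cauchy in $\sP_{\pi_\omega}(\fA)$, so it converges to some $\varphi' \in \sP_{\pi_\omega}(\fA)$. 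By uniqueness of limits in the normed space $\fA^*$ we conclude $\varphi = \varphi' \in \sP_{\pi_\omega}(\fA) \subseteq S$, which proves that $S$ is norm-closed.

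The one subtlety worth flagging is that closedness of a sector \emph{within} $\sP(\fA)_{\textup{n}}$ (Corollary \ref{cor:sector_is_open} and the remarks surrounding Theorem \ref{thm:superselection_sector_equivalences}) does not by itself place $\varphi$ in $S$, since a priori the limit is only known to live in the ambient space $\fA^*$. It is essential to use the \emph{intrinsic} completeness of each sector as a metric space, which forces the limit of the Cauchy tail to land back inside the sector irrespective of the surrounding space. That is precisely where Corollary \ref{cor:PH_superselection_metric_equivalence} and Proposition \ref{prop:PH_gap_metric} do the decisive work.
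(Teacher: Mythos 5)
Your proof is correct and follows essentially the same route as the paper's: distinct sectors are at distance $2$ (Lemma \ref{lem:sector_is_open}) so a Cauchy sequence is eventually in one sector, and each sector is intrinsically complete via Corollary \ref{cor:PH_superselection_metric_equivalence} and Proposition \ref{prop:PH_gap_metric}, so the limit lands back inside it. The subtlety you flag — that completeness, not merely relative closedness in $\sP(\fA)$, is what is needed — is exactly the point the paper's argument relies on.
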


\begin{proof}
By Theorem \ref{thm:superselection_sector_equivalences}, each superselection sector coincides with $\sP_\pi(\fA)$ for some nonzero irreducible representation $(\hilbH, \pi)$. It then follows from Proposition \ref{prop:PH_gap_metric} and Corollary \ref{cor:PH_superselection_metric_equivalence} that each superselection sector is complete, hence closed, in $\fA^*$ in the norm topology. By Lemma \ref{lem:sector_is_open}, each Cauchy sequence in $\sP(\fA)$ is eventually in a single superselection sector. It follows that the union of an arbitrary collection of superselection sectors is complete, hence norm-closed.
\end{proof}

Corollary \ref{cor:superselection_sector_closed} is equivalent to \cite[Cor.~4.8]{KadisonLimitsofStates}. Kadison takes a different approach to the proof, but remarks that it may be proven with Theorem \ref{thm:transition_probability} as we have done in the proof of Corollary \ref{cor:PH_superselection_metric_equivalence}.

We now begin to consider the differential geometry of projective Hilbert space. 
We refer the reader to Appendix \ref{sec:banach-hilbert-manifolds} or \cite{LanIDM} for details
on infinite dimensional manifolds used henceforth.  

\begin{lem}\label{lem:PH_trivializations}
  Let $\hilbH$ be a Hilbert space of dimension $\geq 2 $ and fix $\Psi \in \bbS\hilbH$. Let
  $C_\Psi = (\bbC \Psi)^\perp$ and let $\bbB_1 (\C\Psi)$ be the open unit ball around $\bbC \Psi$
  with respect to the gap metric. The maps
  $\sigma_\Psi: \bbS \hilbH \setminus \qty(\bbS \hilbH \cap C_\Psi)\rightarrow C_\Psi\times \Unitary(1)$ and
  $\tau_\Psi:\bbB_1 (\C\Psi) \rightarrow C_\Psi$ given by
\[
\sigma_\Psi\qty(\Omega) = \qty(\frac{\Omega}{\ev{\Psi, \Omega}} - \Psi, \frac{\ev{\Psi, \Omega}}{\abs{\ev{\Psi, \Omega}}})
\qqtext{and} 
\tau_\Psi (\bbC\Omega) = \frac{\Omega}{\ev{\Psi,\Omega}} - \Psi 
\]
for $\Omega \in \bbS \hilbH \setminus \qty(\bbS \hilbH \cap C_\Psi)$
are well-defined homeomorphisms.
\end{lem}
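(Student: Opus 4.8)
The plan is to analyze $\sigma_\Psi$ directly and then obtain $\tau_\Psi$ from it, exploiting that the first component of $\sigma_\Psi$ is exactly $\tau_\Psi \circ p$, where $p \colon \bbS\hilbH \to \bbP\hilbH$ is the canonical projection.

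First I would verify that $\sigma_\Psi$ is well defined. Its domain $\bbS\hilbH \setminus (\bbS\hilbH \cap C_\Psi)$ is precisely the set of unit vectors $\Omega$ with $\ev{\Psi,\Omega} \neq 0$, so the division is legitimate; the computation $\ev{\Psi, \Omega/\ev{\Psi,\Omega} - \Psi} = 1 - 1 = 0$ puts the first component in $C_\Psi$, and the second component lies in $\Unitary(1)$ by construction. To see $\sigma_\Psi$ is a homeomorphism I would write down an explicit inverse. Decomposing $\Omega = \ev{\Psi,\Omega}\Psi + v$ with $v \in C_\Psi$ and imposing $\norm{\Omega} = 1$ gives $\abs{\ev{\Psi,\Omega}} = (1 + \norm{w}^2)^{-1/2}$, where $w$ denotes the first component, and this leads to the candidate
\[
  \sigma_\Psi^{-1}(w, \lambda) = \frac{\lambda (\Psi + w)}{\sqrt{1 + \norm{w}^2}} \ .
\]
I would check that this lands in the domain (its inner product against $\Psi$ equals $\lambda (1 + \norm{w}^2)^{-1/2} \neq 0$, and its norm is $1$ because $\Psi \perp w$) and that both composites are the identity. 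Since $\sigma_\Psi$ and $\sigma_\Psi^{-1}$ are assembled from the inner product, scalar multiplication, and the continuous nonvanishing maps $a \mapsto a/\abs{a}$ and $w \mapsto (1 + \norm{w}^2)^{-1/2}$, both are continuous, so $\sigma_\Psi$ is a homeomorphism.

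For $\tau_\Psi$ I would first identify the domain: by \eqref{eq:equivalence-metric-projective-space-gap-metric} in Proposition \ref{prop:PH_gap_metric}, $d_{\textup{gap}}(\bbC\Psi, \bbC\Omega) = \sqrt{1 - \ltrans \bbC\Psi, \bbC\Omega \rtrans^2}$, so $\bbB_1(\bbC\Psi)$ is exactly the set of rays $\bbC\Omega$ with $\ev{\Psi,\Omega} \neq 0$. The defining formula is invariant under rescaling $\Omega \mapsto c\Omega$ with $c \neq 0$, so $\tau_\Psi$ descends to a well-defined map on rays, and the earlier orthogonality computation shows $\tau_\Psi(\bbC\Omega) \in C_\Psi$. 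The key identity is
\[
  \tau_\Psi \circ p = \mathrm{pr}_1 \circ \sigma_\Psi
\]
on $\bbS\hilbH \setminus (\bbS\hilbH \cap C_\Psi) = p^{-1}(\bbB_1(\bbC\Psi))$, where $\mathrm{pr}_1$ is the projection onto $C_\Psi$. Because $p$ is a continuous open surjection (shown in the proof of Proposition \ref{prop:PH_gap_metric}(\ref{ite:gap})), its restriction to the saturated open set $p^{-1}(\bbB_1(\bbC\Psi))$ is a quotient map onto $\bbB_1(\bbC\Psi)$; continuity of the right-hand side then forces continuity of $\tau_\Psi$. Bijectivity and continuity of the inverse follow from the explicit formula $\tau_\Psi^{-1}(w) = \bbC(\Psi + w)$: one checks $\tau_\Psi(\bbC(\Psi + w)) = (\Psi + w)/\ev{\Psi, \Psi + w} - \Psi = w$, that every ray in the domain arises this way, and that $w \mapsto \bbC(\Psi + w) = p\big((\Psi + w)/\sqrt{1 + \norm{w}^2}\big)$ is continuous.

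The computations are elementary throughout; the only point needing care is the continuity of $\tau_\Psi$, which is defined on the quotient $\bbB_1(\bbC\Psi)$ rather than on the sphere. I expect this to be the one genuine (if minor) obstacle, and it is resolved precisely by combining the factorization $\tau_\Psi \circ p = \mathrm{pr}_1 \circ \sigma_\Psi$ with the fact that $p$ restricts to a quotient map on the relevant saturated open set.
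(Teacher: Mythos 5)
Your proposal is correct and follows essentially the same route as the paper: the same explicit inverses $\sigma_\Psi^{-1}(\Phi,\lambda)=\lambda(\Phi+\Psi)/\sqrt{1+\norm{\Phi}^2}$ and $\tau_\Psi^{-1}(\Phi)=\bbC(\Phi+\Psi)$, with continuity of $\tau_\Psi$ deduced from the factorization through the canonical projection $p$. If anything, you are slightly more careful than the paper in spelling out why $p$ restricted to the saturated open set is a quotient map and in identifying $\bbB_1(\C\Psi)$ explicitly as the rays not orthogonal to $\Psi$, both of which the paper leaves implicit.
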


\begin{proof}
  % We show that $\sigma_\Psi$ is well-defined.
  If $\Omega \in \bbS \hilbH \setminus (\bbS \hilbH \cap C_\Psi)$, then
  $\ev{\Psi, \Omega} \neq 0$ and
  \[
    \ev{\Psi, \frac{\Omega}{\ev{\Psi,\Omega}} - \Psi} = 1 - \ev{\Psi,\Psi} = 0 \ .
  \]
  So $\frac{\Omega}{\ev{\Psi,\Omega}} - \Psi \in C_\Psi$
  and $\sigma_\Psi$ is well-defined.
Continuity of $\sigma_\Psi$ is manifest. We now define a continuous map
$\sigma^{-1}_\Psi: C_\Psi \times \Unitary(1) \rightarrow \bbS \hilbH \setminus \qty(\bbS \hilbH \cap C_\Psi)$ by
\[
  \sigma^{-1}_\Psi(\Phi, \lambda) = \frac{\lambda (\Phi + \Psi)}{\sqrt{1 + \norm{\Phi}^2}} \ .
\]
% This is also clearly well-defined and continuous.
It is straightforward to check that $\sigma_\Psi^{-1}$ is indeed a two-sided inverse
for $\sigma_\Psi$.

Next we consider $\tau_\Psi$. We have already shown that
$\frac{\Omega}{\ev{\Psi,\Omega}} - \Psi$ is perpendicular to $\Psi$, so $\tau_\Psi$
is well defined. Since composition of $\tau_\Psi$ with the canonical projection 
$p: \bbS \hilbH \setminus \qty(\bbS \hilbH \cap C_\Psi)  \to \bbB_1 (\bbC \Psi)$
is continuous, $\tau_\Psi$ is continuous as well.
Now put
\begin{equation}
  \label{eq:inverse-chart-projective-hilbert-space}
   \tau_\Psi^{-1}(\Phi) = \bbC(\Phi + \Psi) \quad\text{for } \Phi \in C_\Psi \ .
\end{equation} 
It is clear that $\Psi + \Phi \neq 0$ for such $\Phi$, so
$\bbC(\Psi + \Phi) \in \bbP \hilbH$ and the ray product
$\ltrans \tau^{-1}_\Psi(\Phi), \bbC \Psi \rtrans = \norm{\Psi + \Phi}^{-1}$
is positive. Hence
% , \[ d_{\textup{gap}}(\tau_\Psi^{-1}(\Phi), \bbC \Psi) < 1,\] 
$\tau_\Psi^{-1}(\Phi) \in \bbB_1 (\C\Psi)$. Continuity of 
the map $\tau_\Psi^{-1}: C_\Psi \to \bbB_1 (\C\Psi)$ is immediate, likewise that it
is a two-sided inverse for $\tau_\Psi$.
\end{proof}

\begin{thm}\label{thm:holomorphic-atlas-projective-space}
  Let $\hilbH$ be a Hilbert space of dimension $\geq 2$ and let $\bbP \hilbH$ be its
  projective Hilbert space. The set of all maps $ \tau_\Psi : \bbB_1 (\C\Psi) \to C_\Psi$ with
  $\Psi \in \bbS\hilbH$  is a holomorphic atlas for $\bbP \hilbH$. Hereby,
  $\bbB_1 (\C\Psi)$, $C_\Psi$ and $\tau_\Psi$ are as in Lemma \ref{lem:PH_trivializations}.
\end{thm}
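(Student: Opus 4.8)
The plan is to verify the two defining conditions of a holomorphic atlas: that the charts $\tau_\Psi$ cover $\bbP\hilbH$, and that all their transition maps are biholomorphic. By Lemma \ref{lem:PH_trivializations} each $\tau_\Psi \colon \bbB_1(\bbC\Psi) \to C_\Psi$ is already a homeomorphism onto the open set $C_\Psi$, so only the analytic compatibility of overlapping charts requires work. For the covering I would observe that $\bbC\Omega \in \bbB_1(\bbC\Psi)$ precisely when $\ev{\Psi,\Omega} \neq 0$: indeed, by the equality in \eqref{eq:equivalence-metric-projective-space-gap-metric} one has $d_{\textup{gap}}(\bbC\Psi,\bbC\Omega)^2 = 1 - \ltrans\bbC\Psi,\bbC\Omega\rtrans^2 < 1$ if and only if $\ltrans\bbC\Psi,\bbC\Omega\rtrans > 0$. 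In particular $\bbC\Psi \in \bbB_1(\bbC\Psi)$, so every ray lies in the domain of its own chart and the charts cover $\bbP\hilbH$.

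Next I would compute the transition maps explicitly, using the inverse formula $\tau_\Psi^{-1}(\Phi) = \bbC(\Phi + \Psi)$ from Lemma \ref{lem:PH_trivializations}. Fix $\Psi, \Psi' \in \bbS\hilbH$. By the covering description above, a point $\Phi \in C_\Psi$ lies in the overlap domain $\tau_\Psi\big(\bbB_1(\bbC\Psi) \cap \bbB_1(\bbC\Psi')\big)$ exactly when $\ev{\Psi', \Phi + \Psi} \neq 0$; since $\Phi \mapsto \ev{\Psi',\Phi+\Psi}$ is continuous, this domain $U \subset C_\Psi$ is open, being the preimage of $\bbC \setminus \{0\}$. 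On $U$ the transition map is
\[
(\tau_{\Psi'} \circ \tau_\Psi^{-1})(\Phi) = \frac{\Phi + \Psi}{\ev{\Psi', \Phi + \Psi}} - \Psi' .
\]

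I would then argue holomorphy term by term, using that sums, products, reciprocals of nonvanishing scalars, and compositions of holomorphic maps between open subsets of complex Banach spaces are again holomorphic, and that every bounded $\bbC$-linear map is holomorphic. The numerator $\Phi \mapsto \Phi + \Psi$ is a continuous affine map $C_\Psi \to \hilbH$, hence holomorphic. Because the inner product is linear in its \emph{second} argument, $\Phi \mapsto \ev{\Psi', \Phi + \Psi}$ is a continuous affine scalar function that is nonvanishing on $U$, so its reciprocal is holomorphic on $U$. Multiplying the $\hilbH$-valued numerator by this scalar and subtracting the constant $\Psi'$ yields a holomorphic map $U \to \hilbH$. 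By Lemma \ref{lem:PH_trivializations} its values already lie in $C_{\Psi'}$, so composing with the bounded linear (hence holomorphic) orthogonal projection $Q_{\Psi'} \colon \hilbH \to C_{\Psi'}$ leaves the map unchanged and exhibits it as a holomorphic map $U \to C_{\Psi'}$. Interchanging the roles of $\Psi$ and $\Psi'$ shows the inverse transition is holomorphic as well, so each transition is biholomorphic; since all the model spaces $C_\Psi$ are mutually isomorphic Hilbert spaces, $\{\tau_\Psi\}_{\Psi \in \bbS\hilbH}$ is a holomorphic atlas for $\bbP\hilbH$.

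I expect the only real subtlety to be bookkeeping rather than depth: one must respect the convention that the inner product is $\bbC$-linear in the second slot (so that $\ev{\Psi',\cdot}$ is holomorphic and not merely antiholomorphic), confirm that the overlap domains are open, and invoke that the transition map already takes values in $C_{\Psi'}$ so that the target Banach space is correct. The genuinely analytic input --- that the reciprocal of a nonvanishing holomorphic scalar function is holomorphic --- is standard for Banach-space-valued holomorphic maps and requires no estimate.
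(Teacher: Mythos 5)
Your argument is correct and follows essentially the same route as the paper's proof: compute the transition map explicitly via $\tau_\Psi^{-1}(\Phi) = \bbC(\Phi+\Psi)$ and observe that the resulting expression is holomorphic. You supply some details the paper leaves implicit (the covering, openness of the overlap domains, and the term-by-term holomorphy argument relying on linearity of the inner product in the second slot), but there is no substantive difference in method.
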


\begin{proof}
  Suppose to be given $\Psi, \Omega \in \bbS \hilbH$ such that
  $\bbB_1 (\C\Psi) \cap \bbB_1 (\C\Omega) \neq \varnothing$. Note that
  then $\ev{\Omega, \Phi + \Psi}\neq 0$ for 
  $\Phi \in \tau_\Psi(\bbB_1 (\C\Psi) \cap \bbB_1 (\C\Omega))$ 
  since $\C (\Phi +\Psi) \in \bbB_1 (\C\Omega)$ by Eq.~\eqref{eq:inverse-chart-projective-hilbert-space}
  and assumption. Lemma \ref{lem:PH_trivializations} then entails that for all such $\Phi$
  \[
    (\tau_\Omega \circ \tau_\Psi^{-1})(\Phi) =
    \frac{\Phi + \Psi}{\ev{\Omega, \Phi + \Psi}} - \Omega
    % \quad\text{for all } \Phi \in \tau_\Psi(\bbB_1 (\C\Psi) \cap \bbB_1 (\C\Omega))
    \ ,
  \]
  hence the transition map $\tau_\Omega \circ \tau_\Psi^{-1}: \tau_\Psi(\bbB_1 (\C\Psi) \cap \bbB_1 (\C\Omega)) \to\tau_\Omega(\bbB_1 (\C\Psi) \cap \bbB_1 (\C\Omega))  $ is holomorphic. 
\end{proof}

By the theorem, $\bbP \hilbH$ becomes a complex manifold
modeled on Hilbert spaces. Note that the sphere $\bbS\hilbH$ carries in a natural way
the structure of a real analytic manifold by the Regular Value Theorem for
Banach manifolds \cite[Theorem D]{GloFSIIDM} and since the map
$ \hilbH \setminus \{ 0\} \to \R_{>0}$, $\Omega  \mapsto \langle \Omega,\Omega\rangle$ is
a real analytic submersion. A real analytic atlas for $\bbS\hilbH$ is given by the set
of all $\sigma_\Psi$ with $\Psi \in \bbS\hilbH$ since the maps
\[
  (\sigma_\Psi, \| \cdot \|^2) : \hilbH \setminus C_\Psi
  \to C_\Psi \times \Unitary(1) \times \R_{>0} , \:
  \Omega \mapsto \left(\sigma_\Psi \left(\frac{\Omega}{\|\Omega\|}\right) , \|\Omega\|^2 \right)
\]
are real analytic diffeomorphisms.

\begin{thm}
  Let $\hilbH$ be a Hilbert space of dimension $\geq 2$.
  Then the canonical projection $p_{\bbS\hilbH}:\bbS \hilbH \rightarrow \bbP \hilbH$
  is real analytic and for each $\Psi\in \bbS \hilbH$ the map 
  \begin{align*}
    \rho_\Psi  \colon \bbS\hilbH \setminus (\bbS \hilbH \cap C_\Psi) \rightarrow
    \bbB_1 (\C\Psi) \times \Unitary(1), \:\:
    \Phi  \mapsto \qty(\bbC \Phi , \frac{\ev{\Psi, \Phi}}{\abs{\ev{\Psi, \Phi}}}) 
\end{align*}
is a real analytic local trivialization for $p$ with typical fiber
$\Unitary(1)$. 
Moreover, the set of pairs $(\bbB_1 (\C\Psi), \rho_\Psi )$ with
$\Psi \in \bbS \hilbH$ forms a bundle atlas, and $p_{\bbS\hilbH}: \bbS \hilbH \rightarrow \bbP \hilbH$
becomes a real analytic $\Unitary (1)$-bundle.
\end{thm}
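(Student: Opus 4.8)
The plan is to deduce every assertion from a single factorization of $\rho_\Psi$ through the two charts already constructed, namely $\sigma_\Psi$ for $\bbS\hilbH$ and $\tau_\Psi$ for $\bbP\hilbH$. First I would match up the domains: for $\Phi\in\bbS\hilbH$ the gap-metric formula \eqref{eq:equivalence-metric-projective-space-gap-metric} gives $d_{\textup{gap}}(\bbC\Phi,\bbC\Psi)=\sqrt{1-\ltrans\bbC\Phi,\bbC\Psi\rtrans^2}<1$ exactly when $\ltrans\bbC\Phi,\bbC\Psi\rtrans>0$, i.e.\ when $\ev{\Psi,\Phi}\neq0$. Hence $\bbS\hilbH\setminus(\bbS\hilbH\cap C_\Psi)=p_{\bbS\hilbH}^{-1}(\bbB_1(\bbC\Psi))$, so the domain of $\rho_\Psi$ is precisely the preimage of the trivializing set $\bbB_1(\bbC\Psi)$.

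Next, comparing the defining formulas of Lemma~\ref{lem:PH_trivializations}, I would record the identity
\[
\sigma_\Psi(\Phi)=\Bigl(\tau_\Psi(p_{\bbS\hilbH}(\Phi)),\,\tfrac{\ev{\Psi,\Phi}}{\abs{\ev{\Psi,\Phi}}}\Bigr),
\]
whose first component is $\tau_\Psi\circ p_{\bbS\hilbH}$ and whose second component is the second component of $\rho_\Psi$. This yields at once
\[
\rho_\Psi=(\tau_\Psi^{-1}\times\id_{\Unitary(1)})\circ\sigma_\Psi.
\]
Since $\sigma_\Psi$ is a real analytic chart for $\bbS\hilbH$ and $\tau_\Psi$ is holomorphic by Theorem~\ref{thm:holomorphic-atlas-projective-space}, hence real analytic, it follows immediately that $\rho_\Psi$ is a real analytic diffeomorphism onto $\bbB_1(\bbC\Psi)\times\Unitary(1)$. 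Moreover $\mathrm{pr}_1\circ\rho_\Psi=p_{\bbS\hilbH}$ directly from the definition of $\rho_\Psi$, so on each domain $p_{\bbS\hilbH}$ equals $\mathrm{pr}_1\circ\rho_\Psi$ and is therefore real analytic; as these domains cover $\bbS\hilbH$, the projection $p_{\bbS\hilbH}$ is real analytic, and the same identity exhibits $\rho_\Psi$ as a local trivialization of $p_{\bbS\hilbH}$ over $\bbB_1(\bbC\Psi)$.

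Finally I would assemble the bundle structure. The balls $\bbB_1(\bbC\Psi)$ cover $\bbP\hilbH$, so the pairs $(\bbB_1(\bbC\Psi),\rho_\Psi)$ form a bundle atlas. Inverting the factorization and using $\sigma_\Psi^{-1}$ together with the orthogonality $\tau_\Psi(\scrl)\perp\Psi$, which gives $\norm{\tau_\Psi(\scrl)+\Psi}=\sqrt{1+\norm{\tau_\Psi(\scrl)}^2}$, one obtains $\rho_\Psi^{-1}(\scrl,\mu)=\mu\,(\tau_\Psi(\scrl)+\Psi)/\norm{\tau_\Psi(\scrl)+\Psi}$. Substituting into $\rho_\Omega$ yields, for $\scrl\in\bbB_1(\bbC\Psi)\cap\bbB_1(\bbC\Omega)$,
\[
(\rho_\Omega\circ\rho_\Psi^{-1})(\scrl,\mu)=\bigl(\scrl,\,g_{\Omega\Psi}(\scrl)\,\mu\bigr),\qquad g_{\Omega\Psi}(\scrl)=\frac{\ev{\Omega,\tau_\Psi(\scrl)+\Psi}}{\abs{\ev{\Omega,\tau_\Psi(\scrl)+\Psi}}}.
\]
Since $\tau_\Psi$ is real analytic and $z\mapsto z/\abs{z}$ is real analytic on $\bbC^\times$, each $g_{\Omega\Psi}\colon\bbB_1(\bbC\Psi)\cap\bbB_1(\bbC\Omega)\to\Unitary(1)$ is real analytic and acts on the typical fiber $\Unitary(1)$ by multiplication; thus $p_{\bbS\hilbH}$ is a real analytic $\Unitary(1)$-bundle. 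One checks that the associated action on $\bbS\hilbH$ is scalar multiplication $\lambda\cdot\Phi=\lambda\Phi$, which in every trivialization reads $(\scrl,\mu)\mapsto(\scrl,\lambda\mu)$, the standard principal action.

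The main obstacle is conceptual rather than computational: spotting the factorization $\rho_\Psi=(\tau_\Psi^{-1}\times\id)\circ\sigma_\Psi$, after which real analyticity, the diffeomorphism property, and local triviality all reduce to the properties of $\sigma_\Psi$ and $\tau_\Psi$ established earlier. The only genuine computations are the gap-metric domain identification and the Pythagorean simplification of $\norm{\tau_\Psi(\scrl)+\Psi}$, both of which are routine.
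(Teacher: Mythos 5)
Your proof is correct and follows essentially the same route as the paper: both hinge on the factorization $\rho_\Psi=(\tau_\Psi^{-1}\times\id_{\Unitary(1)})\circ\sigma_\Psi$, deduce real analyticity of $p_{\bbS\hilbH}$ and the local trivialization property from the charts of Lemma~\ref{lem:PH_trivializations} and Theorem~\ref{thm:holomorphic-atlas-projective-space}, and then verify that the transition functions are real analytic $\Unitary(1)$-valued maps (your formula for $g_{\Omega\Psi}$ agrees with the paper's $\lambda\cdot\frac{\ev{\Omega,\Phi}}{\abs{\ev{\Omega,\Phi}}}\cdot\frac{\abs{\ev{\Psi,\Phi}}}{\ev{\Psi,\Phi}}$ upon choosing the representative $\Phi$ proportional to $\tau_\Psi(\scrl)+\Psi$). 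The only additions are the explicit gap-metric identification of the domain and the closing remark on the principal action, both of which are fine.
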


\begin{proof}
We first note that $\rho_\Psi = (\tau_\Psi^{-1} \times \id_{\Unitary(1)}) \circ \sigma_\Psi$.
% : \bbS\hilbH \setminus (\bbS \hilbH \cap C_\Psi) \rightarrow \bbB_1 (\C\Psi) \times \Unitary(1)$.
Indeed, observe that $p_{\bbS\hilbH}^{-1}(\bbB_1 (\C\Psi))$ coincides with 
$\bbS \hilbH \setminus \qty(\bbS \hilbH \cap C_\Psi)$ and that the diagram
\[
\begin{tikzcd}
\bbS \hilbH \setminus \qty(\bbS \hilbH \cap C_\Psi) \arrow[rr," (\tau_\Psi^{-1} \times id_{\Unitary(1)}) \circ \sigma_\Psi"]\arrow[dr,"p_{\bbS\hilbH}"'] && \bbB_1 (\C\Psi) \times \Unitary(1) \arrow[dl]\\
&\bbB_1 (\C\Psi)&
\end{tikzcd}
\]
commutes, as can be seen from the definition of $\sigma_\Psi$ and the formula for $\tau_\Psi$. Thus,
\[
(\tau_\Psi^{-1} \times id_{\Unitary(1)}) \circ \sigma_\Psi(\Phi) = \qty(\bbC \Phi, \frac{\ev{\Psi, \Phi}}{\abs{\ev{\Psi, \Phi}}}) = \rho_\Psi(\Phi).
\]
Commutativity of the diagram also shows that $p$ is real analytic since $\sigma_\Psi$ and
$\tau_\Psi^{-1}$ are so. 
Next we want to show that for given $\Psi, \Omega \in \bbS \hilbH$ the transition map
\[
  (\bbB_1 (\C\Psi) \cap \bbB_1 (\C\Omega)) \times \Unitary(1) \rightarrow \Unitary(1) : \:
  (\bbC \Phi, \lambda) \mapsto
  % (\rho_{\Psi, \bbC \Phi} \circ \rho_{\Omega, \bbC \Phi}^{-1})(\lambda) =
  (\rho_{\Omega} \circ \rho_{\Psi}^{-1})(\bbC \Phi , \lambda)
\]
%defined by
%\[
%  (\bbC \Phi, \lambda) \mapsto (\rho_{\Psi, \bbC \Phi} \circ \rho_{\Omega, \bbC \Phi}^{-1})(\lambda)
%  = (\rho_{\Psi} \circ \rho_{\Omega}^{-1})(\bbC \Phi , \lambda)
%\]
is real analytic. To this end check that
\begin{align*}
  \qty(\rho_{\Omega} \circ \rho_{\Psi}^{-1})(\C\Phi,\lambda) = \lambda \cdot \frac{\ev{\Omega, \Phi}}{\abs{\ev{\Omega, \Phi}}} \cdot \frac{\abs{\ev{\Psi, \Phi}}}{\ev{\Psi, \Phi}} \ .
  % \quad \text{for all } (\C\Phi,\lambda) \in  (\bbB_1 (\C\Psi) \cap \bbB_1 (\C\Omega)) \times \Unitary(1) \ .
\end{align*}
The right hand side is obviously real analytic as a function of
$\Phi \in \bbS\hilbH \setminus (C_\Psi \cup C_\Omega)$ since the 
inner product on a Hilbert space is real analytic because it is a real bilinear map. 
This entails the claim.
\end{proof}

\begin{cor}\label{cor:PH_mapsto_SH}
  Let $\hilbH$ be a Hilbert space of dimension $\geq 2$ and fix $\Psi \in \bbS \hilbH$. The function
  $\bbB_1 (\C\Psi) \rightarrow \bbS \hilbH$ which maps a ray $\scrl\in\bbB_1 (\C\Psi)$ to the unique
  $\Phi \in  \scrl \cap \bbS \hilbH$ such that  $\ev{\Phi, \Psi} > 0$ then is real analytic.
\end{cor}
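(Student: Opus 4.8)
The plan is to recognize the map in question as a composition of two real analytic maps that have already been produced, namely the chart $\tau_\Psi$ on $\bbB_1(\C\Psi)$ from Theorem~\ref{thm:holomorphic-atlas-projective-space} and the trivialization $\sigma_\Psi^{-1}$ from Lemma~\ref{lem:PH_trivializations} restricted to the slice $\lambda = 1$. So there is nothing deep to prove: the content is purely in correctly bookkeeping which unit representative of a ray is selected.

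First I would make the selection explicit in the chart. Given $\scrl \in \bbB_1(\C\Psi)$, set $v = \tau_\Psi(\scrl) \in C_\Psi$, so that $\scrl = \C(v + \Psi)$ by \eqref{eq:inverse-chart-projective-hilbert-space}. Since $v \perp \Psi$, the representative $v + \Psi$ satisfies $\ev{v + \Psi, \Psi} = 1$ and $\norm{v + \Psi}^2 = 1 + \norm{v}^2$. A general unit representative of $\scrl$ has the form $\mu(v+\Psi)/\norm{v+\Psi}$ with $\mu \in \Unitary(1)$, and
\[
\ev{\tfrac{\mu(v+\Psi)}{\norm{v+\Psi}}, \Psi} = \frac{\overline{\mu}}{\norm{v+\Psi}} ,
\]
so the requirement $\ev{\cdot,\Psi} > 0$ forces $\overline{\mu}$ to be real and positive, hence $\mu = 1$. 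Thus the unique unit vector selected is
\[
\Phi = \frac{v + \Psi}{\sqrt{1 + \norm{v}^2}} = \sigma_\Psi^{-1}(v, 1),
\]
the last equality being exactly the formula for $\sigma_\Psi^{-1}$ recorded in the proof of Lemma~\ref{lem:PH_trivializations}.

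Consequently the map under consideration equals $\scrl \mapsto \sigma_\Psi^{-1}(\tau_\Psi(\scrl), 1)$. I would then conclude by assembling real analyticity of the factors: $\tau_\Psi$ is a real analytic chart, since by Theorem~\ref{thm:holomorphic-atlas-projective-space} it is a holomorphic chart and hence real analytic for the underlying real analytic structure on $\bbP\hilbH$; the inclusion $C_\Psi \to C_\Psi \times \Unitary(1)$, $v \mapsto (v,1)$, is real analytic; and $\sigma_\Psi^{-1}$ is real analytic because $\sigma_\Psi$ belongs to the real analytic atlas of $\bbS\hilbH$ established immediately before this corollary. A composition of real analytic maps is real analytic, giving the claim.

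I do not expect a genuine obstacle; the only point requiring care is verifying that the unit vector with positive overlap with $\Psi$ is precisely the $\lambda=1$ slice of $\sigma_\Psi^{-1}$ rather than some other unimodular multiple. As an alternative argument that sidesteps the trivialization entirely, one could show directly that $v \mapsto (v+\Psi)/\sqrt{1+\norm{v}^2}$ is real analytic as a map into $\hilbH \setminus \{0\}$ — the numerator is affine and $v \mapsto 1 + \norm{v}^2 = 1 + \ev{v,v}$ is a positive real analytic function, so its reciprocal square root is real analytic — and then observe that, since $\bbS\hilbH$ is a real analytic split submanifold of $\hilbH$, real analyticity into $\hilbH$ with image in $\bbS\hilbH$ is equivalent to real analyticity into $\bbS\hilbH$.
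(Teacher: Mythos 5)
Your proof is correct and takes the same route as the paper, which disposes of the corollary in one line by writing the map as the composition $\scrl \mapsto \sigma_\Psi^{-1}(\tau_\Psi(\scrl),1)$ of real analytic maps. Your explicit verification that the unit representative with $\ev{\Phi,\Psi}>0$ is exactly the $\lambda=1$ slice of $\sigma_\Psi^{-1}$ is a welcome elaboration of the detail the paper leaves implicit.
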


\begin{proof}
  The stated map is the composition
  $\scrl \mapsto \sigma_\Psi^{-1}( \tau_\Psi( \scrl), 1)$
  % \[
  %  \scrl \mapsto \tau_\Omega(\scrl ) \mapsto (\tau_\Omega(\scrl), 1) \mapsto \sigma_\Omega^{-1}(     \tau_\Omega( \scrl), 1),
  % \]
which is real analytic since $\tau_\Psi$ and $\sigma_\Psi^{-1}$ are.
\end{proof}

\begin{cor}\label{cor:purestate_mapsto_SH}
  Let $\fA$ be a $C^*$-algebra,  $\omega \in \sP(\fA)_{\textup{n}}$, and let 
  $\bbB_2(\omega)  \subset \sP(\fA)_{\textup{n}}$ be the open ball of radius $2$
  centered on $\omega$.  If $(\hilbH, \pi, \Omega)$ is the GNS representation of $\omega$,
  $\bbB_2(\omega) $ is contained in the space of pure $\pi$-normal states
  $\sP_\pi(\fA)_{\textup{n}}$ and the map
  $s: \bbB_2(\omega) \rightarrow \bbS \hilbH$, $\varphi \mapsto \Phi $,
  where $\Phi$ is the unique unit vector representing $\varphi$ and having
  $\ev{\Phi, \Omega} > 0$, is a norm continuous section of
  the canonical  projection $p_{\bbS \hilbH}: \bbS \hilbH \to \sP_\pi (\fA)_{\textup{n}}$.
  When $\sP_\pi (\fA)_{\textup{n}}$ is endowed with the unique complex manifold structure so that
  the canonical isomorphism of uniform spaces $r:\bbP\hilbH \to \sP_\pi (\fA)_{\textup{n}}$
  associated to the representation $(\hilbH, \pi)$  as in
  Corollary \ref{cor:PH_superselection_metric_equivalence} is biholomorphic,
  the thus defined section $s$ is real analytic.
  Moreover, the canonical injection
  $\sP_\pi (\fA)_{\textup{n}} \hookrightarrow \fA^*$
  then is real analytic and its tangent map is injective. 
\end{cor}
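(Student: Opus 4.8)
The plan is to realize the section $s$ as a composition of maps whose regularity has already been established, and then to read off the properties of the canonical injection from those of $s$ together with the Kadison transitivity theorem. First I would verify the two preliminary claims. If $\varphi \in \bbB_2(\omega)$ then $\norm{\varphi - \omega} < 2$, so by Lemma \ref{lem:sector_is_open} the states $\varphi$ and $\omega$ cannot lie in different superselection sectors; since the sector of $\omega$ is $\sP_{\pi_\omega}(\fA) = \sP_\pi(\fA)$ (Definition \ref{def:superselection_sector}), this gives $\varphi \in \sP_\pi(\fA)$, and by Proposition \ref{prop:pure_folia_vector_state} each such $\varphi$ is a vector state whose representing unit vector is unique up to a unimodular scalar. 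Theorem \ref{thm:transition_probability} yields $\abs{\ev{\Phi,\Omega}}^2 = 1 - \tfrac14\norm{\varphi - \omega}^2 > 0$ for any representative $\Phi$, so exactly one representative satisfies $\ev{\Phi,\Omega} > 0$; this defines $s(\varphi)$, and $p_{\bbS\hilbH}(s(\varphi)) = \varphi$ shows $s$ is a section.

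Next I would identify $s$ with a composition. By \eqref{eq:gap_vs_norm} the bi-Lipschitz bijection $r$ of Corollary \ref{cor:PH_superselection_metric_equivalence} satisfies $d_{\textup{gap}} = \tfrac12\norm{\,\cdot\,}$ on states, so $r$ carries $\bbB_1(\C\Omega)$ onto $\bbB_2(\omega)$, and by construction $s = g \circ r^{-1}$, where $g\colon \bbB_1(\C\Omega) \to \bbS\hilbH$ is the real-analytic map of Corollary \ref{cor:PH_mapsto_SH} (the case $\Psi = \Omega$). Norm continuity of $s$ is then immediate, since $r^{-1}$ is Lipschitz and $g$ is continuous. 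Because the complex manifold structure on $\sP_\pi(\fA)_{\textup{n}}$ is by definition the one making $r$ biholomorphic, the map $r^{-1}$ is real analytic, and composing with the real-analytic $g$ shows $s$ is real analytic. (When $\dim\hilbH = 1$ the sector is a single point and every assertion is trivial, so I assume $\dim\hilbH \geq 2$, as required by the two corollaries.)

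For the canonical injection $\iota\colon \sP_\pi(\fA)_{\textup{n}} \hookrightarrow \fA^*$ I would use the identity $\iota \circ p_{\bbS\hilbH} = E$, where $E\colon \bbS\hilbH \to \fA^*$ sends $\Phi$ to the functional $A \mapsto \ev{\Phi, \pi(A)\Phi}$. The map $E$ is the restriction to the real-analytic submanifold $\bbS\hilbH \subset \hilbH$ of the bounded homogeneous quadratic, hence real-analytic, map $\hilbH \to \fA^*$ associated to the bounded sesquilinear map $(\Phi,\Psi) \mapsto (A \mapsto \ev{\Phi, \pi(A)\Psi})$; thus $E$ is real analytic. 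Covering $\sP_\pi(\fA)$ by balls $\bbB_2(\varphi_0)$ and writing $\iota|_{\bbB_2(\varphi_0)} = E \circ s_{\varphi_0}$ for the section $s_{\varphi_0}$ built from a representative of $\varphi_0$, real analyticity of $\iota$ follows from that of $E$ and of each $s_{\varphi_0}$.

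Finally, injectivity of $T_\varphi\iota$ is the crux. Writing $\Phi = s(\varphi)$ and lifting $X \in T_\varphi\sP_\pi(\fA)$ to $v \in T_\Phi\bbS\hilbH = \{v : \Re\ev{\Phi,v} = 0\}$ with $T_\Phi p_{\bbS\hilbH}(v) = X$, the identity $\iota \circ p_{\bbS\hilbH} = E$ gives $T_\varphi\iota(X) = T_\Phi E(v)$, where $T_\Phi E(v)(A) = \ev{v, \pi(A)\Phi} + \ev{\Phi, \pi(A)v}$. Decomposing $v = is\Phi + w$ with $s \in \bbR$ and $w \perp \Phi$, the diagonal contributions cancel and $T_\Phi E(v) = 0$ reduces to $\ev{w, \pi(A)\Phi} + \ev{\Phi, \pi(A)w} = 0$ for all $A \in \fA$. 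If $w \neq 0$, then $\Phi$ and $w$ are linearly independent, so the Kadison transitivity theorem supplies $A \in \fA$ with $\pi(A)\Phi = w$ and $\pi(A)w = 0$, whence the left-hand side equals $\norm{w}^2 > 0$, a contradiction. Hence $w = 0$, so $v \in \bbR i\Phi = \ker T_\Phi p_{\bbS\hilbH}$ and $X = 0$. I expect this last step — pinning down $\ker T_\Phi E \cap T_\Phi\bbS\hilbH$ as the vertical line $\bbR i\Phi$ via Kadison transitivity — to be the only genuinely nontrivial part, the remainder being bookkeeping with the earlier results.
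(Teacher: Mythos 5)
Your proof is correct and follows essentially the same route as the paper's: the same factorization $s = g \circ r^{-1}$ through Corollaries \ref{cor:PH_superselection_metric_equivalence} and \ref{cor:PH_mapsto_SH}, and the same use of the Kadison transitivity theorem on a pair of linearly independent vectors to kill the kernel of the tangent map. The only cosmetic difference is that you lift to $\bbS\hilbH$ and must discard the vertical direction $i\bbR\Phi$ by hand, whereas the paper computes the derivative directly in the chart $\tau_\Psi$ of $\bbP\hilbH$, where the tangent vector already lies in $C_\Psi$.
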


\begin{proof}
  We know $\bbB_2(\omega)$ is contained in $\sP_\pi(\fA)$ by Lemma \ref{lem:sector_is_open}, so
  by Corollary \ref{cor:PH_superselection_metric_equivalence} the restriction of $r^{-1}$ provides a
  norm continuous map $\bbB_2(\omega) \rightarrow \bbP \hilbH$.
  Equation \eqref{eq:gap_vs_norm} shows that the image is contained in the open unit ball about $\bbC \Omega$ with respect to the gap metric. Composing with the norm continuous map from Corollary \ref{cor:PH_mapsto_SH} yields the norm continuous section $s$. By construction, $s$ is real analytic when $r^{-1}$ is.
  To verify the last claim consider for $\Psi \in \bbS\hilbH$
  the chart $\tau_\Psi :\bbB_1 (\C\Psi) \to C_\Psi$
  from Theorem \ref{thm:holomorphic-atlas-projective-space}.
  The composition 
  \[
    F :C_\Psi \overset{\tau_\Psi^{-1}}{\longrightarrow} \bbP\hilbH
    \overset{r}{\longrightarrow} \sP_\pi (\fA)_{\textup{n}} \hookrightarrow \fA^*
  \]
  then maps $v\in C_\Psi$ to the linear functional
  \[
    F (v) : \fA\to \C, \: A\mapsto \frac{1}{\|\Psi+v\|^2}
    \langle \Psi+v, \pi(A) (\Psi+v)\rangle \ .
  \] 
  This is clearly a real analytic function in $v$. Let us determine its
  derivative. For $v \in C_\Psi$ consider the path
  $\gamma : \R \to C_\Psi$, $t\mapsto tv$ and compute
  \[
    T_0 F (v) = \left. \frac{d}{dt} \right|_{t=0} F (\gamma (t)) =
    \langle \Psi , \pi( \,\cdot\, ) v\rangle 
    + \langle v , \pi( \,\cdot\, ) \Psi\rangle \ .
  \]
  Assume that $v \neq 0$. Then $\Psi$ and $v$ are orthogonal to each and linearly independent.
  Hence by the Kadison transitivity theorem there exists an operator $A\in \cA$ such that
  $\pi (A)v = \Psi$ and $\pi(A)\Psi =v$. Hence
  $T_0 F (v) (A) = \|\Psi\|^2 + \|v\|^2 > 1$ and $T_0F$ has a trivial kernel.
  Therefore, the tangent map $T\sP_\pi (\fA)_{\textup{n}} \rightarrow T\fA^*$ 
  is injective as claimed.
\end{proof}

\begin{rem}\label{rem:injectivity-tangent-map-not-entailing-submanifold-infinite-dimensional-case}
  In the case where  $ \fA^*$ is finite dimensional, the preceding corollary
  entails that $\iota : \sP_\pi (\fA)_{\textup{n}} \hookrightarrow \fA^*$ is a
  real analytic embedding and that $\sP_\pi (\fA)_{\textup{n}}$  is a 
  submanifold of $ \fA^*$.
  The  corresponding argument can not be extended to
  the infinite dimensional case since for a smooth topological embedding
  $N\hookrightarrow M$ of infinite dimensional manifolds injectivity of the
  tangent map $TN \rightarrow TM$ in general neither entails
  that $N \hookrightarrow M$ is an immersion
  nor that  $N$ is a submanifold of $M$.  
\end{rem}

\subsection{The K\"ahler manifold structure on the pure state space} 
\label{subsec:kaehlerstruct}
By Proposition \ref{prop:decomposition-pure-state-space-superselection-sectors}, the superselection sectors
form the path connected components of the pure state space $\sP(\fA)$ endowed with the norm topology. 
Moreover, every superselection sector is open in $\sP(\fA)_{\textup{n}}$ by Corollary
\ref{cor:sector_is_open}. To define a holomorphic structure on $\sP(\fA)_{\textup{n}}$ it therefore
suffices to define one on each superselection sector separately. So let $\omega$ be a pure state
on $\fA$ and $S_\omega$ the superselection sector it defines. Then  $S_\omega$ coincides with the
space $\sP_{\pi_\omega}(\fA)_{\textup{n}}$ of pure $\pi_\omega$-normal states where as usual 
$(\hilbH_\omega , \pi_\omega, \Omega_\omega)$ denotes the GNS representation of $\omega$. 
According to Corollary \ref{cor:purestate_mapsto_SH}, the uniform isomorphism
$r_{\omega} :\bbP\hilbH_\omega \to \sP_{\pi_\omega} (\fA)_{\textup{n}}$ associated to the representation $\pi_\omega$
% as in Corollary \ref{cor:PH_superselection_metric_equivalence}
endows $S_\omega$ %\sP_{\pi_\omega}(\fA)_{\textup{n}}$ 
with a  holomorphic structure. It remains to show that the holomorphic structure is independent of 
the particular representative of the superselection sector. It suffices to verify
that for every other state $\psi \in S_\omega$ % $=\sP_{\pi_\omega}(\fA)_{\textup{n}}$ 
the ``transition'' map $r_{\omega,\psi} := r_{\omega}^{-1} \circ r_{\psi} : \bbP\hilbH_\psi \to \bbP\hilbH_\omega$ 
is holomorphic, where $r_{\psi}$ is the uniform isomorphism associated to the GNS representation $(\hilbH_\psi,\pi_\psi)$.
% as in Corollary \ref{cor:PH_superselection_metric_equivalence}. 
Since the GNS representations for $\omega$ and $\psi$ are unitarily equivalent, 
there exists a unique  unitary intertwiner
$U: \hilbH_\psi \to \hilbH_\omega$ between $\pi_\psi$ and $\pi_\omega$ so that
$U \Omega_\psi = \Omega_\omega$. 
Hence the transition map  $r_{\omega,\psi}$ is
given by $\C\Phi \mapsto \C (U\Phi)$. Next choose $\Omega \in \bbS\hilbH_\omega$ and $\Psi \in \bbS\hilbH_\psi$.
Let $\tau_\Omega$ and  $\tau_\Psi$ be the corresponding charts  of 
$\bbP\hilbH_\omega$ and $\bbP\hilbH_\psi$, respectively. In these charts, the
transition map $r_{\omega,\psi}$ has the form 
\begin{equation*}
  \begin{split}
    % \tau_\Omega \circ r_{\omega,\psi} \circ \tau_\Psi^{-1} : 
    % \tau_\Psi \big(\bbB_1(\C\Psi)\cap \bbB_1(\C(U^{-1}\Omega))\big) & \to
    % \tau_\Omega\big(\bbB_1(\C(U\Psi))\cap \bbB_1(\C\Omega)\big) \\
    % \Phi & \mapsto % \tau_\Omega \big( \C (U(\Phi +\Psi)) \big)
    % \frac{U(\Phi +\Psi)}{\langle\Omega,U(\Phi +\Psi)\rangle} - \Omega
    % \tau_\Omega \circ r_{\omega,\psi} \circ \tau_\Psi^{-1} (\Phi) =
    % \frac{U(\Phi +\Psi)}{\langle\Omega,U(\Phi +\Psi)\rangle} - \Omega \quad\text{for }
    % \Phi \in  \tau_\Psi \big(\bbB_1(\C\Psi)\cap \bbB_1(\C(U^{-1}\Omega))\big)   \ ,
    \tau_\Omega \circ r_{\omega,\psi} \circ \tau_\Psi^{-1} :
    \tau_\Psi \big(\bbB_1(\C\Psi)\cap \bbB_1(\C(U^{-1}\Omega))\big) \to C_\Omega , \:
    \Phi \mapsto \frac{U(\Phi +\Psi)}{\langle\Omega,U(\Phi +\Psi)\rangle} - \Omega \ ,
  \end{split}
\end{equation*}
which obviously is holomorphic in $\Phi$. Thus the pure state space $\sP(\fA)$ carries
a unique structure of a complex manifold such that for every irreducible representation
$(\hilbH,\pi)$ the associated canonical embedding $r : \bbP \hilbH \to \sP(\fA)$ is a biholomorphic map onto its image. We call this complex structure on $\sP(\fA)$ \emph{canonical}. 

Next, we equip $\sP(\fA)$ with a canonical K\"ahler structure.
More precisely, we have to construct a smooth hermitian form on $\sP(\fA)$ whose imaginary part is
symplectic. To this end recall that a hermitian 
metric on a complex Hilbert manifold $M$ is a smooth section $h$ of the tensor bundle
$T^0_2M \otimes \C$ of complexified $2$-fold covariant tensors on $M$ 
such that $h$ is fiberwise conjugate linear in the first argument, fiberwise complex linear in the second argument
and positive definite; see  \cite[Sec.\ 1.2]{Klingenberg} and Example \ref{ex:tangent-tensor-bundles} for definitions
and details on tensor bundles. 

Now let $\omega$ be a pure state on $\fA$ as before and consider
the smooth section $s_\omega: \bbB_2(\omega) \to \bbS\hilbH _\omega\subset \hilbH_\omega\setminus \{ 0 \}$
from Corollary \ref{cor:purestate_mapsto_SH} which associates to every state
$\varphi \in \bbB_2(\omega)$ the unique vector $\Phi \in \bbS\hilbH_\omega$ such that
$\langle \Phi,\Omega_\omega\rangle > 0$. Define
$h_\omega : \bbB_2(\omega) \to T^0_2 \sP(\fA)\otimes \C $ as the pullback
of the constant hermitian metric $\langle -,-\rangle$ on $\hilbH\setminus \{0\}$ via the section
$s_\omega$ that means put
\[
  h_{\omega} (\varphi) (X,Y) = \langle T_\varphi s_\omega (X), T_\varphi s_\omega (Y) \rangle \quad \text{for all }
  \varphi\in \bbB_2(\omega), \: X,Y \in T_\varphi\sP(\fA) \ .
\]
Then $h_\omega$ is a smooth hermitian metric on $\bbB_2(\omega)$ since $s$ is an immersion. 
Moreover, the imaginary part of $h_\omega$ is closed, since the imaginary part of
$\langle -,-\rangle$ is a symplectic bilinear form on $\hilbH_\omega$. Hence  $h_\omega$
endows $\bbB_2(\omega)$ with a K\"ahler manifold structure. It remains to show that 
for another pure state $\psi$ the hermitian metrics $h_\psi$ and $h_\omega$ coincide on the
overlap of their domains.  So assume that $\bbB_2(\omega) \cap \bbB_2(\psi)$
is non-empty. This can only be the case when $\omega$ and $\psi$ are in the same superselection
sector, or in other words if their exists a unitary intertwiner $U: \hilbH_\psi \to \hilbH_\omega$ between
the GNS representations $\pi_\psi$ and $\pi_\omega$ as above. Then $s_\omega = U \circ s_\psi$
by definition of the sections $s_\omega$ and $s_\psi$, hence
\begin{equation*}
  \begin{split}
    h_{\omega} (\varphi) (X,Y)&  = \langle T_\varphi s_\omega (X), T_\varphi s_\omega (Y) \rangle  =
  \langle U T_\varphi s_\psi (X), U T_\varphi s_\psi (Y) \rangle   \\
   & = \langle  T_\varphi s_\psi (X),  T_\varphi s_\psi (Y) \rangle = h_{\psi} (\varphi) (X,Y)
  \end{split}
\end{equation*}
for all  $\varphi\in \bbB_2(\omega)\cap \bbB_2(\psi)$ and $ X,Y \in T_\varphi\sP(\fA)$.  
The local hermitian metrics therefore glue together to a global one which we denote by $h$.
Since each of the above local sections $s_\omega$ is a  riemannian immersion, the canonical
projection $p_{\bbS\hilbH}:\bbS\hilbH \to \sP_\pi(\fA)_\textup{n}$ then becomes a riemannian submersion. 
This uniquely determines the riemannian structure on $\sP(\fA)$, hence also $h$ 
since  a hermitian  metric  is uniquely determined by its real part. 
We now obtain the following result.

\begin{thm}
  Endowed with its canonical complex structure, the pure state space $\sP(\fA)$ of a $C^*$-algebra $\fA$
  carries a natural hermitian metric $h$ turning it into a possibly infinite dimensional K\"ahler manifold.
  The hermitian metric $h$ is uniquely determined by the requirement that for every irreducible
  representation $(\hilbH,\pi)$ of  $\fA$ the canonical projection
  $p_{\bbS\hilbH} : \bbS\hilbH \to \sP_\pi(\fA)_\textup{n}$, $\Psi \mapsto r(\C\Psi)$ is a
  riemannian submersion. 
\end{thm}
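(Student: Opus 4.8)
The plan is to build the metric one superselection sector at a time, taking the canonical complex structure as already constructed. By Proposition~\ref{prop:decomposition-pure-state-space-superselection-sectors} and Corollary~\ref{cor:sector_is_open} the sectors are exactly the connected components of $\sP(\fA)_{\textup{n}}$, each simultaneously open and closed, so it is enough to produce a K\"ahler metric on a single sector and glue trivially over the disjoint union. Fixing a pure state $\omega$, its sector equals $\sP_{\pi_\omega}(\fA)$ and is biholomorphic to $\bbP\hilbH_\omega$ via the map $r$ of Corollary~\ref{cor:PH_superselection_metric_equivalence}. It therefore suffices to produce on each $\bbP\hilbH$ a K\"ahler metric making $p_{\bbS\hilbH}\colon\bbS\hilbH\to\bbP\hilbH$ a riemannian submersion, and to check the outcome is independent of the irreducible representation realizing the sector.

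First I would define the Riemannian metric $g$ on $\bbP\hilbH$ by the submersion recipe, which is precisely what the uniqueness clause demands. At $\Psi\in\bbS\hilbH$ the vertical subspace of $T_\Psi\bbS\hilbH$ is the line $\R\, i\Psi$ tangent to the $\Unitary(1)$-orbit, and the horizontal subspace is its orthogonal complement $H_\Psi=\{v:\langle\Psi,v\rangle=0\}=(\C\Psi)^\perp$. Since $dp_{\bbS\hilbH}$ restricts to a real-linear isomorphism $H_\Psi\to T_{\bbC\Psi}\bbP\hilbH$, I set $g(X,Y)=\Re\langle X^{\mathrm h},Y^{\mathrm h}\rangle$ for the horizontal lifts $X^{\mathrm h},Y^{\mathrm h}\in H_\Psi$. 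Because the round metric $\Re\langle\cdot,\cdot\rangle$ on $\bbS\hilbH$ and the splitting $T\bbS\hilbH=H\oplus V$ are $\Unitary(1)$-invariant, $g$ is independent of the representative $\Psi$ in the fibre; it is smooth as the metric descended from an invariant metric along the smooth principal $\Unitary(1)$-bundle $p_{\bbS\hilbH}$, and one reads off real-analyticity directly in the charts $\tau_\Psi$ of Theorem~\ref{thm:holomorphic-atlas-projective-space}. By construction $p_{\bbS\hilbH}$ is then a riemannian submersion.

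Next I would upgrade $g$ to a K\"ahler metric. The horizontal space $H_\Psi$ is a complex subspace and $dp_{\bbS\hilbH}|_{H_\Psi}$ intertwines multiplication by $i$ with the canonical complex structure $J$, so $(JX)^{\mathrm h}=i\,X^{\mathrm h}$; since $\Re\langle i\,\cdot,i\,\cdot\rangle=\Re\langle\cdot,\cdot\rangle$ this gives $g(JX,JY)=g(X,Y)$, whence $h(X,Y):=\langle X^{\mathrm h},Y^{\mathrm h}\rangle$ is a genuine hermitian metric with $\Re h=g$ and fundamental two-form $\Omega_h(X,Y)=\Im\langle X^{\mathrm h},Y^{\mathrm h}\rangle=g(JX,Y)$. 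For the K\"ahler condition I would descend the constant, hence closed, symplectic form $\sigma=\Im\langle\cdot,\cdot\rangle$ on $\hilbH$ along $\iota\colon\bbS\hilbH\hookrightarrow\hilbH$: a short computation using $\langle X^{\mathrm h},i\Psi\rangle=0$ and $\Im\langle i\Psi,i\Psi\rangle=0$ shows that the vertical contributions drop out, so $p_{\bbS\hilbH}^{*}\Omega_h=\iota^{*}\sigma$. As $p_{\bbS\hilbH}$ is a surjective submersion its pullback is injective on forms, and $p_{\bbS\hilbH}^{*}\,d\Omega_h=d\,\iota^{*}\sigma=\iota^{*}\,d\sigma=0$ then forces $d\Omega_h=0$. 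Hence $h$ is K\"ahler.

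Finally I would settle independence of the representation and uniqueness. If two irreducible representations realize the same sector they are unitarily equivalent by Theorem~\ref{thm:superselection_sector_equivalences}, via a unitary that is an isometry of $\bbS\hilbH$ commuting with both the $\Unitary(1)$-action and $i$; it thus preserves horizontal spaces and the inner product, so the two copies of $h$ agree, and the sectorwise metrics assemble into a single hermitian metric on $\sP(\fA)$. Uniqueness is then immediate: the submersion requirement forces the real part of any admissible metric to coincide fibrewise with $g$, and a hermitian metric is determined by its real part together with the already fixed complex structure. The step I expect to be the main obstacle is the K\"ahler condition: one must verify cleanly both that $\Omega_h$ descends from $\sigma$ (the vanishing of the vertical terms) and that descent plus injectivity of $p_{\bbS\hilbH}^{*}$ yields closedness. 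Everything else is bookkeeping with the structures already assembled in Corollaries~\ref{cor:purestate_mapsto_SH} and~\ref{cor:PH_superselection_metric_equivalence}.
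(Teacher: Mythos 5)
Your proof is correct and follows the same overall architecture as the paper: both work sector by sector, reduce each sector to $\bbP\hilbH$ via the biholomorphism $r$ of Corollary~\ref{cor:PH_superselection_metric_equivalence}, handle independence of the chosen irreducible representation with a unitary intertwiner, and derive uniqueness from the fact that a hermitian metric is determined by its real part once the complex structure is fixed. Where you genuinely diverge is in how $h$ is produced and how the K\"ahler condition is verified. The paper defines $h_\omega$ on each ball $\bbB_2(\omega)$ as the pullback of the constant hermitian form $\langle -,-\rangle$ on $\hilbH_\omega$ along the local section $s_\omega$ of Corollary~\ref{cor:purestate_mapsto_SH}, gets closedness of $\Im h_\omega$ from the constancy of $\Im\langle -,-\rangle$, glues the local metrics using the intertwiners, and only at the end asserts that $p_{\bbS\hilbH}$ is a riemannian submersion. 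You instead define $h$ globally on each sector by the horizontal-lift recipe $h(X,Y)=\langle X^{\mathrm h},Y^{\mathrm h}\rangle$ on $H_\Psi=(\C\Psi)^\perp$, check $\Unitary(1)$-invariance and the compatibility $(JX)^{\mathrm h}=iX^{\mathrm h}$, and prove closedness via the descent identity $p_{\bbS\hilbH}^*\Omega_h=\iota^*\sigma$ combined with injectivity of the pullback along a surjective submersion. Your route makes the riemannian-submersion property --- and hence the uniqueness clause --- hold by construction rather than being the last thing to check; it also sidesteps the delicate point that the pullback of the round metric along a section of the $\Unitary(1)$-bundle coincides with the submersion metric only to the extent that the section is horizontal, which is precisely what your explicit verification that the vertical contributions drop out of $\iota^*\sigma$ addresses head-on. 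The price is that well-definedness over the fibre and the complex-linearity of $T p_{\bbS\hilbH}|_{H_\Psi}$ must be checked by hand, but both are routine in the charts $\tau_\Psi$ of Theorem~\ref{thm:holomorphic-atlas-projective-space}.
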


Following common language in K\"ahler geometry we call the real part of $h$ the \emph{Fubini--Study metric}
on  $\sP(\fA)$. Let us finally show that on each superselection sector $\sP_\pi(\fA)_\textup{n}$ - with $(\hilbH,\pi)$
an irreducible representation as before - the geodesic distance $\delta_\textup{FS}$ of the Fubini-Study metric coincides with the
Fubini--Study distance $d_\textup{FS}$ transferred from $\bbP\hilbH$ to $\sP_\pi(\fA)_\textup{n}$
via the associated uniform isomorphism $r$. Let $\omega$ and $\varphi$ be two distinct states in  $\sP_\pi(\fA)_\textup{n}$
and choose $\Omega,\Phi \in \bbS\hilbH $ which project to $\omega$ and $\varphi$, respectively. After possibly multiplying 
$\Phi$ with a complex number of modulus $1$ we can assume that $\langle \Omega,\Phi\rangle =  |\langle \Omega,\Phi\rangle|$.
By assumption, $\Omega$ and $\Phi$ then span a real plane $E$ in $\hilbH$ whose intersection with the sphere $\bbS\hilbH $
is totally geodesic since it is the fixed point manifold of reflection at $E$ which is a linear isometric isomorphism;
see \cite[1.10.15 Theorem]{Klingenberg}.
Let $\alpha = \arccos \langle \Omega,\Phi\rangle $ and $\Psi \in P$ be the normal vector perpendicular to $\Omega$ such that
$\Phi = \cos \alpha \cdot \Omega + \sin \alpha \cdot \Psi$. Now note that the intersection of $P$ with $\bbS\hilbH$ is a great
circle $C$. The unique shortest geodesic from $\Omega$ to $\Phi$ in $C$ is given by the path
$\gamma: [0,1] \to P \cap \bbS\hilbH$, $t\mapsto \cos (t \alpha) \cdot \Omega + \sin (t \alpha) \cdot \Psi$.
Since the great circle $C$ is totally geodesic, $\gamma$ is also a shortest geodesic in $\bbS\hilbH$ connecting
$\Omega$ and $\Phi$. Its length is obviously $\alpha$. Hence the claimed equality
\begin{equation}
  \label{eq:fubini-study-distance-coincides-geodesic-length}
  d_\textup{FS} (\C\Omega,\C\Phi) = \arccos \ltrans \bbC \Psi, \bbC \Omega \rtrans =
  \delta_\textup{FS} (\omega,\varphi) \:
  % \text{for all } \omega,\varphi \in \sP_\pi(\fA)_\textup{n}, \: \Omega \in p^{-1}(\omega), \: \Phi\in  p^{-1}(\varphi)
\end{equation}
holds true if we can yet show that the path $\gamma$ is horizontal; see \cite[1.11.11 Corollary]{Klingenberg}.
To this end fix $t_{0} \in [0,1]$
and consider the path $\tau : [-\pi,\pi]\to \bbS\hilbH$, $s \mapsto e^{is}\gamma(t_0)$. The derivative
$\dot{\tau} (0) = i \, \gamma (t_0)$ then spans the kernel of the tangent map
$T_{\gamma (t_0)} p :T_{\gamma (t_0)} \bbS\hilbH\to T_{r\gamma(t_0)}$. We need to show
$\Re \langle \dot{\tau} (0) , \dot{\gamma} (t_0)\rangle=0$. Since $\| \gamma (t) \|^2 =1$ for all $t\in [0,1]$,
we already know that $\Re \langle \gamma (t_0),\dot{\gamma} (t_0)\rangle =0$. Since the inner product
$\langle \Omega,\Phi\rangle$ is real, the inner product of $\Omega$ and $\Psi$ is so too, hence
$\langle \gamma (t_0),\dot{\gamma} (t_0)\rangle = \Re \langle \gamma (t_0),\dot{\gamma} (t_0)\rangle =0$. Therefore,
\[
  \Re \langle \dot{\tau} (0) , \dot{\gamma} (t_0)\rangle =
  - \Im \langle  \gamma (t_0) , \dot{\gamma} (t_0)\rangle = 0 
\]
and the claim is proved.

\begin{rem}
  Originally, Cirelli, Lanzavecchia and coauthors showed in their work from the 80ies that the pure state
  space of a $C^*$-algebra carries in a natural way the structure of a K\"ahler manifold, albeit the
  proof is scattered over several papers \cite{CirelliHamiltonianVectorFieldsQM,CirelliNormalPureStatesVonNeumannAlgebra,CirelliPureStatesQMKaehlerBundles}. 
  Their work is related to and builds upon the geometric characterization of $C^*$-algebraic state spaces 
  by Alfsen, Hanche-Olsen and Shultz \cite{AlfsenHancheOlsenShultz}.
  In a certain sense, unravelling the K\"ahler manifold structure of the pure state space of
  a $C^*$-algebra can be understood  as  a step forward in Connes' program of noncommutative geometry
  \cite{ConNG} which has the goal to
  describe $C^*$-algebras by geometric means and to use $C^*$-algebras for the geometric description of
  spaces as they appear for example in quantum mechanics where a direct geometric intuition is lacking. 
\end{rem}

%!TEX root = FamiliesGNS.tex

\section{Continuous Kadison transitivity theorems}\label{sec:contkadison}

The Kadison transitivity theorem states that whenever a $C^*$-algebra $\fA$ acts
irreducibly on a Hilbert space $\cH$ there exists for every pair of $n$-tuples
of vectors $x_1,\ldots, x_n$ and $y_1,\ldots, y_n$ in $\cH$ such that $x_1,\ldots, x_n$
are linearly independent an element $A \in \fA$ such that $Ax_k = y_k$ for
$k=1,\ldots ,n$; see \cite[Thm.\ 5.4.3]{KadisonRingroseI}. 
However, the solution to this problem is in general not unique. The question arises, then, whether it is possible to choose the solutions $A \in \fA$ so as to depend continuously on the initial data $x_1,\ldots, x_n$ and $y_1,\ldots, y_n$. This is a problem amenable to the theory of selections developed by Ernest Michael in the 1950s, and indeed we use the Michael selection theorem to provide an affirmative answer to our question. In section \ref{sec:main_results}, we recall the necessary terminology and results from Michael's original work on selections \cite{MichaelSelection}, then we prove our main results in Theorems \ref{thm:continuous_Kadison} and \ref{thm:continuous_Kadison_unitary}. In section \ref{sec:principal_fiber_bundles} we use Theorem \ref{thm:continuous_Kadison_unitary} to prove that $p_{\Unitary(\fA)}:\Unitary(\fA) \rightarrow \sP_\omega(\fA)$, $p_{\Unitary(\fA)}(U) = U \cdot \omega$ has the structure of a principal $\Unitary_\omega(\fA)$-bundle, where $\Unitary_\omega(\fA) = \qty{U \in \Unitary(\fA): U \cdot \omega = \omega}$, for any unital $C^*$-algebra $\fA$ and pure state $\omega \in \sP(\fA)$. We provide a few examples where this bundle is nontrivial.

\subsection{Main results}\label{sec:main_results}

The key ingredient in proving our continuous Kadison transitivity theorems is the Michael selection theorem. We provide this result and the necessary definitions below.

\begin{defn}[{\cite{MichaelSelection}}]
Let $X$ and $Y$ be topological spaces. A \textit{carrier} is a function $\phi : X \rightarrow \wp_+(Y)$, where $\wp_+(Y)$ is the set of all nonempty subsets of $Y$. A \textit{selection} for $\phi$ is a continuous function $S:X \rightarrow Y$ such that $S(x) \in \phi(x)$ for all $x \in X$. The carrier $\phi$ is \textit{lower semicontinuous} if for every open set $V \subset Y$, the set
\[
\qty{x \in X: \phi(x) \cap V \neq \varnothing}
\]
is open in $X$. Equivalently, $\phi$ is lower semicontinuous if for every $x_0 \in X$, $y_0 \in \phi(x_0)$, and neighborhood $V$ of $y_0$, there exists a neighborhood $U$ of $x_0$ such that $\phi(x) \cap V \neq \varnothing$ for all $x \in U$. 
\end{defn}

We will use the latter description of lower semicontinuity in our proofs. If $Y$ is metrizable, as it will be for our applications, the neighborhood $V$ may be taken to be a ball of radius $\varepsilon > 0$ centered on $y_0$. The space $X$ will always be metrizable in our applications as well. We now state the Michael selection theorem for reference.

\begin{thm}[{\cite[Thm.\ 3.2$''$]{MichaelSelection}}]
Let $X$ be a paracompact Hausdorff space and let $Y$ be a real or complex Banach space. If $\phi:X \rightarrow \wp_+(Y)$ is a lower semicontinuous carrier such that $\phi(x)$ is closed and convex for all $x \in X$, then there exists a selection for $\phi$.
\end{thm}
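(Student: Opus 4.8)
The plan is to reduce the theorem to a repeated application of a single \emph{$\varepsilon$-approximate selection lemma}: for every lower semicontinuous carrier $\phi\colon X\to\wp_+(Y)$ with convex (not necessarily closed) values and every $\varepsilon>0$, there is a continuous map $f\colon X\to Y$ with $d(f(x),\phi(x))<\varepsilon$ for all $x\in X$. Granting this lemma, one produces an actual selection as a uniform limit of approximate ones, using the closedness of the values and the completeness of $Y$. I would first prove the lemma, then carry out the iteration, and I expect the technical heart to lie in checking that the auxiliary carriers appearing in the iteration remain lower semicontinuous.

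For the approximate selection lemma, fix $\varepsilon>0$ and for each $y\in Y$ set $U_y=\{x\in X:\phi(x)\cap B_\varepsilon(y)\neq\varnothing\}$, which is open by lower semicontinuity. Since each $\phi(x)$ is nonempty, the sets $U_y$ cover $X$. As $X$ is paracompact Hausdorff, I would choose a locally finite partition of unity $\{p_\lambda\}_{\lambda\in\Lambda}$ subordinate to this cover, say $\operatorname{supp}p_\lambda\subset U_{y_\lambda}$, and define $f(x)=\sum_\lambda p_\lambda(x)\,y_\lambda$. Local finiteness makes $f$ continuous. For fixed $x$, the value $f(x)$ is a finite convex combination of those $y_\lambda$ with $p_\lambda(x)>0$, each of which satisfies $d(y_\lambda,\phi(x))<\varepsilon$. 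Because $\phi(x)$ is convex, its open $\varepsilon$-neighborhood $\{z:d(z,\phi(x))<\varepsilon\}$ is convex as well, so $f(x)$ lies in it; this gives $d(f(x),\phi(x))<\varepsilon$, proving the lemma.

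For the iteration I would construct continuous maps $f_n\colon X\to Y$ satisfying the invariant $d(f_n(x),\phi(x))<2^{-n}$ for all $x$. Start with $f_1$ from the lemma ($\varepsilon=\tfrac12$). Given $f_{n-1}$, define the auxiliary carrier $\phi_n(x)=\phi(x)\cap B_{2^{-(n-1)}}(f_{n-1}(x))$, which is nonempty by the previous distance bound and convex as an intersection of convex sets; applying the lemma to $\phi_n$ with $\varepsilon=2^{-n}$ yields $f_n$ with $d(f_n(x),\phi_n(x))<2^{-n}$. This gives the invariant $d(f_n(x),\phi(x))<2^{-n}$ and, via the triangle inequality through a point of $\phi_n(x)\subset B_{2^{-(n-1)}}(f_{n-1}(x))$, the increment bound $\|f_n(x)-f_{n-1}(x)\|<2^{-(n-1)}+2^{-n}$. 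The increments are summable, so $(f_n)$ is uniformly Cauchy; since $Y$ is complete, it converges uniformly to a continuous $f\colon X\to Y$. Because $d(f_n(x),\phi(x))\to0$ and each $\phi(x)$ is closed, $f(x)\in\phi(x)$ for every $x$, so $f$ is the desired selection.

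The step I expect to be the main obstacle is verifying that each $\phi_n$ is lower semicontinuous, which is required before the lemma can be reapplied. I would argue this directly: given $x_0$, a point $z_0\in\phi_n(x_0)$, and $\delta>0$, set $\eta=2^{-(n-1)}-\|z_0-f_{n-1}(x_0)\|>0$; using continuity of $f_{n-1}$ to control $\|f_{n-1}(x)-f_{n-1}(x_0)\|$ and lower semicontinuity of $\phi$ to locate points of $\phi(x)$ near $z_0$, a triangle-inequality estimate shows that for $x$ in a suitable neighborhood of $x_0$ there is a point of $\phi(x)$ lying simultaneously in $B_{2^{-(n-1)}}(f_{n-1}(x))$ and in $B_\delta(z_0)$, witnessing $\phi_n(x)\cap B_\delta(z_0)\neq\varnothing$. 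This stability of lower semicontinuity under intersection with a continuously moving open ball of fixed radius is routine but delicate, and it is precisely the place where the hypotheses on $X$, $Y$, and $\phi$ interact.
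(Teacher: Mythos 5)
The paper does not prove this theorem at all: it is quoted directly from Michael's paper \cite[Thm.~3.2$''$]{MichaelSelection} and used as a black box, so the only proof to compare yours against is Michael's classical one, which is exactly what you have reproduced (approximate selections built from a partition of unity subordinate to the cover by the sets $U_y=\{x:\phi(x)\cap B_\varepsilon(y)\neq\varnothing\}$, followed by the iteration $\phi_n(x)=\phi(x)\cap B_{2^{-(n-1)}}(f_{n-1}(x))$ and a uniform limit). Your write-up is correct, including the two genuinely delicate points: the approximate-selection lemma is applied to the carriers $\phi_n$, whose values are convex but not closed, and you correctly isolate and verify that intersecting a lower semicontinuous carrier with an open ball of fixed radius centered at a continuously varying point preserves lower semicontinuity.
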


To apply the Michael selection theorem to the representation theory of $C^*$-algebras, we will use the following two results. The first is a lemma used in proving the Kadison transitivity theorem.

\begin{lem}[{\cite[Lem.\ 5.4.2]{KadRinFTOAI}}]\label{lem:transitivity_in_B(H)}
Let $\cH$ be a Hilbert space and let $e_1,\ldots, e_n \in \cH$ be an orthonormal system. For any vectors $z_1,\ldots, z_n \in \cH$ such that $\norm{z_i} \leq r$ for all $i$, there exists $T \in \frB(\cH)$ such that $\norm{T} \leq (2n)^{1/2}r$ and $Te_i = z_i$ for all $i$. If there exists a self-adjoint operator $S \in \fB(\cH)$ such that $Se_i = z_i$ for all $i$, then $T$ may be chosen to be self-adjoint.
\end{lem}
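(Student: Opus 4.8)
The plan is to exhibit explicit finite-rank operators and estimate their norms by hand, handling the general case first and then modifying the same operator to achieve self-adjointness. For the general statement I would define $T_0 \in \fB(\cH)$ by $T_0\xi = \sum_{j=1}^n \ev{e_j, \xi}\, z_j$, where the inner product is linear in the second slot as fixed in the paper. Orthonormality of the $e_i$ gives at once $T_0 e_i = \sum_j \ev{e_j, e_i} z_j = z_i$. To bound the norm I would factor $T_0 = Z\circ E$ through $\bbC^n$, with $E\colon\cH \to \bbC^n$, $E\xi = (\ev{e_j,\xi})_j$, and $Z\colon \bbC^n \to \cH$, $Z(c_1,\dots,c_n) = \sum_j c_j z_j$. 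Bessel's inequality yields $\norm{E} \le 1$, while $\norm{Z} \le \norm{Z}_{\mathrm{HS}} = \qty(\sum_j \norm{z_j}^2)^{1/2} \le \sqrt{n}\, r$; hence $\norm{T_0} \le \sqrt{n}\, r \le \sqrt{2n}\, r$, which already settles the first assertion.

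For the self-adjoint case, the only way I would use the existence of a self-adjoint $S$ with $Se_i = z_i$ is through the induced symmetry of the matrix $M_{ij} := \ev{e_i, z_j}$: indeed $M_{ji} = \ev{e_j, Se_i} = \ev{Se_j, e_i} = \overline{\ev{e_i, z_j}} = \overline{M_{ij}}$, so $M$ is Hermitian. Letting $P$ denote the orthogonal projection onto $\cK_0 = \vecspan\qty{e_1,\dots,e_n}$, I would then set
\[
T = T_0 + T_0^{*} - P T_0 .
\]
The heuristic is that $T$ should be the self-adjoint block operator acting on $\cK_0$ by the Hermitian matrix $M$, sending each $e_i$ to the component of $z_i$ in $\cK_0^\perp$, and acting on $\cK_0^\perp$ by the adjoint of that map. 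One checks that $T_0^{*}$ and $PT_0$ agree on $\cK_0$ — both send $e_i$ to $\sum_j M_{ji} e_j = P z_i$ — which is precisely where Hermiticity of $M$ enters; this makes $PT_0$ self-adjoint, forces $T = T^{*}$, and, after the cancellations, leaves $T e_i = z_i$.

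The norm estimate is the delicate point, and the particular combination above is chosen to make it work. Since $T_0$ annihilates $\cK_0^\perp$ while $T_0^{*} - PT_0$ annihilates $\cK_0$, one obtains for every $\xi$ the clean splitting $T\xi = T_0(P\xi) + T_0^{*}\big((I-P)\xi\big)$. The triangle inequality together with $\norm{T_0^{*}} = \norm{T_0} \le \sqrt{n}\, r$ then gives $\norm{T\xi} \le \sqrt{n}\, r\,\big(\norm{P\xi} + \norm{(I-P)\xi}\big)$, and a final Cauchy–Schwarz step applied to this two-term sum, using the Pythagorean identity $\norm{P\xi}^2 + \norm{(I-P)\xi}^2 = \norm{\xi}^2$, introduces exactly the factor $\sqrt{2}$ and yields $\norm{T} \le \sqrt{2n}\, r$.

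The main obstacle is not any isolated hard step but the bookkeeping needed to extract precisely the constant $\sqrt{2n}$. A crude triangle-inequality bound on $T = T_0 + T_0^{*} - PT_0$ only produces something like $3\sqrt{n}\, r$; the improvement to $\sqrt{2n}\, r$ depends entirely on recognizing that the two surviving summands have domains supported on the mutually orthogonal subspaces $\cK_0$ and $\cK_0^\perp$, so that their contributions combine through Pythagoras rather than additively. Getting the self-adjointness and the $e_i \mapsto z_i$ conditions to survive simultaneously while preserving this orthogonal structure is the part that requires care.
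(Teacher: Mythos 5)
Your proof is correct. The paper does not reproduce an argument for this lemma but simply cites Kadison--Ringrose, Lemma 5.4.2, and your construction (the rank-$n$ operator $T_0\xi=\sum_j\ev{e_j,\xi}z_j$ with the Bessel/Cauchy--Schwarz bound $\norm{T_0}\le\sqrt{n}\,r$, followed by the Hermitian correction $T=T_0+T_0^*-PT_0$ and the Pythagorean splitting over $\cK_0\oplus\cK_0^\perp$ to get the factor $\sqrt{2}$) is essentially the standard argument given there.
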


The following theorem provides norm bounds on the elements of the $C^*$-algebra produced by the Kadison transitivity theorem. It will be instrumental in proving lower semicontinuity of the carriers that we consider. The essence of the statement and proof are contained in Theorem 2.7.5 in \cite{PedersenCAlgAutomorphisms}, however we consider a $C^*$-algebra with a representation rather than a $C^*$-subalgebra of $\fB(\cH)$. Therefore we provide a full proof.

\begin{thm}\label{thm:Pedersen}
Let $\frA$ be a $C^*$-algebra and let $(\cH, \pi)$ be a nonzero irreducible representation. If $x_1,\ldots, x_n \in \cH$ are linearly independent and $T \in \frB(\cH)$, then there exists $A \in \frA$ such that $\norm{A} \leq \norm{T}$ and $\pi(A)x_i = Tx_i$ for all $i$. If $T$ is self-adjoint, then $A$ may be chosen to be self-adjoint.
\end{thm}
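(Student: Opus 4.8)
The plan is to rerun Kadison's transitivity argument while tracking norms, the two inputs being Lemma \ref{lem:transitivity_in_B(H)} (transitivity inside $\fB(\cH)$ with its $\sqrt{2n}$ bound and self-adjoint clause) and Lemma \ref{lem:rep_almost_isometry} (to pass from $\norm{\pi(\cdot)}$ back to the $\fA$-norm). First I would reduce to orthonormal vectors: Gram--Schmidt gives an orthonormal basis $e_1,\dots,e_n$ of $V=\vecspan\{x_1,\dots,x_n\}$, and since $\{x_i\}$ and $\{e_j\}$ span the same subspace $V$, the requirement $\pi(A)x_i=Tx_i$ for all $i$ is equivalent to $\pi(A)e_j=Te_j$ for all $j$ (both assert $\pi(A)|_V=T|_V$), while $T$ itself --- hence $\norm{T}$ and its self-adjointness --- is untouched. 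So I may assume $x_1,\dots,x_n$ orthonormal.

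Next I would build a one-step approximation engine. Irreducibility gives $\pi(\fA)''=\fB(\cH)$, so by the Kaplansky density theorem the unit ball of $\pi(\fA)$ is strong-operator dense in that of $\fB(\cH)$, with the self-adjoint version available for self-adjoint targets. Thus for any $S\in\fB(\cH)$ and tolerance $\kappa>0$, approximating $S$ strongly on the finite set $\{e_j\}$ and invoking Lemma \ref{lem:rep_almost_isometry} (symmetrizing $C\mapsto\tfrac12(C+C^*)$ when $S=S^*$, which leaves $\pi(C)$ fixed) produces $A\in\fA$ with $\norm{A}\le\norm{S}+\eta$ for any prescribed $\eta>0$ and $\norm{\pi(A)e_j-Se_j}\le\kappa$, self-adjoint whenever $S$ is.

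Then I would iterate against a pre-committed geometric budget $b_k=(1-\lambda)\lambda^k\norm{T}$, so that $\sum_k b_k=\norm{T}$. Start from the residual operator $D_0=T$ of norm $\norm{T}$; at stage $k$ the residual vectors $D_k e_j=Te_j-\pi(A_0+\cdots+A_{k-1})e_j$ stay proportional to $Te_j$ up to a small error, so they are realized by an operator $G_k$ of norm $\approx\lambda^k\norm{T}$ (its dominant part is a scalar multiple of $T$, which avoids the $\sqrt{2n}$ blow-up, and the small remainder is realized by Lemma \ref{lem:transitivity_in_B(H)}, self-adjointly when $T=T^*$ since each $D_k$ is then self-adjoint). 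Applying the engine to a fraction of $G_k$ of norm at most $b_k$ yields $A_k$ with $\norm{A_k}\le b_k$, and choosing $\kappa_k,\eta_k\ll\lambda^k\norm{T}$ keeps a fixed contraction factor, so $D_k e_j\to 0$ geometrically. Hence $A:=\sum_k A_k$ converges in $\fA$ with $\pi(A)e_j=Te_j$ exactly, $\norm{A}\le\sum_k b_k=\norm{T}$, and $A=A^*$ whenever $T=T^*$.

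The hard part is the \emph{sharp} bound $\norm{A}\le\norm{T}$ rather than $\norm{A}\le(1+\varepsilon)\norm{T}$. The naive scheme that fully cancels the residual at each stage spends essentially the entire budget $\norm{T}$ in the first step --- the engine cannot match a norm-$\norm{T}$ operator on $V$ with appreciably smaller norm when some $\norm{Te_j}\approx\norm{T}$ --- and then each correction adds a strictly positive amount inflated by the $\sqrt{2n}$ factor of Lemma \ref{lem:transitivity_in_B(H)}, so it only delivers $(1+\varepsilon)\norm{T}$. The remedy is exactly the pre-committed geometric budget combined with spending only a fixed fraction of the residual's cheaply realized norm at each stage, and absorbing the non-unital slack of Lemma \ref{lem:rep_almost_isometry} into the budget by targeting a slightly smaller value; the delicate point to verify is that the residual still contracts by a uniform factor under this constrained spending.
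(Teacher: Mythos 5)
Your orthonormal reduction and your one-step Kaplansky engine are both sound, and you have correctly located the crux in the sharp bound $\norm{A}\leq\norm{T}$; but the budgeted iteration you propose as a remedy cannot close that gap. Consider the case where $T$ attains its norm on $V=\vecspan\qty{x_1,\ldots,x_n}$, say $\norm{Te_1}=\norm{T}$ for a unit vector $e_1\in V$ (e.g.\ $T$ a rank-one projection and $x_1$ in its range). If $A=\sum_k A_k$ with $\pi(A)e_1=Te_1$, then
\[
\sum_k\norm{A_k}\;\geq\;\sum_k\norm{\pi(A_k)e_1}\;\geq\;\norm{\sum_k\pi(A_k)e_1}\;=\;\norm{Te_1}\;=\;\norm{T}\;=\;\sum_k b_k ,
\]
so certifying $\norm{A_k}\leq b_k$ for every $k$ forces $\norm{A_k}=b_k=\norm{\pi(A_k)e_1}$ exactly, with zero slack at every stage. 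Your engine cannot deliver this: to guarantee $\norm{A_k}\leq b_k$ via Lemma \ref{lem:rep_almost_isometry} you need a target $S_k$ with $\norm{S_k}\leq b_k-\eta_k<b_k$, yet $S_k$ must move $\pi(\cdot)e_1$ by essentially $\norm{D_ke_1}-\norm{D_{k+1}e_1}\approx b_k$, so $\norm{S_k}\gtrsim b_k$. Every stage therefore overshoots its budget by a strictly positive amount (the $\eta_k$ from Lemma \ref{lem:rep_almost_isometry}, which the paper pointedly states only in the form $\norm{B}<\norm{\pi(A)}+\varepsilon$, plus the Kaplansky error inflated by $\sqrt{2n}$ at the next stage), and the scheme lands at $\norm{A}\leq(1+\varepsilon)\norm{T}$ --- exactly the outcome you set out to avoid. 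No tuning of $\lambda$, $\kappa_k$, $\eta_k$ or adaptive re-budgeting escapes this, because the lower bound above is forced by the interpolation requirement itself.

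The missing idea is spectral truncation, and it is how the paper's proof (following Pedersen) gets the sharp constant. For self-adjoint $T$: let $P$ be the projection onto $\vecspan\qty{x_1,\ldots,x_n,Tx_1,\ldots,Tx_n}$ and $S=PTP$, use the ordinary Kadison transitivity theorem to produce \emph{some} self-adjoint $A\in\fA$ with $\pi(A)P=S$ --- no norm control whatsoever at this step --- and observe that $\pi(A)^kP=S^k$ for all $k$. Then clip the spectrum: with $f$ the continuous function truncating $\bbR$ to $[-\norm{S},\norm{S}]$ (and $f(0)=0$, so $f(A)\in\fA$ even in the non-unital case), one has $\norm{f(A)}\leq\norm{S}\leq\norm{T}$, and $f(A)$ still interpolates because $f(S)=S$ and $f$ is a uniform limit of polynomials compatible with the relation $\pi(A)^kP=S^k$. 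The general case reduces to this one by applying it to $\abs{S}$ and composing with a unitary from the unitization implementing the polar isometry $\abs{S}x\mapsto Sx$ on the relevant finite-dimensional subspace. The sharp bound comes from functional calculus applied \emph{after} solving the interpolation problem, not from norm bookkeeping inside an approximation scheme; I would rework your proof around that device.
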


\begin{proof}
First suppose $T$ is self-adjoint. Let $P \in \frB(\cH)$ be the projection onto $\vecspan\qty{x_1,\ldots, x_n, Tx_1,\ldots, Tx_n}$ and define $S = PTP$, which is self-adjoint, satisfies $\norm{S} \leq \norm{T}$, and has $Sx_i = Tx_i$ for all $i$. Extend $x_1,\ldots, x_n$ to a basis $x_1,\ldots, x_m$ of the above span. By the Kadison transitivity theorem, there exists $A \in \frA_\tn{sa}$ such that $\pi(A)x_i = PTx_i$ for all $i = 1,\ldots, m$, hence $\pi(A)P = S$. Assuming $\pi(A)^kP = S^k$ for some $k \in \bbN$, we have
\[
\pi(A)^{k+1}P = \pi(A)^k S = \pi(A)^k PS = S^{k+1},
\]
so $\pi(A)^{k}P = S^k$ for all $k \in \bbN$ by induction.

Define a continuous function $f:\bbR \rightarrow \bbR$ by
\[
f(t) = \begin{cases} \norm{S} &: t \geq \norm{S} \\ t &: \abs{t} \leq \norm{S} \\ -\norm{S} &: t \leq -\norm{S}\end{cases}
\]
Note that $f(A)$ is well-defined by continuous functional calculus and lies in $\frA$ regardless of whether $\frA$ is unital or not since $f(0) = 0$. Furthermore, $f(A)$ is self-adjoint, $\norm{f(A)} \leq \norm{S} \leq \norm{T}$, and $\pi(f(A)) = f(\pi(A))$. We also have $f(S) = S$ since $f$ restricts to the identity on $\sigma(S)$. 

We show that $\pi(f(A))x_i = Sx_i$ for all $i$. Given $i \leq n$ and $\varepsilon > 0$, we choose a real polynomial $g$ such that $\abs{f(t) - g(t)} < \varepsilon/2\norm{x_i}$ whenever $\abs{t} \leq \max(\norm{A}, \norm{S})$. Note that $g(\pi(A))x_i = g(S)x_i$ since $\pi(A)^kP = S^k$ for all $k \in \bbN$. Then
\begin{align*}
\norm{\pi(f(A))x_i - Sx_i} &\leq \norm{f(\pi(A))x_i - g(\pi(A))x_i} + \norm{g(S)x_i - Sx_i}\\
&\leq \norm{(f-g)(\pi(A))}\norm{x_i} + \norm{(g - f)(S)}\norm{x_i} < \varepsilon.
\end{align*}
Since $\varepsilon > 0$ was arbitrary, this implies $\pi(f(A))x_i = Sx_i = Tx_i$, as desired.

For the general case, we again consider $S = PTP$, where $P$ is defined as before, but $T$ is not necessarily self-adjoint. We still have $\norm{S} \leq \norm{T}$ and $Sx_i = Tx_i$ for all $i$. The map $\abs{S}(\cH) \rightarrow S(\cH)$, $\abs{S}x \mapsto Sx$ is a well-defined bijective isometry, and may therefore be extended to a unitary $\cH \rightarrow \cH$. By the self-adjoint case above, there exists $A \in \frA_\tn{sa}$ such that $\norm{A} \leq \norm{\abs{S}} = \norm{S} \leq \norm{T}$ and $\pi(A)x_i = \abs{S}x_i$ for all $i$. By the Kadison transitivity theorem, there exists a unitary $U$ in the unitization of $\frA$ such that $U\abs{S}x_i = Sx_i$ for all $i$. Then $UA \in \frA$, $\norm{UA} \leq \norm{T}$, and $\pi(UA)x_i = Sx_i = Tx_i$ for all $i$.
\end{proof}

We are now ready to prove the ``continuous Kadison transitivity theorem'' in the general and self-adjoint cases. For notation, when $\cH$ is a Hilbert space we denote elements of the Hilbert space $\cH^n$ by bold letters $\x = (x_1,\ldots, x_n)$ and elements of $\cH^{2n}$ by pairs of bold letters $(\x, \y)$. Given an element $T \in \fB(\cH)$ and $n \in \bbN$, we denote $T^{\oplus n} = T\oplus \cdots \oplus T \in \fB(\cH^n)$.

\begin{thm}\label{thm:continuous_Kadison}
Let $\fA$ be a $C^*$-algebra, let $(\cH, \pi)$ be a nonzero irreducible representation, and let $n$ be a positive integer. Let
\[
X = \qty{(\x, \y) \in \cH^{2n}: x_1,\ldots, x_n \tn{ are linearly independent}},
\]
equipped with the subspace topology inherited from $\cH^{2n}$. There exists a continuous map $A : X \rightarrow \fA$ such that
\begin{equation}\label{eq:selection_criterion}
\pi(A(\x,\y))x_i = y_i \quad \tn{for all} \, \, i = 1,\ldots, n
\end{equation}
for all $(\x, \y) \in X$. Similarly, defining
\[
X_\tn{sa} = \qty{(\x, \y) \in X: \exists\, T \in \fB(\cH)_\tn{sa} \tn{ s.t.\ } Tx_i = y_i \tn{ for all }  i =1,\ldots, n},
\]
there exists a continuous map $A :X_\tn{sa} \rightarrow \fA_\tn{sa}$ satisfying \eqref{eq:selection_criterion} for all $(\x, \y) \in X_\tn{sa}$.
\end{thm}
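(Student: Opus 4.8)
The plan is to realize the desired map as a continuous selection and to invoke the Michael selection theorem. I would introduce the carrier $\phi : X \to \wp_+(\fA)$ defined by
\[
  \phi(\x,\y) = \qty{A \in \fA : \pi(A)x_i = y_i \ \tn{for}\ i = 1,\ldots, n},
\]
so that a selection for $\phi$ is exactly a continuous map satisfying \eqref{eq:selection_criterion}. To set up Michael's theorem I first check the routine hypotheses: the base $X$ is a subspace of the Hilbert space $\cH^{2n}$, hence metrizable and therefore paracompact Hausdorff, while $\fA$ is a complex Banach space. Each $\phi(\x,\y)$ is nonempty by the classical Kadison transitivity theorem, and it is the preimage of the point $(y_1,\ldots,y_n)$ under the norm-continuous linear map $A \mapsto (\pi(A)x_1,\ldots,\pi(A)x_n)$, so it is a closed affine subspace of $\fA$, in particular closed and convex. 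Everything then reduces to verifying that $\phi$ is lower semicontinuous, which is where the real content lies.

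To prove lower semicontinuity I would fix $(\x_0,\y_0) \in X$, an element $A_0 \in \phi(\x_0,\y_0)$, and $\varepsilon > 0$, and seek a neighborhood $U$ of $(\x_0,\y_0)$ with $\phi(\x,\y) \cap B_\varepsilon(A_0) \neq \varnothing$ for all $(\x,\y) \in U$. Writing a candidate solution as $A = A_0 + B$, the constraint becomes $\pi(B)x_i = z_i$, where
\[
  z_i := y_i - \pi(A_0)x_i = \big(y_i - (y_0)_i\big) + \pi(A_0)\big((x_0)_i - x_i\big),
\]
using $\pi(A_0)(x_0)_i = (y_0)_i$. Each $z_i$ is continuous in $(\x,\y)$ and vanishes at $(\x_0,\y_0)$, so $\max_i \norm{z_i}$ can be made arbitrarily small by shrinking $U$. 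It therefore suffices to produce, for $(\x,\y)$ near the base point, an element $B \in \fA$ with $\pi(B)x_i = z_i$ and $\norm{B} < \varepsilon$.

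Here the quantitative inputs do the work. I would pass to an orthonormal basis $e_1,\ldots,e_n$ of $\vecspan\qty{x_1,\ldots,x_n}$ via Gram--Schmidt, write $x_i = \sum_j R_{ij} e_j$ with $R$ invertible, and set $w_j = \sum_i (R^{-1})_{ji} z_i$, so that any operator sending $e_j \mapsto w_j$ sends $x_i \mapsto z_i$. Since the Gram matrix of $x_1,\ldots,x_n$ is invertible at $(\x_0,\y_0)$ and continuous in $\x$, the quantity $C := \max_j \sum_i \abs{(R^{-1})_{ji}}$ stays bounded on a neighborhood, giving $\max_j \norm{w_j} \leq C \max_i \norm{z_i}$. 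Lemma \ref{lem:transitivity_in_B(H)} then provides $T \in \fB(\cH)$ with $Te_j = w_j$ and $\norm{T} \leq (2n)^{1/2}\max_j \norm{w_j}$, hence $Tx_i = z_i$ and $\norm{T} \leq (2n)^{1/2} C \max_i \norm{z_i}$, and Theorem \ref{thm:Pedersen} upgrades this to $B \in \fA$ with $\pi(B)x_i = z_i$ and $\norm{B} \leq \norm{T}$. Shrinking $U$ so that $(2n)^{1/2} C \max_i \norm{z_i} < \varepsilon$ yields $\norm{B} < \varepsilon$, establishing lower semicontinuity and, via Michael's theorem, the selection $A : X \to \fA$.

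For the self-adjoint statement I would rerun the identical argument with $\fA$ replaced by the real Banach space $\fA_\tn{sa}$, the carrier $\phi_\tn{sa}(\x,\y) = \qty{A \in \fA_\tn{sa} : \pi(A)x_i = y_i}$, and $A_0 \in \fA_\tn{sa}$; nonemptiness on $X_\tn{sa}$ comes from the self-adjoint case of Theorem \ref{thm:Pedersen}. The only new point is that the correction can be made self-adjoint: for $(\x,\y) \in X_\tn{sa}$ there is self-adjoint $S \in \fB(\cH)_\tn{sa}$ with $Sx_i = y_i$, so $S - \pi(A_0)$ is self-adjoint, sends $e_j \mapsto w_j$, and realizes $z_i$; thus the self-adjoint clauses of Lemma \ref{lem:transitivity_in_B(H)} and Theorem \ref{thm:Pedersen} apply verbatim. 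I expect the main obstacle to be precisely the lower-semicontinuity estimate: it is not enough that the transitivity problem be solvable for nearby data, one needs solutions whose norm tends to $0$ as the data converge, and this is exactly why the norm-controlled refinements Lemma \ref{lem:transitivity_in_B(H)} and Theorem \ref{thm:Pedersen}, together with the local boundedness of the change-of-basis matrix $R^{-1}$, are indispensable.
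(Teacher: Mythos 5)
Your proposal is correct and follows essentially the same route as the paper's proof: the identical Michael-selection setup with the carrier $\phi(\x,\y)=\{A\in\fA:\pi(A)x_i=y_i\}$, closedness/convexity/nonemptiness checked the same way, and lower semicontinuity established by passing to a Gram--Schmidt orthonormal frame, using local boundedness of the change-of-basis coefficients, and combining Lemma \ref{lem:transitivity_in_B(H)} with the norm-controlled Theorem \ref{thm:Pedersen} to produce a small-norm correction $B$ with $A_0+B\in\phi(\x,\y)$. The self-adjoint case is handled exactly as in the paper, so no further comment is needed.
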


\begin{proof}
Since $X$ and $X_\tn{sa}$ are subspaces of $\cH^{2n}$, they are metrizable, hence paracompact Hausdorff. We will use the Michael selection theorem for the carrier $\phi:X \rightarrow \wp_+(\fA)$ defined by
\[
\phi(\x, \y) = \qty{A \in \fA: \pi(A)x_i = y_i \tn{ for all $i = 1,\ldots, n$}}.
\]
For the self-adjoint case, we note that $\fA_\tn{sa}$ is a real Banach space with the topology inherited from $\fA$, and we define $\phi_\tn{sa} :X_\tn{sa} \rightarrow \wp_+(\fA_\tn{sa})$ by  
\[
\phi_\tn{sa}(\x, \y) = \fA_\tn{sa} \cap \phi(\x, \y).
\] 
By the Kadison transitivity theorem, $\phi(\x, \y)$ and $\phi_\tn{sa}(\x, \y)$ are nonempty for all $(\x, \y) \in X$ and $(\x, \y) \in X_\tn{sa}$, respectively.  Given $(\x, \y) \in X$, $t \in [0,1]$, and $A, B \in \phi(\x, \y)$, we have
\[
\pi(tA + (1 - t)B)x_i = t \pi(A) x_i + (1 - t)\pi(B)x_i = ty_i + (1 - t)y_i = y_i
\]
for all $i$, so $\phi(\x, \y)$ is convex. An identical argument shows that $\phi_\tn{sa}(\x, \y)$ is convex for all $(\x, \y) \in X_\tn{sa}$. Furthermore, if $\hat x_i : \fB(\cH) \rightarrow \cH$ denotes the evaluation map $\hat x_i(T) = Tx_i$, then we see that
\[
\phi(\x, \y) = \bigcap_{i=1}^n (\hat x_i \circ \pi)^{-1}(\qty{y_i}),
\]
so $\phi(\x, \y)$ is closed since $\hat x_i \circ \pi$ is continuous for each $i$. For the self-adjoint case, we note that $\phi_\tn{sa}(\x, \y) = \fA_\tn{sa} \cap \phi(\x, \y)$ is closed in $\fA_\tn{sa}$ since $\fA_\tn{sa}$ has the subspace topology.

All that remains to show is that $\phi$ is lower semicontinuous, then the result will follow immediately from the Michael selection theorem. Fix $(\x_0, \y_0) \in X$, $A_0 \in \phi(\x_0, \y_0)$, and let $\varepsilon > 0$; replace $X$ and $\phi$ by $X_\tn{sa}$ and $\phi_\tn{sa}$ for the self-adjoint case. Given $(\x, \y) \in X$, let $e_1(\x),\ldots, e_n(\x) \in \cH$ be the orthonormal basis obtained by applying the Gram-Schmidt method to $x_1,\ldots, x_n$, and let $\lambda_{ij}(\x) \in \bbC$ be such that $e_i(\x) = \sum_{j=1}^n \lambda_{ij}(\x)x_j$. Note that each $\lambda_{ij}(\x)$ is a continuous function $X \rightarrow \bbC$. Moreover, the matrix $\Lambda_\x = (\lambda_{ij}(\x))$ defines an invertible element $\Lambda_\x \in \fB(\cH^n)$. Observe that the map $X \rightarrow \fB(\cH^n)$, $(\x, \y) \mapsto \Lambda_\x$ is continuous, $\Lambda_\x \x = \be(\x)$, and $[\Lambda_\x, T^{\oplus n}] = 0$ for all $T \in \fB(\cH)$. Let $O_1$ be the preimage of the open ball of radius $\norm{\Lambda_{\x_0}}$ centered on $\Lambda_{\x_0}$ under the map $(\x, \y) \mapsto \Lambda_\x$. Let $O_2$ be the preimage of the open ball of radius $\varepsilon/(4n\norm{\Lambda_{\x_0}})$ centered at zero under the map $X\rightarrow \cH^n$, $(\x, \y) \mapsto \y - \pi(A_0)^{\oplus n}\x$, which is also continuous. Then $O = O_1 \cap O_2$ is a neighborhood of $(\x_0, \y_0)$ in $X$ and $(\x, \y) \in O$ implies 
\[
\norm{\Lambda_\x} < 2 \norm{\Lambda_{\x_0}} \qqtext{and} \norm{\y - \pi(A_0)^{\oplus n} \x} < \frac{\varepsilon}{4n\norm{\Lambda_{\x_0}}}.
\]
For the self-adjoint case, we set $O_\tn{sa} = X_\tn{sa} \cap O$.

Given $(\x, \y) \in O$, set $\z(\x, \y) = \Lambda_\x \y$ and observe that $A \in \phi(\x, \y)$ if and only if $A \in \phi(\be(\x), \z(\x, \y))$ since $\Lambda_\x$ is invertible and commutes with $\pi(A)^{\oplus n}$ for all $A \in \fA$. For ease of notation we now suppress the arguments of $\be$ and $\z$. We estimate
\begin{align*}
\norm{z_i - \pi(A_0)e_i} &\leq \norm{\z - \pi(A_0)^{\oplus n}\be}\\
&<2\norm{\Lambda_{\x_0}}\norm{\y - \pi(A_0)^{\oplus n}\x} < \frac{\varepsilon}{2n}.
\end{align*}
By Lemma \ref{lem:transitivity_in_B(H)}, there exists $T \in \fB(\cH)$ such that $Te_i = z_i - \pi(A_0)e_i$ for all $i$ and $\norm{T} \leq \varepsilon/\sqrt{2n} < \varepsilon$.  In the self adjoint case, we observe that we may choose $T$ to be self-adjoint since $(\be, \z) \in X_\tn{sa}$ and $\pi(A_0)$ is self-adjoint, so there exists a self-adjoint operator mapping $e_i$ to $z_i - \pi(A_0)e_i$ for all $i$. By Theorem \ref{thm:Pedersen}, there exists $A_1 \in \fA$ such that $\norm{A_1} \leq \norm{T} < \varepsilon$ and $\pi(A_1)e_i = z_i - \pi(A_0)e_i$. In the self-adjoint case, we may choose $A_1$ to be self-adjoint. Defining $A = A_0 + A_1$, we see that $\norm{A - A_0} < \varepsilon$ and
\[
\pi(A)e_i = \pi(A_0)e_i + \pi(A_1)e_i = z_i
\]
for all $i$, which implies $A \in \phi(\be, \z) = \phi(\x, \y)$. In the self-adjoint case, we have $A \in \fA_\tn{sa}$ by choice of $A_1$, so $A \in \phi_\tn{sa}(\x, \y)$. This proves lower semicontinuity, completing the proof.
\end{proof}

\begin{rem}
Suppose $(\cH, \pi, \Omega)$ is the GNS representation of $\omega \in \sP(\fA)$. If we set $n = 1$ in the previous theorem and fix $x = \Omega$, then we get a continuous map $A: \cH \rightarrow \fA$ such that $\pi(A(y))\Omega = y = q(A(y))$ for all $y \in \cH$, where $q:\fA \rightarrow \cH$ is the quotient map. We see that $A$ is a right inverse for $q$. Since $q$ is a surjective bounded linear map between Banach spaces, the existence of a continuous right inverse is guaranteed by the Bartle-Graves theorem \cite{BartleGraves,MichaelSelection}. However, the existence of a continuous \textit{linear} right inverse is equivalent to $\ker q = \fN = \qty{A \in \fA: \omega(A^*A) = 0}$ having a closed complement in $\fA$. It is easy to see that this holds in certain special cases, such as when $\fA$ is commutative or finite-dimensional, or $\fA = \fB(\cK)$ for some Hilbert space $\cK$ and $\omega$ is a state corresponding to a unit vector in $\cK$. However, broader conditions under which $\fN$ is complemented (or whether this is true in general) are unknown to the authors.
\end{rem}

We now move towards a unitary version of Theorem \ref{thm:continuous_Kadison}.
%, which we call the ``continuous Kadison transitivity theorem.'' 
We do so through a series of lemmata.

\begin{lem}\label{lem:Stiefel_section}
Let $\cH$ be a Hilbert space, let $x_1,\ldots, x_n \in \cH$ be an orthonormal system. Given $\varepsilon > 0$, there exists $\delta > 0$ such that for any orthonormal system $y_1,\ldots, y_n \in \cH$ with $\norm{x_i - y_i} < \delta$ for all $i$, there exists a unitary $U \in \Un(\cH)$ such that 
\begin{enumerate}
\item[\tn{(i)}] $\cK_n = \vecspan\qty{x_1,\ldots, x_n, y_1,\ldots, y_n}$ is invariant under $U$, 
\item[\tn{(ii)}] $U$ acts as the identity on $\cK^\perp_n$, 
\item[\tn{(iii)}] $\norm{I - U} < \varepsilon$, 
\item[\tn{(iv)}] and $Ux_i = y_i$ for all $i$.
\end{enumerate}
\end{lem}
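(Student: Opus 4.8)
The plan is to reduce everything to the finite-dimensional subspace $\cK_n$ and to produce $U$ in two steps. Let $P$ and $Q$ denote the orthogonal projections of $\cH$ onto $M=\vecspan\qty{x_1,\dots,x_n}$ and $N=\vecspan\qty{y_1,\dots,y_n}$, respectively. Since the $x_i$ and the $y_i$ are orthonormal, a direct estimate gives $\norm{P-Q}\le 2\sum_{i=1}^n\norm{x_i-y_i}\le 2n\delta$, so the two frames span nearly equal subspaces. The map $x_i\mapsto y_i$ already defines a partial isometry $M\to N$, but extending it to a unitary on all of $\cH$ that is simultaneously close to $I$ and sends $x_i$ \emph{exactly} to $y_i$ is the crux: merely aligning $M$ with $N$ (e.g.\ by a polar decomposition of the frame map) need not move the individual frame vectors to their targets. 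I will therefore first rotate $M$ onto $N$, and then correct the frame by a finite-dimensional unitary supported on $M$.

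For the first step I use the Kato intertwining unitary. Set
\[
R = QP + (I-Q)(I-P).
\]
A short computation yields the identity $R^*R = RR^* = I-(P-Q)^2$, so once $\delta$ is small enough that $\norm{P-Q}<1$ the operator $R$ is invertible and $U_1 := R\,(R^*R)^{-1/2}$ is unitary. Since $R^*R=I-(P-Q)^2$ commutes with $P$, so does $(R^*R)^{-1/2}$, and combined with $QR=RP$ this gives $U_1 P = Q U_1$, i.e.\ $U_1 M = N$. Moreover $R$ acts as the identity on $\cK_n^\perp$ (there $P=Q=0$, so $R=(I-Q)(I-P)=I$) and preserves $\cK_n$; as $(P-Q)^2$ also vanishes on $\cK_n^\perp$, the same holds for $(R^*R)^{-1/2}$, whence $U_1$ is the identity on $\cK_n^\perp$ and preserves $\cK_n$. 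Finally $\norm{R-I}\le\norm{(I-Q)P}+\norm{Q(I-P)}\le 2\norm{P-Q}$ and $(R^*R)^{-1/2}\to I$ as $\norm{P-Q}\to 0$, so $\norm{I-U_1}$ can be made as small as desired by shrinking $\delta$.

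For the second step, note that $\qty{U_1^* y_1,\dots,U_1^* y_n}$ is an orthonormal basis of $U_1^* N = M$, as is $\qty{x_1,\dots,x_n}$. Let $V_0$ be the unitary of $M$ determined by $V_0 x_i = U_1^* y_i$, and extend it to $V := V_0\oplus I_{M^\perp}\in\Un(\cH)$; then $V$ is the identity on $\cK_n^\perp$ and preserves $\cK_n$. Its matrix entries $\langle x_i, U_1^* y_j\rangle = \langle U_1 x_i, y_j\rangle$ tend to $\langle x_i,x_j\rangle=\delta_{ij}$ as $\delta\to 0$ (using $U_1\to I$ and $\norm{x_j-y_j}<\delta$), so $\norm{I-V}\to 0$ as well. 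Now put $U := U_1 V$. Both factors preserve $\cK_n$ and fix $\cK_n^\perp$ pointwise, giving (i) and (ii); and $U x_i = U_1 V x_i = U_1 U_1^* y_i = y_i$, giving (iv). For (iii) I bound $\norm{I-U}\le \norm{I-U_1}+\norm{U_1(I-V)} = \norm{I-U_1}+\norm{I-V}$, and by the estimates above one chooses $\delta>0$ small enough that this is $<\varepsilon$. The main obstacle, as indicated, is decoupling the exact frame condition (iv) from the smallness condition (iii); the two-step scheme resolves it by treating the subspace alignment and the in-subspace frame rotation separately, each contributing a factor close to the identity.
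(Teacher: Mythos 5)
Your proof is correct, and it takes a genuinely different route from the paper. The paper argues by induction on $n$: the base case is the explicit rank-two rotation $U_{x,y}z = \ev{y,x}z - \ev{y,z}x + \ev{x,z}y$, for which the exact identity $\norm{I - U_{x,y}} = \norm{x-y}$ holds, and the inductive step composes the unitary obtained for $x_1,\ldots,x_n$ with $U_{z,y_{n+1}}$ where $z = Vx_{n+1}$. You instead perform a single global alignment of the spans via the Kato--Sz.-Nagy direct rotation $U_1 = R(R^*R)^{-1/2}$ with $R = QP + (I-Q)(I-P)$, and then correct the frame inside $M$ by a finite-dimensional unitary $V$. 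All the key identities you invoke check out: $R^*R = RR^* = I - (P-Q)^2$, the intertwining $QR = RP$ giving $U_1P = QU_1$, the localization of $R$ and $(R^*R)^{-1/2}$ to $\cK_n$, and the bounds $\norm{P-Q}\le 2n\delta$ and $\norm{R-I}\le 2\norm{P-Q}$; the entrywise convergence of the $n\times n$ matrix of $V_0$ is uniform in the admissible $y_i$'s, so $\delta$ can be chosen depending only on $n$ and $\varepsilon$, as required. Your approach avoids induction and is conceptually cleaner, at the cost of a less explicit norm estimate in the second step. One practical remark: the paper's proof is not only a proof of the lemma but also a construction it reuses later --- the operator $U_{x,y}$ of \eqref{eq:U_xy} and the identity \eqref{eq:U_xy_norm_identity} are cited explicitly in the proofs of Lemma \ref{lem:unitary_Kadison_lemma2} and Lemma \ref{lem:UHF_unitary_closures} --- so if one adopted your argument, those later proofs would still need the $n=1$ rotation as a separate ingredient.
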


\begin{proof}
We prove the lemma by induction on $n$. Consider the case when $n = 1$. Given $x, y \in \bbS \cH$, we define $U_{x,y} \in \Un(\cH)$ by having $U_{x,y}$ act as the identity on the orthogonal complement of $\cK = \vecspan\qty{x,y}$ and defining $U_{x,y}$ on $\cK$ by
\begin{equation}\label{eq:U_xy}
U_{x,y}z = \ev{y, x}z - \ev{y, z}x + \ev{x, z}y.
\end{equation}
One can check that this is indeed unitary and satisfies $U_{x,y}x = y$. When $\dim \cK = 1$, we have $y = \ev{x, y} x$, so we see that $U_{x,y}|_{\cK}$ is multiplication by $\ev{x,y}$. When $\dim \cK = 2$, the eigenvalues of $U_{x,y}|_{\cK}$ are
\[
\lambda_{x,y}^\pm = \Re \ev{x, y} \pm i \sqrt{1 - (\Re \ev{x, y})^2}.
\] 
Thus, $\sigma(U_{x,y}) \subset \qty{\lambda_{x,y}^+, \lambda_{x,y}^-, 1}$. Since $I - U_{x,y}$ is normal, its norm is given by its spectral radius, so
\begin{equation}\label{eq:U_xy_norm_identity}
\norm{I - U_{x,y}} = \abs{\lambda_{x,y}^+ - 1} = \abs{\lambda_{x,y}^- - 1} = \sqrt{2 - 2\Re \ev{x,y}} = \norm{x - y}.
\end{equation}
Therefore setting $\delta = \varepsilon$ and $U = U_{x,y}$ works for the base case.

Suppose the lemma is true for some $n$ and let $x_1,\ldots, x_{n+1}$ be an orthonormal system. Choose $\delta' > 0$ such that for any orthonormal system $y_1,\ldots, y_{n}$ with $\norm{x_i - y_i} < \delta'$ for all $i \leq n$, there exists a unitary $V \in \Un(\cH)$ satisfying (i), (ii), $\norm{I - V} < \varepsilon/3$, and $Vx_i = y_i$ for $i \leq n$. Let $\delta = \min(\delta', \varepsilon/3)$ and let $y_1,\ldots, y_{n+1}$ be an orthonormal system with $\norm{x_i - y_i} < \delta$ for all $i$. Since $V$ leaves $\cK_n$ invariant and acts as the identity on $\cK_n^\perp$, we see that
\[
Vx_{n+1} = VPx_{n+1} + (I - P)x_{n+1} \in \cK_{n+1},
\]
where $P$ is the projection onto $\cK_n$. Likewise $Vy_{n+1} \in \cK_{n+1}$, so $V$ leaves $\cK_{n+1}$ invariant. Since $\cK_{n+1}^\perp \subset \cK_n^\perp$, we also know that $V$ acts as the identity on $\cK_{n+1}^\perp$.

Set $z = Vx_{n+1}$ and note that $y_1,\ldots, y_n, z$ is an orthonormal system since it is the image of $x_1,\ldots, x_{n+1}$ under the unitary $V$. Furthermore,
\[
\norm{z - y_{n+1}} \leq \norm{(V-I)x_{n+1}} + \norm{x_{n+1} - y_{n+1}} < \frac{\varepsilon}{3} + \delta \leq \frac{2\varepsilon}{3}
\]
Consider the unitary $W = U_{z, y_{n+1}}V$. Since $z \in \cK_{n+1}$, we see that $W$ leaves $\cK_{n+1}$ invariant and acts as the identity on $\cK_{n+1}^\perp$. Since $z$ and $y_{n+1}$ are orthogonal to all $y_i$ with $i \leq n$, it follows that $Wx_i = y_i$ for all $i \leq n +1$. Finally,
\begin{align*}
\norm{I - W} &\leq \norm{I - U_{z, y_{n+1}}} + \norm{U_{z, y_{n+1}} - U_{z, y_{n+1}}V}\\
&\leq \norm{z - y_{n+1}} + \norm{I - V} < \varepsilon,
\end{align*}
completing the proof.
\end{proof}

\begin{rem}
We note that the $n = 1$ case of Lemma \ref{lem:Stiefel_section} can also be accomplished by polar decomposition. Assuming $x, y \in \bbS \cH$ and $\ev{x,y} \neq 0$, consider the operator
\[
A_{x,y} = \frac{\ev{x,y}}{\abs{\ev{x,y}}}P_yP_x + (1 - P_y)(1 - P_x),
\]
where $P_x$ and $P_y$ are the projections onto $\bbC x$ and $\bbC y$, respectively. The unitary $V_{x,y}$ obtained from the polar decomposition $A_{x,y} = V_{x,y}\abs{A_{x,y}}$ satisfies (i), (ii), and (iv). For fixed $x$, the map $y \mapsto V_{x,y}$ is norm-continuous on the domain $\qty{y \in \bbS \cH: \ev{x, y} \neq 0}$ and fulfills $V_{x,x} = I$. Therefore, $V_{x,y}$ satisfies (iii) for small enough $\delta$. 
\end{rem}

The following lemma contains the heart of the unitary version of the continuous Kadison transitivity theorem. 

\begin{lem}\label{lem:unitary_Kadison_lemma2}
Let $\fA$ be a unital $C^*$-algebra, let $(\cH, \pi)$ be a nonzero irreducible representation, and let $n$ be a non-negative integer. Define
\[
Y_+ = \qty{(\x, y) \in \cH^{n+1} \times \bbS \cH : \begin{array}{l} x_1,\ldots, x_{n+1} \tn{ are orthonormal, } \ev{x_{n+1}, y} > 0, \\ \tn{and } \ev{x_i, y} = 0 \tn{ for all }  i \leq n  \end{array}},
\]
equipped with the subspace topology. There exists a continuous map $U: Y_+ \rightarrow \Un(\fA)$ such that
\begin{equation}\label{eq:U_transitive_lemma}
\pi(U(\x, y))x_{n+1} = y \qqtext{and} \pi(U(\x, y))x_i = x_i
\end{equation}
for all $(\x, y) \in X$ and $i \leq n$. Here, $ \Un(\fA)$ denotes the group
of unitary elements of $\fA$.
\end{lem}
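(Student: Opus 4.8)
The plan is to produce the unitary as $U(\x,y) = \exp\qty(i A(\x,y))$ for a continuous self-adjoint element $A \colon Y_+ \to \fA_{\tn{sa}}$ obtained from the Michael selection theorem, where $A$ is chosen so that $\pi(A)$ reproduces, on the relevant finite-dimensional subspace, the generator of an explicit rotation of $\cH$ carrying $x_{n+1}$ to $y$ while fixing $x_1,\ldots,x_n$. Concretely, for $(\x,y) \in Y_+$ I set $c = \ev{x_{n+1}, y} \in (0,1]$, $\theta = \arccos c \in [0, \pi/2)$, and $\eta = \eta(\x,y) = y - c\, x_{n+1}$, so that $\eta \perp x_1,\ldots,x_{n+1}$ and $\norm{\eta} = \sin\theta$. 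Writing the rotation as $\exp(iH)$ with
\[
H(\x,y)\, w = \tfrac{\theta}{\sin\theta}\, i\big( \ev{\eta, w}\, x_{n+1} - \ev{x_{n+1}, w}\, \eta \big) \quad (\theta > 0), \qquad H(\x,y) = 0 \quad (\theta = 0),
\]
one checks that $H$ is self-adjoint, norm-continuous on all of $Y_+$ (since $\theta/\sin\theta \to 1$ and $\eta \to 0$ as $\theta \to 0$), of rank $\leq 2$ with $\norm{H} = \theta$, annihilates $x_1,\ldots,x_n$, leaves $\cK_0 = \vecspan\qty{x_{n+1}, \eta}$ invariant, and satisfies $\exp(iH)x_{n+1} = c\,x_{n+1} + \eta = y$ (this is essentially the base case $U_{x_{n+1},y}$ of Lemma \ref{lem:Stiefel_section}, cf.\ \eqref{eq:U_xy}). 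The point of passing to $H$ is that only the continuous data $x_{n+1}$ and $\eta$ enter, never the unit vector $e = \eta/\norm{\eta}$, which cannot be chosen continuously near the locus $y = x_{n+1}$.

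Next I would introduce the carrier $\phi \colon Y_+ \to \wp_+(\fA_{\tn{sa}})$,
\[
\phi(\x,y) = \qty{A \in \fA_{\tn{sa}} : \pi(A)x_i = 0 \ (i \leq n),\ \pi(A)x_{n+1} = H x_{n+1},\ \pi(A)\eta = H\eta,\ \norm{A} \leq 2\theta},
\]
the condition on $\eta$ being vacuous when $\theta = 0$. Each value is convex and closed (the linear conditions cut out an affine subspace, intersected with a ball). Nonemptiness is where Theorem \ref{thm:Pedersen} enters: applied to the orthonormal system $x_1,\ldots,x_{n+1}, e$ (for $\theta > 0$) and the self-adjoint operator $H$, it yields a self-adjoint $A$ with $\pi(A)$ agreeing with $H$ on this system and $\norm{A} \leq \norm{H} = \theta < 2\theta$; for $\theta = 0$ the element $A = 0$ works. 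This also shows that each $\phi(\x,y)$ with $\theta>0$ contains a point strictly inside its norm ball.

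The heart of the proof — and the step I expect to be the main obstacle — is lower semicontinuity of $\phi$, precisely because of the degeneracy at $\qty{y = x_{n+1}}$. Away from that locus $\theta$ stays positive, $e$ varies continuously, and the frame $x_1,\ldots,x_{n+1}, e$ with targets $0,\ldots,0, Hx_{n+1}, He$ depends continuously on $(\x,y)$; there $\phi$ (before the norm bound) is a carrier of exactly the type treated in Theorem \ref{thm:continuous_Kadison}, so the Gram--Schmidt-plus-correction argument of that theorem gives lower semicontinuity, and the strictly interior point found above lets one intersect with the continuously varying ball $\norm{A} \leq 2\theta$ without destroying it (a Slater-type argument: slide any carrier element toward the interior point and use continuity of $\theta$). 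At a point with $\theta_0 = 0$ the norm bound forces $\phi(\x_0, y_0) = \qty{0}$, while every element of a nearby $\phi(\x,y)$ has norm $\leq 2\theta \to 0$ and thus automatically approximates $0$; this is exactly why the explicit constraint $\norm{A} \leq 2\theta$ is built into the carrier. The delicate point to get right is that near $\qty{y = x_{n+1}}$ one cannot select $e$ continuously, so lower semicontinuity must be checked pointwise, using at each nearby point that point's own orthonormal frame and the norm control of Theorem \ref{thm:Pedersen} / Lemma \ref{lem:transitivity_in_B(H)}.

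Granting lower semicontinuity, the Michael selection theorem (with $Y_+$ metrizable, hence paracompact Hausdorff, and $\fA_{\tn{sa}}$ a real Banach space) yields a continuous selection $A \colon Y_+ \to \fA_{\tn{sa}}$. I would then set $U(\x,y) = \exp\qty(i A(\x,y)) \in \Un(\fA)$, which is continuous and unitary since $A$ is continuous and self-adjoint and $\fA$ is unital. As $\pi$ is a bounded $*$-homomorphism, $\pi(U) = \exp\qty(i\pi(A))$; and since $\pi(A)$ agrees with $H$ on the invariant subspace $\cK_0$ and kills $x_1,\ldots,x_n$, one has $\pi(A)^k x_{n+1} = H^k x_{n+1}$ and $\pi(A)^k x_i = 0$ for $k \geq 1$, whence $\pi(U(\x,y))x_{n+1} = \exp(iH)x_{n+1} = y$ and $\pi(U(\x,y))x_i = x_i$ for $i \leq n$, as required.
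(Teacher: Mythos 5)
Your proposal is correct, and its overall architecture is the same as the paper's: produce $U$ as $e^{iA}$ for a continuous self-adjoint selection $A$ of a carrier whose members agree with a rank-two rotation generator on $\vecspan\qty{x_1,\ldots,x_{n+1},y}$ and carry a norm bound proportional to $\theta=\arccos\ev{x_{n+1},y}$, the bound being what tames the degenerate locus $y=x_{n+1}$ (your $H$ restricted to $\vecspan\qty{x_{n+1},\eta}$ is exactly the paper's generator $T_{\x,y}$). The two proofs genuinely diverge, however, in how they handle the two delicate points. First, you make the generator itself continuous across $\theta=0$ by working with the unnormalized $\eta$, whereas the paper works on $Y_+'=\qty{\theta>0}$, runs the selection there with the bound $\norm{A}\leq\theta$, and only afterwards extends by $0$, using that bound for continuity at the boundary. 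Second, and more substantively, for lower semicontinuity on $\qty{\theta>0}$ you factor the carrier (minus the norm bound) through Theorem \ref{thm:continuous_Kadison} via the continuous frame map $(\x,y)\mapsto\big((x_1,\ldots,x_{n+1},e),(0,\ldots,0,Hx_{n+1},He)\big)$ into $X_\tn{sa}$, and then restore the constraint $\norm{A}\leq 2\theta$ by a convex perturbation toward the interior point of norm $\leq\theta$ supplied by Theorem \ref{thm:Pedersen}; the paper instead proves lower semicontinuity from scratch, conjugating a given carrier element by a unitary $e^{iB}$ that implements the small frame rotation of Lemma \ref{lem:Stiefel_section} and rescaling by $\theta(\u,v)/\theta(\x,y)$. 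Your route buys a cleaner reuse of already-established results at the cost of the Slater-type step, which you only sketch; it does go through — writing $A_t=(1-t)A'+tA_*$ with $A'$ the element produced by Theorem \ref{thm:continuous_Kadison} and $A_*$ the interior point, the $t$ needed to satisfy $(1-t)\norm{A'}\leq(2-t)\theta$ moves $A'$ by at most a fixed multiple (about $3$) of the combined tolerances, uniformly in $\theta_0$, and the points with $\theta_0=0$ are covered by your separate argument since there the carrier value is $\qty{0}$ and all nearby values lie in balls of radius $2\theta\to 0$ — but that estimate is the one piece you should write out in full.
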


\begin{proof}
The function $\theta:Y_+ \rightarrow [0, \pi/2)$ defined as
\[
\theta(\x, y) = \cos^{-1} \ev{x_{n+1}, y}
\] 
is continuous on $Y_+$. If we set $Y'_+ = \qty{(\x, y) \in Y_+ : \ev{x_{n+1}, y} < 1}$, then we have another continuous map $w:Y'_+ \rightarrow \cH$ given by
\[
w(\x, y) = \frac{y - \ev{x_{n+1}, y}x_{n+1}}{\norm{y - \ev{x_{n+1}, y}x_{n+1}}}
\]
and $\qty{x_{n+1}, w(\x, y)}$ is a basis for $\vecspan\qty{x_{n+1}, y}$. In this basis, the unitary $U_{x_{n+1}, y}$ defined in Equation \eqref{eq:U_xy} is represented by the matrix
\[
\mqty(\cos \theta(\x, y) & - \sin \theta(\x, y) \\ \sin \theta(\x, y) & \cos \theta(\x, y))
\]
when restricted to $\vecspan\qty{x_{n+1}, y}$. Given $(\x, y) \in Y'_+$, we also define an operator $T_{\x, y} \in \fB(\cH)_\tn{sa}$ which acts as the zero operator on $\vecspan\qty{x_{n+1}, y}^\perp$ and, when restricted to $\vecspan\qty{x_{n+1}, y}$, is represented by the matrix
\[
\mqty(0 & i\theta(\x, y) \\ -i\theta(\x, y) & 0)
\]
with respect to the basis $\qty{x_{n+1}, w(\x, y)}$. Observe that $\norm{T_{\x,y}} = \theta(\x, y)$ and $U_{x_{n+1},y} = e^{iT_{\x,y}}$.

Note that $Y_+'$ is metrizable, hence paracompact Hausdorff. Given $(\x, y) \in Y_+$, define $\cK(\x, y) = \vecspan\qty{x_1,\ldots, x_n, y}$ and define a carrier $\phi:Y'_+ \rightarrow \wp_+(\fA_\tn{sa})$ by
\[
\phi(\x, y) = \qty{A \in \fA_\tn{sa} : \pi(A)|_{\cK(\x, y)} =T_{\x, y}|_{\cK(\x, y)} \tn{ and } \norm{A} \leq \theta(\x, y)}
\]
By Theorem \ref{thm:Pedersen}, $\phi(\x, y)$ is nonempty for all $(\x, y) \in Y'_+$. We see that $\phi(\x, y)$ is closed and convex by the same arguments used in the proof of Theorem \ref{thm:continuous_Kadison}, along with the fact that the closed ball of radius $\theta(\x, y)$ is closed and convex. For lower semicontinuity, fix $(\x, y) \in Y'_+$, $A_0 \in \phi(\x, y)$, and $\varepsilon > 0$. Use continuity and positivity of $\theta$ on $Y_+'$ to choose a neighborhood $O$ of $(\x, y)$ such that for all $(\u, v) \in O$, we have 
\[
\abs{1 - \frac{\theta(\u, v)}{\theta(\x, y)}} < \frac{\varepsilon}{2\norm{A_0}}.
\]
Apply Lemma \ref{lem:Stiefel_section} to the orthonormal system $x_1,\ldots, x_{n+1}, w(\x, y)$  and the number
\[
\varepsilon' = \min\qty(2, \frac{\varepsilon}{4\norm{A_0}})
\]
to find a $\delta > 0$ with the properties described in Lemma \ref{lem:Stiefel_section}. By continuity of $w$ on $Y_+'$, we may shrink $O$ such that for all $(\u, v) \in O$, we have $\norm{x_i - u_i} < \delta$ and $\norm{w(\x, y) - w(\u, v)} < \delta$.

Now, given $(\u, v) \in O$, there exists a unitary $V \in \Un(\cH)$ such that $Vx_i = u_i$ for all $i$, $Vw(\x, y) = w(\u, v)$, and $\norm{I - V} < \varepsilon'$. The fact that $\norm{I - V} < 2$ implies that $-1 \notin \sigma(V)$, so we can use the continuous functional calculus to apply the principal branch of the logarithm and obtain a self-adjoint operator $S = -i \Log V$ with $\norm{S} \leq \pi$. Note that $S$ leaves $\cG = \vecspan\qty{x_1,\ldots, x_{n+1}, w_{x_{n+1}, y}, u_1,\ldots, u_{n+1}, w_{u_{n+1}, v}}$ invariant since $V$ leaves $\cG$ invariant. By Theorem \ref{thm:Pedersen}, we can obtain a self-adjoint operator $B \in \fA$ such that $\pi(B)|_{\cG} = S|_{\cG}$ and $\norm{B} \leq \norm{S}$. Hence $W = e^{iB}$ acts as $V = e^{iS}$ on this subspace and by continuous functional calculus,
\begin{align*}
\norm{I - W} &= \sup_{\lambda \in \sigma(W)} \abs{\lambda - 1} = \abs{e^{i\norm{B}} - 1} \\
&\leq \abs{e^{i\norm{S}} - 1} = \sup_{\lambda \in \sigma(V)} \abs{\lambda - 1} = \norm{I - V}.
\end{align*}
It is easy to check that 
\[
A = \frac{\theta(\u, v)}{\theta(\x, y)}W A_0 W^{-1} \in \phi(\u, v)
\]
using the values of $V$ and $T_{\x, y}$  on $x_1,\ldots, x_{n+1}, w(\x, y)$, and the values of $V^{-1}$ and $T_{\u, v}$ on $u_1,\ldots, u_{n+1}, w(\u, v)$. Finally, observe that
\begin{align*}
\norm{A_0 - A} &\leq \norm{A_0 - WA_0W^{-1}} + \abs{1 - \frac{\theta(\u, v)}{\theta(\x, y)}} \norm{A_0}\\
&\leq \qty(2 \norm{I - W} + \abs{1 - \frac{\theta(\u, v)}{\theta(\x, y)}})\norm{A_0} < \varepsilon,
\end{align*}
as desired. This proves lower semicontinuity of $\phi$.

The Michael selection theorem now gives a continuous selection $A:Y'_+ \rightarrow \fA_\tn{sa}$ of $\phi$. We extend $A$ to $Y_+$ by defining $A(\x, y) = 0$ whenever $\ev{x_{n+1}, y} = 1$, equivalently, when $x_{n+1} = y$. Then $A:Y_+ \rightarrow \fA_\tn{sa}$ is continuous on $Y'_+$ since $Y'_+$ is open in $Y_+$ and $A$ is continuous on $Y_+ \setminus Y'_+$ by continuity of $\theta$ on $Y_+$ and the fact that $\norm{A(\x, y)} \leq \theta(\x, y)$ for all $(\x, y) \in Y_+$. Exponentiating $A$ yields a continuous map $U:Y_+ \rightarrow \Un(\fA)$, $U(\x, y) = e^{iA(\x, y)}$ that acts as $U_{x_{n+1}, y}$ on $\cK(\x, y)$, thereby satisfying \eqref{eq:U_transitive_lemma}.
\end{proof}

The purpose of our final lemma is to remove the condition that $\ev{x_{n+1}, y} > 0$ and replace it with the condition that $\ev{x_{n+1}, y} \notin \bbR_{\leq 0}$.

\begin{lem}\label{lem:unitary_Kadison_lemma}
Let $\fA$ be a unital $C^*$-algebra, let $(\cH, \pi)$ be a nonzero irreducible representation, and let $n$ be a non-negative integer. Define
\[
Y = \qty{(\x, y) \in \cH^{n+1} \times \bbS \cH : \begin{array}{l} x_1,\ldots, x_{n+1} \tn{ are orthonormal, } \ev{x_{n+1}, y} \notin  \bbR_{\leq 0}, \\ \tn{and } \ev{x_i, y} = 0 \tn{ for all }  i \leq n  \end{array}},
\]
equipped with the subspace topology. There exists a continuous map $U: Y \rightarrow \Un(\fA)$ such that
\begin{equation}\label{eq:unitary_Kadison_eq3}
\pi(U(\x, y))x_{n+1} = y \qqtext{and} \pi(U(\x, y))x_i = x_i
\end{equation}
for all $(\x, y) \in Y$ and $i \leq n$. 
\end{lem}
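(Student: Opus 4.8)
The plan is to reduce to Lemma \ref{lem:unitary_Kadison_lemma2} by first rotating away the phase of $\ev{x_{n+1}, y}$, and then correcting for that phase by a second, simpler unitary produced by the Michael selection theorem. First I would record the phase: for $(\x, y) \in Y$ the number $\mu(\x, y) = \ev{x_{n+1}, y}$ is nonzero and not a negative real, so $\lambda(\x, y) = \mu(\x, y)/\abs{\mu(\x, y)}$ is a continuous map $Y \to \bbS^1 \setminus \qty{-1}$. Setting $y'(\x, y) = \overline{\lambda(\x, y)}\, y$ produces a unit vector with $\ev{x_{n+1}, y'} = \abs{\mu} > 0$ and $\ev{x_i, y'} = 0$ for $i \leq n$, so $(\x, y') \in Y_+$ and the induced map $\Phi : Y \to Y_+$, $(\x, y) \mapsto (\x, y')$, is continuous. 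Composing $\Phi$ with the map from Lemma \ref{lem:unitary_Kadison_lemma2} yields a continuous $U_1 : Y \to \Un(\fA)$ with $\pi(U_1(\x, y))x_{n+1} = y'$ and $\pi(U_1(\x, y))x_i = x_i$ for $i \leq n$.

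It then remains to build a continuous $U_2 : Y \to \Un(\fA)$ with $\pi(U_2(\x, y))y' = \lambda y' = y$ and $\pi(U_2(\x, y))x_i = x_i$ for $i \leq n$; granting this, $U = U_2 U_1$ is continuous and satisfies \eqref{eq:unitary_Kadison_eq3}, since $\pi(U)x_{n+1} = \pi(U_2)y' = y$ and $\pi(U)x_i = \pi(U_2)x_i = x_i$. To produce $U_2$, write $\lambda = e^{i\alpha}$ with $\alpha(\x, y) = -i\Log \lambda(\x, y) \in (-\pi, \pi)$, which is continuous because $\lambda$ avoids $-1$, and let $P_{y'}$ be the rank-one projection onto $\bbC y'$. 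The self-adjoint operator $\alpha P_{y'}$ annihilates $x_1, \ldots, x_n$ and multiplies $y'$ by $\alpha$, so $\exp(i\alpha P_{y'})$ is exactly the desired phase multiplication. I would realize a generator of this exponential inside $\fA$ by a selection argument: setting $\cL(\x, y) = \vecspan\qty{x_1, \ldots, x_n, y'}$, which is invariant under $\alpha P_{y'}$, define the carrier $\psi : Y \to \wp_+(\fA_\tn{sa})$ by
\[
\psi(\x, y) = \qty{B \in \fA_\tn{sa} : \pi(B)|_{\cL(\x,y)} = (\alpha P_{y'})|_{\cL(\x,y)} \tn{ and } \norm{B} \leq \abs{\alpha(\x, y)}}.
\]
By Theorem \ref{thm:Pedersen} each $\psi(\x, y)$ is nonempty, and it is closed and convex by the arguments of Theorem \ref{thm:continuous_Kadison}. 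A selection $B$ for $\psi$ gives $\pi(B)x_i = 0$ and $\pi(B)y' = \alpha y'$, whence $\pi(e^{iB})x_i = x_i$ and $\pi(e^{iB})y' = e^{i\alpha}y' = y$; so $U_2 = e^{iB}$ works. Since $\abs{\alpha}$ vanishes precisely where $\lambda = 1$, i.e.\ where $\mu > 0$ and no correction is needed, I would run the selection on the open set $Y^\circ = \qty{(\x, y) \in Y : \ev{x_{n+1}, y} \notin \bbR_{> 0}}$ and extend $U_2$ by the identity on its complement, exactly as Lemma \ref{lem:unitary_Kadison_lemma2} extends from $Y_+'$ to $Y_+$; the bound $\norm{B} \leq \abs{\alpha}$ guarantees continuity across the interface.

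The main obstacle is the lower semicontinuity of $\psi$, which is the technical heart of the argument, and which I expect to establish exactly as in the proof of Lemma \ref{lem:unitary_Kadison_lemma2}. Given $(\x, y) \in Y^\circ$, $B_0 \in \psi(\x, y)$, and $\varepsilon > 0$, I would use Lemma \ref{lem:Stiefel_section} to align the orthonormal frame $x_1, \ldots, x_n, y'$ with a nearby frame $u_1, \ldots, u_n, v'$ by a unitary $V$ close to the identity, realize $V$ on the relevant finite-dimensional subspace as some $W = e^{iB'} \in \Un(\fA)$ via Theorem \ref{thm:Pedersen}, and then take $B = \tfrac{\alpha(\u, v)}{\alpha(\x, y)}\, W B_0 W^{-1}$, whose signed rescaling matches both the new angle $\alpha(\u,v)$ and the norm bound. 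Continuity of $\alpha$, of $y'$, and of the frame then forces $B \in \psi(\u, v)$ with $\norm{B - B_0} < \varepsilon$ once $(\u, v)$ is sufficiently close. With lower semicontinuity in hand, the Michael selection theorem supplies $U_2$ and completes the proof.

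I note that $Y^\circ$ is metrizable (hence paracompact Hausdorff) as a subspace of $\cH^{n+1} \times \bbS\cH$, and open in $Y$, so the hypotheses of the selection theorem are met; the only genuinely new ingredient beyond Lemma \ref{lem:unitary_Kadison_lemma2} is the continuous splitting of the target unitary into the real-rotation part handled by $U_1$ and the pure-phase part handled by $U_2$, the latter being where the weakened hypothesis $\ev{x_{n+1}, y} \notin \bbR_{\leq 0}$ is exactly what keeps $\lambda$ away from $-1$.
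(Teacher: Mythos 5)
Your proof is correct, but it takes the mirror-image route of the paper's. The paper \emph{pre}-rotates the source: it sends $(\x,y)$ to the tuple $(\x, 0,\dots,0,\alpha(\x,y)x_{n+1}) \in X_\tn{sa}$ with $\alpha = \Im\Log\ev{x_{n+1},y}$, feeds this directly into the already-proved self-adjoint case of Theorem \ref{thm:continuous_Kadison}, exponentiates to get a continuous $V$ with $\pi(V)x_{n+1} = \lambda x_{n+1}$ and $\pi(V)x_i = x_i$, and only then applies Lemma \ref{lem:unitary_Kadison_lemma2} to the pair $(\pi(V)^{\oplus n+1}\x,\, y) \in Y_+$, setting $U = WV$. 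You instead \emph{post}-rotate the target: de-phase $y$ to $y' = \overline{\lambda}y$, apply Lemma \ref{lem:unitary_Kadison_lemma2} first, and then correct the phase with a second unitary $U_2$. Both decompositions are valid, and your reduction of the problem to the single phase rotation $y' \mapsto \lambda y'$ fixing $x_1,\dots,x_n$ is exactly the right residual task. The one place where you do substantially more work than necessary is in constructing $U_2$: you set up a fresh carrier with the norm bound $\norm{B}\le\abs{\alpha}$, prove lower semicontinuity from scratch, and then patch across the locus $\ev{x_{n+1},y}\in\bbR_{>0}$ by extending with the identity. None of that is needed. Since $x_1,\dots,x_n,y'$ is an orthonormal system and $\alpha P_{y'}$ is a self-adjoint operator realizing the data $(x_1,\dots,x_n,y')\mapsto(0,\dots,0,\alpha y')$, the tuple lies in $X_\tn{sa}$ and depends continuously on $(\x,y)$ over all of $Y$; a single application of the self-adjoint case of Theorem \ref{thm:continuous_Kadison} followed by exponentiation hands you $U_2$ globally, with the point $\alpha=0$ causing no trouble (there $\pi(B)y'=0$, so $e^{iB}$ fixes $y'=y$ automatically). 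This is precisely the shortcut the paper exploits for its own phase rotation, and adopting it would collapse the second half of your argument to three lines while eliminating the interface discussion entirely.
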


\begin{proof}
The angle $\alpha: Y \rightarrow (-\pi, \pi)$ defined by taking the principal branch of the logarithm:
\[
\alpha(\x, y) = \Im \Log \ev{x_{n+1}, y}
\]
is continuous on $Y$. The map $Y \rightarrow X_\tn{sa}$, $(\x, y) \mapsto (\x, 0, \ldots, 0, \alpha(\x, y)x_{n+1})$ is continuous, where $X_\tn{sa}$ is as in Theorem \ref{thm:continuous_Kadison}. We may therefore compose with the map $A:X_\tn{sa} \rightarrow \fA_\tn{sa}$ from Theorem \ref{thm:continuous_Kadison} and exponentiate to obtain a continuous map $V:Y \rightarrow \Un(\fA)$ such that
\begin{align*}
\pi(V(\x, y))x_{n+1} = e^{i\alpha(\x, y)}x_{n+1} = \frac{\ev{x_{n+1}, y}}{\abs{\ev{x_{n+1},y}}} x_{n+1}
\end{align*}
and $\pi(V(\x, y))x_i = x_i$ for all $i \leq n$. 

Note that
\[
\ev{\frac{\ev{x_{n+1}, y}}{\abs{\ev{x_{n+1}, y}}} x_{n+1}, y} = \abs{\ev{x_{n+1}, y}} > 0.
\]
Thus, we have a continuous map $Y \rightarrow Y_+$, $(\x, y) \mapsto (\pi(V(\x, y))^{\oplus n+1} \x, y)$ and we compose this with the continuous map from Lemma \ref{lem:unitary_Kadison_lemma2} to obtain a continuous map $W:Y \rightarrow \Un(\fA)$. Defining $U:Y \rightarrow \Un(\fA)$ by $U(\x, y) = W(\x, y)V(\x, y)$, we see that $U$ is continuous and satisfies \eqref{eq:unitary_Kadison_eq3}.
\end{proof}

We are finally ready to prove the unitary version of the continuous Kadison transitivity theorem.

\begin{thm}\label{thm:continuous_Kadison_unitary}
Let $\fA$ be a unital $C^*$-algebra, let $(\cH, \pi)$ be a nonzero irreducible representation, and let $n$ be a positive integer. Let 
\[
X_\tn{u} = \qty{(\x, \y) \in X: \exists\, T \in \Un(\cH) \tn{ s.t.\ } Tx_i = y_i \tn{ for all } i =1,\ldots, n},
\]
equipped with the subspace topology, where $X$ is as in Theorem \ref{thm:continuous_Kadison}. For every $(\x_0, \y_0) \in X_\tn{u}$, there exists a neighborhood $(\x_0, \y_0) \in O \subset X_\tn{u}$ and a continuous map $U:O \rightarrow \Un(\fA)$ such that
\begin{equation}\label{eq:unitary_selection}
\pi(U(\x, \y))x_i = y_i \quad \tn{ for all }  i = 1,\ldots, n
\end{equation}
for all $(\x, \y) \in O$.
\end{thm}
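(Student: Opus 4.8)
The plan is to reduce the statement to moving one orthonormal frame onto another and then to realize that move by a single controlled unitary produced with the Michael selection theorem, after using a fixed reference unitary to normalize the situation at the base point.

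First I would pass from linearly independent tuples to orthonormal frames. On $X_\tn{u}$ the two tuples have equal Gram matrices, since any unitary $T$ with $Tx_i=y_i$ preserves inner products; hence the Gram--Schmidt coefficients $\lambda_{ij}$, which depend only on the Gram matrix, agree for $\x$ and $\y$. Writing $e_i(\x)=\sum_j\lambda_{ij}x_j$ and $f_i(\y)=\sum_j\lambda_{ij}y_j$ gives continuous orthonormal frames on $X_\tn{u}$ with $Te_i=f_i$, and since the change of basis is invertible, $\pi(U)x_i=y_i$ for all $i$ is equivalent to $\pi(U)e_i=f_i$ for all $i$. So it suffices to produce a local continuous $U$ carrying $(e_i)$ to $(f_i)$. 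Next I would normalize at the base point: because $(\x_0,\y_0)\in X_\tn{u}$ there is $T_0\in\Un(\cH)$ with $T_0e_i^0=f_i^0$, so the classical (unitary) Kadison transitivity theorem provides a \emph{fixed} $U_0\in\Un(\fA)$ with $\pi(U_0)e_i^0=f_i^0$. Setting $e_i'=\pi(U_0)e_i$ (continuous, still orthonormal) we get $e_i'^0=f_i^0$, so the source and target frames coincide at $(\x_0,\y_0)$ and stay close on a neighborhood. Writing $U=W\,U_0$, it remains to find a continuous $W$ near the base point with $\pi(W)e_i'=f_i$.

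For $W$ I would carry out a single ``full-frame'' move rather than move vectors one at a time. On a neighborhood $O$ where the frames are close I construct a continuous near-identity unitary $R_{\x,\y}\in\Un(\cH)$ with $R_{\x,\y}e_i'=f_i$ that leaves a finite-dimensional subspace $\cK$ (containing all $e_i',f_i$) invariant, acts trivially on $\cK^\perp$, and satisfies $R_{\x_0,\y_0}=I$; concretely one rotates $\vecspan\{e_i'\}$ onto $\vecspan\{f_i\}$ by the Kato unitary of the two projections and then matches the resulting orthonormal basis to $(f_i)$ inside the target span. Taking the principal logarithm $H_{\x,\y}=-i\Log R_{\x,\y}$ gives a continuous self-adjoint generator of small norm, again $\cK$-invariant. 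Exactly as in the proof of Lemma \ref{lem:unitary_Kadison_lemma2}, I then apply the Michael selection theorem to the carrier
\[
\phi(\x,\y)=\qty{A\in\fA_\tn{sa}:\pi(A)|_{\cK}=H_{\x,\y}|_{\cK}\ \text{and}\ \norm{A}\le\norm{H_{\x,\y}}},
\]
whose values are nonempty by Theorem \ref{thm:Pedersen}, closed and convex by the usual argument, and lower semicontinuous by Lemma \ref{lem:transitivity_in_B(H)} together with Theorem \ref{thm:Pedersen}. A continuous selection $A$ gives $W=e^{iA}$; since $\cK$ is $\pi(A)$-invariant one has $\pi(A)^m|_\cK=H^m|_\cK$, hence $\pi(W)|_\cK=e^{iH}|_\cK=R_{\x,\y}|_\cK$ and therefore $\pi(W)e_i'=f_i$. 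Then $U=WU_0$ is the desired continuous selection on $O$.

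The main obstacle is exactly the construction of this move $W$. The tempting alternative of iterating the one-vector move of Lemma \ref{lem:unitary_Kadison_lemma} — placing $e_1'\mapsto f_1$, then $e_2'\mapsto f_2$ with $f_1$ held fixed, and so on — runs into trouble because that lemma controls only the fixed vectors and the moved vector, so each step displaces the not-yet-moved frame vectors in an uncontrolled way; consequently the hypothesis $\ev{x_{n+1},y}\notin\bbR_{\leq 0}$ required at the next step cannot be guaranteed even at the base point, and the inner products that arise are genuinely complex, so the positive-inner-product Lemma \ref{lem:unitary_Kadison_lemma2} (whose norm bound would keep each move near the identity) does not apply. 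Routing the argument through a single near-identity move with a \emph{norm-bounded} generator is what sidesteps this, and it is also why the statement is only local: the phases obstruct the positive, hence globally selectable, normalization, reflecting the topological nontriviality of the unitary group. The remaining technical work is the explicit continuous construction of $R_{\x,\y}$ — note that Lemma \ref{lem:Stiefel_section} supplies such a unitary only pointwise — and the verification of lower semicontinuity of $\phi$, which proceeds as in Lemma \ref{lem:unitary_Kadison_lemma2}.
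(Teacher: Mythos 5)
Your route is genuinely different from the paper's: you normalize at the base point and then realize the entire frame move by a single near-identity unitary (a Kato rotation of spans followed by an in-span matching), so that one application of the Michael selection theorem suffices; the paper instead reduces to the diagonal $(\x_0,\x_0)$ and builds a local section over the space of orthonormal frames by induction on $n$, placing one vector at a time via Lemma \ref{lem:unitary_Kadison_lemma}. Your stated reason for rejecting the iterative approach is not accurate, however. In the paper's induction the vector to be placed at stage $n+1$ is $\pi(V(P\y))x_{n+1}$, a continuous function of $\y$ that equals $x_{n+1}$ at the base point because $V$ is normalized so that $V(P\x)=1$; hence $\ev{y_{n+1},\pi(V(P\y))x_{n+1}}$ equals $1$ at the base point and stays off $\bbR_{\leq 0}$ on a neighborhood, and the already-placed target vectors $y_1,\ldots,y_n$ are held \emph{exactly} fixed at each subsequent step. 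The ``uncontrolled displacement'' you worry about is tamed by continuity, not by norm bounds, and the iteration does work.

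The genuine gap in your own argument is the lower semicontinuity of the carrier $\phi(\x,\y)=\qty{A\in\fA_\tn{sa}:\pi(A)|_{\cK}=H_{\x,\y}|_{\cK},\ \norm{A}\le\norm{H_{\x,\y}}}$. The proof in Lemma \ref{lem:unitary_Kadison_lemma2} does not transfer: it rests on the exact identity $T_{\u,v}=\frac{\theta(\u,v)}{\theta(\x,y)}\,V\,T_{\x,y}\,V^{-1}$, available only because $T_{\x,y}$ is by construction a fixed $2\times 2$ matrix scaled by $\theta(\x,y)$ in a frame that $V$ transports; the candidate $\frac{\theta(\u,v)}{\theta(\x,y)}WA_0W^{-1}$ then lands \emph{exactly} in the target carrier with the norm bound intact. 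Your generator $H_{\x,\y}=-i\Log R_{\x,\y}$ satisfies no such rescaled-conjugation relation between nearby parameter values, and since every element of your carrier necessarily has norm exactly $\norm{H_{\x,\y}}$ (because $\norm{A}\ge\norm{\pi(A)|_{\cK}}=\norm{H_{\x,\y}}$), an additive or conjugated correction of $A_0$ generically violates the constraint; as written, this step fails. The repair is straightforward but must be made: drop the norm bound entirely --- unlike in Lemma \ref{lem:unitary_Kadison_lemma2}, you never need the selection to degenerate to $0$ at a boundary, since your construction is purely local around a point where $H=0$ --- and prove lower semicontinuity of the affine-valued carrier $\qty{A\in\fA_\tn{sa}:\pi(A)|_{\cK}=H_{\x,\y}|_{\cK}}$ by the additive-correction argument of the self-adjoint case of Theorem \ref{thm:continuous_Kadison}, using Lemma \ref{lem:transitivity_in_B(H)} and Theorem \ref{thm:Pedersen} to produce a small self-adjoint $A_1$ with $\pi(A_0+A_1)$ matching $H_{\x',\y'}$ on $\cK(\x',\y')$ for nearby $(\x',\y')$. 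With that change, and with the continuity of the Kato-plus-matching unitary written out in detail (Lemma \ref{lem:Stiefel_section} indeed only gives a pointwise statement), your argument goes through.
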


\begin{proof}
As in the proof of Theorem \ref{thm:continuous_Kadison}, let $\Lambda : X_\tn{u} \rightarrow \fB(\cH^n)^\times$ be the continuous map obtained by applying the Gram-Schmidt method to $x_1,\ldots, x_n$. Recall that for $T \in \fB(\cH)$, we have $T^{\oplus n}\x = \y$ if and only if $T^{\oplus n}\be(\x) = \z(\x, \y)$, where $\be(\x) = \Lambda_\x \x$ and $\z(\x, \y) = \Lambda_\x \y$. Since $(\x, \y) \mapsto (\be(\x), \z(\x, \y))$ is continuous, it suffices to prove the theorem with $X_\tn{u}$ replaced by 
\[
X_\tn{u}^\tn{on} = \qty{(\x, \y) \in X_\tn{u} : x_1,\ldots, x_n \tn{ are orthonormal}}.
\]
Therefore, suppose $(\x_0, \y_0) \in X_\tn{u}^\tn{on}$.

Suppose the theorem is true when $\x_0 = \y_0$. Then for arbitrary $(\x_0, \y_0) \in X_\tn{u}^\tn{on}$, we can find a neighborhood $(\x_0, \x_0) \in O \subset X_\tn{u}^\tn{on}$ and a continuous function $U:O \rightarrow \Un(\fA)$ for which \eqref{eq:unitary_selection} holds. By the Kadison transitivity theorem, there exists $V \in \Un(\fA)$ such that $\pi(V)^{\oplus n} \y_0 = \x_0$. Then $O' = (I_{\cH^n} \oplus \pi(V)^{\oplus n})^{-1}(O) \subset X_\tn{u}^\tn{on}$ is a neighborhood of $(\x_0, \y_0)$ and $O' \ni (\x, \y) \mapsto V^{-1}U(\x, \pi(V)^{\oplus n}\y)$ is a continuous map satisfying \eqref{eq:unitary_selection}. Thus it suffices to prove the theorem for $(\x_0, \x_0) \in X_\tn{u}^\tn{on}$.

Let $Z_n = \qty{\x \in \cH^n : x_1,\ldots, x_n \tn{ are orthonormal}}$ with the subspace topology. Now suppose that for every $\x \in Z_n$, we have a neighborhood $\x \in O \subset Z_n$ and a continuous map $U: O \rightarrow \Un(\fA)$ such that 
\begin{equation}\label{eq:selection_unitary_simpler}
\pi(U(\y))^{\oplus n}\x = \y
\end{equation} 
for all $\y \in O$. Given $(\x_0, \x_0) \in X_\tn{u}^\tn{on}$ and such a neighborhood $\x_0 \in O \subset Z_n$, we see that $O' = O \times O \subset X_\tn{u}^\tn{on}$ is a neighborhood of $(\x_0, \x_0)$ and the map $O' \ni (\x, \y) \mapsto U(\y)U(\x)^{-1}$ satisfies \eqref{eq:unitary_selection}. 

We prove the hypothesis about $Z_n$ by induction on $n$. The $n = 1$ case follows from the $n = 0$ case of Lemma \ref{lem:unitary_Kadison_lemma}: one takes $O = \qty{y \in \bbS \cH: \ev{x, y} \notin \bbR_{\leq 0}}$ and the map $O \ni y \mapsto (x, y) \mapsto U(x, y)$ does the trick, where $U :Y \rightarrow \Un(\fA)$ is as in Lemma \ref{lem:unitary_Kadison_lemma}. Assume the hypothesis is true for some $n \geq 1$ and let $\x \in Z_{n+1}$. Let $P :Z_{n+1} \rightarrow Z_n$ be the projection onto the first $n$ components. We have a neighborhood $P\x \in O \subset Z_n$ and a continuous map $\tilde V: O \rightarrow \Un(\fA)$ such that \eqref{eq:selection_unitary_simpler} holds for $P\x$ and $\y \in O$. Define $V:O \rightarrow \Un(\fA)$ by $V(\y) = \tilde V(\y)\tilde V(P\x)^{-1}$ so that $V$ is continuous, satisfies \eqref{eq:selection_unitary_simpler} for $P\x$ and $\y \in O$, and has $V(P\x) = 1$. Define
\begin{align*}
O' = \qty{\y \in P^{-1}(O): \ev{y_{n+1}, \pi(V(P\y))x_{n+1}} \notin \bbR_{\leq 0}}
\end{align*}
Then $\x \in O' \subset Z_{n+1}$ and  $O' \rightarrow Y$, $\y \mapsto (\y, \pi(V(P\y))x_{n+1})$ is continuous. Composing this with the map from Lemma \ref{lem:unitary_Kadison_lemma} yields a continuous map $W:O' \rightarrow \Un(\fA)$, and one can easily check that $U:O' \rightarrow \Un(\fA)$, $U(\y) = W(\y)^{-1}V(P\y)$ satisfies \eqref{eq:selection_unitary_simpler}. This completes the proof.
\end{proof}

\begin{cor}\label{cor:continuous_Kadison_automorphism}
Let $\fA$ be a unital $C^*$-algebra and let $\omega \in \sP(\fA)$. There exists a continuous map $U:\bbB_2(\omega) \rightarrow \Unitary(\fA)$ such that 
\[
U_\psi \cdot \omega = \psi
\]
for all $\psi \in \bbB_2(\omega)$. If $\fA$ is non-unital and $\omega \in \sP(\fA)$, then there exists a continuous map $\alpha:\bbB_2(\omega) \rightarrow \Aut(\fA)_\tn{n}$ such that
\[
\omega \circ \alpha_\psi = \psi
\]
for all $\psi \in \bbB_2(\omega)$.
\end{cor}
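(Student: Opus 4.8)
The plan is to reduce both statements to the $n=0$ case of Lemma~\ref{lem:unitary_Kadison_lemma}, using the continuous section of the canonical projection produced in Corollary~\ref{cor:purestate_mapsto_SH}. Throughout, let $(\hilbH,\pi,\Omega)$ denote the GNS representation of $\omega$; since $\omega$ is pure this representation is irreducible and nonzero. By Lemma~\ref{lem:sector_is_open} every $\psi$ with $\norm{\psi-\omega}<2$ lies in the same superselection sector as $\omega$, so $\bbB_2(\omega)\subset\sP_\pi(\fA)$, and Corollary~\ref{cor:purestate_mapsto_SH} furnishes a norm-continuous section $s:\bbB_2(\omega)\to\bbS\hilbH$ assigning to each $\psi$ the unique unit vector $s(\psi)$ representing $\psi$ with $\ev{s(\psi),\Omega}>0$. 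The key observation is that $\ev{\Omega,s(\psi)}=\overline{\ev{s(\psi),\Omega}}>0$, so in particular $\ev{\Omega,s(\psi)}\notin\bbR_{\le 0}$ for every $\psi\in\bbB_2(\omega)$.

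For the unital case, I would apply Lemma~\ref{lem:unitary_Kadison_lemma} with $n=0$ to the irreducible representation $(\hilbH,\pi)$. Its domain $Y$ then consists of pairs $(x_1,y)$ of unit vectors with $\ev{x_1,y}\notin\bbR_{\le 0}$, so by the observation above $\psi\mapsto(\Omega,s(\psi))$ is a continuous map $\bbB_2(\omega)\to Y$. Composing with the continuous selection $U:Y\to\Unitary(\fA)$ from that lemma yields a continuous map $\psi\mapsto U_\psi:=U(\Omega,s(\psi))$ satisfying $\pi(U_\psi)\Omega=s(\psi)$. It then remains to compute
\[
(U_\psi\cdot\omega)(A)=\omega(U_\psi^*AU_\psi)=\ev{\pi(U_\psi)\Omega,\pi(A)\pi(U_\psi)\Omega}=\ev{s(\psi),\pi(A)s(\psi)}=\psi(A),
\]
that is $U_\psi\cdot\omega=\psi$, which is a routine verification.

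For the non-unital case the only structural change is that no unitary of $\fA$ implements the required move, so I would pass to the unitization $\widetilde{\fA}$. I would extend $\pi$ to the representation $\widetilde{\pi}$ of $\widetilde{\fA}$ on $\hilbH$ with $\widetilde{\pi}(I)=\id_{\hilbH}$; since $\widetilde{\pi}(\widetilde{\fA})'=\pi(\fA)'$, this representation is again nonzero and irreducible, and $\Omega$ represents the canonical extension $\widetilde{\omega}$ of $\omega$. Exactly as above, Lemma~\ref{lem:unitary_Kadison_lemma} with $n=0$ applied to $(\hilbH,\widetilde{\pi})$ gives a continuous map $\psi\mapsto U_\psi\in\Unitary(\widetilde{\fA})$ with $\widetilde{\pi}(U_\psi)\Omega=s(\psi)$. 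Because $\fA$ is a closed two-sided ideal in $\widetilde{\fA}$, conjugation $\alpha_\psi(A):=U_\psi^*AU_\psi$ restricts to a $*$-automorphism of $\fA$, and $\omega\circ\alpha_\psi=\psi$ by the same computation (using $\widetilde{\pi}|_\fA=\pi$ and $\omega=\widetilde{\omega}|_\fA$).

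The remaining, and main, technical point is to check that $\psi\mapsto\alpha_\psi$ is continuous for the \emph{norm} topology on $\Aut(\fA)_{\tn{n}}$. This will follow from the estimate
\[
\norm{\alpha_U-\alpha_{U'}}=\sup_{\norm{A}\le 1}\norm{U^*AU-U'^*AU'}\le 2\norm{U-U'},
\]
obtained by writing $U^*AU-U'^*AU'=U^*A(U-U')+(U^*-U'^*)AU'$, which shows $U\mapsto\alpha_U$ is Lipschitz from $\Unitary(\widetilde{\fA})$ to $\Aut(\fA)_{\tn{n}}$; composing with the already-continuous map $\psi\mapsto U_\psi$ gives the result. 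I expect this Lipschitz estimate, together with the verification that $(\Omega,s(\psi))$ genuinely lands in the domain $Y$, to be the only points requiring care, the substance of the argument having been absorbed into Lemma~\ref{lem:unitary_Kadison_lemma} and Corollary~\ref{cor:purestate_mapsto_SH}.
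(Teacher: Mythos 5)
Your proposal is correct and follows essentially the same route as the paper: reduce to the $n=0$ case of Lemma~\ref{lem:unitary_Kadison_lemma} via the section from Corollary~\ref{cor:purestate_mapsto_SH} in the unital case, and pass to the unitization and conjugate in the non-unital case. The only (harmless) difference is organizational: the paper applies the unital case as a black box through the isometric embedding $\sP(\fA)\hookrightarrow\sP(\widetilde{\fA})$, whereas you apply the lemma directly to the extended irreducible representation $\widetilde{\pi}$ on the same GNS Hilbert space and supply the Lipschitz estimate for $U\mapsto U^*(\cdot)U$ explicitly, which the paper only asserts.
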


\begin{proof}
Let $(\cH, \pi, \Omega)$ be the GNS representation of $\omega$. 
Corollary \ref{cor:purestate_mapsto_SH} states continuity of the map $\bbB_2(\omega) \rightarrow \bbS \cH$, $\psi \mapsto \Psi$ where $\Psi$ represents $\psi$ and $\ev{\Psi, \Omega} > 0$. Hence $\bbB_2(\omega) \rightarrow Y$,  $\psi \mapsto (\Omega, \Psi)$ is well-defined, where $Y$ is as in Lemma \ref{lem:unitary_Kadison_lemma} with $n = 0$. Composing with the map from Lemma \ref{lem:unitary_Kadison_lemma} yields  a continuous map $U:\bbB_2(\omega) \rightarrow \Unitary(\fA)$, $\psi \mapsto U_\psi$ such that
\[
\pi(U_\psi)\Omega = \Psi
\]
for all $\psi \in \bbB_2(\omega)$. This implies that $U_\psi \cdot \omega = \psi$, as desired.

In the non-unital case, we consider the unitization $\tilde \fA$ and the isometry $\sP(\fA) \rightarrow \sP(\tilde \fA)$, $\psi \mapsto \tilde \psi$ where $\tilde \psi$ is the unique extension of $\psi$ to a pure state on $\tilde \fA$. This gives a continuous map $\bbB_2(\omega) \rightarrow \bbB_2(\tilde \omega)$, and we may apply the unital case to get a continuous map $U:\bbB_2(\tilde \omega) \rightarrow \Unitary(\tilde \fA)$ such that $U_{\tilde\psi} \cdot \tilde \omega = \tilde\psi$ for all $\psi \in \bbB_2(\omega)$. Since $\fA$ is a two-sided ideal in $\tilde \fA$, we have a continuous function $\Unitary(\tilde \fA) \rightarrow \Aut(\fA)_\tn{n}$, $U \mapsto (A \mapsto U^*AU)$ and composing with this function yields the desired map $\bbB_2(\omega) \rightarrow \Aut(\fA)_\tn{n}$.
\end{proof}

\subsection{Construction of principal fiber bundles}\label{sec:principal_fiber_bundles}

In this section, given a unital $C^*$-algebra $\fA$ and a pure state $\omega \in \sP(\fA)$, we construct a principal fiber bundle $p_{\Unitary(\fA)} :\Unitary(\fA) \rightarrow \sP_\omega(\fA)$, where 
\[
[p_{\Unitary(\fA)}(U)](A) = (U \cdot \omega)(A) = \omega(U^*AU).
\] Crucially, Corollary \ref{cor:continuous_Kadison_automorphism} is used to construct local trivializations. Note that $\Unitary(\fA) \times \sP(\fA)_\tn{n} \rightarrow \sP(\fA)_\tn{n}$, $(U, \omega) \mapsto U \cdot \omega$ is a continuous group action. We then discuss a strategy for showing this bundle is nontrivial and provide a few examples.

\begin{cor}
  Let $\fA$ be a unital $C^*$-algebra, assume that $\omega \in \sP(\fA)$ is a
  pure state, and let
  $p_{\Unitary(\fA)}:\Unitary(\fA) \rightarrow \sP_\omega(\fA)$ be the
  continuous surjection which maps $U\in \Unitary(\fA)$ to $p(U) = U \cdot \omega$.
  Furthermore, let the isotropy group
  \[ \Unitary_\omega(\fA) = \qty{U \in \Unitary(\fA): U \cdot \omega = \omega}\]
  act on $\Unitary(\fA)$ and on itself by right multiplication. The fibers of $p_{\Unitary(\fA)}$ then are exactly the orbits of $\Unitary_\omega(\fA)$ and  $p_{\Unitary(\fA)}:\Unitary(\fA) \rightarrow \sP_\omega(\fA)$ is a locally trivial principal $\Unitary_\omega(\fA)$-bundle.
\end{cor}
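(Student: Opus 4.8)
The plan is to reduce local triviality to the construction of continuous local sections of $p_{\Unitary(\fA)}$, since a bundle with a free group action is a locally trivial principal bundle precisely when such sections exist. First I would record the group-theoretic bookkeeping. The assignment $(U,\psi) \mapsto U \cdot \psi$, $A \mapsto \psi(U^*AU)$, is a continuous \emph{left} action of $\Unitary(\fA)$ on $\sP(\fA)_{\textup{n}}$ satisfying $U \cdot (V \cdot \omega) = (UV) \cdot \omega$, and $p_{\Unitary(\fA)}(U) = U \cdot \omega$. Right multiplication by $\Unitary_\omega(\fA)$ is free (it is multiplication in a group) and continuous. To see that the fibers are exactly the $\Unitary_\omega(\fA)$-orbits, note that $p_{\Unitary(\fA)}(UW) = U \cdot (W \cdot \omega) = U \cdot \omega = p_{\Unitary(\fA)}(U)$ for $W \in \Unitary_\omega(\fA)$, while conversely $p_{\Unitary(\fA)}(U) = p_{\Unitary(\fA)}(V)$ gives $(V^{-1}U) \cdot \omega = V^{-1} \cdot (U \cdot \omega) = V^{-1} \cdot (V \cdot \omega) = \omega$, so $V^{-1}U \in \Unitary_\omega(\fA)$ and $U = V(V^{-1}U)$ lies in the orbit of $V$.

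The heart of the argument is the construction, near an arbitrary $\psi_0 \in \sP_\omega(\fA)$, of a continuous section, which I would obtain by transporting via homogeneity the section already supplied by Corollary \ref{cor:continuous_Kadison_automorphism} at the basepoint $\omega$. Since $\psi_0$ lies in the superselection sector of $\omega$, Theorem \ref{thm:superselection_sector_equivalences} (unital case) yields a unitary $V \in \Unitary(\fA)$ with $V \cdot \omega = \psi_0$. Because $A \mapsto V^*AV$ is a $*$-automorphism, the translation $L_V : \psi \mapsto V \cdot \psi$ is a norm isometry of $\sP_\omega(\fA)$, hence maps $\bbB_2(\omega)$ onto $\bbB_2(\psi_0)$, an open neighborhood of $\psi_0$ contained in $\sP_\omega(\fA)$ by Lemma \ref{lem:sector_is_open}. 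Writing $s_\omega : \bbB_2(\omega) \to \Unitary(\fA)$ for the section of Corollary \ref{cor:continuous_Kadison_automorphism}, I would set $s_{\psi_0}(\psi) = V\, s_\omega(V^{-1} \cdot \psi)$ on $\bbB_2(\psi_0)$; a direct computation gives $s_{\psi_0}(\psi) \cdot \omega = V \cdot (V^{-1} \cdot \psi) = \psi$, so $p_{\Unitary(\fA)} \circ s_{\psi_0} = \id$, and $s_{\psi_0}$ is continuous as a composite of continuous maps.

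Finally I would convert each local section $s : O \to \Unitary(\fA)$ into a local trivialization in the standard way, defining $\Phi : O \times \Unitary_\omega(\fA) \to p_{\Unitary(\fA)}^{-1}(O)$ by $\Phi(\psi, W) = s(\psi)W$, with inverse $U \mapsto (p_{\Unitary(\fA)}(U),\, s(p_{\Unitary(\fA)}(U))^{-1}U)$; the membership $s(p_{\Unitary(\fA)}(U))^{-1}U \in \Unitary_\omega(\fA)$ follows at once from the section property, and $\Phi$ is manifestly $\Unitary_\omega(\fA)$-equivariant for the right action and compatible with the projection. The only substantive input is the existence of a continuous section near $\omega$, which is exactly Corollary \ref{cor:continuous_Kadison_automorphism}; thus the main obstacle has already been dispatched upstream by the continuous Kadison transitivity theorem, and the remaining work is the routine verification of continuity, freeness, and equivariance of the maps above.
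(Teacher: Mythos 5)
Your proposal is correct and follows essentially the same route as the paper: both reduce the problem to the fiber-equals-orbit computation plus a continuous local section obtained from Corollary \ref{cor:continuous_Kadison_automorphism} together with a fixed unitary $V$ carrying $\omega$ to $\psi_0$, and the resulting trivialization formulas agree. The only (cosmetic) difference is that you apply the corollary at $\omega$ and translate by the isometry $\psi\mapsto V\cdot\psi$, whereas the paper invokes the corollary directly at $\psi_0$.
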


\begin{proof}
Suppose $U, V \in \Unitary(\fA)$ and $U \cdot \omega = V \cdot \omega$. Then $V^*U \cdot \omega = \omega$, so $V^*U \in \Unitary_\omega(\fA)$ and $U = V(V^*U)$ is in the same orbit as $V$. Conversely, if $U$ and $V$ are in the same orbit, then there exists $W \in \Unitary_\omega(\fA)$ such that $U = VW$, hence $U \cdot \omega = VW \cdot \omega = V \cdot \omega$, so $U$ and $V$ are in the same fiber.

We now construct local trivializations for $p_{\Unitary(\fA)}$ which for the rest of the proof we abbreviate by $p$. Fix $\psi_0 \in \sP_\omega(\fA)$. By Corollary \ref{cor:continuous_Kadison_automorphism}, we know there exists a continuous map $U:\bbB_2(\psi_0) \rightarrow \Unitary(\fA)$ such that $U_\psi \cdot \psi_0 = \psi$ for all $\psi \in \bbB_2(\psi_0)$. Fix a unitary $V \in \Unitary(\fA)$ such that $V\omega = \psi_0$. Define $\varphi:p^{-1}(\bbB_2(\psi_0)) \rightarrow \bbB_2(\psi_0) \times \Unitary_\omega(\fA)$ by
\begin{align*}
\varphi(W) = (p(W), V^*U_{p(W)}^*W).
\end{align*}
Observe that 
\[
V^*U_{p(W)}^*W \omega = V^*U_{p(W)}^*p(W) = V^*\psi_0 = \omega,
\]
so $\varphi$ is indeed well-defined. Furthermore, $\varphi$ is manifestly continuous, equivariant on fibers, and commutes with the projections onto $\bbB_2(\psi_0)$. The function $\bbB_2(\psi_0) \times \Unitary_\omega(\fA) \rightarrow p^{-1}(\bbB_2(\psi_0))$ defined by 
\[
(\psi, W) \mapsto U_\psi V W
\]
is well-defined since 
\[
p(U_\psi V W) = U_\psi V W \omega = U_\psi V \omega = U_\psi \psi_0 = \psi \in \bbB_2(\psi_0).
\] Furthermore, it is continuous and it is a two-sided inverse for $\varphi$ since
\begin{align*}
\varphi(U_\psi V W) = (\psi, V^*U^*_\psi U_\psi VW) = (\psi, W)
\end{align*}
and 
\[
\varphi(W) = (p(W), V^*U^*_{p(W)}W) \mapsto U_{p(W)}V V^*U^*_{p(W)}W = W.
\]
This proves that $\varphi$ is a homeomorphism. This provides a trivializing cover for $p$ with local trivializations that are equivariant on fibers, completing the proof.
\end{proof}

By looking at the fundamental groups of $\sP_\omega(\fA)$, $\Unitary(\fA)$, and $\Unitary_\omega(\fA)$, one may be able to determine that the bundle $p_{\Unitary(\fA)} :\Unitary(\fA) \rightarrow \sP_\omega(\fA)$ is nontrivial for a particular unital $C^*$-algebra $\fA$ and state $\omega \in \sP(\fA)$. We exhibit some examples below. If the bundle is trivial, then $\Unitary(\fA)$ is homeomorphic to $\Unitary_\omega(\fA) \times \sP_\omega(\fA)$, and in each example we show that this leads to a contradiction.

If $\fA = M_n(\bbC)$ for some integer $n \geq 2$ and $\omega$ is any pure state, then $\sP_\omega(\fA) \cong \bbC \bbP^{n-1}$, so $\sP_\omega(\fA)$ is simply connected. The unitary group $\Unitary(\fA) = \Unitary(n)$ is path-connected and has fundamental group $\pi_1(\Unitary(n)) \cong \bbZ$. The stabilizer is $\Unitary_\omega(\fA) \cong \Unitary(1) \times \Unitary(n-1)$ which has fundamental group $\pi_1(\Unitary_\omega(\fA)) \cong \bbZ \times \bbZ$. Therefore the bundle is not trivial because
\[
\pi_1(\Unitary(\fA)) \cong \bbZ \not\cong \bbZ \times \bbZ \cong \pi_1(\Unitary_\omega(\fA) \times \sP_\omega(\fA)).
\]

If $\fA = \fB(\cH)$ for a separable, infinite-dimensional Hilbert space $\cH$ and $\omega$ is a pure normal state, then $\sP_\omega(\fA) \cong \bbP \cH$ is an Eilenberg-MacLane space of type $K(\bbZ, 2)$,  $\Unitary(\fA) = \Unitary(\cH)$ is contractible by Kuiper's theorem \cite{Kuiper}, and $\Unitary_\omega(\fA) \cong \Unitary(1) \times \Unitary(\cH)$. Therefore the bundle is not trivial because
\[
\pi_1(\Unitary(\fA)) \cong \qty{0} \not\cong \bbZ \cong \pi_1(\Unitary_\omega(\fA) \times \sP_\omega(\fA)).
\]

We can also show that the bundle is nontrivial for any UHF algebra. If $\fA$ is a UHF algebra and
$\omega \in \sP(\fA)$, then $\sP_\omega(\fA) \cong \bbP \cH_\omega$ and $\cH_\omega$ is a separable,
infinite dimensional Hilbert space \cite{Glimm_UHF_Algebras}, so $\bbP \cH_\omega$ is again a $K(\bbZ, 2)$.
In the following we will determine the homotopy groups of $\Unitary(\fA)$ and $\Unitary_\omega(\fA)$
which will then entail nontriviality of the bundle
$p_{\Unitary(\fA)} :\Unitary(\fA) \rightarrow \sP_\omega(\fA)$.
The computation relies on two major results, namely on Glimm's observation that the isomorphism
class of a UHF algebra can be encoded by its associated supernatural number \cite{Glimm_UHF_Algebras}
and on a theorem of Gl\"ockner \cite[Thm.~1.13]{glockner2010homotopy}.
Before we come to the computation of the homotopy groups $\pi_k(\Unitary(\fA))$ and
$\pi_k(\Unitary_\omega(\fA))$ we therefore first state these results in the form needed here and
provide a few preliminaries.

Recall from e.g.~\cite[Sec.~7.4]{RordamLarsenLaustsenKtheory}
that by a \emph{supernatural number} one understands a sequence
$n= (n_i)_{i\in \N}$ of elements
$n_i \in \{ 0, 1,\ldots ,\infty\} = \N \cup\{\infty\}$.
By slight abuse of language one sometimes writes 
\[
  n = \prod_{i\in \N} p_i^{n_i} \ ,
\]
where $\{ p_0,p_1,\ldots\}$ is the set of primes listed in increasing order,
and regards the right hand side of this formula as a formal prime
factorization of the supernatural number $n$. The product of two supernatural numbers $n,m$
is given by
\[
  nm = \prod_{i\in \N} p_i^{n_i+m_i}\ ,  
\]
but their sum is in general not defined. 
Associated to a supernatural number $n=(n_i)_{i\in \N}$ is the additive subgroup
$Q(n) \subset \Q$ consisting of all fractions $\frac pq$, where $p,q$ are integers
and $q$ has the prime decomposition
\[
  q = \prod_{i\in \N} p_i^{q_i} 
\]
such that $q_i \leq n_i$ for all $i\in \N$ and only finitely many of the $q_i$ are nonzero. 
By construction, $Q(n)$ contains $1$, and each additive supgroup $A\subset \Q$ containing $1$ equals
$Q(n)$ for some supernatural number $n$. Furthermore, two groups $Q(n)$ and  $Q(m)$ are isomorphic
if and only there are positive natural numbers $n',m'$ such that $n n' = mm'$;
see \cite[Sec.~7.4]{RordamLarsenLaustsenKtheory} for details. 

To further clarify language let us remind the reader that by a UHF algebra one understands
a $C^*$-algebra $\fA$ which can be identified with the colimit of a countable strict inductive
system of type I factors of finite dimension and unital $*$-homomorphisms 
\[
  \fA_0 \lhook\joinrel\xrightarrow{\iota_{0,1}\:\:} \fA_1
  \lhook\joinrel\xrightarrow{\iota_{1,2}\:\:}\ldots  \lhook\joinrel\xrightarrow{\iota_{i-1,i}} \fA_i
  \lhook\joinrel\xrightarrow{\iota_{i,i+1}}\ldots \ . 
\]
Recall that strictness of the inductive system means that each of the unital $*$-homomorphisms
$\iota_i$ is injective and that by a type I factor of finite dimension
% or more pecisely a factor of type I$_n$ with $n\in \N$
one understands a von Neumann algebra which is $*$-isomorphic to the matrix algebra $M_n (\bbC)$
for some $n\in \N$.
Therefore,  each of the $C^*$-algebras $\fA_i$ is $*$-isomorphic to
a matrix algebra $M_{n_i} (\bbC)$ such that the sequence of ranks $(n_i)_{i\in \N}$ is increasing
and $n_i$ is a divisor of $n_j$ for all $i\leq j$. Following Glimm \cite{Glimm_UHF_Algebras}, we say
that  $\fA$ is \emph{generated} by the inductive system $(\fA_i)_{i\in \N}$
of \emph{type} $(n_i)_{i\in \N}$. We always assume that the type is \emph{unbounded} meaning
that $\lim_{j\to \infty} n_j =\infty$. As in \cite{Glimm_UHF_Algebras}, a UHF algebra $\fA$ therefore has 
to be infinite dimensional. Glimm further associates to a UHF algebra $\fA$  a supernatural number
$\delta_\fA$ as follows. Write 
$n_i = \prod_{j\in \N} p_j^{\delta_{i,j}}$, where the $\delta_{i,j} \in \N$ are unique by prime decomposition,   
and put $\delta_j = \sup\{ \delta_{i,j} : \: i\in \N\}$. By construction, $\delta_\fA =(\delta_j)_{j\in \N}$
then is a supernatural number which according to \cite{Glimm_UHF_Algebras}
uniquely determines the isomorphism class of the UHF algebra $\fA$; see also
\cite[Thm.~7.4.5]{RordamLarsenLaustsenKtheory}. 

\begin{lem}\label{lem:UHF_unitary_closures}
  Assume to be given a strict inductive system $(\fA_j)_{j \in \bbN}$ of 
  unbounded type  $(n_j)_{j\in \N}$ such that $\fA_i \subset \fA_j$ for all $i\leq j$
  and such that the morphisms of the inductive system are given by inclusion. 
  Let $\fA$ be the UHF algebra generated by $(\fA_j)_{j \in \bbN}$ that is let
  $\fA = \overline{\bigcup_{j\in \bbN} \fA_j}$. 
  If $\omega \in \sP(\fA)$ and $\omega|_{\fA_j} \in \sP(\fA_j)$
  for all $j$, then
\begin{equation}\label{eq:UHF_unitary_closures}
\Unitary_\omega(\fA) = \overline{\bigcup_{j\in \bbN} \Unitary(\fA_j) \cap \Unitary_\omega(\fA)}.
\end{equation}
and
\begin{equation}\Unitary(\fA_j) \cap \Unitary_\omega(\fA) = \Unitary_{\omega|_{\fA_j}}(\fA_j) \cong \Unitary(1) \times \Unitary(n_j - 1) \ .
  \end{equation}
\end{lem}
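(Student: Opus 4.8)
The plan is to establish the two displayed identities separately, proving the second one first since it is needed for the hard half of the first.

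\textbf{The second identity.} For fixed $j$, the algebra $\fA_j \cong M_{n_j}(\bbC)$ is a full matrix algebra, so $\fA$ splits as a tensor product $\fA \cong \fA_j \otimes \fB_j$, where $\fB_j = \fA_j' \cap \fA$ is the relative commutant (itself a UHF algebra); this is standard for UHF algebras \cite{Glimm_UHF_Algebras, RordamLarsenLaustsenKtheory}. The key input is a factorization: writing $\rho_j := \omega|_{\fA_j}$, I claim that purity of $\rho_j$ forces $\omega = \rho_j \otimes \sigma_j$ with $\sigma_j := \omega|_{\fB_j}$. To see this, let $p \in \fA_j$ be the minimal projection with $\rho_j(p) = 1$, so that $pap = \rho_j(a)\,p$ for all $a \in \fA_j$. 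Since $\omega((1-p)\otimes 1) = 1 - \rho_j(p) = 0$ and $(1-p)\otimes 1$ is a projection, Cauchy--Schwarz gives $\omega\big(((1-p)\otimes 1)X\big) = 0$ for all $X$, whence $\omega(X) = \omega\big((p\otimes 1)X(p\otimes 1)\big)$; applied to $X = a \otimes b$ this yields $\omega(a\otimes b) = \rho_j(a)\,\sigma_j(b)$. Identifying $U \in \Unitary(\fA_j)$ with $U \otimes 1$, one computes $U\cdot\omega = (U\cdot\rho_j)\otimes\sigma_j$, so $U\cdot\omega = \omega$ iff $U\cdot\rho_j = \rho_j$; this gives $\Unitary(\fA_j)\cap\Unitary_\omega(\fA) = \Unitary_{\rho_j}(\fA_j)$. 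Finally, $\rho_j$ is the vector state of a unit vector $e \in \bbC^{n_j}$ in the defining representation of $M_{n_j}(\bbC)$, and by Proposition \ref{prop:pure_folia_vector_state}(\ref{ite:vectors-states-same-state-linearly-dependent}) a unitary fixes $\rho_j$ precisely when $e$ is one of its eigenvectors. Such unitaries act by a phase on $\bbC e$ and arbitrarily on $e^\perp$, giving $\Unitary_{\rho_j}(\fA_j) \cong \Unitary(1)\times\Unitary(n_j-1)$.

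\textbf{The first identity, easy inclusion.} The map $\Unitary(\fA)\to\sP(\fA)_{\textup{n}}$, $U \mapsto U\cdot\omega$, is norm continuous, so $\Unitary_\omega(\fA)$ is the preimage of the singleton $\{\omega\}$, hence norm closed. As it contains every set $\Unitary(\fA_j)\cap\Unitary_\omega(\fA)$, it contains the closure of their union, proving $\supseteq$.

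\textbf{The first identity, density inclusion.} Given $U \in \Unitary_\omega(\fA)$ and $\varepsilon > 0$, I would first approximate $U$ by a unitary in some $\fA_k$: density of $\bigcup_j \fA_j$ yields $a \in \fA_k$ with $\|U - a\|$ small, and then $U' := a(a^*a)^{-1/2} \in \Unitary(\fA_k)$ is norm-close to $U$. From $\|U'\cdot\omega - \omega\| = \|U'\cdot\omega - U\cdot\omega\| \le 2\|U - U'\|$ one obtains that $U'\cdot\rho_k$ is close to $\rho_k$ on $\fA_k = M_{n_k}(\bbC)$. Applying Theorem \ref{thm:transition_probability} in the (irreducible) defining representation converts this into $|\langle U'e, e\rangle|^2 = 1 - \tfrac14\|U'\cdot\rho_k - \rho_k\|^2$ close to $1$, and the chordal-distance formula of Proposition \ref{prop:PH_gap_metric}(\ref{ite:gap}) supplies a phase $\mu$ with $\|U'e - \mu e\|$ small. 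The $n=1$ case of Lemma \ref{lem:Stiefel_section}, together with the norm identity \eqref{eq:U_xy_norm_identity}, then produces $W \in \Unitary(n_k) = \Unitary(\fA_k)$ with $W(\mu e) = U'e$ and $\|I - W\| = \|U'e - \mu e\|$ small. Setting $V := W^{-1}U' \in \Unitary(\fA_k)$ gives $Ve = \mu e$, so $V \in \Unitary_{\rho_k}(\fA_k) = \Unitary(\fA_k)\cap\Unitary_\omega(\fA)$ by the second identity, while $\|V - U\| \le \|I - W\| + \|U' - U\| < \varepsilon$. Thus $U$ lies in the closure of $\bigcup_k \Unitary(\fA_k)\cap\Unitary_\omega(\fA)$.

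The main obstacle is this density inclusion: approximating $U$ by matrix-algebra unitaries is routine, but such approximants need not respect the stabilizer condition, so the real work is \emph{correcting} the approximant back into $\Unitary_\omega(\fA)$ while keeping the norm change small. This correction is exactly where the product structure $\omega = \rho_k \otimes \sigma_k$ (to transfer "fixes $\rho_k$" to "fixes $\omega$") and the quantitative metric estimates of Theorem \ref{thm:transition_probability} and Lemma \ref{lem:Stiefel_section} are indispensable.
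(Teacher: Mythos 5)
Your proof is correct, and its hardest part --- the density inclusion --- follows essentially the same strategy as the paper: approximate $U\in\Unitary_\omega(\fA)$ by a unitary in some $\fA_k$ (the paper simply cites Glimm's Lemma~3.1 for this, whereas you redo the polar-decomposition argument), and then correct the approximant by a small rotation of the form $U_{x,y}$ from Lemma~\ref{lem:Stiefel_section}, whose norm identity \eqref{eq:U_xy_norm_identity} controls the size of the correction. The difference is where you carry out this correction: the paper works directly in the GNS representation $(\cH,\pi,\Omega)$ of $\omega$, observes $\pi(U)\Omega=\lambda\Omega$, and gets the clean bound $\|\pi(V)\Omega-\lambda\Omega\|\le\|V-U\|$ for free, while you work in the defining representation of $M_{n_k}(\bbC)$ and recover the phase $\mu$ through Theorem~\ref{thm:transition_probability} and the chordal-distance formula; this costs a $\sqrt 2$ in the estimate but changes nothing essential. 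The genuinely different route is your treatment of the second identity: you prove the product decomposition $\omega=\rho_j\otimes\sigma_j$ via the minimal-projection/Cauchy--Schwarz argument in the splitting $\fA\cong\fA_j\otimes(\fA_j'\cap\fA)$, which transfers ``fixes $\rho_j$'' to ``fixes $\omega$'' by a purely algebraic computation. The paper instead restricts the GNS representation to the finite-dimensional invariant subspace $\cH_j=\pi(\fA_j)\Omega$, notes that purity of $\omega|_{\fA_j}$ makes this restriction an irreducible (hence, for a matrix algebra, bijective) representation, and concludes from $\pi(U_0)\Omega=\lambda_0\Omega$. Your version requires the (standard but unproved-in-the-paper) tensor splitting of $\fA$ over a full matrix subalgebra, but in exchange it exposes the product structure of the state explicitly and avoids any appeal to the GNS machinery for this step; the paper's version is shorter and stays uniformly inside the single representation $(\cH,\pi,\Omega)$ used for the density argument.
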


\begin{proof}
  Suppose $U \in \Unitary_\omega(\fA)$, fix $\varepsilon > 0$, and let $(\cH, \pi, \Omega)$ be the GNS representation of $\omega$. Lemma 3.1 in \cite{Glimm_UHF_Algebras} states that $\Unitary(\fA) = \overline{\bigcup_{j\in\bbN} \Unitary(\fA_j)}$, so there exists $j \in \bbN$ and $V \in \Unitary(\fA_j)$ such that $\norm{U - V} < \varepsilon/2$.  The fact that $U \in \Unitary_\omega(\fA)$ implies that $\pi(U)\Omega = \lambda \Omega$ for some $\lambda \in \Unitary(1)$, hence
\[
\norm{\pi(V)\Omega - \lambda \Omega} = \norm{\pi(V - U)\Omega} < \frac{\varepsilon}{2}.
\]
Now, $\cH_j \defeq \pi(\fA_j)\Omega$ is a finite-dimensional subspace of $\cH$ and $\pi$ restricts to a cyclic representation $\pi_j:\fA_j \rightarrow \fB(\cH_j)$ with cyclic unit vector $\Omega$ representing $\omega|_{\fA_j}$. Since $\omega|_{\fA_j}$ is pure by hypothesis, $\pi_j$ is an irreducible representation. In particular, since $\fA_j \cong M_{n_j}(\bbC)$, we know $\pi_j$ is a $*$-isomorphism. There exists a unitary $W \in \Unitary(\fA_j)$ such that $\pi_j(W) = U_{\pi(V)\Omega, \lambda\Omega}$, where $U_{\pi(V)\Omega, \lambda \Omega}$ is as defined in Lemma \ref{lem:Stiefel_section}. By \eqref{eq:U_xy_norm_identity} of Lemma \ref{lem:Stiefel_section}, $\norm{I - W} = \norm{\pi(V)\Omega - \lambda \Omega} < \varepsilon/2$ and $\pi(W)\pi(V)\Omega = \lambda \Omega$. Then $WV \in \Unitary(\fA_j) \cap \Unitary_\omega(\fA)$ and 
\[
\norm{U - WV} \leq \norm{U - V} + \norm{V - WV} < \varepsilon,
\]
as desired. 

Note that $U_0 \in \Unitary_{\omega|_{\fA_j}}(\fA_j)$ if and only if $U_0 \in \Unitary(\fA_j)$ and $\omega(U_0^*AU_0) = \omega(A)$ for all $A \in \fA_j$. Therefore it is clear that $\Unitary(\fA_j) \cap \Unitary_\omega(\fA)  \subset \Unitary_{\omega|_{\fA_j}}(\fA_j)$. Conversely, if $U_0 \in \Unitary_{\omega|_{\fA_j}}(\fA_j)$, then $\pi_j(U_0)\Omega = \pi(U_0)\Omega = \lambda_0 \Omega$ for some $\lambda_0 \in \Unitary(1)$, which implies that $U_0 \in \Unitary_\omega(\fA)$, i.e., $\omega(U_0^*AU_0) = \omega(A)$ for all $A \in \fA$. That $\Unitary_{\omega|_{\fA_j}}(\fA_j) \cong \Unitary(1) \times \Unitary(n_j - 1)$ is immediate from the fact that $\fA_j$ is $*$-isomorphic to $M_{n_j}(\bbC)$.
\end{proof}

\begin{lem}\label{lem:Cayley_transform}
Let $\fA$ be a unital $C^*$-algebra generated by the strict inductive system $(\fA_j)_{j \in \bbN}$
of type $(n_j)_{j\in \N}$.
% such that the sequence $(n_j)_{j\in \N}$ is strictly increasing.
Let $O = \qty{U \in \Unitary(\fA): \norm{I - U} < 2}$ and let $\phi:O \rightarrow \fA_\tn{sa}$ be defined by $\phi(U) = i(I - U)(I + U)^{-1}$. Then $\phi$ is a homeomorphism and $\phi(O \cap \Unitary_\omega(\fA))$ is a closed subspace of $\fA_\tn{sa}$ for all $\omega \in \sP(\fA)$.
\end{lem}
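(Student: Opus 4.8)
The plan is to treat the two assertions separately: the homeomorphism is the classical Cayley transform, and closedness is obtained by transporting the stabiliser condition through the GNS representation of $\omega$. The hypothesis that $\fA$ is generated by an inductive system plays no role here, so I would prove the lemma for an arbitrary unital $C^*$-algebra.

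First I would show $\phi$ is a homeomorphism by exhibiting an explicit inverse $\psi\colon \fA_\tn{sa} \to \fA$, $\psi(A) = (iI - A)(iI + A)^{-1}$. Since $A$ is self-adjoint we have $\sigma(A) \subset \bbR$, so $iI + A$ is invertible and $\psi(A)$ is well defined; evaluating $g(t) = (i-t)(i+t)^{-1}$ on $\bbR$ gives $\abs{g(t)} = 1$ and $g(t) \neq -1$, so $\psi(A)$ is unitary with $-1 \notin \sigma(\psi(A))$, i.e.\ $\psi(A) \in O$. Conversely, for $U \in O$ the condition $\norm{I - U} < 2$ is equivalent to $-1 \notin \sigma(U)$ (the point of the unit circle farthest from $1$), so $I + U$ is invertible and $\phi(U)$ is defined; evaluating $f(\zeta) = i(1-\zeta)(1+\zeta)^{-1}$ on the circle minus $\{-1\}$ gives $f(e^{i\theta}) = \tan(\theta/2) \in \bbR$, whence $\phi(U) \in \fA_\tn{sa}$. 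Both maps are continuous because inversion is continuous on the invertibles and multiplication is jointly continuous, and the one-line identities $f \circ g = \id$ and $g \circ f = \id$ show $\phi$ and $\psi$ are mutually inverse. Hence $\phi\colon O \to \fA_\tn{sa}$ is a homeomorphism.

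For the second assertion, let $(\cH, \pi, \Omega)$ be the GNS representation of $\omega$; since $\omega$ is pure, $\pi$ is irreducible. Because $\pi$ is a unital $*$-homomorphism, $\sigma(\pi(U)) \subset \sigma(U)$, so $I + \pi(U)$ is invertible and $\pi$ intertwines the Cayley transform and its inverse, e.g.\ $\pi(\phi(U)) = i(I - \pi(U))(I + \pi(U))^{-1}$. Now $U \in \Unitary_\omega(\fA)$ means $\omega(U^*AU) = \omega(A)$ for all $A$, which in the GNS picture says that the unit vectors $\pi(U)\Omega$ and $\Omega$ define the same vector state; by Proposition~\ref{prop:pure_folia_vector_state}~(\ref{ite:vectors-states-same-state-linearly-dependent}) this forces $\pi(U)\Omega = \lambda\Omega$ for some $\lambda \in \Unitary(1)$. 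Writing $A = \phi(U)$ and using the intertwining, $\pi(A)\Omega = i(1-\lambda)(1+\lambda)^{-1}\Omega$, a real multiple of $\Omega$; conversely, if $A \in \fA_\tn{sa}$ satisfies $\pi(A)\Omega \in \bbC\Omega$ then $\pi(\psi(A))\Omega = (i - c)(i+c)^{-1}\Omega$ is a unimodular multiple of $\Omega$, so $\psi(A) \in \Unitary_\omega(\fA)$. This yields the clean description
\[
\phi(O \cap \Unitary_\omega(\fA)) = \qty{A \in \fA_\tn{sa} : \pi(A)\Omega \in \bbC\Omega}.
\]

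Finally I would conclude closedness. Letting $P \in \fB(\cH)$ be the orthogonal projection onto $\bbC\Omega$, the set above is exactly the kernel of the bounded real-linear map $L\colon \fA_\tn{sa} \to \cH$, $L(A) = (I - P)\pi(A)\Omega$, and the kernel of a continuous map is closed (indeed a closed real-linear subspace). The step I expect to require the most care is the characterisation of the image: combining purity of $\omega$ (to obtain the eigenvector condition via Proposition~\ref{prop:pure_folia_vector_state}) with the intertwining of the Cayley transforms by $\pi$, while keeping the spectral bookkeeping straight, namely $-1 \notin \sigma(\pi(U))$ and the reality of the resulting eigenvalue. Once the image is identified as $\qty{A : \pi(A)\Omega \in \bbC\Omega}$, closedness is immediate.
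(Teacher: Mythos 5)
Your proof is correct, and for the second assertion it takes a somewhat different (and cleaner) route than the paper. The homeomorphism part is essentially identical: both arguments observe that $\norm{I-U}<2$ is equivalent to $-1\notin\sigma(U)$ for unitary $U$ and invert the Cayley transform by continuous functional calculus; you are also right that the inductive-system hypothesis is never used in the proof. For the closed-subspace claim, the paper gets closedness for free from the fact that $\Unitary_\omega(\fA)$ is closed in $\Unitary(\fA)$ and $\phi$ is a homeomorphism of $O$ onto $\fA_\tn{sa}$, and then separately verifies real-linearity by showing that if $U,V\in O\cap\Unitary_\omega(\fA)$ and $\alpha\in\bbR$, then $A=\phi(U)+\alpha\phi(V)$ satisfies $\pi(A)\Omega\in\bbR\Omega$ and hence $\phi^{-1}(A)\in O\cap\Unitary_\omega(\fA)$ (computing $\pi(\phi(U))\Omega=\phi(\lambda)\Omega$ via polynomial approximation rather than your direct resolvent manipulation). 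You instead identify the image outright as $\qty{A\in\fA_\tn{sa}:\pi(A)\Omega\in\bbC\Omega}$, i.e.\ the kernel of the bounded real-linear map $A\mapsto(I-P)\pi(A)\Omega$, which delivers closedness and linearity in one stroke and gives a more informative description of the set; the only point you leave implicit is that the eigenvalue $c$ in $\pi(A)\Omega=c\Omega$ is real (immediate from $c=\ev{\Omega,\pi(A)\Omega}$ and $A=A^*$), which is needed for $(i-c)(i+c)^{-1}$ to be unimodular. Both arguments are sound; yours buys an explicit characterisation of $\phi(O\cap\Unitary_\omega(\fA))$, while the paper's defers slightly less to the GNS picture for the closedness half.
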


\begin{proof}
If $U \in O$, then $-1 \notin \sigma(U)$ since this would contradict $\norm{I - U} < 2$. Therefore $\phi$ is well-defined. Note that $\phi$ is just multiplication by $-1$ composed with the inverse Cayley transform. In particular, $\phi$ is a homeomorphism  with inverse  $\phi^{-1}(A) = (iI - A)(iI + A)^{-1}$ by continuous functional calculus. 

Let $\omega \in \sP(\fA)$. Then $\phi(O \cap \Unitary_\omega(\fA))$ is closed since $\Unitary_\omega(\fA)$ is closed in $\Unitary(\fA)$ and $\phi$ is a homeomorphism. Let $(\cH, \pi, \Omega)$ be the GNS representation of $\omega$. If $U \in O \cap \Unitary_\omega(\fA)$, then $\pi(U)\Omega = \lambda \Omega$ for some $\lambda \in \Unitary(1)$. Furthermore, there exists a sequence of polynomials $(p_n)$ such that $p_n(\lambda)$ converges to $\phi(\lambda)=i(1 - \lambda)(1 + \lambda)^{-1}$ uniformly on $\sigma(U)$, hence
\[
\pi(\phi(U))\Omega = \lim_{n \rightarrow \infty} \pi(p_n(U)) \Omega = \lim_{n \rightarrow \infty} p_n(\lambda)\Omega = \phi(\lambda)\Omega. 
\]
Now, let $U, V \in O \cap \Unitary_\omega(\fA)$, let $\alpha \in \bbR$, and set $A = \phi(U) + \alpha \phi(V)$. Then the argument above implies that $\pi(A)\Omega = \mu \Omega$ for some $\mu \in \bbR$, hence $\pi(\phi^{-1}(A))\Omega = \phi^{-1}(\mu)\Omega$ by the same argument. Thus, $\phi^{-1}(A) \in O \cap \Unitary_\omega(\fA)$, so $A \in \phi(O \cap \Unitary_\omega(\fA))$. Therefore, $\phi(O \cap \Unitary_\omega(\fA))$ is a subspace of $\fA_\tn{sa}$.
\end{proof}

The last tool we need for the computation of the homotopy groups is a theorem by Gl\"ockner \cite[Thm.~1.13]{glockner2010homotopy}, which says that under the existence of so-called well-filled charts, the homotopy groups of a space $X$ are the direct colimits of the homotopy groups of an ascending sequence of subspaces $X_1 \subset X_2 \subset \cdots$ whose union $\bigcup X_j$ is dense in $X$.  The notion of a well-filled chart is given by Definition 1.7 in the same article. The definition provided there is more general than we need; in fact, our well-filled charts are of a very simple form and the following more restrictive framework will suffice. Let $X$ be a Hausdorff topological group with a sequence of subgroups $(X_j)_{j \in \bbN}$ such that $X_j \subset X_{j+1}$ for all $j \in \bbN$ and $X = \overline{\bigcup_{j} X_j}$. Equip each $X_j$ with its subspace topology. Let $E$ be a
Hausdorff locally convex topological vector space. If $O$ is an open subset of $X$ containing the identity of $X$ and $\phi:O \rightarrow E$ is a homeomorphism such that $\phi(O \cap X_j)$ is a closed
linear subspace of $E$ for all $j \in \bbN$, then $\phi$ is a \emph{well-filled chart} and
\[
\pi_k(X, x) = \colim_{j \in \bbN_x} \pi_k(X_j, x) \tn{ for all $k \in \bbN$ and $x \in \bigcup_{j\in \bbN} X_j$}\ ,
\]
where $\bbN_x = \qty{j \in \bbN: x \in X_j}$ and where the colimit is with respect to the homomorphisms induced by the inclusions $X_j \rightarrow X$ and $X_i \rightarrow X_{j}$ for $i \leq j$. Likewise,
\[
  \pi_0(X) = \colim_{j\in \N} \pi_0(X_j) \ .
\]
This distills what we need from Definition 1.7, Theorem 1.13, Corollary 1.14 and Lemma 8.1 in \cite{glockner2010homotopy}, although the full definition of a well-filled chart is more general.

\begin{thm}\label{thm:UHF_Uomega}
Let $\fA$ be a  UHF algebra  generated by the strict inductive system $(\fA_j)_{j \in \bbN}$
of type $(n_j)_{j\in \N}$.
Denote by $\delta_\fA$ the supernatural number associated to $\fA$. Then  
\begin{equation}\label{eq:relation-classifying-group-type}
  Q(\delta_\fA) = \bigcup_{j\in \N} n_j^{-1}\bbZ
\end{equation}
and 
\begin{equation}\label{eq:homotopy_groups_U(A)_UHF}
\pi_k(\Unitary(\fA)) \cong \begin{cases} 0 &\text{for $k$ even},\\ Q(\delta_\fA) &\text{for $k$ odd}.  \end{cases}
\end{equation}
Furthermore, for every $\omega \in \sP(\fA)$ the homotopy groups of the isotropy group $\Unitary_\omega (\fA)$ are  given by 
\begin{equation}\label{eq:homotopy_groups_Uomega(A)_UHF}
  \pi_k(\Unitary_\omega(\fA)) \cong \begin{cases} 0 &\text{for $k$ even}, \\
    \bbZ \times Q(\delta_\fA) &\text{for } k = 1, \\
    Q(\delta_\fA) &\text{for } k > 1 \text{ and } k \text{ odd}. \end{cases}
\end{equation}
\end{thm}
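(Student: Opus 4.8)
Here is how I would attack the three displayed identities; I treat them in turn, using \eqref{eq:relation-classifying-group-type} as an elementary warm-up, \eqref{eq:homotopy_groups_U(A)_UHF} via Gl\"ockner's colimit theorem, and \eqref{eq:homotopy_groups_Uomega(A)_UHF} via the long exact homotopy sequence of the principal bundle of \S\ref{sec:principal_fiber_bundles}. For \eqref{eq:relation-classifying-group-type}: since $n_i \mid n_j$ for $i \leq j$, the exponents satisfy $\delta_{i,k} \leq \delta_{j,k}$, so $i \mapsto \delta_{i,k}$ is nondecreasing with supremum $\delta_k$. A rational $p/q$ with $q = \prod_k p_k^{q_k}$ (finitely many $q_k \neq 0$) lies in $\bigcup_j n_j^{-1}\bbZ$ iff $q \mid n_i$ for some single $i$, i.e.\ $q_k \leq \delta_{i,k}$ for all $k$; since only finitely many primes occur, such an $i$ exists exactly when $q_k \leq \delta_k$ for every $k$, which is the defining condition for $p/q \in Q(\delta_\fA)$. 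This gives the claimed equality.

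For \eqref{eq:homotopy_groups_U(A)_UHF} I would apply Gl\"ockner's theorem to the filtration of $\Unitary(\fA) = \overline{\bigcup_j \Unitary(\fA_j)}$ (Glimm's Lemma~3.1, as used in the proof of Lemma~\ref{lem:UHF_unitary_closures}) by the subgroups $\Unitary(\fA_j)$. The inverse Cayley transform $\phi$ of Lemma~\ref{lem:Cayley_transform} is a well-filled chart here, since $\phi\big(O \cap \Unitary(\fA_j)\big) = (\fA_j)_\tn{sa}$, a closed linear subspace of $\fA_\tn{sa}$. Gl\"ockner's theorem then gives
\[
\pi_k(\Unitary(\fA)) \cong \colim_{j} \pi_k(\Unitary(\fA_j)) = \colim_j \pi_k(\Unitary(n_j)).
\]
The essential step is identifying the transition maps. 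Each connecting inclusion $\fA_i \hookrightarrow \fA_j$ is a unital $*$-homomorphism $M_{n_i}(\bbC) \to M_{n_j}(\bbC)$, hence by uniqueness of representations of matrix algebras is unitarily conjugate to the amplification $A \mapsto A \otimes I_{n_j/n_i}$; as inner automorphisms of $M_{n_j}(\bbC)$ are homotopic to the identity, on homotopy groups the induced map agrees with that of $U \mapsto U \otimes I_{n_j/n_i}$. Through the clutching identification $\pi_{2\ell-1}(\Unitary(n)) \cong \widetilde{K}^0(S^{2\ell})$ in the stable range, amplification by $I_m$ corresponds to replacing a bundle $E$ by $E^{\oplus m}$, hence to multiplication by $m = n_j/n_i$ on $\pi_k \cong \bbZ$ for every odd $k$. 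For even $k$ and for $k = 0$ one has $\pi_k(\Unitary(n_j)) = 0$ once $n_j$ is in the stable range, so the colimit vanishes; for odd $k$ the system is $\bbZ \xrightarrow{\times (n_{j+1}/n_j)} \bbZ \to \cdots$, whose colimit is $\bigcup_j n_j^{-1}\bbZ = Q(\delta_\fA)$ by \eqref{eq:relation-classifying-group-type} (normalizing the $j$-th copy by $x \mapsto x/n_j$). This proves \eqref{eq:homotopy_groups_U(A)_UHF}.

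For \eqref{eq:homotopy_groups_Uomega(A)_UHF} I would feed \eqref{eq:homotopy_groups_U(A)_UHF} into the long exact homotopy sequence of the locally trivial (hence Serre-fibration) principal $\Unitary_\omega(\fA)$-bundle $p_{\Unitary(\fA)} : \Unitary(\fA) \to \sP_\omega(\fA)$. Here $\sP_\omega(\fA) \cong \bbP\hilbH_\omega$ by Corollary~\ref{cor:PH_superselection_metric_equivalence}, and since $\hilbH_\omega$ is separable and infinite dimensional (a simple infinite-dimensional $\fA$ admits no finite-dimensional representation), $\bbS\hilbH_\omega$ is contractible and $\bbP\hilbH_\omega$ is a $K(\bbZ,2)$. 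For odd $k \geq 3$ the neighbouring base groups $\pi_{k+1}(\bbP\hilbH_\omega)$ and $\pi_k(\bbP\hilbH_\omega)$ vanish, so $i_*$ is an isomorphism $\pi_k(\Unitary_\omega(\fA)) \cong \pi_k(\Unitary(\fA)) = Q(\delta_\fA)$; for even $k \geq 2$ the sequence together with $\pi_k(\Unitary(\fA)) = 0$ forces $\pi_k(\Unitary_\omega(\fA)) = 0$; and the $k=0$ end gives connectedness. For $k=1$ the sequence degenerates to
\[
0 \longrightarrow \bbZ \xrightarrow{\ \partial\ } \pi_1(\Unitary_\omega(\fA)) \xrightarrow{\ i_*\ } Q(\delta_\fA) \longrightarrow 0 .
\]

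To split this I would use the phase character $\chi_\omega : \Unitary_\omega(\fA) \to \Unitary(1)$, $W \mapsto \langle \Omega_\omega, \pi_\omega(W)\Omega_\omega\rangle$, a continuous homomorphism. The map $\Unitary(\fA) \to \bbS\hilbH_\omega$, $U \mapsto \pi_\omega(U)\Omega_\omega$, is a morphism of fibrations from $p_{\Unitary(\fA)}$ to the circle bundle $\bbS\hilbH_\omega \to \bbP\hilbH_\omega$ inducing $\chi_\omega$ on fibers, so naturality of the connecting homomorphism gives $\chi_{\omega *} \circ \partial = \partial'$, where $\partial'$ is the connecting map of $\Unitary(1) \to \bbS\hilbH_\omega \to \bbP\hilbH_\omega$. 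Since $\bbS\hilbH_\omega$ is contractible, $\partial'$ is an isomorphism, so $\chi_{\omega *}$ restricts to an isomorphism on $\operatorname{im}\partial$ and is a retraction; the sequence splits and $\pi_1(\Unitary_\omega(\fA)) \cong \bbZ \times Q(\delta_\fA)$. I expect the main obstacle to be the transition-map computation in \eqref{eq:homotopy_groups_U(A)_UHF}: one must pin down that the connecting inclusions are amplifications up to (homotopically trivial) inner automorphisms and that amplification acts as multiplication by $n_j/n_i$ on every odd homotopy group, for which the clutching/$K$-theory interpretation is the cleanest route. I note that for a pure state $\omega$ with all restrictions $\omega|_{\fA_j}$ pure, \eqref{eq:homotopy_groups_Uomega(A)_UHF} can alternatively be derived directly from Lemma~\ref{lem:UHF_unitary_closures}, the Cayley chart of Lemma~\ref{lem:Cayley_transform}, and Gl\"ockner's theorem, tracking the two $\bbZ$ summands of $\pi_1\big(\Unitary(1) \times \Unitary(n_j-1)\big)$ by $\chi_\omega$ and the determinant; the fibration argument has the advantage of handling all $\omega \in \sP(\fA)$ at once.
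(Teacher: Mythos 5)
Your proposal is correct, and it splits naturally into a part that mirrors the paper and a part that genuinely diverges. For \eqref{eq:relation-classifying-group-type} you give a direct arithmetic argument where the paper cites \cite[Lem.~7.4.4]{RordamLarsenLaustsenKtheory}; for \eqref{eq:homotopy_groups_U(A)_UHF} your route is essentially the paper's (Gl\"ockner's theorem applied to the Cayley chart of Lemma~\ref{lem:Cayley_transform}, with the connecting maps acting as multiplication by $n_{ij}=n_j/n_i$ on odd homotopy), the only difference being that you justify the multiplication-by-$n_{ij}$ fact via the clutching/$K$-theory picture and explicitly invoke uniqueness of unital embeddings $M_{n_i}(\C)\hookrightarrow M_{n_j}(\C)$ up to inner automorphism, whereas the paper builds that normal form into its choice of $*$-isomorphisms $\sigma_i$ and computes $(\sigma_{ij})_*$ directly. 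The real divergence is \eqref{eq:homotopy_groups_Uomega(A)_UHF}: the paper reduces to a product pure state via Powers' transitivity, identifies $\Unitary(\fA_j)\cap\Unitary_\omega(\fA)\cong\Unitary(1)\times\Unitary(n_j-1)$ (Lemma~\ref{lem:UHF_unitary_closures}), checks that the Cayley chart restricts to a well-filled chart of $\Unitary_\omega(\fA)$, and then computes the colimit of $\pi_1(\Unitary(1)\times\Unitary(n_j-1))$ with the explicit $2\times 2$ transition matrices $\bigl(\begin{smallmatrix}1&0\\ n_{ij}-1 & n_{ij}\end{smallmatrix}\bigr)$; you instead feed \eqref{eq:homotopy_groups_U(A)_UHF} into the long exact sequence of the principal bundle $\Unitary(\fA)\to\sP_\omega(\fA)\cong\bbP\hilbH_\omega\simeq K(\bbZ,2)$ from \S\ref{sec:principal_fiber_bundles} and split the resulting extension $0\to\bbZ\to\pi_1(\Unitary_\omega(\fA))\to Q(\delta_\fA)\to 0$ with the phase character $\chi_\omega$, using naturality of the connecting map against the circle bundle $\bbS\hilbH_\omega\to\bbP\hilbH_\omega$. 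The splitting step is genuinely needed (since $\Ext^1_\bbZ(Q(\delta_\fA),\bbZ)$ is typically nonzero) and your argument for it is sound. Your approach buys uniformity in $\omega$ (no appeal to Powers' transitivity result) and shorter bookkeeping, at the cost of depending on the principal-bundle structure established earlier in \S\ref{sec:principal_fiber_bundles}; the paper's colimit computation is self-contained within the Gl\"ockner framework and produces the explicit generators and transition matrices, which is more informative if one wants the maps and not just the groups. There is no circularity in your route, since the local triviality of $p_{\Unitary(\fA)}$ rests only on Corollary~\ref{cor:continuous_Kadison_automorphism}.
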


\begin{proof}
  By
  % $\dim \fA =\infty$ the sequence $(n_j)_{j\in \N}$ is unbounded and therefore $\lim\limits_{j\to \infty} n_j =\infty$.
  possibly passing to isomorphic $C^*$-algebras we can assume without loss of generality that
  the inductive system defining $\fA$  is of the form
  \[
    \bbC I \subset  \fA_0 \subset \fA_1 \subset \cdots  \ , 
  \]
  where $I$ is the unit of $\fA$. Then $\fA = \overline{\bigcup_{j\in \bbN} \fA_j}$
  and  $\Unitary(\fA) = \overline{\bigcup_{j \in \bbN} \Unitary(\fA_j)}$, where the latter equality
  has been shown by \cite[Lem.\ 3.1]{Glimm_UHF_Algebras}.

  We want to prove a similar formula for the isotropy group $\Unitary_\omega(\fA)$. 
  To this end observe that by \cite[Cor.\ 3.8]{Powers_UHF_Representations} there exists for every pair
  of pure states $\psi, \omega \in \sP(\fA)$ an automorphism $\alpha \in \Aut(\fA)$ such that $\psi = \omega \circ \alpha$ . Then $\alpha$ restricts to an isomorphism of topological groups $\Unitary_\psi(\fA) \rightarrow \Unitary_\omega(\fA)$. Therefore, without loss of generality, we may choose $\omega$ to be any pure state we like; in particular, we may choose $\omega$ such that $\omega|_{\fA_j}$ is pure for all $j\in \bbN$. Then Lemma \ref{lem:UHF_unitary_closures} implies that $\Unitary_\omega(\fA) = \overline{\bigcup_{j\in\bbN} \Unitary(\fA_j) \cap \Unitary_\omega(\fA)}$.
  
  Formula \eqref{eq:relation-classifying-group-type} is an arithmetic result relating an abelian
  group obtained directly from the sequence $(n_j)_{j\in\N}$ with the abelian group constructed from
  the associated supernatural number $\delta_\fA$. The formula is proved in
  \cite[Lem.~7.4.4 (i)]{RordamLarsenLaustsenKtheory}.

Next we will define a well-filled chart of $\Unitary(\fA)$, which restricts to a well-filled chart of $\Unitary_\omega(\fA)$, whose domain contains the identity $I \in \fA$. Then the remarks preceding the theorem will yield
\begin{equation}\label{eq:colimit_unitary_groups}
\pi_k(\Unitary(\fA)) = \colim_{j \in \bbN} \pi_k(\Unitary(\fA_j))
\end{equation}
and
\begin{equation}\label{eq:colimit_isotropy_groups}
\pi_k(\Unitary_\omega(\fA)) = \colim_{j\in \bbN} \pi_k(\Unitary(\fA_j) \cap \Unitary_\omega(\fA)).
\end{equation}
In particular, $\Unitary(\fA_j) \cong \Unitary(n_j)$ and $\Unitary(\fA_j) \cap \Unitary_\omega(\fA) \cong \Unitary(1) \times \Unitary(n_j - 1)$ by Lemma \ref{lem:UHF_unitary_closures}. These spaces are path-connected, so the homotopy groups are independent of the base point. We will define our well-filled chart, then analyze the colimit.

As in Lemma \ref{lem:Cayley_transform}, let $O = \qty{U \in \Unitary(\fA): \norm{I - U} < 2}$ and define $\phi:O \rightarrow \fA_\tn{sa}$ by $\phi(U) = i(I - U)(I + U)^{-1}$. Then $\phi$ is a homeomorphism and $\phi|_{O \cap \Unitary(\fA_j)}$ is a homeomorphism onto $(\fA_j)_\tn{sa}$ for all $j \in \bbN$, so $\phi$ is a well-filled chart for $\Unitary(\fA)$ and $I \in O$, as desired.

Since $\phi$ is a homeomorphism, the restriction
\[ \phi|_{O \cap \Unitary_\omega(\fA)}:O \cap \Unitary_\omega(\fA) \rightarrow \phi(O \cap \Unitary_\omega(\fA)) \]
is also a homeomorphism when $\phi(O \cap \Unitary_\omega(\fA))$ is given the subspace topology inherited from $\fA_\tn{sa}$. Lemma \ref{lem:Cayley_transform} entails that $\phi(O \cap \Unitary_\omega(\fA))$ is a closed linear subspace of $\fA_\tn{sa}$. For each $j \in \bbN$, Lemma \ref{lem:UHF_unitary_closures} states that $\Unitary(\fA_j) \cap \Unitary_\omega(\fA) = \Unitary_{\omega|_{\fA_j}}(\fA_j)$, so Lemma \ref{lem:Cayley_transform} implies that $\phi(O \cap \Unitary_\omega(\fA) \cap \Unitary(\fA_j))$ is a closed linear subspace of
$(\fA_j)_\tn{sa}$, hence also a closed linear subspace of $\phi(O \cap \Unitary_\omega(\fA))$. We see that $\phi|_{O \cap \Unitary_\omega(\fA)}$ is a well-filled chart with $I \in O \cap \Unitary_\omega(\fA)$, as desired.

We now analyze the colimits. Denote by $\iota_{ij}:\fA_i \rightarrow \fA_j$ the canonical inclusions
for $i\leq j$. Then there exist $*$-isomorphisms $\sigma_i:\fA_i \rightarrow M_{n_i}(\bbC)$ such that
\begin{equation}\label{eq:matrix_algebra_inclusion}
  \sigma_{ij}(A) = \mqty(A&&&\\&A&&\\&&\ddots &\\&&&A)
  \quad \text{for all } A \in M_{n_i}(\bbC)\ ,
\end{equation}
where $\sigma_{ij} = \sigma_j \circ \iota_{ij} \circ \sigma_i^{-1}: M_{n_i}(\bbC) \to M_{n_j}(\bbC)$ and
where there are $n_{ij} = n_j/n_i \in \bbN$ copies of $A$ on the diagonal.
Choose $\omega$ to be the unique pure state determined by setting $\omega(A)$ to be the top left entry of the matrix $\sigma_i(A)$ for all $A \in \fA_i$. The colimit \eqref{eq:colimit_unitary_groups} is isomorphic to the colimit of the homomorphisms on homotopy groups $\pi_k(\Unitary(n_i))$ that are induced by the inclusions \eqref{eq:matrix_algebra_inclusion} restricted to the unitary groups $\Unitary(n_i)$. Furthermore, 
\[
\sigma_i(\Unitary_\omega(\fA) \cap \Unitary(\fA_i)) = \qty{\mqty(z & \\ & U) : z \in \Unitary(1), \, U \in \Unitary(n_i - 1)} \cong \Unitary(1) \times \Unitary(n_i - 1).
\]
Therefore, the colimit \eqref{eq:colimit_isotropy_groups} is isomorphic to the colimit of the homorphisms on homotopy groups $\pi_k(\Unitary(1) \times \Unitary(n_i - 1))$ that are induced by the continuous maps $g_{ij}:\Unitary(1) \times \Unitary(n_i - 1) \rightarrow \Unitary(1) \times \Unitary(n_j - 1)$ defined by
\begin{equation}
  \label{eq:pr1ofg}
  (\operatorname{pr}_1 \circ g_{ij})(z,U) = z
\end{equation}
and
\begin{equation}
  \label{eq:pr2ofg}
  (\operatorname{pr}_2 \circ g_{ij})(z, U) = \mqty(U&&&&&\\&z&&&&\\&&U&&&\\&&&z&&\\&&&&\ddots&&\\&&&&&U) \ ,
\end{equation}
where $\operatorname{pr}_1$ and $\operatorname{pr}_2$ are the projections.

We consider the former colimit. Recall that the map
\[ f_{ij}:\Unitary(n_i) \rightarrow \Unitary(n_j), \: U \mapsto \diag(U, I)\]
  induces isomorphisms on homotopy groups $\pi_k$ for $k < 2n_i$.
  %, as seen by examining the long exact sequence emerging from the fiber bundle
  %\[ \Unitary(n_i) \rightarrow \Unitary(n_j) \rightarrow S^{2n_j-1} \]
  %defined by having $\Unitary(n_j)$ act on a fixed unit vector in $\bbC^{n_j}$.
  By the Bott periodicity
theorem \cite{BottStableHomotopyClassicalGroups}, the homotopy groups of the unitary groups
$\Unitary(n)$ are given for $k < 2n$ by 
\[
\pi_k(\Unitary(n)) = \begin{cases} 0 &\text{if } k \tn{ is even,} \\ \bbZ &\text{if } k \tn{ is odd.} \end{cases}
\]
Thus, when $k$ is even, the colimit is zero. Fix $k$ odd and let $i_0 \in \bbN$ be the smallest natural number such that $n_{i_0} > k/2$. Choosing a generator $x \in \pi_k(\Unitary(n_{i_0}))$ we obtain generators $(f_{i_0i})_*x \in \pi_k(\Unitary(n_i))$ for all $i \geq i_0$. Then for all $j\geq i \geq i_0 $,
\[
(\sigma_{ij})_*(f_{i_0i})_*x = n_{ij}(f_{i_0 j})_*x,
\]
where we have restricted $\sigma_{ij}$ to the unitary groups. The homomorphisms $(\sigma_{ij})_*$ are thus multiplication by $n_{ij}$.
% Set $Q_0 = \bigcup_{j \geq i \geq i_0} n_{ij}^{-1}\Z $ and $Q(\delta_\fA) = \bigcup_{j\in \N} n_j^{-1} \Z$.
Now define homomorphisms
\[
  \sigma_i: \pi_k(\Unitary(n_i)) \rightarrow Q(\delta_\fA)=
  \bigcup_{j\in \N} n_j^{-1} \Z\]
as follows. If $i\geq i_0$, let $\sigma_i$ by the unique group homomorphism
mapping the generator $(f_{i_0i})_*x$ to $1/n_{i}$. If $i< i_0$, put
$\sigma_i = \sigma_{i_0}(\sigma_{ii_0})_*$. By construction, the relation
$\sigma_i = \sigma_{j}(\sigma_{ij})_*$ then is fulfilled for all $j\geq i$.
Since the union of the images of
the homomorphisms $\sigma_i$ coincides with  $Q(\delta_\fA)$ and since
$\sigma_i$  is injective for $i\geq i_0$, $Q(\delta_\fA)$
together with the family $(\sigma_j)_{j\in \N}$ is the directed colimit
we are looking for and formula \eqref{eq:homotopy_groups_U(A)_UHF} is proven.

 We now consider the direct system $g_{ij}:\Unitary(1) \times \Unitary(n_i - 1) \rightarrow \Unitary(1) \times \Unitary(n_j - 1)$.
 For the homotopy groups $\pi_k$ with $k > 1$, the analysis proceeds in an analogous way using the fact that
 the embedding $ \Unitary(n_j - 1) \hookrightarrow \Unitary(1) \times \Unitary(n_j - 1)$, $U \mapsto (1,U)$ 
 induces an isomorphism  $\pi_k(\Unitary(n_j - 1)) \to \pi_k(\Unitary(1) \times \Unitary(n_j - 1))$.
 Choose
 $i_0$ such that $n_{i_0}  > k/2+1$. If $k$ is even, then
 $\pi_k(\Unitary(1) \times \Unitary(n_j - 1)) = \pi_k(\Unitary(n_j - 1)) = 0$ for all $j \geq i_0$, hence the colimit
 $\colim\limits_{j\in \N} \pi_k(\Unitary(1) \times \Unitary(n_j - 1))$ is trivial.    
 If $k$ is odd and $j\geq i \geq i_0 $, the homomorphism $(g_{ij})_*$ maps a generator of
 $\pi_k(\Unitary(1) \times \Unitary(n_i - 1))$ to $n_{ij}$ times a generator of $\pi_k(\Unitary(1) \times \Unitary(n_j - 1))$,
 hence the colimit coincides with $Q(\delta_\fA)$ as before. In case $k = 1$ we have $\pi_1(\Unitary(1) \times \Unitary(n_i - 1)) \cong \bbZ \times \bbZ$ for all $i\geq i_0$.
 Denote % by $f_j : \Unitary (1) \to \Unitary (n_j) $ the map $z \mapsto \diag (z,I)$,
 by $f_j : \Unitary (1) \to \Unitary (1) \times \Unitary (n_j-1) $ the map
 $z \mapsto (1,\diag (z,I))$ and by
 $h_j : \Unitary (1) \to \Unitary (1) \times \Unitary (n_j-1) $ the map
 $z \mapsto (z,I)$. After choice of a generator $x \in \pi_1(\Unitary (1))$
 the elements $x_j = (h_j)_* x$ and $y_j  = (f_j)_* x$ then are generators of
 $\pi_1(\Unitary (1) \times \Unitary (n_j-1))$.  Inspection of equations
 \eqref{eq:pr1ofg} and \eqref{eq:pr2ofg} then shows that for $j \geq i$ 
\begin{align*}
(g_{ij})_*x_i &= x_j + (n_{ij} - 1)y_j \ ,\\
(g_{ij})_*y_i &= n_{ij}y_j \ .
\end{align*}
In other words, $(g_{ij})_*$ is given by the matrix 
\[
\mqty(1 & 0 \\ n_{ij} - 1 & n_{ij}).
\]
Now let the homomorphisms
$g_j : \pi_1(\Unitary(1) \times \Unitary(n_j - 1)) \rightarrow \bbZ \times Q$
be given by multiplication by the matrix
\[
\mqty(1 & 0 \\ 1 + n_j^{-1} & n_j^{-1}) \ .
\]
Then one checks easily that $ g_j (g_{ij})_* = g_i$ for all $j\geq i$. 
The union of the images of the maps $g_j$ covers  $\bbZ \times Q$. Moreover, each
of the maps $g_j$ is injective, hence  $\bbZ \times Q$ together with the family
of maps $(g_j)_{j\in \N}$ provides the colimit of the inductive system
of abelian groups $\left( \pi_1(\Unitary(1) \times \Unitary(n_j - 1)), (g_{ij})_*\right)_{i\leq j}$.
This finishes the proof of \eqref{eq:homotopy_groups_Uomega(A)_UHF}.
\end{proof}

\begin{rem}
  The proof of the theorem applies to more general situations. Namely, if $\fA$ is the colimit of a strict inductive system of $C^*$-algebras
  $(\fA_j)_{j \in J}$, not necessarily countable, then the same argument as above
  using \cite[Thm.\ 1.13]{glockner2010homotopy} yields
\[
\pi_k(\Unitary(\fA), U) = \colim_{j \in J_U} \pi_k(\Unitary(\fA_j), U) \tn{ for all $k \in \bbN$ and $U \in \bigcup_{j \in J} \Unitary(\fA_j)$},
\]
where $J_U = \qty{j \in J: U \in \Unitary(\fA_j)}$. This result was shown by Handelman in
\cite[Prop.\ 4.4]{HandelmanK0_AF_Algebras} through a different method of proof.
% Handelman also shows that $\pi_1(\Unitary(\fA)) \cong K_0(\fA)$ for any AF algebra $\fA$ \cite{HandelmanK0_AF_Algebras}.
Schr\"oder computed in \cite{SchroederUHFalgebras} the homotopy groups of the regular group of a UHF algebra which is
homotopy equivalent to its unitary group. Schr\"oder's result therefore entails ours.  
However, to our knowledge, the homotopy groups $\pi_k(\Unitary_\omega(\fA))$ for a UHF algebra $\fA$ and pure state $\omega$ have
not been computed before. 
\end{rem}

We now can show the claimed nontriviality of
the bundle $\Unitary(\fA) \rightarrow \sP_\omega(\fA)$. 

\begin{cor}
  For every infinite dimensional UHF algebra $\fA$ and pure state $\omega$ on it
  the bundle  $p_{\Unitary(\fA)} : \Unitary(\fA) \rightarrow \sP_\omega(\fA)$ is nontrivial. 
\end{cor}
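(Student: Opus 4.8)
The plan is to argue by contradiction, following the same template used above for $M_n(\bbC)$ and $\fB(\cH)$: if the principal bundle $p_{\Unitary(\fA)}$ were trivial it would admit a global trivialization, hence a homeomorphism $\Unitary(\fA) \cong \Unitary_\omega(\fA) \times \sP_\omega(\fA)$. Such a homeomorphism induces isomorphisms on all homotopy groups, and I would extract a contradiction already at the level of $\pi_1$.

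First I would record the homotopy type of the base. Since $\fA$ is an infinite dimensional UHF algebra and $\omega$ is pure, the GNS Hilbert space $\cH_\omega$ is separable and infinite dimensional, so $\sP_\omega(\fA) \cong \bbP\cH_\omega$ is a $K(\bbZ,2)$; in particular $\pi_1(\sP_\omega(\fA)) = 0$. Combined with the product formula $\pi_1(X \times Y) \cong \pi_1(X) \times \pi_1(Y)$, this yields $\pi_1(\Unitary_\omega(\fA) \times \sP_\omega(\fA)) \cong \pi_1(\Unitary_\omega(\fA))$. Then I would invoke Theorem \ref{thm:UHF_Uomega}, which supplies $\pi_1(\Unitary(\fA)) \cong Q(\delta_\fA)$ on the one hand and $\pi_1(\Unitary_\omega(\fA)) \cong \bbZ \times Q(\delta_\fA)$ on the other. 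Triviality of the bundle would therefore force an isomorphism $Q(\delta_\fA) \cong \bbZ \times Q(\delta_\fA)$ of abelian groups.

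The crux, and the only step that is not purely formal, is to see that these two groups cannot be isomorphic. I would argue by rank: since $Q(\delta_\fA)$ is a subgroup of $\Q$ containing $1$, tensoring with $\Q$ gives $Q(\delta_\fA) \otimes_\bbZ \Q \cong \Q$, so $Q(\delta_\fA)$ is torsion free of rank $1$, whereas $(\bbZ \times Q(\delta_\fA)) \otimes_\bbZ \Q \cong \Q^2$ has rank $2$. Rank being an isomorphism invariant, the two are not isomorphic, contradicting triviality and finishing the proof. I expect no genuine obstacle here: the substantive work was already carried out in the computation of the homotopy groups in Theorem \ref{thm:UHF_Uomega}, so this corollary is essentially its formal consequence, with the rank comparison playing the decisive role.
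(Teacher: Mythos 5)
Your proposal is correct and follows the paper's own argument essentially verbatim: both deduce from Theorem \ref{thm:UHF_Uomega} that $\pi_1(\Unitary(\fA))\otimes_\Z\Q\cong\Q$ while $\pi_1(\Unitary_\omega(\fA)\times\sP_\omega(\fA))\otimes_\Z\Q\cong\Q^2$, and conclude nontriviality from the rank discrepancy. Your explicit remark that $\sP_\omega(\fA)\cong\bbP\cH_\omega$ is simply connected is a useful clarification that the paper leaves implicit, but the route is the same.
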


\begin{proof}
  As  a consequence of the preceding theorem, the   rationalized fundamental  groups
of $\Unitary(\fA)$
\[ \pi_1(\Unitary(\fA))\otimes_\Z \Q \cong Q(\delta_\fA) \otimes_\Z Q \cong \Q\] and of the trivial bundle
$\Unitary_\omega(\fA)\times \sP_\omega(\fA)$ 
\[\pi_1(\Unitary_\omega(\fA)\times \sP_\omega(\fA))\otimes_\Z \Q \cong (\bbZ \times A (\delta_\fA))\otimes_\Z \Q
\cong \Q^2\]
are not isomorphic, hence $p_{\Unitary(\fA)} : \Unitary(\fA) \rightarrow \sP_\omega(\fA)$ can not be trivial.
\end{proof}

The theorem also allows to compute the topological K-theory of a UHF algebra.

\begin{cor}
  Under the assumptions of the theorem, the K-theory of the UHF algebra $\fA$ is given by
  \begin{equation}
    \label{eq:ktheoryUHF}
    K_k(\fA) =
    \begin{cases}
      Q(\delta_\fA) & \text{for } k=0 ,\\
      0  & \text{for } k=1 .
    \end{cases}
  \end{equation}
\end{cor}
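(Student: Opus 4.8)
The plan is to read the $K$-groups off from the homotopy computation of Theorem \ref{thm:UHF_Uomega} through the standard bridge between topological $K$-theory and the homotopy of the stable unitary group. For a unital $C^*$-algebra $A$ one has natural isomorphisms $K_1(A)\cong \pi_0(\Unitary_\infty(A))$ and $K_0(A)\cong \pi_1(\Unitary_\infty(A))$, where $\Unitary_\infty(A)=\colim_m \Unitary(M_m(A))$ is the colimit of the matrix amplifications under the corner embeddings $u\mapsto\diag(u,1)$; see \cite{RordamLarsenLaustsenKtheory}. What makes the theorem applicable is that every amplification $M_m(\fA)$ is again a UHF algebra: it is generated by the strict inductive system $(M_m(\fA_j))_{j\in\bbN}$ of unbounded type $(m\,n_j)_{j\in\bbN}$, and its associated supernatural number is the product $m\,\delta_\fA$ (obtained by adding the finite exponents of $m$ to those of $\delta_\fA$).

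First I would apply Theorem \ref{thm:UHF_Uomega} to each $M_m(\fA)$, obtaining $\pi_0(\Unitary(M_m(\fA)))=0$ and $\pi_1(\Unitary(M_m(\fA)))\cong Q(m\,\delta_\fA)$. Passing to the colimit over $m$ through the bridge isomorphisms then yields $K_1(\fA)=\colim_m\pi_0(\Unitary(M_m(\fA)))=0$ immediately, which is the second line of \eqref{eq:ktheoryUHF}. For the first line there remains the identification of $K_0(\fA)=\colim_m\pi_1(\Unitary(M_m(\fA)))=\colim_m Q(m\,\delta_\fA)$ with $Q(\delta_\fA)$.

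The main obstacle is precisely this colimit over $m$: the corner embeddings do not respect the matrix filtrations exploited in the theorem, and the subgroups $Q(m\,\delta_\fA)=m^{-1}Q(\delta_\fA)\subset\Q$ are not nested in a way whose union gives the answer (for the CAR algebra the naive union would be all of $\Q$ rather than $\bbZ[\tfrac12]$). The clean way around this is to invoke continuity of $K$-theory under inductive limits, $K_k(\fA)=\colim_j K_k(\fA_j)$. Since $K_1(M_{n_j}(\bbC))=0$ this reconfirms $K_1(\fA)=0$, while $K_0(M_{n_j}(\bbC))\cong\bbZ$ with the multiplicity-$n_{ij}$ inclusion inducing multiplication by $n_{ij}$ gives $K_0(\fA)=\colim_j(\bbZ,\times n_{ij})$. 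This is exactly the inductive system $(\pi_1(\Unitary(n_j)),(\sigma_{ij})_*)$ treated in the proof of Theorem \ref{thm:UHF_Uomega}: the determinant isomorphism $\pi_1(\Unitary(n))\cong\bbZ\cong K_0(M_n(\bbC))$ matches the distinguished generators and sends the block-diagonal inclusion to multiplication by $n_{ij}$ on either side. Its colimit was evaluated there, via \eqref{eq:relation-classifying-group-type}, to be $Q(\delta_\fA)=\bigcup_j n_j^{-1}\bbZ$, so $K_0(\fA)\cong Q(\delta_\fA)$ and \eqref{eq:ktheoryUHF} follows.
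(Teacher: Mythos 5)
Your argument is correct, but it reaches the result by a genuinely different route than the paper. The paper works entirely inside its own framework: it takes the definition $K_0(\fA)=\pi_1(\GL_\infty(\fA))$, $K_1(\fA)=\pi_0(\GL_\infty(\fA))$ with $\GL_\infty(\fA)$ the colimit in topological groups, applies Gl\"ockner's theorem a second time to get $\pi_k(\GL_\infty(\fA))=\colim_n\pi_k(\GL_n(\fA))=\colim_n\pi_k(\Unitary_n(\fA))$, and then runs a double-colimit interchange: for $n_{i_0}>2k$ the stabilization maps $\Unitary_n(\fA_i)\to\Unitary_m(\fA_i)$ are isomorphisms on $\pi_k$ and commute with the multiplication-by-$n_{ij}$ maps, so $\pi_k(\Unitary_n(\fA))\cong\pi_k(\Unitary(\fA))$ for every $n$ and the answer is read off from Theorem \ref{thm:UHF_Uomega} with no new computation. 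You instead import two external standard facts — the bridge isomorphisms $K_1\cong\colim_m\pi_0(\Unitary(M_m(\fA)))$, $K_0\cong\colim_m\pi_1(\Unitary(M_m(\fA)))$, and continuity of operator $K$-theory under inductive limits — and reduce to the classical computation $K_0(\fA)=\colim_j(\bbZ,\times n_{ij})$. That is sound, and your instinct to abandon the colimit over corner embeddings of the groups $Q(m\,\delta_\fA)$ was reasonable, though in fact that colimit also evaluates to $Q(\delta_\fA)$: the corner embedding sends the generator $1/(mn_i)$ to $1/(m'n_i)$, so each transition map $m^{-1}Q(\delta_\fA)\to m'^{-1}Q(\delta_\fA)$ is an isomorphism rather than an inclusion, and the naive union is simply the wrong way to compute it. The one thing your route quietly elides is the identification of the paper's topological definition \eqref{eq:topKtheory} (homotopy groups of the colimit \emph{space} with the bamboo-shoot topology) with the standard $K$-groups (colimits of homotopy groups) for which continuity is proved in the literature; bridging those is exactly the Gl\"ockner step, and avoiding reliance on the folklore version of that identification is the stated point of the paper's remark following the corollary. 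So your proof buys brevity and familiarity at the cost of the self-containedness the paper is after.
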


\begin{proof}
  Denote by $\GL_n (\fA)$ and $\Unitary_n (\fA)$ the groups of invertible respectively unitary
  $n\times n$-matrices with entries in $\fA$. Note that both are topological groups in a natural
  way and that $\Unitary_n (\fA)$ is a deformation retract of  $\GL_n (\fA)$
  \cite[Sec.~8.1]{BlaKTOA2nd}. Let $\GL_\infty (\fA)$ and $\Unitary_\infty (\fA)$
  denote the colimits within the category of topological groups of the inductive systems
  $(\GL_n (\fA))_{n\in \N}$ and $(\Unitary_n (\fA))_{n\in \N}$, respectively. 
  The topological K-theory of $\fA$ can now be defined as the homotopy groups 
  \begin{equation}
    \label{eq:topKtheory}
    K_k(\fA) = \begin{cases}
      \pi_1 \big( \GL_\infty (\fA)\big) \quad \text{for } k = 0 \ , \\
      \pi_0 \big( \GL_\infty (\fA)\big) \quad \text{for } k = 1 \ .
    \end{cases}
  \end{equation}
  Using as before Gl\"ockner's results \cite{glockner2010homotopy} on the homotopy groups of colimits of
  direct systems of Banach Lie groups possessing well-filled charts or the sightly stronger
  direct limit charts one concludes that
  \[
   \pi_k \big( \GL_\infty (\fA)\big) = \colim_{n\in \N} \pi_{k} \big( \GL_n (\fA)\big)  \ . 
  \]
  Since $\GL_n (\fA)$ and $\Unitary_n (\fA)$ are homotopy equivalent, one obtains
  \begin{equation}
    \label{eq:colimiglunitary}
    \pi_k \big( \GL_\infty (\fA)\big) = \colim_{n\in \N} \pi_{k} \big( \Unitary_n (\fA)\big)  \ .
  \end{equation}
  Note that this equality also holds true  when $\fA$ is replaced by any of the $C^*$-algebras
  $\fA_j \cong M_{n_j} (\C)$ defining $\fA$.   
  Now observe that when $i_0$ is chosen such that $n_{i_0} > 2k$ there is for all $m\geq n$ and $j\geq i > i_0$ 
  a commutative diagram
  \[
  \begin{tikzcd}
    \pi_{k} \big( \Unitary_n (\fA_i)\big) \arrow{r} \arrow[d,"\cdot n_{ij}"] &
    \pi_{k} \big( \Unitary_m (\fA_i)\big) \arrow[d,"\cdot n_{ij}"] \\
    \pi_{k} \big( \Unitary_n (\fA_j)\big) \arrow{r}   &
    \pi_{k} \big( \Unitary_m (\fA_j)\big) \ .
  \end{tikzcd}
  \]
  The horizontal morphisms in this diagram are induced by embeddings of the form
  $A \mapsto \diag (A,I)$ and are isomorphisms. The vertical morphisms are multiplication by $n_{ij}$,
  using notation from the proof of the theorem. 
  For all $m\geq n$ the induced maps 
  \[
     \colim_{j\in \N} \pi_{k} \big( \Unitary_n (\fA_j)\big) \to
    \colim_{j\in \N} \pi_{k} \big( \Unitary_m (\fA_j)\big) 
  \]
  are therefore isomorphisms, hence for all $n\geq 1$
  \[
    \pi_{k} \big( \Unitary_n (\fA)\big) = \pi_{k} \big( \Unitary (\fA)\big) =
    \begin{cases}
      Q(\delta_\fA) & \text{for } k \text{ even}\ , \\
      0 & \text{for } k \text{ odd} \ . \\
    \end{cases}
  \]  
  By \eqref{eq:topKtheory} and \eqref{eq:colimiglunitary} this entails the claim. 
\end{proof}

\begin{rem}
  The topological K-theory of a UHF algebra is well known; see e.g.~\cite{SchroederUHFalgebras},
  or \cite[Sec.~7.4]{RordamLarsenLaustsenKtheory}. 
  % or the paper \cite{HandelmanK0_AF_Algebras} by Handelman where it has been shown
  % that $\pi_1(\Unitary(\fA)) \cong K_0(\fA)$ for any AF algebra.
  The virtue of the approach presented here is that it avoids the claim occasionally  
  made in the K-theoretic literature that the colimit topology on $\GL_\infty (\fA)$ is
  compatible with the underlying group structure. For $\fA$ infinite dimensional, this is in general not true
  as has been shown in \cite{TatShiHirGTURILTGCGD,YamILGLG}. 
  The appropriate way is to define $\GL_\infty (\fA)$ as the colimit of the direct system
  $(\GL_n (\fA))_{n\in \N}$ within the category of topological groups. Under this concept,
  $\GL_\infty (\fA)$ is the union of the groups $\GL_n (\fA)$ endowed with the natural group structure. 
  The correct topology turning it into a topological group is the \emph{bamboo shoot topology} \cite{TatShiHirGTURILTGCGD}
  which in general does not coincide with the colimit topology. Gl\"ockner's approach \cite{glockner2010homotopy}
  to determine the homotopy groups of colimits of Lie groups or manifolds addresses this fact. 
\end{rem}

%!TEX root = FamiliesGNS.tex
%
%
\section{The fiberwise GNS construction}
\label{sec:fiberwiseGNS}
In this section we consider a norm-defined  $C^*$-algebra fiber bundle  $p:\fA \rightarrow X$
and construct some naturally associated fiber bundles.
For the convenience of the reader we collected in Appendices \ref{sec:fiber-bundles} and \ref{sec:smooth-fiber-bundles} 
several fundamental notions from infinite dimensional bundle theory which are used in the following. 

\subsection{The setup}
\label{sec:setup}
Throughout this section we assume to be given a norm-defined $C^*$-algebra bundle $p:\fA \to X$ over a locally path connected Hausdorff topological space $X$. 
This means in particular that $p:\fA \rightarrow X$ is a continuous surjection
such that $\fA_x = p^{-1}(x)$ carries the structure of a $C^*$-algebra for each $x \in X$,
that the typical fiber is a $C^*$-algebra $\fF$ and finally that the structure group 
is the group $\Aut(\fF)_{\textup{n}}$ of all automorphisms of $\fF$ endowed with the norm topology.
%For mostly technical reasons we further assume that the base space $X$ allows for sufficiently fine
%good open covers which is always the case when $X$ is a separable Hilbert manifold, a triangulable space or a CW complex.

Given the  $C^*$-algebra bundle $p:\fA \to X$ we may construct the dual bundle,
a Banach bundle whose fibers are the dual spaces $\fA_x^*$ with the norm topology. More precisely, we set 
\[
   \fA^* = \coprod_{x \in X} \fA_x^*
\]
as a disjoint union of sets and let $\projAStar : \fA^* \rightarrow X$ be the natural projection.
For every local trivialization $(\varphi,O)$ of $\fA$, we define
$\varphi_* : \fA^*_{|O} = (\projAStar)^{-1}(O) \rightarrow O \times \fF^*$ by
$\varphi_*(x, \omega) =  (x, \omega \circ \varphi_x^{-1})$ for each $x\in X$. Since  $\varphi_x^{-1}$ is a
$*$-isomorphism, the map $\varphi_{*,x} : \fA_x^* \rightarrow \fF^*$ is a bijective linear isometry. Furthermore,
given local trivializations $(\varphi_i,O_i)$, $i=1,2$, of $\fA$, the transition map
\[
  g_{*,12}: O_1 \cap O_2 \rightarrow \Aut (\fF^*)_{\textup{n}}, \quad
  x \mapsto \varphi_{1,*, x} \circ \varphi_{2,*,x}^{-1} = (\varphi_{1,x} \circ \varphi_{2,x}^{-1})_*
\]
is continuous by norm continuity of the map
$\Aut (\fF) \rightarrow \GL (\fF^*)$, $\alpha \mapsto \alpha_* = (\alpha^{-1})^*$.
As for the underlying topology, $\fA^*$ will be endowed with the coarsest topology such that for each local
trivialization $(\varphi,O)$  of $\fA$ the set $\fA^*_{|O}$ is open and the map $\varphi_* $ is continuous.
Since all transition functions are continuous with values in $\Aut(\fF)_{\textup{n}}$, each local trivialization
$\varphi_* $ then is a homeomorphism, and $\projAStar: \fA^* \to X$ becomes a norm defined Banach vector bundle
with typical fiber the dual $\fF^*$.

Next we construct the subbundle $\projAPure: \sP(\fA) \to X$ of $\projAStar :\fA^* \to X$
consisting of \emph{fiberwise pure states} on $\fA$. As a set, let
\[ \sP(\fA) = \coprod_{x \in X}  \sP(\fA_x) \]
and endow $\sP(\fA)$  with the subspace topology from $\fA^*$. The restriction of $\projAStar$ to $\sP(\fA)$
will be denoted by $\projAPure$. By construction, $\projAPure : \sP(\fA) \to X$
then is a continuous surjection. Given a local trivialization $(\varphi,O)$ of $\fA$, the restriction of 
$\varphi_*$ to $\sP(\fA)_{|O} = \sP(\fA) \cap \fA^*_{|O}$ then maps each fiber
$\sP(\fA_x)$ onto $\sP(\fF)$ since $\varphi_{x}$ is a $*$-isomorphism. So
$\varphi_*|_{\sP(\fA)_{|O}}: \sP(\fA)_{|O} \to O \times \sP(\fF) $ is a local trivialization of $\sP(\fA)$.  
By the following lemma the automorphism group $\Aut(\fF)_{\textup{n}}$ acts effectively on $\sP(\fF)$, so
the family of such restrictions forms a trivializing atlas of $\sP(\fA)$ with norm continuous
transition maps. Hence $(\sP(\fA),X,\projAPure,\sP(\fF),\Aut(\fF)_{\textup{n}})$ is a
subbundle of the dual bundle $\projAStar : \fA^* \to X$.
We call $\projAPure: \sP(\fA) \to X$   the \emph{pure state bundle} of $\fA$.

\begin{lem}\label{lem:efective-action-pure-state-space}
  For any  $C^*$-algebra $\fB$, the automorphism group $\Aut (\fB)_{\textup{n}}$ acts effectively and
  continuously on the pure state space $\sP (\fB)$ by
  \[ \Aut (\fB)_\tn{n} \times \sP (\fB)_\tn{n} \to \sP (\fB)_\tn{n}, \quad 
    (\alpha,\omega) \mapsto \omega \circ \alpha^{-1} \ .
  \]  
\end{lem}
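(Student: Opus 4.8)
The plan is to verify, in turn, that the formula $(\alpha,\omega)\mapsto \omega\circ\alpha^{-1}$ takes values in $\sP(\fB)$, that it defines a left action, that this action is effective, and that it is jointly continuous for the norm topologies. First I would settle well-definedness and the action axioms, which are formal. Since $\alpha^{-1}$ is a positive surjective isometry, $\omega\circ\alpha^{-1}$ is positive of norm $\|\omega\|=1$, hence again a state; moreover $\phi\mapsto\phi\circ\alpha^{-1}$ is an affine bijection of the convex set $\sS(\fB)$ onto itself (with inverse $\phi\mapsto\phi\circ\alpha$), so it carries extreme points to extreme points and therefore maps $\sP(\fB)$ into $\sP(\fB)$. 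The identity automorphism acts trivially, and taking the inverse in the formula is exactly what makes this a \emph{left} action:
\[
\alpha\cdot(\beta\cdot\omega)=(\omega\circ\beta^{-1})\circ\alpha^{-1}=\omega\circ(\alpha\beta)^{-1}=(\alpha\beta)\cdot\omega .
\]

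Next I would establish effectiveness. Suppose $\alpha\cdot\omega=\omega$ for every $\omega\in\sP(\fB)$, i.e. $\omega(\alpha^{-1}(A))=\omega(A)$ for all $A\in\fB$ and every pure state $\omega$. Writing $\beta=\alpha^{-1}$ and $C=\beta(A)-A$, this says $\omega(C)=0$ for all pure $\omega$. Decomposing $C$ into self-adjoint and skew-adjoint parts and using that states are real on self-adjoint elements, it suffices to treat self-adjoint $C$; but the pure states of a $C^*$-algebra separate its points, since for a nonzero self-adjoint element one obtains, from a norm-attaining character of the abelian $C^*$-subalgebra it generates together with the pure state extension theorem, a pure state whose modulus on that element equals its norm. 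Hence $C=0$ for every $A$, so $\beta=\alpha^{-1}=\id$ and $\alpha=\id$. This separation-of-points input is the only genuinely $C^*$-algebraic ingredient, and I expect it to be the main (if standard) obstacle; everything else is estimates.

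Finally I would prove joint continuity by a direct Lipschitz bound. Each $*$-automorphism is an isometry, so $\|\gamma\|_{\mathrm{op}}=1$ for all $\gamma\in\Aut(\fB)$, and the identity $\alpha^{-1}-\beta^{-1}=\alpha^{-1}(\beta-\alpha)\beta^{-1}$ gives $\|\alpha^{-1}-\beta^{-1}\|_{\mathrm{op}}\le\|\alpha-\beta\|_{\mathrm{op}}$, which also shows inversion is norm-continuous. Combining this with $\|\phi\circ\gamma\|\le\|\phi\|$ for any functional $\phi$ and automorphism $\gamma$ yields, for $\alpha,\beta\in\Aut(\fB)$ and $\omega,\psi\in\sP(\fB)$,
\[
\|\omega\circ\alpha^{-1}-\psi\circ\beta^{-1}\|\le\|(\omega-\psi)\circ\beta^{-1}\|+\|\omega\circ(\alpha^{-1}-\beta^{-1})\|\le\|\omega-\psi\|+\|\alpha-\beta\|_{\mathrm{op}} .
\]
Thus the action is $1$-Lipschitz for the sum metric on $\Aut(\fB)_{\textup{n}}\times\sP(\fB)_{\textup{n}}$, giving the required joint norm-continuity and completing the argument.
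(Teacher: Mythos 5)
Your proof is correct, and it diverges from the paper's argument in the one place where the paper actually does work. The paper dismisses continuity as obvious and proves only effectiveness, which it does representation-theoretically: invariance of all pure states forces $\pi(\alpha(A))=\pi(A)$ for every irreducible representation $\pi$ (via vector states and polarization), and then faithfulness of the reduced atomic representation \cite[Prop.~10.3.10]{KadisonRingroseII} gives $\alpha=\id$ (with a unitization step in the non-unital case). You instead argue at the level of functionals: for each $A$ the element $\alpha^{-1}(A)-A$ is annihilated by every pure state, and since pure states separate points of a $C^*$-algebra (norm-attaining character on $C^*(A_{\mathrm{sa}})$ plus the pure state extension theorem), it vanishes. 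The two routes are of comparable depth — faithfulness of the reduced atomic representation is itself proved from the same separation fact — but yours is more self-contained and avoids invoking the machinery of reduced atomic representations, and it handles the non-unital case without a separate unitization detour. Your explicit $1$-Lipschitz estimate
\[
\norm{\omega\circ\alpha^{-1}-\psi\circ\beta^{-1}}\leq \norm{\omega-\psi}+\norm{\alpha-\beta}
\]
via $\alpha^{-1}-\beta^{-1}=\alpha^{-1}(\beta-\alpha)\beta^{-1}$ is a genuine improvement on the paper's bare assertion that continuity is obvious, and the well-definedness and action-axiom checks are all in order.
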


\begin{proof}
  We only need to show that  $\Aut (\fB)$ acts effectively on the pure state space. Continuity is obvious. To this end suppose that the automorphism $\alpha \in \Aut (\fB)$ leaves every pure state invariant.
  Then for every irreducible representation $(\ssH, \pi)$ of $\fB$, every $\Psi \in \ssH$, and every
  $A \in \fB$, we have $\langle \Psi, \pi(\alpha(A))\Psi \rangle = \langle \Psi, \pi(A)\Psi \rangle$, which implies
  that $\pi(\alpha(A)) = \pi(A)$ for all $A \in \fB$. This entails that $\rho \circ \alpha = \rho$ for a reduced
  atomic representation $\rho$ in the sense of \cite[Sec.~10.3]{KadisonRingroseII}. If $\fB$ is unital, a reduced
  atomic representation is faithful by \cite[Prop.~10.3.10]{KadisonRingroseII}, so it follows that $\alpha$ coincides with
  the identity automorphism and $\Aut (\fB)$ acts effectively on $\sP(\fB)$. If $\fB$ is non-unital, then the unique extension of $\alpha$ to a unital $*$-automorphism on the unitization of $\fB$ leaves every pure state on the unitization invariant, hence the non-unital case follows from the unital case. 
\end{proof}
 
\subsection{The GNS Hilbert bundle}
\label{sec:hilbert-bundle}
In this section, we construct  Hilbert bundles over pure state spaces whose fibers correspond to the Hilbert spaces of the associated GNS representations. The most general case, which is the last one presented in this section, is the construction of a Hilbert bundle $\ssH \to \sP(\fA)$ associated to a $C^*$-algebra bundle $\fA \to X$ which we call the \emph{GNS Hilbert bundle}. We begin with some preliminary constructions.

We start with a Hilbert space $\hilbH$ and construct a Hilbert bundle
$\projGNS:\ssH \to S$ over the space $ S:= \sP_* (\fB(\hilbH))_{\textup{n}}$ of pure normal states of the
$C^*$-algebra of bounded linear operators on $\hilbH$. Note that $S$ is assumed to carry the metric uniformity
induced by the norm on the dual $\fB (\hilbH)^*$.

As a set, let
\[
  \ssH = \coprod_{\varrho \in S} \hilbH_\varrho
\]  
be the disjoint union  of the GNS Hilbert spaces $\hilbH_\varrho$. Denote by
$\projGNS \!\! :  \ssH \to S$ the projection which associates to every vector
$\Psi \in \hilbH_\varrho$ the ``footpoint'' state $\varrho$.

Recall from Cor.~\ref{cor:PH_superselection_metric_equivalence} that the map
$r: \bbP \hilbH \to S$ which associates to each ray $\C\Psi$ the pure state $\psi$ it represents is a
bi-Lipschitz isomorphism of uniform spaces. For $v \in \bbS \hilbH$ let
$O_v \subset S$ be the open ball of radius $2$ around the state $r (\C v)$. Let $s_v:O_v \to \bbS \hilbH $
be the norm-continuous section of the canonical projection $\bbS\hilbH \to S$ from
Cor.~\ref{cor:purestate_mapsto_SH} such that $\langle s_v(\varrho),v \rangle >0$ for all $\varrho \in O_v$.  
% where, as before, the metric on $\sP_* (\fB(\hilbH) )_{\textup{n}}$ is the one induced by the norm
% on the dual $\fB(\hilbH)^*$. 
Given a pure normal state $\varrho \in O_v$, denote by  $R_{v,\varrho}$  the cyclic representation
$(\hilbH, \id_{\fB(\hilbH)},\Rho_{v,\varrho})$ where $\Rho_{v,\varrho} = s_v (\varrho)$.

By construction, the state associated to the unit vector $\Rho_{v,\varrho}$ coincides with $\varrho$.
Hence by Prop.~\ref{prop:functorial-GNS} (\ref{ite:functoriality-sector-spaces})
the cyclic representation $R_{v,\varrho}$ is unitarily equivalent to the GNS representation
$(\hilbH_\varrho,\pi_\varrho,\Omega_\varrho)$ via a unique unitary map
$U_{v,\varrho} :\hilbH_\varrho \to\hilbH$ which maps $\Omega_\varrho$ to $\Rho_{v,\varrho}$. 
The unitary $U_{v,\varrho}$ can actually be written down explicitly. It is given by
\begin{equation}
  \label{eq:representation-unitary-S-rho}
    U_{v,\varrho} (A + \fN_\varrho ) = A(\Rho_{v,\varrho})\quad\text{for all } A \in \fB(\hilbH) \ .  
\end{equation}
%
% Obviously, the map $A \mapsto  A(\Rho_{v,\varrho})$ is surjective. It is an isometry since
% $\| A + \fN_\varrho \|^2 = \varrho (A^*A) = \| A \Rho_{v,\varrho}\|^2$ and maps
% $\Omega_\varrho = I + \fN_\varrho$ to $\Rho_{v,\varrho}$. So \eqref{eq:representation-unitary-S-rho}
% holds true, indeed. 

From these data we can now define a local trivialization of $\ssH$ over $O_v$
by the following formula: 
\[
  \chi_v : \projGNS^{-1}(O_v) = \ssH|_{O_v} \to O_v \times \hilbH , \: \Psi \mapsto
  \left( \varrho, U_{v,\varrho} \Psi \right)  \quad\text{where } \varrho =  \projGNS (\Psi) \ . 
\]
Now endow $\ssH$ with the coarsest topology so that for every $v\in\bbS\hilbH$ the set $\ssH|_{O_v}$
is open and the local trivialization $\chi_v$ is continuous. 

Before we  determine the transition functions let us take a step back and consider
the tautological line bundle
$\projLine :\sL \to S \cong \bbP \hilbH$
whose fiber over $\varrho \in S$ consists of all vectors in the complex line
$r^ {-1} (\varrho)$. Then $\sL$  is a smooth line bundle over $S$ with structure group given by
$\Unitary (1)$. By construction  $\sL$ coincides with the pullback by $r^{-1}$ of the tautological line
bundle over $\bbP \cH$. A local smooth frame of $\sL$ over $O_v$ is given by the section
$s_v :O_v\to \bbS \hilbH$, $\varrho \mapsto \Rho_{v,\varrho}$, hence
\[
  \tau_v : \sL|_{O_v} \to O_v \times \C, \:  \Psi \mapsto
  \left( \projLine (\Psi)   , \langle \Rho_{v, p\raisebox{0.5pt}{$_{{}_{\sL}}$} (\Psi) }, \Psi \rangle \right) 
\]
is a local trivialization. Now let $w$ be another unit vector in $\hilbH$ and consider the  map
\[
  h_{v,w} : O_v\cap O_w  \to \Unitary (1), \: \varrho \mapsto \langle \Rho_{v,\varrho}, \Rho_{w,\varrho} \rangle \ .
\]
Then
\begin{equation}
  \label{eq:transition-condition-cyclic-vectors}
  \Rho_{v,\varrho} =  \overline{h_{v,w} (\varrho)} \cdot \Rho_{w,\varrho}
  \quad\text{for all } \varrho \in O_v \cap O_w \ ,
\end{equation}
hence one obtains for all $z \in \C$
\[
  \tau_v \circ \tau_w ^{-1} (\varrho , z) = (\varrho , h_{v,w} (\varrho) \cdot z) \ .
\]  
Therefore, the family $(h_{v,w})_{v,w\in \bbS\hilbH}$ forms a \v{C}ech cocycle of transition functions defining the
tautological line bundle over $S$. 

For the bundle $\projGNS: \ssH \to S$,
Eq.\ \eqref{eq:transition-condition-cyclic-vectors} implies that the map
\[
  \chi_v \circ \chi_w^{-1} : (O_v\cap O_w ) \times \hilbH \to (O_v\cap O_w )  \times \hilbH , \:
  (\varrho , A  (\Rho_{w,\varrho})) \mapsto  (\varrho , A  (\Rho_{v,\varrho})) \ ,
\]
is given by the fiberwise action of  $\overline{h_{v,w}} $  meaning that
\begin{equation}
  \label{eq:transition-maps-dual-action-tautologial-cocycle}
  \chi_v \circ \chi_w^{-1} (\varrho, \Psi) =  \overline{h_{v,w} (\varrho)} \cdot \Psi
  \quad\text{for all } (\varrho,\Psi) \in  (O_v\cap O_w ) \times  \hilbH \ .
\end{equation}  
On the one hand this implies that each transition function
\[
  O_v\cap O_w \to \Unitary (\sH) , \:
  \varrho \mapsto \chi_{v,\varrho} \circ \chi_{w,\varrho}^{-1}
\]
is a continuous map and therefore every local trivialization $\chi_v$  a
homeomorphism. Hence, $\projGNS: \ssH \to S$ is a Hilbert bundle as claimed.
On the other hand, Eq.\ \eqref{eq:transition-maps-dual-action-tautologial-cocycle}
entails that the structure group of the fiber bundle  $\projGNS: \ssH \to S$ can be
reduced to $\Unitary (1)$ and that a defining \v{C}ech cocycle is given  by the
\v{C}ech cocycle
$(\overline{h}_{v,w})_{v,w\in \bbS\hilbH}$ defining the dual tautological line bundle
$\sL^* \to S$.

\begin{rem}
  The argument above shows that $\projGNS: \ssH \to S$ can be identified with the
  associated bundle $\operatorname{Fr} (\sL^*)\times_{\Unitary(1)} \hilbH$, where
  $\operatorname{Fr} (\sL^*) \to S$ denotes the bundle of unitary frames of the dual tautological bundle.
  Note that $\operatorname{Fr} (\sL^*) \to S$  is a $\Unitary (1)$-principal bundle by construction.    
\end{rem}

We continue with a  $C^*$-algebra $\fF$ and construct a locally trivial Hilbert bundle
$\projGNS: \ssH \to \sP (\fF)_{\textup{n}}$ over the pure state space of $\fF$ which we assume to be endowed
with the metric induced by the norm on $\fF^*$.
As a set, the Hilbert bundle $\ssH$ we construct is the disjoint union of the
GNS Hilbert spaces $\hilbH_\varrho$, $\varrho \in \sP (\fF)$. The projection
$\projGNS :\ssH \to \sP (\fF)$ maps each element of $\hilbH_\varrho$ to $\varrho$.
Since the pure state space $\sP (\fF)$ is the disjoint union of sectors, we obtain
the decomposition
\begin{equation}
  \label{eq:decomposion-hilbert-bundle-sector-parts}
  \ssH = \coprod_{\varrho \in \sP (\fF)} \hilbH_\varrho =
  \coprod_{S\in {\ssec} (\fF)}\coprod_{\varrho \in S } \hilbH_\varrho \ ,
\end{equation}
where ${\ssec} (\fF)$ denotes the space of superselection sectors of $\fF$.

Now fix for every  sector $S\in {\ssec} (\fF)$ a pure state $\varrho_S\in S$
and a cyclic representation $(\hilbH_S,\pi_S,\Omega_S)$ such $\Omega_S$ is a unit
vector representing the state $\varrho_S$, e.g.~the GNS representation of $\varrho_S$.
We then apply over each sector $S$  the construction of
a Hilbert bundle $p_{\ssH_S} : \ssH_S \to S$ described above. Since each sector is
an open connected component of $\sP (\fF)_{\textup{n}}$ and by the decomposition
(\ref{eq:decomposion-hilbert-bundle-sector-parts}), the projection
$\projGNS: \ssH \to \sP (\fF)_{\textup{n}}$ carries a unique structure of a
Hilbert bundle such that each subspace
$\ssH_{|S} = \ssH_S$ is open  and such that the canonical injection 
$\ssH_S \hookrightarrow \ssH$ is an embedding of Hilbert bundles.
Note that for different sectors $S,S'\in {\ssec} (\fF)$, the associated Hilbert spaces
$\hilbH_S$ and $\hilbH_{S'}$ might not be (canonically) isomorphic, so the
typical fiber might change from component to component. 
%
% The corresponding bundle charts can be expressed as follows. 
% Given $S \in {\ssec} (\fF)$ and $v\in \bbS\hilbH_S$ let $O_{S,v}\subset S$ be the open
% ball of radius $2$ around the state $r_S (\C v)$, where
% $r_S : \bbP\hilbH_S \to S = \sP_{\pi_S} (\fF)_{\textup{n}}$ denotes the bi-Lipschitz
% isomorphism from Cor.~\ref{cor:PH_superselection_metric_equivalence}.
% Let $s_{S,v} : O_{S,v} \to \bbS\hilbH_S$ be the unique section
% from Cor.~\ref{cor:purestate_mapsto_SH} such that $\ev{s_{S,v}(\omega),v} > 0 $
% for all $\omega \in O_{S,v}$.
%
% In terms of Remark \ref{}, $\projGNS: \ssH \to \sP (\fF)_{\textup{n}}$ can be
% understood as the Hilbert bundle associated to the frame bundle of the dual
% of the canonical line bundle 

A \v{C}ech cocyle of transition functions for the Hilbert bundle
$\projGNS: \ssH \to\sP (\fF)_{\textup{n}} $ is given as follows.
Given a sector $S$ and a vector $v\in \bbS\hilbH_S$ let $O_{S,v}\subset S$ be the open
ball of radius $2$ around the state $r_S (\C v)$, where
$r_S : \bbP\hilbH_S \to S = \sP_{\pi_S} (\fF)_{\textup{n}}$ denotes the bi-Lipschitz
isomorphism from Cor.~\ref{cor:PH_superselection_metric_equivalence}.
Let $s_{v} : O_{S,v} \to \bbS\hilbH_S$ be the unique section
according to Cor.~\ref{cor:purestate_mapsto_SH} such that $\ev{s_{v}(\varrho),v} > 0 $
for all $\varrho \in O_{S,v}$. The family
% $(h_{S,v,w})_{S\in \sP (\fF)_{\textup{n}}, \: v,w\in \bbS\hilbH_S}$
of transition functions
\[
  h_{S,v,w} : O_{S,v}\cap O_{S,w}  \to \Unitary (1), \:
  \varrho \mapsto \langle s_{v} (\varrho) , s_{w} (\varrho) \rangle \ ,
\]
where $S$ runs through the superselection sectors of $\fF$ and 
$v,w$ through the unit vectors of $\hilbH_S$ 
then is a \v{C}ech cocycle whose dual \v{C}ech cocycle
$(\overline{h}_{S,v,w})_{S\in {\ssec} (\fF), \: v,w\in \bbS\hilbH_S}$
characterizes  the Hilbert bundle
$\projGNS: \ssH \to \sP (\fF) $ with typical fiber $\hilbH_S$ over the sector $S$
up to isomorphism. 

Before we tackle the final and general case we need an auxiliary result showing that the
unitary associated to an automorphism of a $C^*$-algebra according to
Prop.~\ref{prop:functorial-GNS} (\ref{ite:naturality-GNS}) depends continuously in the norm topology
on the argument. This result will be used to show that out of a system of transition functions
for a $C^*$-algebra bundle $\fA$ we get a system of $\Unitary (\hilbH)$-valued transition maps
for the Hilbert bundle to be constructed. 

For the following proposition, let $\Iso(\fB, \fC)_\tn{n}$ be the set of $*$-isomorphisms between $C^*$-algebras $\fB$ and $\fC$, with the subspace topology inherited from the norm topology on the bounded linear operators $\fB \rightarrow \fC$. Likewise, $\Unitary(\cH, \tilde{\hilbH})_\tn{n}$ is the set of unitary operators $\cH \rightarrow \tilde{\hilbH}$ with the norm topology.

\begin{prop}\label{prop:continuous-dependancy-unitary-automorphism}
Let $\fB$ be a $C^*$-algebra, $(\hilbH,\pi)$ a nonzero irreducible representation,
  and $r: \bbP\hilbH \to \sP_\pi(\fB)_{\textup{n}}$ the uniform isomorphism associated to $(\hilbH,\pi)$. For every $\Omega \in \bbS\hilbH$, let $s_\Omega : \bbB_2(r(\bbC\Omega)) \to \bbS\cH$ be the unique section of the canonical projection $r \circ p_{\bbS\hilbH}$
  such that $\langle \Omega, s_\Omega (\omega) \rangle > 0$ for all $\omega\in \bbB_2 (r(\bbC\Omega))$, and let $\xi_\Omega:\fB \rightarrow \hilbH$ be the map $\xi_\Omega(B) = \pi(B)\Omega$. Let $\fC$ be another $C^*$-algebra
  with nonzero irreducible representation $(\tilde{\hilbH},\tilde{\pi})$ and associated maps $\tilde r$, $\tilde s_{\Psi}$, and $\tilde\xi_{\Psi}$ for $\Psi \in \bbS \tilde{\hilbH}$. Define
  \[
  O = \qty{(\alpha, \Omega, \Psi) \in \Iso(\fB, \fC)_\tn{n} \times \bbS \hilbH \times \bbS \tilde{\hilbH} : \alpha_*r(\bbC \Omega) \in \bbB_2(\tilde r(\bbC\Psi))}
  \]
  Then the following hold true:
  \begin{enumerate}[{\rm (i)}]
    \item\label{ite:O_nonempty} If $\fB = \fC$ and $(\cH,\pi) = (\tilde{\hilbH},\tilde{\pi})$, then for each $\Omega, \Psi \in \bbS \hilbH$ there exists $\alpha \in \Aut(\fB)$ such that $(\alpha, \Omega, \Psi) \in O$.
    \item\label{ite:O_open} The set $O$ is open in $\Iso(\fB,\fC)_\tn{n} \times \bbS\hilbH \times \bbS\tilde{\hilbH}$.
    \item\label{ite:O_Phi_continuous} The map $\Phi:O \rightarrow \bbS\tilde{\hilbH}$, $\Phi(\alpha, \Omega, \Psi) = \tilde s_{\Psi}\alpha_*r(\bbC\Omega)$ is continuous.
    \item\label{ite:O_U_continuous} The unique map $U:O \rightarrow \Unitary(\hilbH,\tilde{\hilbH})_\tn{n}$ making the diagram 
    \[
    \begin{tikzcd}[column sep = 1.5cm]
    \fB \arrow[r,"\alpha"] \arrow[d,"\xi_{\Omega}"']& \fC \arrow[d,"\tilde\xi_{\Phi(\alpha,\Omega,\Psi)}"]\\
    \hilbH \arrow[r,"{U{(\alpha,\Omega,\Psi)}}"']& \tilde{\hilbH}
   \end{tikzcd}
    \]
    commute for all $(\alpha, \Omega,\Psi) \in O$ is continuous.
  \end{enumerate}
\end{prop}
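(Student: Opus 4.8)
The plan is to treat the four claims in turn, with (\ref{ite:O_U_continuous}) relying on (\ref{ite:O_Phi_continuous}) and the others being essentially independent. Two facts will be used throughout: a $*$-isomorphism is isometric, so $\norm{\omega \circ \gamma} = \norm{\omega}$ for every $*$-isomorphism $\gamma$; and inversion is $1$-Lipschitz on $\Iso(\fB,\fC)_\tn{n}$, since $\norm{\gamma_1^{-1} - \gamma_2^{-1}} = \norm{\gamma_1^{-1}(\gamma_2 - \gamma_1)\gamma_2^{-1}} \leq \norm{\gamma_1 - \gamma_2}$, and likewise composition is jointly continuous. For (\ref{ite:O_nonempty}), given $\Omega, \Psi \in \bbS\hilbH$ I would take $\alpha$ to be an inner automorphism: by the Kadison transitivity theorem applied to $\pi$ (passing to the unitization if $\fB$ is non-unital, whose canonical irreducible extension acts on the same $\hilbH$) there is a unitary $U$ with $\pi(U)\Omega = \Psi$, and $\alpha = \operatorname{Ad}U$ restricts to a $*$-automorphism of $\fB$ because $\fB$ is an ideal in its unitization. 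A direct computation gives $\alpha_* r(\bbC\Omega)(A) = \ev{\pi(U)\Omega, \pi(A)\pi(U)\Omega} = \ev{\Psi, \pi(A)\Psi} = r(\bbC\Psi)(A)$, so $\alpha_* r(\bbC\Omega) = r(\bbC\Psi) \in \bbB_2(r(\bbC\Psi))$ and $(\alpha,\Omega,\Psi) \in O$. For (\ref{ite:O_open}), I will show the map $e(\alpha,\Omega,\Psi) = \norm{\alpha_* r(\bbC\Omega) - \tilde r(\bbC\Psi)}$ is continuous, so that $O = e^{-1}([0,2))$ is open. Here $\Omega \mapsto r(\bbC\Omega)$ is norm-continuous by Lemma \ref{lem:vector_state}, and the two facts above yield $\norm{\alpha_{1*}\omega_1 - \alpha_{2*}\omega_2} \leq \norm{\alpha_1 - \alpha_2} + \norm{\omega_1 - \omega_2}$, so $(\alpha,\Omega) \mapsto \alpha_* r(\bbC\Omega)$ and $\Psi \mapsto \tilde r(\bbC\Psi)$ are continuous into $\sP(\fC)_\tn{n}$; composing with the metric gives continuity of $e$.

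For (\ref{ite:O_Phi_continuous}), I would prove continuity at a fixed $(\alpha_0,\Omega_0,\Psi_0) \in O$ by a local phase comparison against a fixed section. Writing $\varrho = \alpha_* r(\bbC\Omega)$, the point $\varrho_0 = \alpha_{0*} r(\bbC\Omega_0)$ lies in the \emph{open} ball $\bbB_2(\tilde r(\bbC\Psi_0))$, so by the continuity established in (\ref{ite:O_open}) there is a neighborhood of $(\alpha_0,\Omega_0,\Psi_0)$ on which $\varrho$ lies in both $\bbB_2(\tilde r(\bbC\Psi_0))$ and $\bbB_2(\tilde r(\bbC\Psi))$. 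On this neighborhood $\tilde s_\Psi(\varrho)$ and $\tilde s_{\Psi_0}(\varrho)$ represent the same ray and hence differ by a phase, and comparing with $\Psi$ fixes it:
\[
\Phi(\alpha,\Omega,\Psi) = \tilde s_\Psi(\varrho) = \frac{\overline{\ev{\Psi, \tilde s_{\Psi_0}(\varrho)}}}{\abs{\ev{\Psi, \tilde s_{\Psi_0}(\varrho)}}}\, \tilde s_{\Psi_0}(\varrho).
\]
The section $\tilde s_{\Psi_0}$ is norm-continuous by Corollary \ref{cor:purestate_mapsto_SH}, and the denominator is nonzero because $\varrho \in \bbB_2(\tilde r(\bbC\Psi))$ forces $\ev{\Psi, \tilde s_{\Psi_0}(\varrho)} \neq 0$ by the transition probability formula of Theorem \ref{thm:transition_probability}. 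Every factor on the right is continuous, so $\Phi$ is continuous at the point, hence on $O$.

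For (\ref{ite:O_U_continuous}), existence and uniqueness of $U(\alpha,\Omega,\Psi)$ come from Proposition \ref{prop:functorial-GNS}(\ref{ite:naturality-GNS}): $\Omega$ is cyclic for $\pi$ and represents $\omega := r(\bbC\Omega)$, while $\Phi := \Phi(\alpha,\Omega,\Psi)$ is cyclic for $\tilde\pi$ and represents $\alpha_*\omega$, so $U$ is the resulting intertwiner with $U\Omega = \Phi$; in particular $U\pi(B) = \tilde\pi(\alpha(B))U$ on all of $\hilbH$ by cyclicity. To get operator-norm continuity at $(\alpha_0,\Omega_0,\Psi_0)$ with $U_0 = U(\alpha_0,\Omega_0,\Psi_0)$, I would bound $\norm{U - U_0} = \norm{U_0^*U - I}$. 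Setting $W = U_0^*U$ and $\beta = \alpha_0^{-1}\alpha \in \Aut(\fB)$, the intertwining relations give $W\pi(B) = \pi(\beta(B))W$ and $W\Omega = U_0^*\Phi =: \Phi'$. The key estimate: for a unit vector $\eta \in \hilbH$, Theorem \ref{thm:Pedersen} (with $n=1$, $x_1 = \Omega$, and $T = \eta\ev{\Omega,\,\cdot\,}$, so $\norm{T} = 1$) supplies $B \in \fB$ with $\norm{B} \leq 1$ and $\pi(B)\Omega = \eta$; then
\[
(W - I)\eta = \pi(\beta(B) - B)\Phi' + \pi(B)(\Phi' - \Omega),
\]
so $\norm{(W-I)\eta} \leq \norm{\beta - \id} + \norm{\Phi' - \Omega}$, and taking the supremum over $\eta$ gives $\norm{W - I} \leq \norm{\beta - \id} + \norm{\Phi' - \Omega}$. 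As $(\alpha,\Omega,\Psi) \to (\alpha_0,\Omega_0,\Psi_0)$, we have $\beta \to \id$ by continuity of composition and inversion, and $\Phi' = U_0^*\Phi \to U_0^*\Phi_0 = \Omega_0$ by (\ref{ite:O_Phi_continuous}) while $\Omega \to \Omega_0$, whence $\norm{\Phi' - \Omega} \to 0$; therefore $\norm{U - U_0} \to 0$.

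The main obstacle I anticipate is precisely part (\ref{ite:O_U_continuous}): strong (pointwise) continuity of $U$ is easy, but upgrading it to \emph{operator-norm} continuity requires uniform control. The device that makes this work is passing to $W = U_0^*U$ and recognizing it as an intertwiner for the nearly trivial automorphism $\beta = \alpha_0^{-1}\alpha$ with a controlled action on the single cyclic vector $\Omega$; the uniform norm bound $\norm{B} \le 1$ furnished by Theorem \ref{thm:Pedersen} (the Pedersen-type sharpening of Kadison transitivity) is exactly what converts the pointwise data into the uniform estimate $\norm{W - I} \le \norm{\beta - \id} + \norm{\Phi' - \Omega}$.
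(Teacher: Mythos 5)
Your proof is correct, and its overall architecture matches the paper's: openness of $O$ via the preimage of $[0,2)$ under $(\alpha,\Omega,\Psi)\mapsto\norm{\alpha_*r(\bbC\Omega)-\tilde r(\bbC\Psi)}$, existence of $\alpha$ in (i) via an inner automorphism of the unitization restricted to the ideal $\fB$, and, crucially, the use of Theorem~\ref{thm:Pedersen} to lift arbitrary unit vectors to elements of norm $\leq 1$, which is exactly what upgrades pointwise to operator-norm continuity in (iv). Where you differ is in the execution of (iii) and (iv), and in both cases your version is a clean repackaging rather than a new idea. For (iii), the paper runs an $\varepsilon$--$\delta$ estimate on the phase $\lambda$ relating $\Phi(\alpha',\Omega',\Psi)$ and $\Phi(\alpha',\Omega',\Psi')$ via the transition-probability identity; you instead write the explicit formula $\tilde s_\Psi(\varrho)=\overline{\ev{\Psi,\tilde s_{\Psi_0}(\varrho)}}\abs{\ev{\Psi,\tilde s_{\Psi_0}(\varrho)}}^{-1}\tilde s_{\Psi_0}(\varrho)$ against a fixed section and read off continuity factor by factor (having correctly arranged, using openness, that $\varrho$ stays in the domain of $\tilde s_{\Psi_0}$, and invoked Theorem~\ref{thm:transition_probability} to keep the denominator nonzero). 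For (iv), the paper interpolates through three intermediate unitaries and bounds each difference uniformly in the test vector; you conjugate once, setting $W=U_0^*U$ and $\beta=\alpha_0^{-1}\alpha$, and obtain the single estimate $\norm{W-I}\leq\norm{\beta-\id}+\norm{\Phi'-\Omega}$ from the identity $(W-I)\eta=\pi(\beta(B)-B)\Phi'+\pi(B)(\Phi'-\Omega)$. Both routes hinge on the same two inputs (the Pedersen-type norm control and the continuity of $\Phi$ from part (iii)); yours yields a tidier quantitative bound, while the paper's three-term split generalizes without change to the two-algebra setting $\Iso(\fB,\fC)$ where $\alpha_0^{-1}\alpha$ lands in $\Aut(\fB)$ anyway, so nothing is lost either way.
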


\begin{proof}
(\ref{ite:O_nonempty}) This follows from Theorem \ref{thm:superselection_sector_equivalences} when $\fB$ is unital. When $\fB$ is non-unital, we may extend $r(\bbC \Omega)$ and $r(\bbC\Psi)$ to pure states on the unitization $\widetilde \fB$ in the same superselection sector. We may then use Theorem \ref{thm:superselection_sector_equivalences} to find a unitary in $\widetilde \fB$ that generates an inner automorphism that relates the two extended states. Since $\fB$ is a two-sided ideal in $\widetilde \fB$, this automorphism restricts to an automorphism $\alpha$ on $\fB$ such that $(\alpha, \Omega, \Psi) \in O$.

(\ref{ite:O_open}) Observe that $O$ is the preimage of $[0,2)$ under the continuous map
\[
  \Iso(\fB,\fC)_\tn{n} \times \bbS\hilbH \times \bbS\tilde{\hilbH} \rightarrow [0,2], \quad (\alpha,\Omega,\Psi) \mapsto \norm{\alpha_*r(\bbC\Omega) - \tilde r(\bbC\Psi)} \ .
\]

(\ref{ite:O_Phi_continuous}) Fix $(\alpha, \Omega, \Psi) \in O$ and $\varepsilon > 0$, and choose $\delta > 0$. Let $(\alpha', \Omega', \Psi') \in O$ such that $\norm{\alpha - \alpha'} < \delta$, $\norm{\Omega - \Omega'} < \delta$, and $\norm{\Psi - \Psi'} < \delta$. Choose $\delta$ small enough so that this implies $(\alpha', \Omega', \Psi) \in O$, which we can do since $O$ is open. We can choose $\delta$ small enough so that $\norm{\Phi(\alpha, \Omega, \Psi) - \Phi(\alpha', \Omega', \Psi)} < \varepsilon/2$ by continuity of $\tilde s_\Psi$ and continuity of $(\alpha, \Omega) \mapsto \alpha_*r(\bbC\Omega)$. We must show that $\delta$ can be chosen small enough so that $\norm{\Phi(\alpha', \Omega', \Psi) - \Phi(\alpha', \Omega', \Psi')} < \varepsilon/2$. Since $\Phi(\alpha', \Omega', \Psi)$ and $\Phi(\alpha', \Omega', \Psi')$ both represent the state $\alpha'_*r(\bbC\Omega')$ in the representation $\tilde{\pi}$, there exists $\lambda \in \Unitary(1)$ such that $\Phi(\alpha', \Omega', \Psi) = \lambda \Phi(\alpha', \Omega', \Psi')$. Then
\begin{align*}
\sqrt{1 - \frac{1}{4}\norm{\alpha_*'r(\bbC\Omega') - \tilde r(\bbC\Psi)}^2} &= \ev{\Psi, \Phi(\alpha', \Omega', \Psi)}\\
&= \lambda \ev{\Psi - \Psi', \Phi(\alpha', \Omega', \Psi')} + \lambda \ev{\Psi', \Phi(\alpha', \Omega', \Psi')}\\
&= \lambda \ev{\Psi - \Psi', \Phi(\alpha', \Omega', \Psi')} \\
&\quad \quad + \lambda \sqrt{1 - \frac{1}{4}\norm{\alpha_*'r(\bbC\Omega') - \tilde r(\bbC \Psi')}^2}.
\end{align*}
We may shrink $\delta$ to make $\lambda \ev{\Psi - \Psi', \Phi(\alpha', \Omega', \Psi')}$ arbitrarily small and the two square roots above arbitrarily close to each other. It follows that we can make $\abs{1 - \lambda} < \varepsilon/2$, yielding $\norm{\Phi(\alpha',\Omega', \Psi) - \Phi(\alpha', \Omega', \Psi)} < \varepsilon/2$, as desired.

(\ref{ite:O_U_continuous}) Recall that the map $U$ as described above exists because $(\cH, \pi, \Omega)$ and $(\tilde{\hilbH}, \tilde{\pi} \circ \alpha, \Phi(\alpha, \Omega, \Psi))$ are both cyclic representations representing the pure state $r(\bbC\Omega)$. In particular, we have
\begin{align}
\label{eq:properties-unitary-intertwiner}  
U{(\alpha, \Omega, \Psi)}\Omega = \Phi(\alpha, \Omega, \Psi) \qqtext{and} U{(\alpha, \Omega, \Psi)}\pi(B) = \tilde{\pi}(\alpha(B))U{(\alpha, \Omega, \Psi)}.
\end{align}
for all $(\alpha, \Omega, \Psi) \in O$ and $B \in \fB$.

Fix $(\alpha, \Omega, \Psi) \in O$ and $\varepsilon > 0$, and choose $\delta > 0$. Let $(\alpha', \Omega', \Psi') \in O$ such that $\norm{\alpha - \alpha'} < \delta$, $\norm{\Omega - \Omega'} < \delta$, and $\norm{\Psi - \Psi'} < \delta$. Choose $\delta$ small enough so that this implies $(\alpha', \Omega, \Psi), (\alpha', \Omega', \Psi) \in O$, which we can do since $O$ is open. Let $v \in \bbS\cH$ be arbitrary and choose $B, B' \in \fB$ such that $\norm{B}, \norm{B'} \leq 1$ and $v = \pi(B)\Omega = \pi(B')\Omega'$, which we can do by Theorem \ref{thm:Pedersen}.
Observe the following:
\begin{align*}
\norm{U(\alpha, \Omega, \Psi)v - U(\alpha', \Omega, \Psi)v} &= \norm{\tilde{\pi}(\alpha(B))\Phi(\alpha, \Omega, \Psi) - \tilde{\pi}(\alpha'(B))\Phi(\alpha', \Omega, \Psi)}\\
&\leq \norm{\alpha - \alpha'} + \norm{\Phi(\alpha, \Omega, \Psi) - \Phi(\alpha',\Omega, \Psi)}\\
  \norm{U(\alpha', \Omega, \Psi)v - U(\alpha', \Omega', \Psi)v} &= \norm{\tilde{\pi}(\alpha'(B))\Phi(\alpha',\Omega,\Psi) - \tilde{\pi}(\alpha'(B))U(\alpha',\Omega',\Psi)\Omega}\\
  %\norm{U(\alpha', \Omega, \Psi)v - U(\alpha', \Omega', \Psi)v} &=
  %\color{red}
  %\norm{\tilde{\pi}(\alpha'(B))\Phi(\alpha',\Omega,\Psi) - U(\alpha',\Omega',\Psi)\pi (B)\Omega}\\
  &\leq \norm{\Phi(\alpha',\Omega,\Psi) - \Phi(\alpha',\Omega',\Psi)} +
    \norm{\Omega - \Omega'}\\
 \norm{U(\alpha',\Omega',\Psi)v - U(\alpha',\Omega',\Psi')v} &= \norm{\tilde{\pi}(\alpha'(B'))\Phi(\alpha',\Omega',\Psi) - \tilde{\pi}(\alpha'(B'))\Phi(\alpha',\Omega',\Psi')}\\
&\leq \norm{\Phi(\alpha',\Omega',\Psi) - \Phi(\alpha',\Omega',\Psi')}.
\end{align*}
By continuity of $\Phi$, all of the quantities on the right can be made less than $\varepsilon/6$ by shrinking $\delta$, independently of $v$. Thus, by the triangle inequality, we may choose $\delta$ such that $\norm{U(\alpha,\Omega,\Psi)v - U(\alpha',\Omega',\Psi')v} < 5\varepsilon/6$ for all $v \in \bbS\cH$, which implies $\norm{U(\alpha,\Omega,\Psi) - U(\alpha',\Omega',\Psi')} < \varepsilon$, proving continuity of $U$.
\end{proof}

   Finally we consider the general case of a norm defined $C^*$-algebra bundle
  $p:\fA\to X$ with typical fiber $\fF$ and construct a locally trivial Hilbert bundle $\projGNS : \ssH \to \sP (\fA) $. 
  As before, we put as a set
\[
  \ssH = \coprod_{\varrho \in \sP(\fA)} \hilbH_\varrho \ .
\]
Let $(\varphi_i)_{i\in I}$ be an atlas of local trivializations
$\varphi_i: p^{-1} (O_i)\to O_i \times \fF$, where each trivializing
domain $O_i$  is an open subset of $X$.  
%By the assumption made on the base
%space $X$, the trivializing atlas can be chosen so that the family $(O_i)_{i\in I}$ is a good
%open cover of $X$.  
The intersections $O_i\cap O_j$ will be denoted by $O_{ij}$ and
the transition functions by $g_{ij} :O_{ij} \to \Aut (\fF)_\textup{n}$. 

As explained above, the trivializing atlas $(\varphi_i)_{i\in I}$
induces an atlas of local trivializations
$\varphi_{i,*} : \sP(\fA)|_{O_i} \to O_i \times \sP(\fF)$, $i\in I$, of
the pure state bundle $p_{\sP(\fA)}:\sP(\fA)\to X$. 
Now choose for every  sector $S\in {\ssec} (\fF)$ a pure state $\varrho_S\in S$
and a cyclic representation $(\hilbH_S,\pi_S,\Omega_S)$ such that the unit vector
$\Omega_S$ represents the state $\varrho_S$. Prop.~\ref{prop:functorial-GNS} (\ref{ite:naturality-GNS}) shows that given two sectors $R, S \in \ssec(\fF)$ for which there exists an automorphism $\alpha \in \Aut(\fF)$ such that $\alpha_*\varrho_S = \varrho_R$, the representation $(\cH_R, \pi_R, \Omega_R)$ is unitarily equivalent to a cyclic representation in $\cH_S$. This defines an equivalence relation on sectors. Therefore, after possibly switching to unitarily equivalent representations, we can assume that all sectors which are equivalent with respect to this relation are represented on the same Hilbert space. In particular, we can assume that for $i,j \in I$ with $O_{ij} \neq \emptyset$, the Hilbert spaces $\cH_S$ and $\cH_{\overline{g_{ij}(x)}(S)}$ coincide for all $x \in O_{ij}$, where
$\overline{g_{ij}(x)}:\ssec (\fF) \to \ssec (\fF)$ denotes the permutation of
the sectors induced by the automorphism $g_{ij}(x)$ of $\fF$ according to
Prop.~\ref{prop:functorial-GNS} (\ref{ite:functoriality-sector-spaces}).
% , where $x$ is an element of the intersection $O_{ij}$.

Given $i\in I$, a sector $S\in \ssec (\fF)$ and a unit vector
$v \in \bbS\hilbH_S$ we define 
\begin{equation}\label{eq:oisv}
O_{i,S,v}:=\varphi_{i,*}^{-1} (O_i \times O_{S,v})\subset \sP (\fA),
\end{equation}
where as before $O_{S,v}\subset S$ denotes the open ball of radius $2$
around the state $r_S (\C v)$. Here, $r_S:\bbP\cH_S \rightarrow \sP_{\pi_S}(\fF)_\tn{n}$ is the uniform isomorphism associated to the representation $(\cH_S, \pi_S, \Omega_S)$.  Recall that we have a canonical section $s_v :O_{S,v}\to \bbS\hilbH_S$.
For every $\varrho \in O_{i,S,v}$ define $\Omega_{i,S,v}(\varrho) = s_v (\varphi_{i,*} (\varrho)) \in \hilbH_S$, where here and in what follows we often
identify $\varphi_{i,*} (\varrho)$ with its projection to $\sP (\fF)$.
Then $(\hilbH_S,\pi_S,\Omega_{i,S,v}(\varrho))$ is a cyclic representation of $\fF$. 
According to Prop.~\ref{prop:functorial-GNS} (\ref{ite:naturality-GNS}) there exists a
unique unitary $U_{i,v,\varrho} :\hilbH_\varrho \to \hilbH_S$ such that the  diagram 
\begin{equation}
  \label{dia:chart-unitaries-gns-hilbert-bundle}
  \begin{tikzcd}
  {\fA_{p(\varrho)}} \arrow[r,"{\varphi_{i,p(\varrho)}}"] \arrow[d,"\xi_\varrho"]& \fF \arrow[d,"{\xi_{\varphi_{i,*}(\varrho)}}"]\\
  \hilbH_\varrho \arrow[r,"{U_{i,v,\varrho}}"']& \hilbH_S 
 \end{tikzcd}
\end{equation}
commutes and $U_{i,v,\varrho} \Omega_\varrho = \Omega_{i,S,v}(\varrho)$. Here, we have abbreviated the projection
$p_{\sP (\fA)}: \sP(\fA) \to X$ by $p$.

\begin{thm}\label{thm:hilbertbundlegeneralcase}
Define local trivializations over the open sets $O_{i,S,v} \subset \sP(\fA)$ of \eqref{eq:oisv} by 
\begin{equation}
  \label{eq:local-trivialization-gns-hilbert-bundle}
  \chi_{i,S,v} : \sH|_{O_{i,S,v}} \to O_{i,S,v} \times \hilbH_S, \quad \Psi \mapsto (\varrho, U_{i,v,\varrho}\Psi),
  \quad \text{where } \varrho = p_{\sH} (\Psi)  \ .
\end{equation}
Endow $\sH$ with the coarsest topology such that each of the subsets $\sH|_{O_{i,S,v}}$ is open and all maps
$\chi_{i,S,v}$ are continuous. Then $p_{\sH} : \sH \to \sP (\fA)$ is a Hilbert bundle.
\end{thm}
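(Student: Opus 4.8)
The plan is to verify the three standard conditions which turn the fiberwise bijections $\chi_{i,S,v}$ into a bundle atlas: that the open sets $O_{i,S,v}$ cover $\sP(\fA)$, that each $\chi_{i,S,v}$ restricts to a unitary isomorphism on fibers, and --- the crux --- that the transition maps $\chi_{i,S,v}\circ\chi_{j,T,w}^{-1}$ are norm continuous with values in the unitary group of the model fiber. Granting these, the fiber bundle construction recalled in Appendix \ref{sec:fiber-bundles} shows that the coarsest topology making every $\sH|_{O_{i,S,v}}$ open and every $\chi_{i,S,v}$ continuous turns each $\chi_{i,S,v}$ into a homeomorphism (its inverse being the transition map read in the opposite order, continuous by the same estimate), so that $p_{\sH}:\sH\to\sP(\fA)$ becomes a locally trivial Hilbert bundle.

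The covering and fiberwise statements are immediate. Given $\varrho\in\sP(\fA)$, choose $i$ with $p(\varrho)\in O_i$, let $S$ be the sector of $\varphi_{i,*}(\varrho)$ and pick any $v\in\bbS\hilbH_S$ representing $\varphi_{i,*}(\varrho)$; then $r_S(\bbC v)=\varphi_{i,*}(\varrho)\in O_{S,v}$, so $\varrho\in O_{i,S,v}$. Each $O_{i,S,v}=\varphi_{i,*}^{-1}(O_i\times O_{S,v})$ is open because $\varphi_{i,*}$ is a homeomorphism and $O_{S,v}$ is open in $\sP(\fF)$, and $\chi_{i,S,v}$ is a fiberwise linear bijection onto $O_{i,S,v}\times\hilbH_S$ since each $U_{i,v,\varrho}\colon\hilbH_\varrho\to\hilbH_S$ is unitary.

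The heart of the argument is the transition maps. From \eqref{eq:local-trivialization-gns-hilbert-bundle} one computes on the overlap $O_{i,S,v}\cap O_{j,T,w}$ that
\[
(\chi_{i,S,v}\circ\chi_{j,T,w}^{-1})(\varrho,\Phi)=(\varrho,\,U_{i,v,\varrho}U_{j,w,\varrho}^{-1}\Phi),
\]
so it suffices to show $\varrho\mapsto U_{i,v,\varrho}U_{j,w,\varrho}^{-1}$ is norm continuous. Writing $x=p(\varrho)$ and $\alpha=g_{ij}(x)=\varphi_{i,x}\circ\varphi_{j,x}^{-1}$, I would chase the defining diagram \eqref{dia:chart-unitaries-gns-hilbert-bundle} to get
\[
U_{i,v,\varrho}U_{j,w,\varrho}^{-1}\,\pi_T(B)\Omega_{j,T,w}(\varrho)=\pi_S\big(\alpha(B)\big)\,\Omega_{i,S,v}(\varrho)\qquad(B\in\fF),
\]
and then record, using $\varphi_{i,*}(\varrho)=\varrho\circ\varphi_{i,x}^{-1}$ and $g_{ij}(x)=\varphi_{i,x}\varphi_{j,x}^{-1}$, the state identities $r_T(\bbC\,\Omega_{j,T,w}(\varrho))=\varphi_{j,*}(\varrho)$ and $\alpha_*\varphi_{j,*}(\varrho)=\varphi_{i,*}(\varrho)$. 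These identify $U_{i,v,\varrho}U_{j,w,\varrho}^{-1}$ with the unitary $U(\alpha,\Omega_{j,T,w}(\varrho),v)$ of Proposition \ref{prop:continuous-dependancy-unitary-automorphism}, applied with $\fB=\fC=\fF$ and the irreducible representations $(\hilbH_T,\pi_T)$, $(\hilbH_S,\pi_S)$; moreover the vector $\Phi(\alpha,\Omega_{j,T,w}(\varrho),v)$ produced there equals $s_v(\varphi_{i,*}(\varrho))=\Omega_{i,S,v}(\varrho)$. Since $\varrho\in O_{i,S,v}$ forces $\varphi_{i,*}(\varrho)\in O_{S,v}=\bbB_2(r_S(\bbC v))$, the triple $(\alpha,\Omega_{j,T,w}(\varrho),v)$ lies in the domain $O$ of that proposition.

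It then remains to note that $\varrho\mapsto(\alpha,\Omega_{j,T,w}(\varrho),v)$ is continuous: the first component is $g_{ij}\circ p$, continuous by the assumed norm continuity of the transition functions $g_{ij}$ and of $p=p_{\sP(\fA)}$; the second is $s_w\circ\varphi_{j,*}$, continuous by Corollary \ref{cor:purestate_mapsto_SH} and continuity of $\varphi_{j,*}$; and the third is constant. Composing with the continuous map $U$ of Proposition \ref{prop:continuous-dependancy-unitary-automorphism}(\ref{ite:O_U_continuous}) yields norm continuity of the transition cocycle, now genuinely valued in $\Unitary(\hilbH_S)_{\textup{n}}$ because $\hilbH_T=\hilbH_S$ on the overlap: there $S=\overline{g_{ij}(x)}(T)$, and the Hilbert spaces attached to related sectors were arranged to coincide. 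The main obstacle is exactly this continuity of the transition cocycle, and it is precisely what Proposition \ref{prop:continuous-dependancy-unitary-automorphism} was engineered to deliver; the remaining bundle-theoretic conclusion is then routine.
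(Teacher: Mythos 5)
Your proposal is correct and follows essentially the same route as the paper: everything is reduced to norm continuity of the transition cocycle $\varrho\mapsto U_{i,v,\varrho}\circ U_{j,w,\varrho}^{-1}$, which is identified with the continuous map $U$ of Proposition \ref{prop:continuous-dependancy-unitary-automorphism} evaluated on a continuously varying triple (with the source/target representations $(\hilbH_T,\pi_T)$, $(\hilbH_S,\pi_S)$ and $\hilbH_T=\hilbH_S$ on nonempty overlaps, exactly as in the paper). The only cosmetic deviation is that you take the third argument of the triple to be the constant vector $v$ rather than $\Omega_{i,S,v}(\varrho)=s_v(\varphi_{i,*}(\varrho))$ as the paper does; since $\varrho\in O_{i,S,v}$ guarantees $\varphi_{i,*}(\varrho)\in O_{S,v}$, both choices produce the same vector $\Phi=\Omega_{i,S,v}(\varrho)$ and hence the same unitary, so the argument is unaffected.
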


\begin{proof}
We will show that the transition functions
\begin{equation}
  \label{eq:transition-functions-gns-hilbert-bundle}
  h_{i,S,v;j,R,w} : O_{i,S,v;j,R,w} := O_{i,S,v} \cap O_{j,R,w} \to \Unitary (\hilbH_S)_\textup{n} , \quad
  \varrho \mapsto  U_{i,v,\varrho}\circ U_{j,w,\varrho}^{-1}
\end{equation}
are continuous for all $i,j\in I$, sectors $S,R\in \ssec (\fF)$, and unit vectors $v,w\in \bbS \hilbH_S$ such that $O_{i,S,v;j,R,w} \neq \emptyset$, so that $p_{\sH} : \sH \to \sP (\fA)$ is a Hilbert bundle as claimed. 

%So let us prove that. 
To prove this, note that $O_{i,S,v;j,R,w} \neq \emptyset$ implies $O_{ij} \neq \emptyset$ and $\cH_R = \cH_S$ since for any $\varrho \in O_{i,S,v;j,R,w}$, we have $g_{ij}(p(\varrho))_*(\varphi_{j,*}(\varrho)) = \varphi_{i,*}(\varrho)$, hence $S = \overline{g_{ij}(p(\varrho))}(R)$.  Consider the following diagram. 
\[
  \begin{tikzcd}
    \fF \arrow[rr,"g_{ij}(p(\varrho))"] \arrow[ddd,"{\xi_{\varphi_{j,*}(\varrho)}}"']
    & & \fF \arrow[ddd,"{\xi_{\varphi_{i,*}(\varrho)}}"] \\
    &  {\fA_{p(\varrho)}} \arrow[ru,"{\varphi_{i}}"'] \arrow[d,"\xi_\varrho"]
       \arrow[lu,"{\varphi_{j}}"] \arrow[d,"\xi_\varrho"]& \\
    &  \hilbH_\varrho \arrow[ld,"{U_{j,w,\varrho}\!\!}"'] \arrow[rd,"{\!\!U_{i,v,\varrho}}"] \\
    \hilbH_S \arrow[rr,"h_{i,S,v;j,R,w}(\varrho)"']& &  \hilbH_S 
  \end{tikzcd}
\]
The diagram commutes by definition of the  $g_{ij}$ and $h_{i,S,v;j,R,w}$
and by construction of the unitaries $U_{j,w,\varrho}$ and $U_{i,v,\varrho}$.
Moreover, 
\[
g_{ij}(p(\varrho))_*r_R(\bbC\Omega_{j,R,w}(\varrho)) = g_{ij}(p(\varrho))_*(\varphi_{j,*}(\varrho)) = \varphi_{i,*}(\varrho) = r_S(\bbC\Omega_{i,S,v}(\varrho)).
\]
Thus, $(g_{ij}(p(\varrho)), \Omega_{j,R,w}(\varrho), \Omega_{i,S,v}(\varrho)) \in O$ and 
\[
\Omega_{i,S,v}(\varrho) = s_v(g_{ij}(p(\varrho))_* r_R(\bbC\Omega_{j,R,w}(\varrho))) =  \Phi(g_{ij}(p(\varrho)), \Omega_{j,R,w}(\varrho), \Omega_{i,S,v}(\varrho)),
\]
where $O$ and $\Phi$ are as in Prop.~\ref{prop:continuous-dependancy-unitary-automorphism}
for the case where $\fB =\fC=\fF$ and the representations $(\hilbH,\pi)$ and $(\tilde{\hilbH},\tilde{\pi})$
are given by $(\hilbH_S,\pi_S)$ and $(\hilbH_R,\pi_R)$, respectively.
Since $h_{i,S,v;j,R,w}(\Omega_{j,R,w}(\varrho)) = \Omega_{i,S,v}(\varrho)$, we know that
\begin{equation}
  \label{eq:representation-transition-functions-gns-hilbert-bundle}
h_{i,S,v;j,R,w}(\varrho) = U \big(g_{ij}(p(\varrho)), \Omega_{j,R,w}(\varrho),\Omega_{i,S,v}(\varrho)\big)
\end{equation}
where $U$ is as in Prop.~\ref{prop:continuous-dependancy-unitary-automorphism}. Since $g_{ij}(p(\varrho))$, $\Omega_{j,R,w}(\varrho)$, and $\Omega_{i,S,v}(\varrho)$ all depend continuously on $\varrho$ and $U$ is continuous, the map $h_{i,S,v;j,R,w}$ is continuous. \end{proof}
%Therefore $\sH$ is a Hilbert bundle as claimed.

\begin{defn}
  We call the Hilbert bundle $p_\ssH : \ssH \to \sP(\fA)$ the \emph{GNS Hilbert bundle} associated to the 
  $C^ *$-algebra bundle $p: \frA \to X$. 
\end{defn}

\begin{rem}\label{rem:bundletrivial}
  By Kuiper's theorem \cite{Kuiper}, the unitary group of a separable infinite dimensional Hilbert space is contractible, hence any Hilbert bundle whose fibers are separable infinite-dimensional Hilbert spaces is a trivial bundle \cite{SchottenloherUnitaryStrongTopology}. Note that in a footnote to \cite[Theorem (3)]{Kuiper}, Kuiper indicates that
  the contractibility of the unitary group holds also for non-separable Hilbert spaces, therefore Hilbert bundles with such fibers have to be trivial as well.  
  Let us remind the reader at this point that if $\fA$ is a separable $C^*$-algebra and $\omega \in \sS(\fA)$,
  then the Hilbert space $\ssH_\omega$ of its GNS representation is separable. 
\end{rem}

\begin{rem}
  Given a norm-defined $C^*$-algebra bundle $p: \fA \to  X$ as before and  a continuous section
  $\omega : X \to \sP(\fA)$ of the associated pure state bundle, the pullback bundle
  $\omega^* \ssH \to X$ of the GNS Hilbert bundle along the section $\omega$ 
  %carries the structure of a
  is a
  Hilbert bundle over $X$. 
\end{rem}

\newcommand{\Nbundle}{\mathscr{N}}
\subsection{The subbundle of Gelfand ideals}
We now construct a locally trivial fiber bundle $p_\Nbundle:\Nbundle \rightarrow \sP(\fA)$ out of the Gelfand ideals of elements in the pure state bundle $\sP(\fA)$, retaining the notation of Section \S\ref{sec:hilbert-bundle} throughout. As a set, we define
\[
\Nbundle = \bigsqcup_{\varrho \in \sP(\fA)} \fN_\varrho.
\]
For each sector $S \in \ssec(\fF)$, let $\fN_S$ denote the Gelfand ideal of $\varrho_S$. Given a sector $S \in \ssec(\fF)$ and a unit vector $v \in \bbS\cH_S$, Corollary \ref{cor:continuous_Kadison_automorphism} yields a continuous map $\alpha_{S,v}:O_{S,v} \rightarrow \Aut(\fF)_\tn{n}$ such that
\[
\alpha_{S,v}(\psi)_*\psi = r_S(\bbC v)
\]
for all $\psi \in O_{S,v}$. We may also find a fixed automorphism $\beta_{S,v} \in \Aut(\fF)$ such that 
\[
\beta_{S,v,*}r_S(\bbC v) = \varrho_S.
\]
We will prove the following result:

\begin{thm}\label{thm:Nbundleresult}
For each $i \in I$, $S \in \ssec(\fF)$, and unit vector $v \in \bbS \cH_S$, define a local trivialization
\[
\chi^\Nbundle_{i,S,v}:\Nbundle|_{O_{i,S,v}} \rightarrow O_{i,S,v} \times \fN_S, \]
by
\[ A \mapsto (\varrho, \beta_{S,v}(\alpha_{S,v}(\varphi_{i,*}(\varrho))(\varphi_i(A)))), \quad \varrho = p_\Nbundle(A) \ .
\]
Give $\Nbundle$ the unique topology such that each $\Nbundle|_{O_{i,S,v}}$ is open and each local trivialization $\chi^{\Nbundle}_{i,S,v}:\Nbundle|_{O_{i,S,v}} \rightarrow O_{i,S,v} \times \fN_S$ is a homeomorphism. Then $p_\Nbundle :  \Nbundle \to \sP(\fA)$ is a fiber bundle.
%$\Nbundle$ is a fiber bundle with this topology, as desired. 
%give
%$p_\Nbundle :  \Nbundle \to \sP(\fA)$ the structure of a fiber bundle.
\end{thm}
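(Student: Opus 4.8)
The plan is to follow the blueprint of the GNS Hilbert bundle (Theorem \ref{thm:hilbertbundlegeneralcase}): I would first check that each $\chi^\Nbundle_{i,S,v}$ is a well-defined fiberwise bijection onto $O_{i,S,v}\times\fN_S$, that the sets $O_{i,S,v}$ cover $\sP(\fA)$, and then that the transition maps are continuous, after which the standard gluing construction for fiber bundles (Appendix \ref{sec:fiber-bundles}) endows $\Nbundle$ with the asserted topology and makes $p_\Nbundle:\Nbundle\to\sP(\fA)$ locally trivial. Throughout I write $x=p_\Nbundle(\varrho)$ and let $g_{ij}(x)=\varphi_{i,x}\circ\varphi_{j,x}^{-1}\in\Aut(\fF)$ denote the transition automorphisms, so that $\varphi_i$ restricted to the fibre over $x$ is $\varphi_{i,x}$.

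The well-definedness rests entirely on the functoriality of the Gelfand ideal recorded in Proposition \ref{prop:functorial-GNS}: for a $*$-isomorphism $\gamma$ one has $\fN_{\gamma_*\omega}=\gamma(\fN_\omega)$. I would trace an element $A\in\fN_\varrho$ through the three stages of $\chi^\Nbundle_{i,S,v}$. First $\varphi_{i,x}(A)\in\fN_{\varphi_{i,*}(\varrho)}$ because $\varphi_{i,*}(\varrho)=\varrho\circ\varphi_{i,x}^{-1}$; then $\alpha_{S,v}(\varphi_{i,*}(\varrho))$ carries $\fN_{\varphi_{i,*}(\varrho)}$ onto $\fN_{r_S(\bbC v)}$ since $\alpha_{S,v}(\psi)_*\psi=r_S(\bbC v)$; and finally $\beta_{S,v}$ carries $\fN_{r_S(\bbC v)}$ onto $\fN_{\varrho_S}=\fN_S$ by the defining property $\beta_{S,v,*}r_S(\bbC v)=\varrho_S$. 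As each stage is an isometric isomorphism of closed ideals, the fibre map $\fN_\varrho\to\fN_S$ is a bijective isometry, with the evident inverse. That the $O_{i,S,v}$ cover $\sP(\fA)$ is immediate: the $O_i$ cover $X$, and given $\varrho$ with $x\in O_i$ the state $\varphi_{i,*}(\varrho)$ lies in some sector $S$ and is represented by a unit vector $v\in\bbS\cH_S$ with $r_S(\bbC v)=\varphi_{i,*}(\varrho)$, whence $\varphi_{i,*}(\varrho)\in O_{S,v}$ and $\varrho\in O_{i,S,v}$.

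The heart of the argument is the transition map. For $\varrho\in O_{i,S,v}\cap O_{j,R,w}$ a direct computation, inverting $\chi^\Nbundle_{j,R,w}$ and applying $\chi^\Nbundle_{i,S,v}$, gives $\chi^\Nbundle_{i,S,v}\circ(\chi^\Nbundle_{j,R,w})^{-1}(\varrho,N)=(\varrho,\Gamma_\varrho N)$, where
\[
\Gamma_\varrho=\beta_{S,v}\circ\alpha_{S,v}(\varphi_{i,*}(\varrho))\circ g_{ij}(x)\circ\alpha_{R,w}(\varphi_{j,*}(\varrho))^{-1}\circ\beta_{R,w}^{-1}\in\Aut(\fF).
\]
Tracking the footpoint states exactly as above, and using $\varphi_{i,*}(\varrho)=g_{ij}(x)_*\varphi_{j,*}(\varrho)$, shows $\Gamma_\varrho(\fN_R)=\fN_S$; moreover a nonempty overlap forces $S=\overline{g_{ij}(x)}(R)$, so that $R$ and $S$ lie in one equivalence class of sectors, and the model fibres agree along each connected component exactly as the spaces $\cH_S$ do in Theorem \ref{thm:hilbertbundlegeneralcase}. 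Continuity of $\varrho\mapsto\Gamma_\varrho$ into $\Aut(\fF)_\tn{n}$ is then the crux: the maps $\varrho\mapsto\varphi_{i,*}(\varrho)$, $\varrho\mapsto\varphi_{j,*}(\varrho)$ and $\varrho\mapsto x$ are continuous by construction of the pure state bundle, $\alpha_{S,v}$ and $\alpha_{R,w}$ are continuous by Corollary \ref{cor:continuous_Kadison_automorphism}, $g_{ij}$ is a continuous transition function of $\fA$, and composition and inversion in $\Aut(\fF)_\tn{n}$ are jointly norm-continuous because automorphisms are isometries (so $\norm{\gamma\delta-\gamma'\delta'}\le\norm{\gamma-\gamma'}+\norm{\delta-\delta'}$ and $\norm{\gamma^{-1}-\gamma'^{-1}}\le\norm{\gamma-\gamma'}$). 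Hence $(\varrho,N)\mapsto\Gamma_\varrho N$ is jointly continuous, as is its inverse.

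The main obstacle I anticipate is bookkeeping rather than conceptual: keeping the several automorphisms and their actions on states versus ideals consistent, in particular the contravariance $\fN_{\gamma_*\omega}=\gamma(\fN_\omega)$ and the directions of $\alpha_{S,v}$ and $\beta_{S,v}$, and ensuring the model fibre is genuinely constant along connected components. The latter is handled, exactly as in the Hilbert bundle case, by the fact that overlapping charts involve equivalent sectors; one may, if desired, fix a single model ideal per equivalence class and absorb the fixed intertwining isomorphisms into the $\beta_{S,v}$, so that every transition map becomes a continuous family of isometric self-homeomorphisms of one model fibre. With continuity of the transition functions established, the gluing lemma yields the unique topology making each $\chi^\Nbundle_{i,S,v}$ a homeomorphism and exhibits $p_\Nbundle:\Nbundle\to\sP(\fA)$ as a fibre bundle.
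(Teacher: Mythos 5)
Your proposal follows the paper's proof almost step for step: the well-definedness of $\chi^\Nbundle_{i,S,v}$ via the functoriality of Gelfand ideals from Proposition \ref{prop:functorial-GNS}, the explicit transition automorphism $\Gamma_\varrho=\beta_{S,v}\,\alpha_{S,v}(\varphi_{i,*}(\varrho))\,g_{ij}(p(\varrho))\,\alpha_{R,w}(\varphi_{j,*}(\varrho))^{-1}\,\beta_{R,w}^{-1}$ is exactly the paper's $h^\Nbundle_{i,S,v;j,R,w}$, and continuity is obtained the same way, from continuity of each factor together with joint norm-continuity of composition and inversion on isometries.

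There is, however, one step you omit that the paper treats as essential. Under the definition of fiber bundle used here (Appendix \ref{sec:fiber-bundles}, following Steenrod), the transition functions must factor through a topological \emph{structure group} acting continuously and \emph{effectively} on the typical fiber; effectiveness is what makes the transition cocycle well defined. The paper therefore identifies the structure group as $H_S=\qty{\gamma\in\Aut(\fF)_\tn{n}:\gamma(\fN_S)\subset\fN_S}$ and proves in Proposition \ref{prop:gelfandidealkeyprop} that $H_S$ is a closed subgroup of $\Aut(\fF)_\tn{n}$ acting effectively on $\fN_S$ by evaluation. The effectiveness is not automatic: an automorphism $\gamma$ fixing $\fN_S$ pointwise fixes $\ker\varrho_S=\fN_S+\fN_S^*$ pointwise, and one must then run a separate argument (writing $\gamma(B)=C+\lambda B$ for $B$ with $\varrho_S(B)=1$ and using $\norm{\gamma^n(B)}=\norm{nC+B}$ to force $C=0$, $\lambda=1$) to conclude $\gamma=\id_\fF$. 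Your proof establishes that the transition maps are continuous families of isometric isomorphisms of the fibers, which gives local triviality, but without exhibiting the structure group and its effective action you have not verified the bundle axioms as stated in the paper. Your closing suggestion of fixing one model ideal per equivalence class of sectors and absorbing the intertwiners into the $\beta_{S,v}$ is sound and in fact addresses a point the paper leaves implicit (that $\fN_R$ and $\fN_S$ must be identified on overlaps), but it does not substitute for the effectiveness argument.
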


\begin{proof}
One can check that $\chi_{i,S,v}^\Nbundle$ is well-defined from the definitions of $\alpha_{S,v}(\varphi_{i,*}(\varrho))$ and $\beta_{S,v}$. 
Let $H_S$ be the subspace of $\Aut(\fF)_\tn{n}$ that leaves $\fN_S$ invariant. 
Given $i, j \in I$, $S, R \in \ssec(\fF)$, and $v, w \in \bbS \cH_S = \bbS\cH_R$ such that $O_{i,S,v;j,R,w} \neq \emptyset$, the transition function
\begin{align*}
h^\Nbundle_{i,S,v;j,R,w}:O_{i,S,v;j,R,w} &\rightarrow H_S,  \\
\varrho &\mapsto \beta_{S,v}\alpha_{S,v}(\varphi_{i,*}(\varrho)) g_{ij}(p(\varrho))\alpha_{R,w}(\varphi_{j,*}(\varrho))^{-1}\beta_{R,w}^{-1}
\end{align*}
is continuous since each term in the composition is a continuous function of $\varrho$.
 %Here, $H_S$ is the subspace of $\Aut(\fF)_\tn{n}$ that leaves $\fN_S$ invariant. 
 The following proposition shows that $H_S$ is a closed subgroup of $\Aut(\fF)_\tn{n}$ which acts effectively on $\fN_S$. In particular, this implies that $\Nbundle$ is a fiber bundle. \end{proof} %and thus establishes Theorem~\ref{thm:Nbundleresult}.

\begin{prop}\label{prop:gelfandidealkeyprop}
Let $\fB$ be a $C^*$-algebra, let $\omega \in \sP(\fB)$, and let $\fN$ be the Gelfand ideal of $\omega$. Given $\alpha \in \Aut(\fB)$, the following are equivalent:
\begin{enumerate}[{\rm (i)}]
  \item\label{ite:N_alpha_invariant} $\alpha(\fN) \subset \fN$,
  \item\label{ite:omega_alpha_invariant} $\alpha_*\omega = \omega$,
  \item\label{ite:N_alpha_equality} $\alpha(\fN) = \fN$.
\end{enumerate}
If $H = \qty{\alpha \in \Aut(\fB): \alpha(\fN) \subset \fN}$, then $H$ is a closed subgroup of $\Aut(\fB)$ and $H$ acts effectively on $\fN$ by evaluation.
\end{prop}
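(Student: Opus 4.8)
The proof will rest on two ingredients already in place: Proposition~\ref{prop:functorial-GNS}(\ref{ite:naturality-GNS}), which for an automorphism $\alpha$ gives $\fN_{\alpha_*\omega}=\alpha(\fN_\omega)$ with $\alpha_*\omega=\omega\circ\alpha^{-1}$, and the self-adjoint form of the Kadison transitivity theorem in Theorem~\ref{thm:Pedersen}. Two of the three implications are immediate. If (\ref{ite:omega_alpha_invariant}) holds then $\fN=\fN_\omega=\fN_{\alpha_*\omega}=\alpha(\fN)$, which is (\ref{ite:N_alpha_equality}); and (\ref{ite:N_alpha_equality}) $\Rightarrow$ (\ref{ite:N_alpha_invariant}) is trivial. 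So the entire content lies in the implication (\ref{ite:N_alpha_invariant}) $\Rightarrow$ (\ref{ite:omega_alpha_invariant}) and in the assertions about $H$.

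The engine I would use is a \emph{span lemma}: for a pure state $\omega$ with GNS triple $(\hilbH,\pi,\Omega)$ there is a self-adjoint $C_0\in\fB$ with $\pi(C_0)\Omega=\Omega$ (whence $\omega(C_0)=1$) such that $\fB=\bbC C_0+\fN_\omega+\fN_\omega^{\,*}$, where $\fN_\omega^{\,*}=\{N^*:N\in\fN_\omega\}$. To prove it, given $A\in\fB$ I would set $A_0=A-\omega(A)C_0$, so that both $\pi(A_0)\Omega$ and $\zeta:=\pi(A_0)^*\Omega$ are orthogonal to $\Omega$; it then suffices to find $N\in\fN_\omega$ with $\pi(N)\Omega=0$ and $\pi(N)^*\Omega=\zeta$, for then $A_0-N\in\fN_\omega^{\,*}$. \textbf{This is the main obstacle}, since Theorem~\ref{thm:Pedersen} prescribes $\pi(N)$ on $\Omega$ but says nothing about $\pi(N)^*$. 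I would circumvent it by taking the bounded operator $W\colon v\mapsto\langle\zeta,v\rangle\,\Omega$ (so $W\Omega=0$ and $W^*\Omega=\zeta$), writing $W=H+iK$ with $H,K$ self-adjoint, applying the self-adjoint case of Theorem~\ref{thm:Pedersen} separately to $H$ and $K$ to get self-adjoint $N_H,N_K\in\fB$ with $\pi(N_H)\Omega=H\Omega$ and $\pi(N_K)\Omega=K\Omega$, and setting $N=N_H+iN_K$; a one-line computation then yields $\pi(N)\Omega=W\Omega=0$ and $\pi(N)^*\Omega=W^*\Omega=\zeta$. The existence of $C_0$ itself is a single application of Theorem~\ref{thm:Pedersen} to the projection onto $\bbC\Omega$.

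For (\ref{ite:N_alpha_invariant}) $\Rightarrow$ (\ref{ite:omega_alpha_invariant}), put $\psi=\alpha_*\omega$, a pure state with $\fN_\psi=\alpha(\fN)\subseteq\fN=\fN_\omega$. Applying the span lemma to $\psi$, every $A$ decomposes as $A=\psi(A)\,C_0+N+M$ with $N\in\fN_\psi\subseteq\fN_\omega$ and $M\in\fN_\psi^{\,*}\subseteq\fN_\omega^{\,*}$. Since a state annihilates its own Gelfand ideal and its adjoint (Cauchy--Schwarz), $\omega(N)=\omega(M)=0$, so $\omega(A)=\omega(C_0)\,\psi(A)$ for all $A$; thus $\omega=\omega(C_0)\,\psi$, and comparing norms (or values on a positive element) of these two states forces $\omega(C_0)=1$ and $\omega=\psi=\alpha_*\omega$. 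For the structural claims, the equivalence (\ref{ite:N_alpha_invariant}) $\Leftrightarrow$ (\ref{ite:N_alpha_equality}) shows $H=\{\alpha:\alpha(\fN)=\fN\}$, manifestly a subgroup, while (\ref{ite:omega_alpha_invariant}) identifies $H$ with the stabilizer $\{\alpha:\alpha_*\omega=\omega\}$, the preimage of the closed point $\{\omega\}$ under the continuous action of Lemma~\ref{lem:efective-action-pure-state-space}; hence $H$ is closed.

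Finally, for effectiveness suppose $\alpha\in H$ fixes $\fN$ pointwise. Being a $*$-map it then fixes $\fN^{\,*}$ pointwise, and by (\ref{ite:N_alpha_invariant}) $\Rightarrow$ (\ref{ite:omega_alpha_invariant}) we have $\omega\circ\alpha=\omega$. Writing $\alpha(C_0)=\mu C_0+S$ with $S\in\fN+\fN^{\,*}$ and applying $\omega$ gives $\mu=\omega(C_0)/\omega(C_0)=1$, so the span lemma yields $\alpha=\mathrm{id}+\omega(\,\cdot\,)\,S$ with $S=S^*$ (forced by the $*$-property) and $\omega(S)=0$. In the unital case $C_0=I$ is fixed, so $S=0$ at once; in general I would insert this expression into the multiplicativity identity $\alpha(AB)=\alpha(A)\alpha(B)$, let $B$ run through an approximate identity to obtain $AS=-\omega(A)S^2$ for all $A$, then take $A$ through an approximate identity to get $S=-S^2$, and combine with $S^2=\omega(S)S=0$ to conclude $S=0$. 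Hence $\alpha=\mathrm{id}$, and $H$ acts effectively on $\fN$. The non-unital bookkeeping (unique pure-state and automorphism extensions to the unitization) is routine and consistent with the approach taken elsewhere in the paper.
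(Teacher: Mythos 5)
Your proof is correct, but it takes a noticeably different route from the paper's at each of the three nontrivial points, essentially trading citations for self-contained computation. For (\ref{ite:N_alpha_invariant}) $\Rightarrow$ (\ref{ite:omega_alpha_invariant}) the paper simply observes that $\alpha(\fN)$ is the Gelfand ideal of $\alpha_*\omega$ and invokes maximality of Gelfand ideals of pure states \cite[Thm.~5.3.5]{MurphyCAOT} to conclude in one line; your ``span lemma'' $\fB=\bbC C_0+\fN+\fN^*$ is precisely the statement $\ker\omega=\fN+\fN^*$ of \cite[Thm.~5.3.4]{MurphyCAOT} (which the paper cites later in the same proof), so you are reproving a quotable fact --- though your workaround for the fact that Theorem~\ref{thm:Pedersen} controls $\pi(N)\Omega$ but not $\pi(N)^*\Omega$, namely splitting the rank-one operator $W$ into self-adjoint parts and applying the self-adjoint case twice, is a clean way to do it, and the subsequent identity $\omega=\omega(C_0)\,\psi$ does yield (\ref{ite:omega_alpha_invariant}). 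For closedness of $H$, your identification of $H$ with the stabilizer of the closed point $\{\omega\}$ under the continuous action of Lemma~\ref{lem:efective-action-pure-state-space} is arguably slicker than the paper's direct sequential argument; both are fine. For effectiveness the two arguments genuinely diverge: the paper writes $\alpha(B)=C+B$ with $C\in\ker\omega$ and exploits only linearity and isometry via the growth estimate $\|B\|=\|\alpha^n(B)\|=\|nC+B\|\geq n\|C\|-\|B\|$, which kills $C$ uniformly in the unital and non-unital cases; you instead feed $\alpha=\mathrm{id}+\omega(\cdot)S$ into multiplicativity and run approximate identities to get $S=-S^2$ and $S^2=-\omega(S)S^2=0$, which also works but uses more structure. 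The paper's effectiveness trick is worth remembering as the more economical one.
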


\begin{proof}
To see (\ref{ite:N_alpha_invariant}) $\Rightarrow$ (\ref{ite:omega_alpha_invariant}), observe that $\alpha(\fN)$ is the Gelfand ideal of $\alpha_*\omega$. Maximality of Gelfand ideals \cite[Thm.~5.3.5]{MurphyCAOT} implies $\alpha_*\omega = \omega$.  That (\ref{ite:omega_alpha_invariant}) $\Rightarrow$ (\ref{ite:N_alpha_equality}) and (\ref{ite:N_alpha_equality}) $\Rightarrow$ (\ref{ite:N_alpha_invariant}) are trivial.  From these equivalences we see that $H$ is a subgroup of $\Aut(\fB)$. If $(\alpha_n)_{n \in \bbN}$ is a sequence in $H$ and $\alpha_n \rightarrow \alpha \in \Aut(\fB)$, then for any $A \in \fN$ we have $\alpha_n(A) \rightarrow \alpha(A)$, hence $\alpha(A) \in \fN$ since $\alpha_n(A) \in \fN$ and $\fN$ is closed. If $\alpha \in H$ and $\alpha(A) = A$ for all $A \in \fN$, then $\alpha(A) = A$ for all $A \in \ker \omega$ since $\ker \omega = \fN + \fN^*$ \cite[Thm.~5.3.4]{MurphyCAOT}. Finally, let $B \in \fA$ such that $\omega(B) = 1$. Since $\fA = \ker \omega \oplus \bbC B$, it remains to show that $\alpha(B) = B$. There exists $C \in \ker \omega$ and $\lambda \in \bbC$ such that $\alpha(B) = C + \lambda B$. Applying $\omega$ to both sides yields $\lambda = 1$. Since $\alpha(C) = C$, we see that $\alpha^n(B) = nC + B$ for all $n \geq 1$. But then
\[
\norm{B} = \norm{\alpha^n(B)} = \norm{nC + B} \geq n \norm{C} - \norm{B},
\]
which is true for all $n$ if and only if $C = 0$. Therefore $\alpha(B) = B$, so $\alpha = \id_\fB$. This proves that $H$ acts effectively.
\end{proof}

%There is a unique topology on $\Nbundle$ such that each $\Nbundle|_{O_{i,S,v}}$ is open and each local trivialization $\chi^{\Nbundle}_{i,S,v}:\Nbundle|_{O_{i,S,v}} \rightarrow O_{i,S,v} \times \fN_S$ is a homeomorphism; $\Nbundle$ is a fiber bundle with this topology, as desired. 

\begin{rem}It is easy to check that the topology on $\Nbundle$ in Theorem~\ref{thm:Nbundleresult} is the subspace topology inherited from $\sP(\fA) \times \fA$. In particular, $\Nbundle$ is a subspace of the pullback bundle $(p_{\sP(\fA)})^*\fA$ since $(p_{\sP(\fA)})^*\fA$ is 
%topologically 
a subspace of $\sP(\fA) \times \fA$ as well. The local trivializations $\chi^\Nbundle_{i,S,v}$ extend to local trivializations
\[((p_{\sP(\fA)})^*\fA)|_{O_{i,S,v}} \rightarrow O_{i,S,v} \times \fF,\] so $\Nbundle$ is a \emph{subbundle} of $(p_{\sP(\fA)})^*\fA$, for subbundles as defined in Appendix \ref{sec:fiber-bundles}.
\end{rem}

\subsection{The smooth case}\label{sec:smooth_case}
It is not immediately clear whether the notion of a topological $C^*$-algebra bundle
can be generalized to the smooth case. Let us explain the problem in  more detail and assume
that  $p:\fA \to X$ is a topological $C^*$-algebra bundle. Assume further that 
the total space $\fA$ and the base space $X$ both carry the structure of Banach manifolds
and that the projection $p$ is smooth. To endow $p:\fA \to X$ with a smooth fiber bundle structure
one needs to select an atlas of local trivializations $(\varphi_i, O_i)_{i\in I}$
such that the corresponding transition functions 
$g_{ij} :O_{ij} \to \Aut (\fF)$ to the automorphism group $\Aut (\fF)$ of the typical fiber $C^*$-algebra $\fF$
are smooth. The main question now is whether  $\Aut (\fF)$ carries the structure of a Lie group,
so that smoothness of the transition functions $g_{ij}$ makes sense. 
This problem seems not to have been studied in the literature before. We therefore provide an answer and show the following.   

\begin{prop}\label{prop:Aut_Lie_group}
  The automorphism group  $\Aut (\fF)$ of a $C^*$-algebra $\fF$ carries  the structure of a real Banach-Lie group with Lie algebra given by
  the space $\sD$ of symmetric bounded derivations on $\fF$, i.e.,
  the space of all derivations $\delta \in  \fB (\fF)$
  such that $(\delta (A))^* = \delta (A^*)$  for all $A\in \fF$.
\end{prop}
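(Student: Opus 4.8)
The plan is to realize $\Aut(\fF)$ as a subgroup of the Banach--Lie group $\GL(\fF)$ of invertible bounded operators on the Banach space underlying $\fF$ (an open subset of $\fB(\fF)$ with the operator norm), and to equip it with a manifold structure via a chart built from the operator exponential $\exp\colon\fB(\fF)\to\GL(\fF)$ and its local inverse $\log$, the principal branch defined by holomorphic functional calculus on a norm-neighbourhood of $\id$. First I would record that the candidate Lie algebra $\sD$ is a closed \emph{real} subspace of $\fB(\fF)$: the Leibniz conditions $X(AB)-X(A)B-AX(B)=0$ cut out a closed subspace (each is continuous and linear in $X$ for fixed $A,B$), and the symmetry condition $X(A^*)=X(A)^*$ is closed as well, though only real-linear --- indeed $iX$ fails to be symmetric when $X$ is --- which is exactly why the resulting group is a \emph{real} Banach--Lie group. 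One checks that $\sD$ is stable under the commutator bracket $[\delta_1,\delta_2]=\delta_1\delta_2-\delta_2\delta_1$, so $(\sD,[\cdot,\cdot])$ is a real Banach--Lie algebra.

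The two directions relating $\sD$ to $\Aut(\fF)$ are handled separately. For the easy inclusion $\exp(\sD)\subseteq\Aut(\fF)$, I would fix $\delta\in\sD$ and $A,B\in\fF$ and note that $t\mapsto e^{t\delta}(AB)$ and $t\mapsto e^{t\delta}(A)\,e^{t\delta}(B)$ both solve the linear ODE $u'=\delta u$ with common initial value $AB$; uniqueness forces $e^{t\delta}(AB)=e^{t\delta}(A)e^{t\delta}(B)$, and the same argument with $\delta$ symmetric gives $e^{t\delta}(A^*)=(e^{t\delta}(A))^*$, so $e^{t\delta}\in\Aut(\fF)$. Conversely, differentiating these identities at $t=0$ shows that any $X$ with $e^{\R X}\subseteq\Aut(\fF)$ lies in $\sD$, which will pin down the Lie algebra at the end.

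The crux --- and the step I expect to be the main obstacle --- is the reverse inclusion: that $\log\alpha\in\sD$ for every $\alpha\in\Aut(\fF)$ sufficiently norm-close to $\id$. Here I would pass to the projective tensor product $\fF\mathbin{\widehat{\otimes}}\fF$ and the bounded multiplication map $m\colon\fF\mathbin{\widehat{\otimes}}\fF\to\fF$, and observe that $\alpha$ is an automorphism precisely when $m\circ(\alpha\otimes\alpha)=\alpha\circ m$, i.e.\ $m$ intertwines $\alpha\otimes\alpha$ with $\alpha$. An intertwiner of two operators intertwines any holomorphic function of them (apply the Cauchy integral over a single contour enclosing both spectra, possible since both spectra cluster near $1$ for $\alpha$ near $\id$), so $m\circ\log(\alpha\otimes\alpha)=\log\alpha\circ m$. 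Since $\alpha\otimes\alpha=(\alpha\otimes\id)(\id\otimes\alpha)$ is a product of commuting operators close to the identity, and $\beta\mapsto\beta\otimes\id$, $\beta\mapsto\id\otimes\beta$ are continuous unital homomorphisms (hence commute with functional calculus), one gets $\log(\alpha\otimes\alpha)=(\log\alpha)\otimes\id+\id\otimes(\log\alpha)$. Feeding this into the intertwining relation and evaluating on $A\otimes B$ yields $(\log\alpha)(AB)=(\log\alpha)(A)\,B+A\,(\log\alpha)(B)$, so $\log\alpha$ is a derivation; applying the same principle to the \emph{conjugate-linear} intertwiner $A\mapsto A^*$ (which commutes with $\alpha$, hence with $\log\alpha$, because $\overline{\log\bar z}=\log z$ off the negative axis) gives symmetry. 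Thus $\log\alpha\in\sD$.

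It then remains to assemble the group. Choosing $\varepsilon>0$ small enough that $\exp$ is an analytic diffeomorphism of $\{\|X\|<\varepsilon\}$ onto a norm-neighbourhood $W$ of $\id$ and that the functional-calculus estimates above hold, the two inclusions show that $\exp$ restricts to a homeomorphism of $V=\{X\in\sD:\|X\|<\varepsilon\}$ onto $\Aut(\fF)\cap W$ in the norm topology; in particular $\Aut(\fF)\cap W=\exp(V)$ is relatively open. Left translations $L_\gamma$ are norm-homeomorphisms of $\Aut(\fF)$ (operator composition is jointly norm-continuous), so the translates $L_\gamma\circ\exp|_V$ form an atlas covering $\Aut(\fF)$, with transition maps $X\mapsto\log(\gamma e^X)$ for fixed $\gamma\in\Aut(\fF)$. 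Each such map is real-analytic as a map of Banach spaces, and by the previous step it takes values in the closed subspace $\sD$; since the restriction of an analytic map to a closed subspace, with image in a closed subspace, is again analytic (difference quotients, hence all derivatives, remain in the closed subspace), the transition maps are analytic between open subsets of $\sD$. The group operations are analytic as well, via $\alpha_0 e^X\cdot\beta_0 e^Y=\alpha_0\beta_0\,e^{\mathrm{Ad}(\beta_0^{-1})X}e^Y$ together with the Baker--Campbell--Hausdorff series, using that $\mathrm{Ad}(\beta_0^{-1})$ preserves $\sD$. Crucially this construction needs only that $\sD$ be \emph{closed} in $\fB(\fF)$, not complemented, so no delicate questions about complementability of derivation spaces arise. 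This exhibits $\Aut(\fF)$ as a real-analytic Banach--Lie group whose Lie algebra, namely the tangent space at $\id$ with the commutator bracket, is exactly $\sD$.
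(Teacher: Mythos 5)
Your proposal is correct, and it reaches the same atlas as the paper (charts $L_\gamma\circ\exp|_V$ with $V$ a small ball in the closed real Lie algebra $\sD$ of symmetric bounded derivations), but it differs genuinely in the one step that carries all the analytic weight: showing that every automorphism in a norm-neighborhood of the identity is the exponential of an element of $\sD$. The paper imports this from Kadison--Ringrose (Thm.~7 of their paper on derivations of operator algebras), which gives the stronger statement that every $\alpha$ with $\|\alpha-\iota\|<2$ lies on a norm-continuous one-parameter subgroup of automorphisms, and also that $\Aut_0(\fF)$ is open in $\Aut(\fF)$; the paper then only charts $\Aut_0(\fF)$ and extends by translation. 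You instead prove the needed local statement from scratch: encoding multiplicativity as the intertwining relation $m\circ(\alpha\otimes\alpha)=\alpha\circ m$ on the projective tensor product, pushing it through holomorphic functional calculus to get $m\circ\log(\alpha\otimes\alpha)=\log\alpha\circ m$, and splitting $\log(\alpha\otimes\alpha)=(\log\alpha)\otimes\id+\id\otimes(\log\alpha)$ via the commuting factorization $(\alpha\otimes\id)(\id\otimes\alpha)$ to extract the Leibniz rule; the conjugate-linear intertwiner $A\mapsto A^*$ (together with $\overline{\log\bar z}=\log z$ and the orientation reversal in the Cauchy integral) gives symmetry. What the paper's route buys is the quantitative radius-$2$ statement and a shorter proof given the reference; what yours buys is self-containedness and the fact that you never need to identify $\Aut_0(\fF)$ or prove it is open, since $\exp(V)=\Aut(\fF)\cap W$ already charts a neighborhood of the identity in the full group. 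Your easy direction ($\exp(\sD)\subseteq\Aut(\fF)$ by ODE uniqueness) and your handling of the atlas are sound; the only cosmetic point is that the Baker--Campbell--Hausdorff series is more machinery than needed for smoothness of multiplication --- as in the paper, it suffices to observe that the group operations are restrictions of analytic maps on $\GL(\fF)$ whose chart expressions take values in the closed subspace $\sD$, and that analyticity survives such restriction without any complementability hypothesis.
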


\begin{proof}
  The group $\GL (\fF)$ of invertible linear endomorphisms of $\fF$ is
  a topological group in the norm topology, and $\Aut (\fF)$ is a closed
  subgroup, hence a topological group as well. Denote by $\Aut_0 (\fF)$
  the connected component of  the unit element $\iota = \id_\fF$ in the
  automorphism group. According to \cite[Thm.~7]{KadisonRingroseDAOA},
  $\Aut_0 (\fF)$ is an open subgroup of  $\Aut (\fF)$ hence it 
  suffices to show that the connected component $\Aut_0 (\fF)$
  carries the structure of a Banach Lie group. 

  Denote by $\bbB_2 (\iota)$
  the ball of radius $2$ in the automorphism group around the unit.
  By \cite[Thm.~7]{KadisonRingroseDAOA},  $\bbB_2 (\iota)$ is contained in
  $\Aut_0 (\fF)$, and each element $\alpha\in \bbB_2 (\iota)$
  lies on a norm continuous one-parameter subgroup of automorphisms
  $\alpha_t$, $t\in \R$ such that $\alpha_1=\alpha$.
  By norm continuity of the one-parameter group, the one-parameter group can be represented
  in the form 
  \begin{equation}
  \label{eq:representation-one-parametr-group}  
    \alpha_t = \exp (t \delta) := \sum_{k=0}^\infty \frac{1}{k!} (t\delta)^k \ ,
  \end{equation}
  where $\delta:\fF \to \fF$ is a bounded operator given as
  the limit $\delta = \lim_{t\to 0} \frac{\alpha_t -\iota}{t}$ ; see e.g.~\cite[Th.~9.6.1]{HillePhillips}.
  Since $\alpha_t$ consists of automorphisms of $\fA$ one readily checks
  that $\delta$ is a symmetric bounded derivation.
  The space $\sD$ of symmetric bounded derivations on $\fF$ clearly forms a
  closed real Lie subalgebra of the Banach algebra $\fB (\fF)$ endowed with the commutator
  as Lie bracket.  Hence, $\sD$ is a Banach-Lie algebra.

  We observed that every element of $\bbB_2 (\iota)$ can be written in the form $\exp (\delta)$
  for some $\delta$ in the Lie algebra $\sD$. Conversely, $\exp (\delta) \in \Aut_0 (\fF)$
  for every $\delta \in \sD$ since  $t \mapsto \exp (t \delta)$ is a one-parameter group
  of automorphisms. This indicates that the exponential map restricted to $\sD$ can serve as
  a differentiable chart for $\Aut_0 (\fF)$.  
  To verify this recall that restricted to a sufficently small ball $\bbB_r (0)\subset \fB (\fF)$
  around the origin the exponential function $\exp : \fB (\fF) \to \GL (\fF)$ is a diffeomorphism onto its image
  by the inverse function theorem. After possibly shrinking $r$ we can achieve that
  $\widetilde{O} := \exp (\bbB_r(0))$ is an open neighborhood of $\iota$ in $\GL (\fF)$
  contained in the unit ball around the identity.
  The inverse of $\exp :\bbB_r(0) \to \widetilde{O}$ is then given by the logarithm series
  \[
    \Log : \widetilde{O} \to \bbB_r(0), \: \eta \mapsto \sum_{k=1}^\infty (-1)^{k-1} \frac{1}{k}(\eta -\iota) \ . 
  \]
  The restriction $\chi_\iota : = \exp|_{ \sD \cap \bbB_r(0)} : \sD \cap \bbB_r(0) \to \Aut_0(\fF)$ therefore is a homeomorphism onto
  $O := \Aut_0(\fF) \cap \widetilde{O}$. For each $\alpha \in \Aut_0(\fF)$ let 
  $\chi_\alpha : O\alpha \to W $ with $W := \sD \cap \bbB_r(0)$ denote the homeomorphism
  which maps $\gamma \in O\alpha$ to $ \Log ( \gamma \alpha^{-1} )$. We interpret $\chi_\alpha$
  as a chart of $\Aut_0(\fF)$ defined over $O\alpha$. For $\alpha,\beta \in \Aut_0(\fF)$ such that
  $O_{\alpha\beta} := O\alpha \cap O\beta \neq \emptyset$ the transition map 
  \[
    \chi_\alpha \circ \chi_\beta^{-1} : \chi_\beta(O_{\alpha\beta}) \to \chi_\alpha (O_{\alpha\beta}), \:
    \delta \mapsto \Log ( \exp (\delta)\beta \alpha^{-1})  
  \] 
  now is smooth, hence the family of charts $\big(\chi_\alpha\big)_{\alpha \in \Aut_0(\fF)}$
  is a smooth atlas which defines a manifold structure on $\Aut_0(\fF)$.
  Multiplication and inversion are smooth with respect to this manifold structure since they are
  on the ambient Lie group $\GL (\fF)$ whose manifold structure is also defined  
  by the exponential function. Hence $\Aut_0(\fF)$ carries a canonical Banach Lie group structure
  modeled on the Banach Lie algebra $\sD$.  By translation, the Banach Lie group structure can be extended in
  a unique way to the whole group $\Aut (\fF)$ and the claim is proved. 
\end{proof}

The observation that $\Aut (\fF)$ is a Banach Lie group now allows us to provide the following definition.

\begin{defn}\label{def:smooth-c*-algebraic-fiber-bundle}
  By a smooth $C^*$-algebra bundle one understands a topological $C^*$-algebra bundle 
  $p:\fA\to X$ 
  %with typical fiber a $C^*$-algebra $\fF$ 
  which in addition is endowed with
  \begin{enumerate}[(i)]
  \item
    Banach manifold structures on the total space $\fA$ and the base space $X$ such that the projection $p$ is smooth,
    and
  \item\label{ite:smooth-local-trivialization}
    a trivializing atlas $(\varphi_i,O_i)_{i\in I}$ such that the charts $\varphi_{i}:p^{-1}(O_i) \rightarrow O_i \times \fF$ and corresponding transition functions $g_{ij}:O_{ij} \to \Aut(\fF)$, $x \mapsto \varphi_{i,x} \circ \varphi_{j,x}^{-1}$ are smooth.  
    %$g_{ij}: O_{ij} \to \sC (\fF, \fF)$, $x \mapsto \varphi_{i,x}\circ \varphi_{j,x}^{-1}$ have image in the
    %automorphism group $\Aut (\fF)$ and  are smooth. 
  \end{enumerate}
\end{defn}

Given a smooth $C^*$-algebra bundle structure on $p:\fA \to X$, the corresponding dual fiber bundle  
$\projAStar: \fA^* \to X$ is a smooth Banach fiber bundle as well since the map
$\tau : \Aut(\fF) \to \GL(\fF^*)$, $\alpha \mapsto \alpha_*$ is of class $\sC^\infty$.
More precisely, if $(\varphi_i,O_i)_{i\in I}$ is a smooth trivializing atlas in the sense of
Def.~\ref{def:smooth-c*-algebraic-fiber-bundle} (\ref{ite:smooth-local-trivialization}), then
$(\varphi_{*,i},O_i)_{i\in I}$ is a trivializing atlas of $\projAStar: \fA^* \to X$
with smooth transition functions
\[ g_{*,ij} = \tau \circ g_{ij}: O_{ij}= O_i\cap O_j \to \GL(\fF^*) \ . \]
By smoothness of the transition functions,
the compositions
\[
  \varphi_{*,i} \circ \varphi_{*,j}^{-1} : O_{ij} \times \fF^* \to O_{ij} \times \fF^*,
  \: (x,\omega) \mapsto  \big(x, g_{*,ij}(x) ( \omega )\big ) 
\]
are smooth, hence the family $(\varphi_{*,i})_{i\in I}$ induces a smooth structure on $\fA^*$
and becomes a smooth trivializing atlas. Therefore, $\projAStar: \fA^* \to X$ inherits the structure
of a smooth fiber bundle from the smooth $C^*$-algebra bundle $p: \fA \to X$.

%Note that $\tau$ is actually a smooth embedding of Lie groups in the sense that
%$\tau$ is smooth, a homeomorphism onto its image and its tangent map is injective. 
%Therefore the image $H := \tau (\Aut(\fF))$ carries in a natural way the structure of a Banach Lie
%subgroup of $\GL(\fF^*)$.
Next let us show that also the pure state bundle can be equipped with a smooth fiber bundle structure whenever
the fiber bundle $p: \fA \to X$ is smooth. 
As explained before, $\varphi_{*,i}$ restricts to a homeomorphism from the restricted
pure state bundle $\sP (\fA)_{|O_i}$ to the product $O_i\times  \sP (\fF)$. The latter space
carries a canonical structure of a Banach manifold. Recall that the Banach Lie group $ \Aut (\fF)$ leaves
$\sP (\fF)$ invariant and acts effectively on $\sP (\fF)$ by Lemma \ref{lem:efective-action-pure-state-space}.
By the above argument it is clear that $ \Aut (\fF)$ acts smoothly on $\fF^*$, but this does not immediately
entail that the action on $\sP (\fF)$ is smooth as well;
cf.~Remark \ref{rem:injectivity-tangent-map-not-entailing-submanifold-infinite-dimensional-case}. 
The following result resolves this problem. 

\begin{lem}\label{lem:smoothness-action-automorphism-group-pure-state-space}
  The action
  \[
     \Upsilon: \Aut (\fF) \times \sP (\fF) \to \sP (\fF), \: (\alpha , \omega) \mapsto \Upsilon (\alpha ,\omega) = \alpha_* \omega 
  \]
  is smooth.   
\end{lem}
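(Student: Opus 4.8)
The plan is to argue smoothness locally and to transport the whole question into a single irreducible representation. Smoothness of $\Upsilon$ is a local property, so I would fix an arbitrary $(\alpha_0,\omega_0)$, set $\psi_0=\alpha_{0,*}\omega_0$, and let $(\hilbH,\pi,\Omega)$ be the GNS representation of $\omega_0$; the source sector $\sP_\pi(\fF)$ is then charted by the biholomorphism $r$ and by the real-analytic section $s\colon\bbB_2(\omega_0)\to\bbS\hilbH$ of Corollary~\ref{cor:purestate_mapsto_SH}. The bookkeeping trick I would exploit is to chart the \emph{target} sector $S_{\psi_0}$ on the \emph{same} Hilbert space, through the irreducible representation $\rho_0:=\pi\circ\alpha_0^{-1}$, for which $\Omega$ is a cyclic vector representing $\psi_0$; its section $\tilde s\colon\bbB_2(\psi_0)\to\bbS\hilbH$ is likewise real-analytic, and $\bbB_2(\psi_0)\subset S_{\psi_0}$ by Lemma~\ref{lem:sector_is_open}.

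For $(\alpha,\omega)$ near $(\alpha_0,\omega_0)$ continuity of the action (Lemma~\ref{lem:efective-action-pure-state-space}) keeps $\Upsilon(\alpha,\omega)$ inside $\bbB_2(\psi_0)$. A short computation comparing $\pi$-vector states with $\rho_0$-vector states then yields
\[ \tilde s\big(\Upsilon(\alpha,\omega)\big)=s(\eta_*\omega),\qquad \eta:=\alpha_0^{-1}\alpha, \]
where $\eta$ is near $\id$ and $\eta_*\omega$ stays in $\bbB_2(\omega_0)$. Since the target chart $\tau_\Omega\circ r_{\rho_0}^{-1}$ sends $\psi$ to $\tilde s(\psi)/\ev{\Omega,\tilde s(\psi)}-\Omega$, the problem collapses to proving that
\[ F(\eta,\omega):=s(\eta_*\omega)\in\bbS\hilbH \]
is smooth near $(\id,\omega_0)$, now entirely inside the fixed representation $(\hilbH,\pi)$.

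The hard part is to produce a smooth family of unitaries implementing $\eta$. Using the exponential chart of $\Aut(\fF)$ from Proposition~\ref{prop:Aut_Lie_group}, I would write $\eta=\exp(\delta)$ with $\delta=\Log\eta\in\sD$ depending smoothly on $\eta$. Then $\pi\circ\delta$ is a bounded symmetric derivation of $\fF$ into $\fB(\hilbH)$, hence inner by the derivation theorem (\cite{KadisonRingroseDAOA}): there is a bounded self-adjoint $H$ with $\pi(\delta(A))=i[H,\pi(A)]$, unique up to a real scalar since $\pi$ is irreducible. Fixing the normalization $\ev{\Omega,H\Omega}=0$ makes $\delta\mapsto H(\delta)$ bounded linear, and iterating and exponentiating the implementer relation gives $\pi\circ\eta=\mathrm{Ad}(e^{iH})\circ\pi$, so that $U_\eta:=e^{iH(\delta)}$ obeys $U_\eta^*\pi(B)U_\eta=\pi(\eta^{-1}(B))$ and $U_\eta\,s(\omega)$ represents $\eta_*\omega$ in $\pi$. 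Consequently
\[ F(\eta,\omega)=\frac{\ev{U_\eta s(\omega),\Omega}}{\abs{\ev{U_\eta s(\omega),\Omega}}}\,U_\eta\,s(\omega), \]
a composite of $\eta\mapsto\delta\mapsto H(\delta)\mapsto e^{iH(\delta)}$, the analytic section $s$, operator evaluation, and a phase normalization analytic wherever $\ev{U_\eta s(\omega),\Omega}\neq0$; at $(\id,\omega_0)$ we have $U_\id=I$ and $s(\omega_0)=\Omega$, so this inner product is nonzero nearby and $F$ is smooth.

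Assembling the two reductions, $\tau_\Omega\circ r_{\rho_0}^{-1}\circ\Upsilon$ is smooth on a neighborhood of $(\alpha_0,\omega_0)$ after precomposing with the product chart (the $\Aut(\fF)$ chart times the section $s$), so $\Upsilon$ is smooth near $(\alpha_0,\omega_0)$; as the point was arbitrary, $\Upsilon$ is smooth. I expect the genuine obstacle to be exactly the phenomenon recorded in Remark~\ref{rem:injectivity-tangent-map-not-entailing-submanifold-infinite-dimensional-case}: even though the $\Aut(\fF)$-action on $\fF^*$ is already smooth and $\sP(\fF)\hookrightarrow\fF^*$ has injective tangent map, $\sP(\fF)$ need not be a submanifold of $\fF^*$, so smoothness cannot be inferred by mere restriction; the real content is producing a \emph{smooth} — not merely continuous, as in Proposition~\ref{prop:continuous-dependancy-unitary-automorphism} — family of intertwiners, which the implementer construction supplies. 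The one routine-but-essential point to verify carefully is the bounded-linear (hence smooth) dependence of $H(\delta)$ on $\delta$.
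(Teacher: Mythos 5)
Your proof is correct, but it takes a genuinely different route from the paper's. The paper works infinitesimally: it fixes a sector and an irreducible representation $(\hilbH,\pi)$ with $S=\sP_\pi(\fF)$, generates tangent vectors at $(\alpha,\varrho)$ by the one-parameter subgroups $s\mapsto\alpha\exp(s\delta)$ and the affine paths $t\mapsto r\tau_\Psi^{-1}(tv)$, computes the candidate derivative
$T_{\alpha,\varrho}\Upsilon(\delta,\dot\gamma_v(0))=-\alpha_*(\delta^*\varrho)+\ev{\Psi,\pi\alpha(\cdot)v}+\ev{v,\pi\alpha(\cdot)\Psi}$,
and concludes $\sC^1$ from the joint continuity of this expression and then $\sC^\infty$ by bootstrapping on its linearity in $\Psi$, $\delta$ and $v$. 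You instead produce an explicit local formula for the action in the charts of \S\ref{subsec:kaehlerstruct}, the key input being a \emph{smooth} family of implementing unitaries: the Sakai--Kadison derivation theorem gives $\pi\circ\delta=\operatorname{ad}(iH(\delta))$ in the irreducible representation, the normalization $\ev{\Omega,H(\delta)\Omega}=0$ pins $H(\delta)$ down and makes $\delta\mapsto H(\delta)$ bounded linear (the estimate $\norm{H(\delta)}\le 2\inf_\lambda\norm{H(\delta)-\lambda}\le\norm{\delta}$ works, using Lemma \ref{lem:rep_almost_isometry} and Kaplansky density to pass from $\norm{\delta}$ to the norm of $\operatorname{ad}(iH)$ on $\fB(\hilbH)$), and exponentiating gives $\pi\circ\exp(\delta)=\mathrm{Ad}(e^{iH(\delta)})\circ\pi$. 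Your reductions (charting the target sector through $\pi\circ\alpha_0^{-1}$ on the same Hilbert space, and the identity $\tilde s(\alpha_*\omega)=s(\eta_*\omega)$ for $\eta=\alpha_0^{-1}\alpha$) check out, as does the phase normalization. What each approach buys: the paper's argument is short and stays entirely within its own toolkit, at the cost of being a derivative-first argument whose rigor rests on the ``jointly continuous, multilinear derivative implies $\sC^1$'' principle; yours is longer and imports one external classical theorem, but it is fully explicit and, as a byproduct, essentially yields the smooth dependence of the intertwining unitary on $(\alpha,\Omega,\Psi)$ — i.e., most of Proposition \ref{prop:smooth-dependancy-unitary-automorphism} — since $U(\alpha,\Omega,\Psi)$ is $e^{iH(\Log(\alpha_0^{-1}\alpha))}$ up to a smoothly varying phase. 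Your self-diagnosis is accurate: the bounded-linear dependence of $H(\delta)$ on $\delta$ is the one point that must be checked, and it does hold.
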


\begin{proof}
  For fixed $\alpha$, consider a sector $S$ of $\fF$ and choose an irreducible representation
  $(\hilbH,\pi)$ of $\fF$ such that $S$ coincides with the space $\sP_\pi (\fF)$
  of pure $\pi$-normal states. Note that the representation $(\hilbH,\pi\alpha)$
  then is ireducible as well and that $\overline{S} = \sP_{\pi\alpha} (\fF)$. 
  Let $r : \bbP\hilbH \to \sP_\pi (\fF)$ be the uniform isomorphism from
  Cor.~\ref{cor:PH_superselection_metric_equivalence} and $\Psi \in \bbS\hilbH$.
  Consider the smooth chart $\tau_\Psi: \bbB_1 (\C\Psi) \to C_\Psi$ of the projective Hilbert space $\bbP\hilbH$
  around the ray $\C\Psi$ as defined in Theorem \ref{thm:holomorphic-atlas-projective-space}.
  For every $v \in C_\Psi$, the map
  $\gamma_v: \R \to \sP (\fF)$, $t \mapsto r \tau_\Psi^{-1} (t v)$ then is a smooth path in
  $\sP_\pi (\fF)$, and $\dot{\gamma}_v (0)$ is a tangent vector of $\sP (\fF)$  at $\varrho = r (\C\Psi)$.
  Let $\delta$ be an element of the Lie algebra $\sD$ of $\Aut (\fF)$. Now compute
  \begin{equation}
  \begin{split}
  \label{eq:derivative-action-automorphism-group-pure-states}
    T_{\alpha,\varrho} \Upsilon \, & \big(\delta,\dot{\gamma}_v (0)\big)  =
    \left. \frac{d}{ds} \right|_{s=0} \Upsilon \big( \alpha \exp(s\delta),\varrho\big) +
    \left. \frac{d}{dt} \right|_{t=0} \Upsilon \big( \alpha , \gamma_v (t)\big) = \\
    & = \left. \frac{d}{ds} \right|_{s=0} \varrho \circ \exp (-s\delta) \circ \alpha^{-1} +
        \left. \frac{d}{dt} \right|_{t=0} \langle  \tau_\Psi^{-1} (t v), \pi\alpha^{-1} (\,\cdot \,)  \tau_\Psi^{-1} (t v) \rangle   = \\
    & = - \alpha_* ( \delta^* \varrho ) + \langle \Psi, \pi\alpha (\,\cdot \,) v\rangle + \langle v, \pi\alpha (\,\cdot \,) \Psi \rangle \ .
  \end{split}  
  \end{equation}
  The right hand side is obvously jointly continuous in $\alpha$, $\Psi$, $\delta$ and $v$,
  so $\Upsilon$ is of class $\sC^1$. Since the right hand side of
  \eqref{eq:derivative-action-automorphism-group-pure-states} is in addition linear in $\Psi$, $\delta$ and $v$
  and continuously differentiable in $\alpha$ by the argument just provided, one concludes that $\Upsilon$ is even of class $\sC^\infty$
  and the claim is proved. 
\end{proof}
Note that the restricted maps
$\varphi_{*,i,\sP(\fA)} := \varphi_{*,i}|_{\sP (\fA)_{O_i}}: \sP (\fA)_{|O_i} \rightarrow O_i \times \sP(\fF)$ with $i\in I$
form a trivializing atlas of the pure state bundle $\projAPure: \sP(\fA) \to X$. Since each of the spaces
$O_i \times \sP(\fF)$ carries the natural product manifold structure,
Lemma \ref{lem:smoothness-action-automorphism-group-pure-state-space} entails that the compositions
\[
  \varphi_{*,i,\sP(\fA)} \circ \varphi_{*,j,\sP(\fA)}^{-1} : O_{ij} \times \sP (\fF)  \to O_{ij} \times \sP(\fF),
  \: (x,\omega) \mapsto  \big(x, \Upsilon (g_{ij}(x), \omega )\big ) 
\]
are smooth. Arguing as before, the pure state bundle $\projAPure: \sP(\fA) \to X$ therefore inherits a smooth fiber
bundle structure from the one on the bundle $p: \fA\to X$.

Finally we consider the GNS Hilbert bundle  $\projGNS : \ssH \to \sP (\fA) $ in the smooth case.
Given an atlas $(\varphi_i,O_i)_{i\in I}$ of smooth trivializations of $p: \fA\to X$ consider the local
trivializations $\chi_{i,S,v}:\sH|_{O_{i,S,v}}\to O_{i,S,v} \times \hilbH_S$ defined by \eqref{eq:local-trivialization-gns-hilbert-bundle}, where $S \in \ssec (\fF)$ is a sector and $v\in \bbS\hilbH_S$. The
sets $O_{i,S,v}$ are open in the pure state bundle $\sP(\fA)$ and inherit a manifold  structure
from the ambient  $\sP(\fA)$. Therefore, each of the cartesian products $O_{i,S,v} \times \hilbH_S$
carries the product manifold structure. The Hilbert bundle $\projGNS : \ssH \to \sP (\fA) $ can now
be endowed with a compatible smooth structure if we can yet show that the transition functions 
$h_{i,S,v;j,R,w} : O_{i,S,v}\cap O_{j,R,w}  \to \Unitary (\hilbH_S)$ given by
Eq.~\eqref{eq:transition-functions-gns-hilbert-bundle} are smooth. By
equation \eqref{eq:representation-transition-functions-gns-hilbert-bundle}, the transition function
$h_{i,S,v;j,R,w}$ is smooth whenever the maps $\Omega_{i,S,v}: O_{i,S,v}\to\hilbH_s$,
$\varrho \mapsto s_v (\varphi_{i,*} (\varrho))$ 
and $U : O \to \Unitary (\hilbH_S)$ from Prop.~\ref{prop:continuous-dependancy-unitary-automorphism}
applied to the case where 
$\fB =\fC=\fF$, $(\hilbH,\pi)=(\hilbH_S,\pi_S)$ and $(\tilde{\hilbH},\tilde{\pi})=(\hilbH_R,\pi_R)$ are smooth.
Smoothness of $\Omega_{i,S,v}$ is clear since by Corollary \ref{cor:purestate_mapsto_SH}
the section $s_v$ is smooth and since $\varphi_{i,*}$ is a smooth local trivialization of the pure state
bundle. Smoothness of $U$ is shown in the following, where we silently use notation from
Prop.~\ref{prop:continuous-dependancy-unitary-automorphism}.

\begin{prop}\label{prop:smooth-dependancy-unitary-automorphism}
  Let $\fB$ be a $C^*$-algebra, $(\hilbH,\pi)$ and $(\hilbH,\tilde{\pi})$ two
  irreducible representations, and let $O$  be the open and non-empty set 
  \[
    O = \qty{(\alpha, \Omega, \Psi) \in \Aut(\fB)_\textup{n} \times \bbS\hilbH \times \bbS\tilde{\hilbH}
      : \alpha_*r(\bbC \Omega) \in \bbB_2(\tilde r(\bbC\Psi))}.
  \]
  Then the following holds true:
  \begin{enumerate}[{\rm (i)}]
  \item\label{ite:O_Phi_smooth}
    The map $\Phi:O \rightarrow \bbS\tilde{\hilbH}$,
    $(\alpha, \Omega, \Psi) \mapsto \tilde s_{\Psi}\alpha_*r(\bbC\Omega)$ is smooth.
  \item\label{ite:O_U_smooth}
     The unique map $U:O \rightarrow \Unitary(\hilbH)$ making the diagram 
    \[
    \begin{tikzcd}[column sep = 1.5cm]
    \fB \arrow[r,"\alpha"] \arrow[d,"\xi_{\Omega}"']& \fB \arrow[d,"\tilde\xi_{\Phi(\alpha,\Omega,\Psi)}"]\\
    \hilbH \arrow[r,"{U{(\alpha,\Omega,\Psi)}}"']& \hilbH
   \end{tikzcd}
    \]
    commute for all $(\alpha, \Omega,\Psi) \in O$ is smooth.
   \end{enumerate} 
 \end{prop}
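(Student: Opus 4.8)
The plan is to deduce smoothness of both maps from the differential-geometric structures already established, reserving an explicit derivative computation only for the operator $U$, whose \emph{operator-norm} smoothness is the genuinely delicate point. For part~(\ref{ite:O_Phi_smooth}) I would exhibit $\Phi$ as a composition of smooth maps. First, the assignment $(\alpha,\Omega)\mapsto\alpha_* r(\bbC\Omega)$ is smooth: the canonical projection $p_{\bbS\hilbH}:\bbS\hilbH\to\bbP\hilbH$ is real analytic, the map $r:\bbP\hilbH\to\sP_\pi(\fB)_{\textup{n}}$ is biholomorphic (hence real analytic) for the canonical complex structure, and the action $\Upsilon:\Aut(\fB)\times\sP(\fB)\to\sP(\fB)$, $(\alpha,\omega)\mapsto\alpha_*\omega$, is smooth by Lemma~\ref{lem:smoothness-action-automorphism-group-pure-state-space}. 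It then remains to treat the $\Psi$-dependent section $\tilde s_\Psi$ jointly in $(\Psi,\mu)$ on the open set where $\mu\in\bbB_2(\tilde r(\bbC\Psi))$. I would factor $\tilde s_\Psi(\mu)=g_\Psi(\tilde r^{-1}(\mu))$, where $\tilde r^{-1}$ is real analytic and independent of $\Psi$, and where $g_\Psi(\scrl)$ denotes the unique unit vector in the ray $\scrl$ with positive overlap with $\Psi$. Choosing locally a real analytic section $\zeta$ of $p_{\bbS\hilbH}$ near a given ray (available from Corollary~\ref{cor:PH_mapsto_SH}), one has the explicit formula $g_\Psi(\scrl)=\frac{|\ev{\Psi,\zeta(\scrl)}|}{\ev{\Psi,\zeta(\scrl)}}\,\zeta(\scrl)$, which is manifestly jointly real analytic wherever $\ev{\Psi,\zeta(\scrl)}\neq 0$; this holds on the relevant domain because $\scrl\in\bbB_1(\bbC\Psi)$ forces the ray product to be positive. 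Composing these pieces yields smoothness of $\Phi$.

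For part~(\ref{ite:O_U_smooth}) I would start from the defining identity $U(\alpha,\Omega,\Psi)\,\pi(B)\Omega=\tilde\pi(\alpha(B))\Phi(\alpha,\Omega,\Psi)$, valid for all $B\in\fB$, together with two facts: the map $\xi_\Omega=\pi(\,\cdot\,)\Omega:\fB\to\hilbH$ is \emph{surjective} (for the pure state $r(\bbC\Omega)$ one has $\hilbH=\fB/\fN$), and by Theorem~\ref{thm:Pedersen} every $w\in\bbS\hilbH$ may be written $w=\pi(B)\Omega$ with $\norm{B}\le 1$. Fixing such a $B$ for a given $w$ and a smooth curve $(\alpha_s,\Omega_s,\Psi_s)$ through the base point, I would rewrite $U_s w=\tilde\pi(\alpha_s(B))\Phi_s-U_s\pi(B)(\Omega_s-\Omega_0)$ and form the difference quotient. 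The term $s^{-1}\big(\tilde\pi(\alpha_s(B))\Phi_s-\tilde\pi(\alpha_0(B))\Phi_0\big)$ converges to $\tilde\pi(\dot\alpha_0(B))\Phi_0+\tilde\pi(\alpha(B))\dot\Phi_0$ by smoothness of $s\mapsto\alpha_s$ (as a curve in the Banach--Lie group $\Aut(\fB)$ of Proposition~\ref{prop:Aut_Lie_group}), boundedness of $\tilde\pi$, and part~(\ref{ite:O_Phi_smooth}); and $U_s\pi(B)\,s^{-1}(\Omega_s-\Omega_0)$ converges to $U_0\pi(B)\dot\Omega_0$ using the operator-norm continuity of $U$ from Proposition~\ref{prop:continuous-dependancy-unitary-automorphism}. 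Crucially, since $\norm{B}\le 1$ uniformly in $w$, every one of these estimates is uniform over $w\in\bbS\hilbH$, so the difference quotient converges \emph{in operator norm}. This exhibits a directional derivative of $U$ in every direction, given on $w=\pi(B)\Omega$ by $\tilde\pi(\dot\alpha_0(B))\Phi+\tilde\pi(\alpha(B))\dot\Phi_0-U\pi(B)\dot\Omega_0$, a bounded operator whose norm is controlled by $\norm{\dot\alpha_0}+\norm{\dot\Phi_0}+\norm{\dot\Omega_0}$.

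To conclude $U\in\sC^\infty$ I would follow the bootstrap used in Lemma~\ref{lem:smoothness-action-automorphism-group-pure-state-space}. The derivative just computed is linear in the velocity $(\dot\alpha_0,\dot\Omega_0,\dot\Psi_0)$ and depends on the base point only through $\alpha$, $U$, $\Phi$, and the derivative of $\Phi$, each of which is at least continuous and, for $\alpha$ and $\Phi$, smooth. Joint continuity of this expression—again uniform over $w\in\bbS\hilbH$ by the bound $\norm{B}\le 1$—shows that $U$ is of class $\sC^1$; and since the derivative is linear in the velocity and continuously differentiable in the base point, iterating the same argument upgrades $U$ to $\sC^\infty$.

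I expect the main obstacle to be precisely the passage from strong (pointwise) smoothness, which would follow routinely, to smoothness in the operator norm, which is what is actually needed for the transition functions of the GNS Hilbert bundle in \S\ref{sec:smooth_case}. The resolution is the observation that the difference-quotient estimates can be made \emph{uniform over the unit sphere of $\hilbH$}; this is exactly where the surjectivity of $\xi_\Omega$ together with the norm bound $\norm{B}\le 1$ from Theorem~\ref{thm:Pedersen}, and the already-proven operator-norm continuity of $U$ from Proposition~\ref{prop:continuous-dependancy-unitary-automorphism}, are indispensable. Without uniform norm control on the solutions $B$ of $\pi(B)\Omega=w$, one would only recover strong smoothness, which does not suffice here.
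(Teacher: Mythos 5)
Your proof is correct and follows the same route as the paper's: part (\ref{ite:O_Phi_smooth}) is handled identically (smoothness of $(\alpha,\Omega)\mapsto\alpha_*r(\bbC\Omega)$ via Lemma \ref{lem:smoothness-action-automorphism-group-pure-state-space} and the holomorphic $r$, $\tilde r$, combined with the explicit phase formula $\tilde s_\Psi(\tilde r(\C\Omega))=\frac{\ev{\Omega,\Psi}}{\abs{\ev{\Omega,\Psi}}}\Omega$ treated jointly in $(\Psi,\Omega)$ and descended through the submersion $p_{\bbS\hilbH}$), and part (\ref{ite:O_U_smooth}) rests on the same identity $U(\alpha,\Omega,\Psi)\,\pi(B)\Omega=\tilde\pi(\alpha(B))\Phi(\alpha,\Omega,\Psi)$ differentiated along curves, yielding the same derivative formula, followed by the same $\sC^1$-to-$\sC^\infty$ bootstrap.

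The one substantive difference is that you make explicit a point the paper's proof passes over in silence. The paper establishes that $U(\,\cdot\,)v$ is continuously differentiable for each fixed $v\in\hilbH$ and then simply asserts ``Therefore $U$ is $\sC^1$,'' i.e., it jumps from strong (pointwise) differentiability to differentiability in the operator norm. You correctly identify this as the delicate step and close it by invoking Theorem \ref{thm:Pedersen} to choose, for every $w\in\bbS\hilbH$, a preimage $B$ with $\norm{B}\leq 1$, so that all difference-quotient estimates are uniform over the unit sphere and the limit exists in $\fB(\hilbH)$. This is exactly the right repair, and it is the same mechanism the paper itself uses to prove operator-norm \emph{continuity} of $U$ in Proposition \ref{prop:continuous-dependancy-unitary-automorphism}(\ref{ite:O_U_continuous}). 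One small caveat: in verifying continuity of the derivative in the base point, the element $B$ must be re-chosen as $\Omega_0$ varies (since $\pi(B)\Omega_0=w$ ties $B$ to the base point); you gloss over this, but it is handled exactly as in the continuity proof of Proposition \ref{prop:continuous-dependancy-unitary-automorphism}, by comparing at a common vector $w$ with two norm-one preimages, so the gap is cosmetic rather than substantive.
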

 \begin{proof}
   (\ref{ite:O_Phi_smooth})
   First observe that the map
   \[ f:\Aut(\fB)_\textup{n}\times\bbS\hilbH \to  \bbS\hilbH, \: (\alpha,\Omega) \mapsto \tilde r^{-1} \alpha_* r(\bbC\Omega)  \] 
   is smooth by Lemma \ref{lem:smoothness-action-automorphism-group-pure-state-space} and since $r$ and $\tilde r$ are holomorphic.
   Next recall from Section \ref{sec:background-projective-hilbert-spaces} that for a given unit vector $\Psi \in \bbS\hilbH$
   and the corresponding pure state $\psi = \tilde r (\C\Psi)$ the section $\tilde s_\Psi: \bbB_1 (\psi) \to \bbS\hilbH$
   maps every pure state of the form
   $\omega = \tilde r (\C\Omega)$ with $\Omega \in \bbS\hilbH\setminus \bbS\hilbH \cap C_\Psi$  to the unit vector 
   \[
     \tilde s_\Psi (\omega) =  \sigma_\Psi^{-1} \big( \tau_\Psi(\C\Omega), 1\big ) =
     \frac{\Omega}{\langle\Psi,\Omega\rangle \cdot \sqrt{1+\left\|  \frac{\Omega}{\langle\Psi,\Omega\rangle} -\Psi  \right\|^2}} =
     \frac{\langle\Omega,\Psi\rangle}{|\langle\Omega,\Psi\rangle|}\Omega \ .
   \]
   The right hand side is obviously a smooth map on the open set
   \[ W= \{ (\Psi,\Omega) \in \bbS\hilbH \times \bbS\hilbH : \: \Omega \notin C_\Psi\} \ . \]
   Since the projection $p: \bbS\hilbH \to \bbP\hilbH$ is a surjective submersion, this implies that 
   the map
   \[
      g : \tilde W \to  \bbS\hilbH, \:  (\Psi,\Omega) \mapsto \tilde s_\Psi (\tilde r (\C \Omega) )
   \]
   is smooth, where $\tilde W\subset \bbS \hilbH \times \bbP \hilbH$ is the open set
   \[
     \tilde W = (\id_{\bbS\hilbH}\times p) (W)=  \{(\Psi, \scrl) \in \bbS \hilbH \times \bbP \hilbH : \:  \scrl \in \bbB_1 (\C\Psi) \} \ .
   \]
   Now observe that the image of an element $(\alpha, \Omega, \Psi) \in O $ under the map $\Phi$ can be rewritten as 
   \begin{equation*}
     \begin{split}
       \Phi (\alpha, \Omega, \Psi) & =  \tilde s_{\Psi}\alpha_* r(\bbC\Omega) =
       \sigma_\Psi^{-1} \big( \tau_\Psi ( \tilde r^{-1} \alpha_* r(\bbC\Omega)) , 1\big ) =
      g \big( \Psi, f(\alpha,\Omega) \big) \ .
     \end{split}
   \end{equation*}
   By smoothness of $f$ and $g$, $\Phi$ then has to be smooth as well.

   (\ref{ite:O_U_smooth}) Fix a vector $v \in \hilbH$.
   Given $\Omega \in \bbS\hilbH $ choose $B\in \fB$ such that $v = \pi (B)\Omega$
   and put $O_\Omega = \{ (\alpha,\Psi)\in \Aut (\fB)_{\textup{n}}\times \bbS\hilbH : \: (\alpha,\Omega,\Psi) \in O \}$.
   The map
   \[
     U( \cdot, \Omega, \cdot)v : O_\Omega \to  \hilbH , \: (\alpha,\Psi) \mapsto  U( \alpha, \Omega, \Psi)v
   \]
   then is smooth by  (\ref{ite:O_Phi_smooth}) and since by
   Eq.~\eqref{eq:properties-unitary-intertwiner}
   \[
     U( \alpha, \Omega, \Psi)v=  U( \alpha, \Omega, \Psi)\pi (B)\Omega = \tilde\pi (\alpha (B)) U( \alpha, \Omega, \Psi) \Omega =
     \tilde\pi (\alpha (B)) \Phi (\alpha, \Omega, \Psi) \ .
   \]
   Next fix $(\alpha,\Psi)\in \Aut (\fB)_{\textup{n}}\times \bbS\hilbH$ such that
   $(\alpha,\Omega,\Psi)\in O$ and let $\gamma: \R \to \bbS\hilbH$ be a smooth path such that
   $\gamma (0)=\Omega$ and $(\alpha,\gamma (t),\Psi)\in O$ for all $t\in \R$. Then compute using Eq.~\eqref{eq:properties-unitary-intertwiner} again:
   \begin{equation*}
     \begin{split}
       \left. \frac{d}{dt} \right|_{t=0} &  U( \alpha, \gamma (t), \Psi)v
       = \lim_{t\to 0} \frac{U( \alpha, \gamma (t), \Psi)v - U( \alpha, \Omega , \Psi)v}{t} = \\
      = & \lim_{t\to 0} \frac{\tilde\pi (\alpha(B)) U( \alpha, \gamma (t), \Psi)\Omega -
           \tilde \pi(\alpha(B)) U( \alpha, \Omega , \Psi)\Omega}{t} = \\
       = & \lim_{t\to 0}\tilde\pi(\alpha(B)) \frac{U(\alpha,\gamma (t),\Psi)(\Omega -\gamma (t))
         + \big(\Phi(\alpha,\gamma (t),\Psi)-\Phi(\alpha,\Omega,\Psi)\big)}{t} = \\
       = &  \tilde \pi(\alpha(B)) \left(  \frac{\partial \Phi(\alpha,\Omega,\Psi)}{\partial \Omega}
           - U( \alpha, \Omega , \Psi) \right)  \cdot \gamma^\prime (0)
     \end{split}
   \end{equation*}
   The right hand side is continuous in $(\alpha,\Omega,\Psi) \in O$, hence
   the map $U( \, - \, )v : O \to  \hilbH$ is continuously differentiable for all
   $v\in \hilbH$. Therefore $U$ is $\sC^1$.  By induction one concludes that $U$ is
   $\sC^\infty$.
 \end{proof}

 \noindent
 In summary, we obtain the following result.

 \begin{thm}
   Let $p:\fA \to X$ be a smooth $C^*$-algebra bundle. Then the associated dual bundle
   $\projAStar : \fA^* \to X$, the pure state bundle $\projAPure: \sP(\fA) \to X$
   and the GNS Hilbert bundle
   $\projGNS:\sH\to X$ all carry natural smooth structures compatible with their underlying
   fiber bundle structures. In particular this means for the GNS Hilbert bundle that there exists an
   atlas of smooth local trivializations whose transition functions have values in the unitary group
   of the (local) typical fiber Hilbert space. 
 \end{thm}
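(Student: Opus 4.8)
The plan is to recognize that this concluding theorem is a synthesis of the smoothness results established in the preceding paragraphs, so the proof reduces to assembling them and verifying in each case that one obtains a trivializing atlas whose transition functions are smooth maps into the relevant structure group. Throughout I would fix a smooth trivializing atlas $(\varphi_i, O_i)_{i \in I}$ of $p:\fA \to X$ as in Definition \ref{def:smooth-c*-algebraic-fiber-bundle}, so that the transition functions $g_{ij}:O_{ij} \to \Aut(\fF)$ are smooth by hypothesis, and then treat the three associated bundles in turn.

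For the dual bundle $\projAStar : \fA^* \to X$, the induced atlas $(\varphi_{*,i}, O_i)_{i \in I}$ has transition functions $g_{*,ij} = \tau \circ g_{ij}$, where $\tau:\Aut(\fF) \to \GL(\fF^*)$, $\alpha \mapsto (\alpha^{-1})^*$. I would observe that $\tau$ is smooth because inversion in the Banach-Lie group $\Aut(\fF)$ (Proposition \ref{prop:Aut_Lie_group}) is smooth and the adjoint assignment $\fB(\fF) \to \fB(\fF^*)$, $\beta \mapsto \beta^*$ is a bounded linear map, hence smooth; composing with the smooth $g_{ij}$ makes each $g_{*,ij}$ smooth, and therefore the overlap maps $\varphi_{*,i} \circ \varphi_{*,j}^{-1}$ are smooth. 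Restricting these trivializations to the pure state bundle $\projAPure : \sP(\fA) \to X$ yields the atlas $(\varphi_{*,i,\sP(\fA)}, O_i)_{i \in I}$ with overlap maps $(x,\omega) \mapsto (x, \Upsilon(g_{ij}(x), \omega))$, whose smoothness follows from Lemma \ref{lem:smoothness-action-automorphism-group-pure-state-space} (smoothness of the action $\Upsilon$) composed with the smooth $g_{ij}$.

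For the GNS Hilbert bundle $\projGNS : \sH \to \sP(\fA)$, I would work with the local trivializations $\chi_{i,S,v}$ of \eqref{eq:local-trivialization-gns-hilbert-bundle} and invoke the formula \eqref{eq:representation-transition-functions-gns-hilbert-bundle}, which writes each transition function as $h_{i,S,v;j,R,w}(\varrho) = U\big(g_{ij}(p(\varrho)), \Omega_{j,R,w}(\varrho), \Omega_{i,S,v}(\varrho)\big)$. I would then check that all three arguments depend smoothly on $\varrho$: the map $g_{ij}\circ p$ is smooth since $g_{ij}$ is smooth and $p$ is a smooth projection of the already-constructed smooth pure state bundle; the map $\Omega_{i,S,v} = s_v \circ \varphi_{i,*}$ is smooth because the section $s_v$ is smooth (Corollary \ref{cor:purestate_mapsto_SH}) and $\varphi_{i,*}$ is a smooth local trivialization of $\sP(\fA)$; and $U$ is smooth by Proposition \ref{prop:smooth-dependancy-unitary-automorphism}. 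Composition then gives smoothness of $h_{i,S,v;j,R,w}$ with values in $\Unitary(\hilbH_S)$, which is precisely the claimed atlas of smooth local trivializations with unitary-valued transition functions.

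The genuinely hard step was not in this bookkeeping but was already discharged in Proposition \ref{prop:smooth-dependancy-unitary-automorphism}: upgrading the intertwining unitary $U$ from continuous to smooth, which required the explicit differentiation argument resting on irreducibility of the representations and the Kadison transitivity theorem. Granting that result together with Lemma \ref{lem:smoothness-action-automorphism-group-pure-state-space}, the present theorem follows by composing smooth maps as outlined, and the only care needed is to confirm that the product manifold structures on the trivializing targets $O_i \times \fF^*$, $O_i \times \sP(\fF)$, and $O_{i,S,v} \times \hilbH_S$ are the natural ones so that the verified smoothness of transition functions indeed defines the asserted smooth fiber bundle structures.
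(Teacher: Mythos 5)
Your proposal is correct and follows essentially the same route as the paper: the paper likewise treats the theorem as a synthesis of the preceding discussion, establishing smoothness of $g_{*,ij} = \tau \circ g_{ij}$ for the dual bundle, invoking Lemma \ref{lem:smoothness-action-automorphism-group-pure-state-space} for the pure state bundle, and using the representation \eqref{eq:representation-transition-functions-gns-hilbert-bundle} together with Proposition \ref{prop:smooth-dependancy-unitary-automorphism} and smoothness of $s_v$ for the GNS Hilbert bundle. Your added justification that $\tau$ is smooth (bounded linear adjoint composed with smooth inversion in the Banach--Lie group $\Aut(\fF)$) fills in a small detail the paper merely asserts, and your identification of Proposition \ref{prop:smooth-dependancy-unitary-automorphism} as the genuinely hard ingredient matches the paper's structure.
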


%\input{subbundle}
%!TEX root = FamiliesGNS.tex

\section{Examples of quantum systems}\label{sec:examples}

We give two simple physical examples to illustrate how the fiberwise GNS construction may be used and how one might obtain a norm-continuous family of pure states. The first is a finite-dimensional example coming from a spin-$\frac{1}{2}$ particle in a magnetic field. The second is an infinite-dimensional example of a non-interacting system obtained by copying the finite-dimensional example at each point of a countable lattice. 

\subsection{Particle in a magnetic field}
\label{subsec:0d_example}

In this section, we consider the trivial $C^*$-algebra bundle 
\[\underline{M}_2(\C):=S^2 \times M_2(\bbC) \rightarrow S^2.\] 
The continuous family of states will be the ground states of a family of self-adjoint operators over $S^2$. To this end, we  
 define a smooth section $H\colon S^2 \rightarrow M_2(\bbC)$ by the Hamiltonians
\[
H_\r = \r \cdot \bm{\sigma} = \mqty(z & x - iy \\ x + iy & -z),
\]
where $\r = (x, y, z) \in S^2$ and $\bm{\sigma} = (\sigma_x, \sigma_y, \sigma_z)$ are the Pauli matrices. Physically, the Hamiltonian $H_\r$ corresponds to the energy of a spin-$\frac{1}{2}$ particle in a magnetic field pointing in the direction $\r$. It is easily verified that the spectrum of this matrix is $\sigma(H_\r) = \qty{-1, 1}$ and that
\begin{equation}\label{eq:Psi_north_pole_chart}
\Psi_\r = \frac{1}{\sqrt{2 + 2z}}\mqty(-x + iy\\ z+1), \,\, z \neq -1 \qqtext{and} \Psi_\r = \mqty(1\\0), \, \, z = -1
\end{equation}
gives the ground state unit vector of $H_\r$. The map $\Psi:S^2 \rightarrow \bbC^2$ is continuous everywhere except when $\r = (0,0,-1)$. It is helpful to note that we can also redefine the ground state unit vector $\Psi$ to be continuous everywhere except when $\r = (0,0,1)$:
\begin{equation}\label{eq:Psi_south_pole_chart}
\Psi_\r = \frac{1}{\sqrt{2-2z}}\mqty(z-1\\x + iy), \, \, z \neq 1 \qqtext{and} \Psi_\r = \mqty(0\\1), \, \, z = 1.
\end{equation}
In either case, the state $\psi_\r(A) = \ev{\Psi_\r, A\Psi_\r}$ may be expressed as
\[
\psi_\r = \tau_0 - \r \cdot \bm{\tau},
\]
where $\tau_0, \tau_x, \tau_y, \tau_z$ is the dual basis of $I, \sigma_x, \sigma_y, \sigma_z$ and $\bm{\tau} = (\tau_x, \tau_y, \tau_z)$. The state $\psi_\r$ is pure since $M_2(\bbC)$ acts irreducibly on $\bbC^2$. Thus, $\psi:S^2 \rightarrow M_2(\bbC)^*$ defines a manifestly smooth section of pure states of the dual bundle, for which we can perform the fiberwise GNS construction of \S\ref{sec:fiberwiseGNS}. In this case, since our $C^\ast$-algebra bundle is finite dimensional, the Hilbert bundle $\cH$ that we obtain is simply the quotient of $\underline{M}_2(\C)$ by $\mathfrak{N} = \{M \mid \psi(M^\ast M) = 0\}$. 

In Remark~\ref{rem:bundletrivial}, we noted that, in the infinite dimensional case, the Hilbert bundle obtained via the fiberwise GNS construction is necessarily trivial. We show that in this finite dimensional example $\cH$ is nontrivial.
First, consider the subbundle $\cE\subset S^2\times\C^2:=\underline{\C}^2$ defined as the kernel of the bundle map $I + H$.  
The fiber above $\r\in S^2$ is the $1$-dimensional subspace of ground states of $H_\r$.
The map
\[S^2\to\cE, \quad \r\mapsto\begin{pmatrix} -x + iy\\ z+1\end{pmatrix}\] is a section of 
$\cE$. This section has a unique zero at $z = -1$, and one can check that this intersection with the zero section
is transverse. It follows that the first Chern class $c_1(\cE)$ is Poincar\'e dual to a point in $S^2$, 
hence $\int_{S^2} c_1(\cE) = 1$. Therefore, $c_1(\cE)$ is non-trivial. % and so neither is $\cE$. %, so that $c_1(\cE)$ must be a nontrivial cohomology class. 

%Note that $\fA = (\underline{\C}^2)^*$, where $\underline{\C}^2$ is the trivial bundle over $S^2$ with fiber 

Now let $\cF$
be the orthonognal complement of $\cE$ in the trivial bundle $\underline{\C}^2$ so that the fiber $\cF_{\r}$ consists of vectors orthogonal to $\Psi_\r$. We then have 
\[\underline{M}_2(\C)\cong \underline{\C}^2\otimes(\underline{\C}^2)^\ast \cong\underline{\C}^2\otimes(\cE^\ast
\oplus\cF^\ast) \cong (\underline{\C}^2\otimes\cE^\ast)\oplus (\underline{\C}^2\otimes\cF^\ast).\] 
Since \[\mathfrak{N}_\r = \{M \mid \langle M\Psi_\r,M\Psi_\r\rangle = 0\} = \{M \mid M\cE_\r = 0\},\] it follows that 
$\mathfrak{N} \cong \underline{\C}^2\otimes\cF^\ast$. Thus, 
\[\cH=\underline{M}_2(\C)/\mathfrak{N}\cong\underline{\C}^2\otimes\cE^\ast,\] implying that $c_1(\cH) = 2c_1(\cE^\ast)\cong -2c_1(\cE)$ is a nontrivial cohomology class.

\begin{rem}
More generally, for any $n$-dimensional Hilbert bundle $\cC$ over $X$, we obtain a $C^*$-algebra bundle as the endomorphism bundle $\fB(\cC)$ of $\cC$. Given a continuous section $H \colon X \to \fB(\cC)$ such that $H_x$ is a self-adjoint operator whose smallest eigenvalue has a one-dimensional eigenspace $\cE_x$ (i.e., $H_x$ is gapped), the $\cE_x$ assemble into a line bundle $\cE$ over $X$. As above, the assignment $A \in \fB(\cC_x) \mapsto \langle \psi_x, A\psi_x\rangle$ for any unit vector $\psi_x \in\cE_x$ gives a continuous section of ground states $\psi \colon X \to \fB(\cC)^*$. The fiberwise GNS construction for $\psi$ gives a Hilbert bundle $\cH$ whose first Chern class  satisfies $c_1(\cH) = c_1(\cC) -nc_1(\cE)$. This is shown as in our example above by establishing that $\cH \cong \cC \otimes \cE^*$.
\end{rem}

\subsection{Non-interacting lattice system}\label{subsec:1d_system}

Perhaps the simplest way of obtaining a norm-continuous family of pure states of an infinite-dimensional $C^*$-algebra $\frA$ is to start with a nonzero irreducible representation $(\cH, \pi)$ and a continuous map $\Omega:X \rightarrow \bbS\cH$ and to lift $\Omega$ to $\sP(\frA)$ using Lemma \ref{lem:vector_state}. Below we give an example of a different flavor.

We copy the finite-dimensional model above to each vertex of a lattice $\bbZ^d$ for some positive $d \in \bbN$. The $C^*$-algebra of this system is the quasi-local algebra
\[
\fA = \overline{\bigcup_{\Lambda \in \wp_f(\bbZ^d)} \fA(\Lambda)} \qqtext{for} \fA(\Lambda) = \bigotimes_{v \in \Lambda} M_2(\bbC),
\]
where $\wp_f(\bbZ^d)$ is the set of all finite subsets of $\bbZ^d$. We also define the local algebra as the dense $*$-algebra $\fA_\tn{loc} = \bigcup_{\Lambda \in \wp_f(\bbZ^d)} \fA(\Lambda)$. For more on quasi-local algebras obtained from lattices see, for example, \cite[Ch.\ 3]{NaaijkensQSSIL}. 

Our parameter space is $X = \prod_{v \in \bbZ^d} S^2$. We will consider both the product and box topologies on $X$. For each $\r = (\r_v)_{v \in \bbZ^d} \in X$, we will construct the ground state $\omega_\r \in \sP(\fA)$ of the interaction\footnote{Although it is standard terminology, ``interaction'' is a bit of a misnomer in this case since the lattice sites are non-interacting.} $\Phi_\r:\wp_f(\bbZ^d) \rightarrow \frA$, defined as 
\[
\Phi_\r(\Lambda) = \begin{cases} H_{\r_v} & \Lambda = \qty{v} \tn{ for some } v \in \bbZ^d, \\ 0 & \tn{otherwise.} \end{cases}
\]
Consider the map $\r \mapsto \Phi_\r$ into the space of bounded, finite-range interactions with the norm
\[
\norm{\Phi} \defeq \sup_{v \in \bbZ^d} \sum_{v \in \Lambda \in \wp_f(\bbZ^d)} \norm{\Phi(\Lambda)}.
\]
Given $\r, \r' \in X$, we have
\[
\norm{\Phi_\r - \Phi_{\r'}} = \sup_{v \in \bbZ^d} \norm{H_{\r_v} - H_{\r_v'}} = \sup_{v \in \bbZ^d} \norm{(\r_v - \r_v') \cdot \bm{\sigma}} = \sup_{v \in \bbZ^d} \norm{\r_v - \r_v'},
\]
This implies that $\r \mapsto \Phi_\r$ is continuous when $X$ is given the box topology, but discontinuous when $X$ is given the product topology. Interestingly, it also shows that composing with the diagonal map $S^2 \rightarrow X$ yields a continuous function into the space of interactions, even though the diagonal map is discontinuous with respect to the box topology.

We construct the ground state of $\Phi_\r$ by an application of the following theorem. The analogous statement for not necessarily pure states is a rephrasal of Theorem 2 in \cite{TakedaInductiveLimits}. The proof of Proposition 5 in the same paper shows that the result holds for pure states.

\begin{thm}\label{thm:pure_state_inductive}
Let $(\fA_i)_{i \in I}$ be an inductive system of $C^*$-algebras indexed by the directed set $I$ and let $\fA$ be the inductive limit. If $\omega_i \in \sP(\fA_i)$ for all $i \in I$ and $\omega_i = \omega_j|_{\fA_i}$ whenever $i \leq j$, then there exists a unique pure state $\omega \in \sP(\fA)$ such that $\omega_i = \omega|_{\fA_i}$ for all $i \in I$.
\end{thm}

Thus, it suffices to define a compatible system of pure states $\omega_{\r, \Lambda} \in \sP(\fA(\Lambda))$. Given $\Lambda \in \wp_f(\bbZ^d)$ we define $\omega_{\r, \Lambda}$ to be the vector state represented by 
\[
\Omega_{\r,\Lambda} = \underset{v\in \Lambda}{\otimes} \Psi_{\r_v} \in \bigotimes_{v \in \Lambda} \bbC^2,
\]
that is, $\omega_{\r, \Lambda} (A) = \langle \Omega_{\r,\Lambda} ,A\Omega_{\r,\Lambda} \rangle$ for all $A\in \fA(\Lambda)$.
The state $\omega_{\r, \Lambda}$ is pure since $\fA(\Lambda)$ acts irreducibly on $\bigotimes_{v \in \Lambda} \bbC^2$. Observe that
\begin{equation}\label{eq:omega_simple_tensor}
\omega_{\r, \Lambda}\qty(\underset{v \in \Lambda}{\otimes} A_v) = \prod_{v \in \Lambda} \psi_{\r_v}(A_v)
\end{equation}
for all simple tensors $\otimes_{v \in \Lambda} A_v$. In particular, if $\Lambda_1 \subset \Lambda_2$, then the above formula implies that $\omega_{\r, \Lambda_1} = \omega_{\r, \Lambda_2}|_{\fA(\Lambda_1)}$ since $\psi_{\r_v}(I) = 1$. Theorem \ref{thm:pure_state_inductive} now yields a unique pure state $\omega_\r \in \sP(\fA)$ that restricts to $\omega_{\r,\Lambda}$ on $\fA(\Lambda)$. We consider the continuity properties of $\omega:X \rightarrow \sP(\frA)$.

We show that $\omega$ is norm-continuous when $X$ is given the box topology. Observe that for any $\r, \r' \in X$ and $A \in \fA(\Lambda)$,
\begin{equation}\label{eq:norm_continuity_lattice_states}
\begin{aligned}
\norm{\omega_\r(A) - \omega_{\r'}(A)} &= \norm{\omega_{\r, \Lambda}(A) - \omega_{\r', \Lambda}(A)} \leq \norm{A} \norm{\omega_{\r, \Lambda} - \omega_{\r', \Lambda}}\\
&\leq 2 \norm{A} \norm{\Omega_{\r, \Lambda} - \Omega_{\r', \Lambda}} \leq 2 \norm{A} \sum_{v \in \Lambda} \norm{\Psi_{\r_v} - \Psi_{\r_v'}},
\end{aligned}
\end{equation}
where in the last step we have used multilinearity of the tensor product, the triangle inequality, and the definition of the norm on the tensor product of Hilbert spaces. Given $\r \in X$ and $\varepsilon > 0$, we may define $\Psi_{\r_v}$ by \eqref{eq:Psi_north_pole_chart} or \eqref{eq:Psi_south_pole_chart} depending on whether $\r_v$ is the north pole or the south pole, or neither, in which case the choice does not matter. Then we may choose a neighborhood $U$ of $\r$ such that
\[
\sum_{v \in \bbZ^d} \norm{\Psi_{\r_v} - \Psi_{\r_v'}} < \frac{\varepsilon}{2}.
\]
for all $\r' \in U$. Again, we see that it is crucial to use the box topology on $X$. With the above inequality, \eqref{eq:norm_continuity_lattice_states} implies that $\norm{(\omega_\r - \omega_{\r'})(A)} < \varepsilon \norm{A}$ for all $A \in \fA_\tn{loc}$, so $\norm{\omega_\r - \omega_{\r'}} \leq \varepsilon$ by the density of $\fA_\tn{loc}$ in $\fA$. This proves norm-continuity of $\omega:X \rightarrow \sP(\frA)$.

When $X$ is given the product topology, we can show that $\r \mapsto \omega_\r$ is weakly$^*$ continuous. It follows from equation \eqref{eq:omega_simple_tensor} and continuity of $\psi:S^2 \rightarrow \sP(M_2(\bbC))$ that $\r \mapsto \omega_\r(A)$ is continuous with respect to the product topology for all simple tensors $A = \otimes_{v \in \Lambda} A_v \in \frA(\Lambda)$. Since any element of $\frA_\tn{loc}$ can be written as a linear combination of such simple tensors, we see that $\r \mapsto \omega_\r(A)$ is continuous for all $A \in \frA_\tn{loc}$. Continuity of $\r \mapsto \omega_\r(A)$ for all $A \in \frA$ can then be checked using density of $\frA_\tn{loc}$ in $\frA$.

Norm-continuity fails, however, when $X$ is given the product topology. This will follow from the theorem below. This result appears in a much more general form as Corollary 2.6.11 in \cite{BratteliRobinsonOAQSMII}. We supply a different proof of the specific part we need under milder assumptions.

\begin{thm}\label{thm:quasilocal_superselection}
  Let $\frA$ be a $C^*$-algebra and let $(\frA_\alpha)_{\alpha \in I}$ be a family
  of $C^*$-subalgebras of $\frA$ such that $\frA = \overline{\bigcup_{\alpha \in I} \frA_\alpha}$. Suppose there exists a symmetric relation $\perp$ on $I$ such that $\alpha \perp \beta$ implies
\[
[\frA_\alpha, \frA_\beta] = \qty{0}.
\]
If $\omega_1, \omega_2 \in \sP(\frA)$ are in the same superselection sector, then for any $\varepsilon > 0$ there exists $\alpha \in I$ such that
\[
\abs{\omega_1(A) - \omega_2(A)} < \varepsilon \norm{A}
\]
for any $\beta \perp \alpha$ and $A \in \frA_\beta$.
\end{thm}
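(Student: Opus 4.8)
The plan is to transport everything into the irreducible GNS representation $(\cH,\pi,\Omega)$ of $\omega_1$ and to exploit the commutation hypothesis together with a careful choice of the intertwining element supplied by Theorem~\ref{thm:superselection_sector_equivalences}. Since $\omega_1$ and $\omega_2$ lie in the same superselection sector, that theorem shows $\omega_2\in\sP_{\pi}(\frA)$, so by Proposition~\ref{prop:pure_folia_vector_state} there is a unit vector $\Psi\in\cH$ with $\omega_2(A)=\langle\Psi,\pi(A)\Psi\rangle$, while $\omega_1(A)=\langle\Omega,\pi(A)\Omega\rangle$. I would then study $\omega_2(A)-\omega_1(A)=\langle\Psi,\pi(A)\Psi\rangle-\langle\Omega,\pi(A)\Omega\rangle$ and aim to bound it by $\varepsilon\|A\|$ once $A$ lives in a subalgebra $\frA_\beta$ commuting with a good approximant of the relevant vectors.

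The decisive preliminary step is to realise the map $\Omega\mapsto\Psi$ by a \emph{contraction} in $\frA$. Choosing a unitary $T\in\fB(\cH)$ with $T\Omega=\Psi$ and applying Theorem~\ref{thm:Pedersen} with $n=1$ and $x_1=\Omega$ produces $B\in\frA$ with $\|B\|\le\|T\|=1$ and $\pi(B)\Omega=\Psi$; in particular $\omega_1(B^*B)=\|\pi(B)\Omega\|^2=1$. The point of insisting on $\|B\|\le1$ is the saturation identity: since $\pi(B)$ is a contraction with $\|\pi(B)\Omega\|=\|\Omega\|$, positivity of $I-\pi(B)^*\pi(B)$ forces $\pi(B^*B)\Omega=\Omega$ exactly. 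When $\frA$ is unital one may simply take $B$ unitary, so this handles both cases at once.

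Next I would invoke density: as $\bigcup_\alpha\frA_\alpha$ is norm dense in $\frA$, for any prescribed $\delta>0$ there is a single index $\alpha$ and an element $B_0\in\frA_\alpha$ with $\|B-B_0\|<\delta$ (no directedness of the family is needed). Fixing any $\beta\perp\alpha$ and $A\in\frA_\beta$, the commutation hypothesis gives $[\pi(B_0),\pi(A)]=[\pi(B_0^*),\pi(A)]=0$ since $B_0,B_0^*\in\frA_\alpha$. The estimate then proceeds in three controlled moves: replacing the outer factors $\pi(B)\Omega=\Psi$ by $\pi(B_0)\Omega$ costs $O(\delta\|A\|)$; sliding $\pi(B_0)$ through $\pi(A)$ rewrites $\langle\pi(B_0)\Omega,\pi(A)\pi(B_0)\Omega\rangle$ as $\langle\pi(B_0^*B_0)\Omega,\pi(A)\Omega\rangle$ with no error; and finally replacing $\pi(B_0^*B_0)\Omega$ by $\Omega$ costs $\|\pi(B_0^*B_0)\Omega-\pi(B^*B)\Omega\|\,\|A\|\le\|B_0^*B_0-B^*B\|\,\|A\|=O(\delta\|A\|)$, using precisely the saturation identity $\pi(B^*B)\Omega=\Omega$. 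Summing, $|\omega_1(A)-\omega_2(A)|\le C\delta\|A\|$ for an absolute constant $C$, so choosing $\delta$ with $C\delta<\varepsilon$ gives the claim with $\alpha$ depending only on $\varepsilon$, uniformly in $\beta\perp\alpha$ and $A\in\frA_\beta$.

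The main obstacle is hidden in the second paragraph and concerns the non-unital case. If one takes an arbitrary intertwiner $B$ with merely $\omega_1(B^*B)=1$, the final move generates the term $\|(\pi(B_0^*B_0)-I)\Omega\|^2=\omega_1((B_0^*B_0)^2)-2\omega_1(B_0^*B_0)+1$, whose leading piece $\omega_1((B^*B)^2)$ is \emph{not} close to $1$ for a non-unitary $B$ and cannot be controlled by shrinking $\delta$. Forcing $B$ to be a contraction, whence $\pi(B^*B)\Omega=\Omega$, is exactly what kills this variance term and makes the bound uniform; recognising this as the crux is the only subtle point, the remaining steps being routine triangle-inequality bookkeeping.
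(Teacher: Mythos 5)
Your proof is correct, and it takes a genuinely different (though related) route from the paper's. The paper passes to the unitization $\frA_1$, invokes Theorem \ref{thm:superselection_sector_equivalences} to get a unitary $U=\lambda+B$ with $\lambda\in\bbC$ and $B\in\frA$ satisfying $\omega_2=U\cdot\omega_1$, approximates only the $\frA$-part $B$ by some $C\in\frA_\alpha$, and then slides $V=\lambda+C$ past $A$ using $[V,A]=0$; the scalar summand commutes for free, which is what makes the unitization harmless. You instead stay inside $\frA$ and inside the GNS representation of $\omega_1$, using Theorem \ref{thm:Pedersen} to manufacture a \emph{contraction} $B\in\frA$ with $\pi(B)\Omega=\Psi$, and you correctly identify and exploit the saturation identity $\pi(B^*B)\Omega=\Omega$ (which follows from positivity of $I-\pi(B)^*\pi(B)$ together with $\ev{\Omega,(I-\pi(B)^*\pi(B))\Omega}=0$) to make the final replacement $\pi(B_0^*B_0)\Omega\approx\Omega$ cost only $O(\delta)$. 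Your diagnosis of why a generic intertwiner with merely $\omega_1(B^*B)=1$ would not suffice is exactly right, and your three-step estimate is sound: each move costs $O(\delta\norm{A})$, the commutation $[\pi(B_0^{(*)}),\pi(A)]=0$ holds because $\frA_\alpha$ is $*$-closed, and the single index $\alpha$ obtained from density of $\bigcup_\alpha\frA_\alpha$ works uniformly over all $\beta\perp\alpha$. The trade-off between the two arguments: the paper's version gives the cleaner constant ($2\norm{B-C}$, hence $\varepsilon/2$ suffices) and avoids any spectral reasoning, at the price of a detour through the unitization; yours avoids the unitization entirely but needs the norm-controlled Kadison transitivity theorem and the saturation observation, which is the genuinely nontrivial extra ingredient.
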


Typically, $I$ is a causal index set  and the relation $\perp$ is defined by $\alpha \perp \beta$ if and only if $\alpha \subset \beta^\perp$ (see Remark \ref{rem:causal_structures}).

\begin{proof}
Since $\omega_1, \omega_2 \in \sP(\frA)$ are in the same superselection sector, they extend to pure states $\tilde \omega_1$, $\tilde \omega_2$ on the unitization $\frA_1$ that are in the same superselection sector. Theorem \ref{thm:superselection_sector_equivalences} yields a unitary $U \in \frA_1$ such that $\tilde \omega_2 = U \cdot \tilde \omega_1$. We may write $U = \lambda + B$ where $\lambda \in \bbC$ and $B \in \frA$.  There exists $\alpha \in I$ and $C \in \frA_\alpha$ such that $\norm{B - C} < \varepsilon/2$. Set $V = \lambda + C$. Given $\beta \in I$ such that $\beta \perp \alpha$ and $A \in \frA_\beta$, we have
\begin{align*}
\abs{\omega_1(A) - \omega_2(A)} &= \abs{\tilde \omega_1(A) - \tilde \omega_1(U^*AU)}\\
&\leq \abs{\tilde \omega_1(U^*UA) - \tilde\omega_1(U^*AV)} + \abs{\tilde\omega_1(U^*AV) - \tilde\omega_1(U^*AU)}\\
&\leq \abs{\tilde\omega_1(U^*(U-V)A)} + \abs{\tilde\omega_1(U^*A(V - U))}\\
&\leq 2\norm{B - C}\norm{A} < \varepsilon \norm{A},
\end{align*}
as desired. Note that we used the fact that $[V,A] = [\lambda + C, A]= 0$ since $\alpha \perp \beta$.
\end{proof}

In our case, the index set is $I = \wp_f(\bbZ^d)$ and $\Lambda_1 \perp \Lambda_2$ if and only if $\Lambda_1 \cap \Lambda_2 = \varnothing$. Intuitively, the theorem states that if two pure states on a quasi-local $C^*$-algebra are in the same superselection sector, then they are equal to each other ``at infinity.'' To prove that our map $\omega$ is not norm-continuous when $X$ has the product topology, we prove that the composition of $\omega$ with the diagonal map $\Delta: S^2 \rightarrow X$ is not norm-continuous. In fact, for distinct $\r, \s \in S^2$, we %use the contrapositive of Theorem \ref{thm:quasilocal_superselection} to 
prove that $\omega_{\Delta(\r)}$ and $\omega_{\Delta(\s)}$ are in different superselection sectors. Since   $S^2$ is path connected, the diagonal map is continuous for the product topology, and the superselection sectors are the path components of $\sP(\frA)$ with the norm topology by Theorem \ref{thm:superselection_sector_equivalences}, this implies that $\omega$ is not norm-continuous when $X$ has the product topology. 

%{\color{cyan} Since $S^2$ is path connected and the diagonal map is continuous for the product topology, Theorem~\ref{thm:superselection_sector_equivalences} then implies that $\omega \circ \Delta$ is not norm continuous. }\agnes{added this sentence}

Given any $\Lambda \in \wp_f(\bbZ^d)$, choose $v \in \bbZ^d \setminus \Lambda$ and consider $H_\r \in \frA(\qty{v})$. Then
\[
\abs{\omega_{\Delta(\r)}(H_\r) - \omega_{\Delta(\s)}(H_\r)} = \abs{\psi_{\r}(H_\r) - \psi_\s(H_\r)} = \abs{1 - \r \cdot \s} > 0.
\]
If $0 < \varepsilon < \abs{1 - \r \cdot \s}$, then the above line shows that for no $\Lambda \in \wp_f(\bbZ^d)$ can we achieve 
\[
\abs{\omega_{\Delta(\r)}(A) - \omega_{\Delta(\s)}(A)} < \varepsilon \norm{A}
\]
for all $\Lambda' \in \wp_f(\bbZ^d)$ such that $\Lambda \cap \Lambda' = \varnothing$ and $A \in \frA(\Lambda')$. Therefore Theorem \ref{thm:quasilocal_superselection} implies that $\omega_{\Delta(\r)}$ and $\omega_{\Delta(\s)}$ are in different superselection sectors. 
%Since the superselection sectors are the path components of $\sP(\frA)$ with the norm topology by Theorem \ref{thm:superselection_sector_equivalences}, this implies that $\omega$ is not norm-continuous when $X$ has the product topology. 

\begin{rem}
Note that this is in contrast to the case of the interaction $\Phi$, where $\Phi$ failed to be continuous with the product topology on $X$, but continuity was restored when we composed with the diagonal map.
\end{rem}

\begin{rem}
It is perhaps worth observing that the box topology has a certain ``quasi-local'' character to it. The intuition behind the topologies used here is that the box topology is fine enough to allow only local deformations to be continuous, which correspond to norm-continuous deformations of states.  Indeed, the connected component of $\r \in X$ is the set of all $\r'$ for which $\r_v \neq \r_v'$ for only finitely many $v \in \bbZ^d$. If we had used the product topology on $X$, then we would be allowing continuous non-local deformations and $\omega$ would only be weakly$^*$ continuous. 

More generally, if $X_1,\ldots, X_n,\ldots $ is a countably infinite collection of Hausdorff, regular, path-connected spaces and $X = \prod_{n=1}^\infty X_n$ with the box topology, then for any $x = (x_n) \in X$, the set
\[
C(x) = \qty{y \in X: x_n \neq y_n \tn{ for only finitely many $n$}}
\]
is the path component of $X$ containing $x$. If we define
\[
C_n(x) = \prod_{k=1}^n X_k \times \prod_{k=n+1}^\infty \qty{x_k},
\]
then $C(x) = \bigcup_{n = 1}^\infty C_n(x)$ as sets. If, in addition, each space $X_n$ is compact, then the subspace topology on $C(x)$ coincides with the union topology induced by the $C_n(x)$.
\end{rem}

%\begin{comment}
%\begin{rem}
%The intuition behind the topologies used here is that the box topology is fine enough to allow only local deformations to be continuous, which correspond to norm-continuous deformations of states. Indeed, the connected component of $\r \in X$ is the set of all $\r'$ for which $\r_v \neq \r_v'$ for only finitely many $v \in \bbZ^d$. If we had used the product topology on $X$, then we would be allowing continuous non-local deformations and $\r \mapsto \omega_\r$ would only be weakly$^*$ continuous. But this is not sufficient to do the fiberwise GNS construction. \red{Not sure if this remark belongs here... Maybe we should save comments about the weak$^*$ case for a paper when we have more to say about the weak$^*$ topology? Or maybe the remark belongs in the discussion/conclusion section of this paper?}
%\end{rem}
%
%\red{Juan or Agn\`es, can we show that the Hilbert bundle obtained from fiberwise GNS is nontrivial in this example? Actually isn't the Hilbert bundle trivial because of Kuiper's theorem? Is there anything interesting to do with this example?}
%\end{comment}

Finally, we show that $\omega_{\r}$ is indeed a ground state for the interaction $\Phi_\r$. The interaction defines  local Hamiltonians $H_{\r,\Lambda} = \sum_{v \in \Lambda} H_{\r_v}$, where $H_{\r_v}$ is now shorthand for the simple tensor in $\fA(\Lambda)$ with $H_{\r_v}$ in the $v$-component and the identity in every other component. The local Hamiltonians define a derivation on $\fA_\tn{loc} = \bigcup_{\Lambda \in \wp_f(\bbZ^d)} \fA(\Lambda)$ by
\[
\delta_\r(A) = i[H_{\r,\Lambda}, A] \quad \tn{ for } A \in \fA(\Lambda).
\]
The state $\omega_\r$ is a ground state for the one-parameter family of automorphisms generated by this derivation, i.e.\ the time evolution, if and only if the inequality $-i\omega_\r(A^*\delta_\r(A)) \geq 0$ is satisfied for all $A \in \fA_\tn{loc}$ \cite[Theorem 3.4.3]{NaaijkensQSSIL}.
Thus, we take $A \in \fA(\Lambda)$ for some $\Lambda \in \wp_f(\bbZ^d)$ and compute
\begin{align*}
-i\omega_\r(A^*\delta_\r(A)) &= \sum_{v \in \Lambda}\omega_{\r,\Lambda}(A^*[H_{\r_v}, A])\\
&= \sum_{v \in \Lambda} \ev{A\Omega_{\r,\Lambda}, (H_{\r_v}A - A H_{\r_v})\Omega_{\r,\Lambda}} \\
&= \sum_{v \in \Lambda} \ev{A \Omega_{\r, \Lambda}, (H_{\r_v} + I)A\Omega_{\r,\Lambda}} \geq 0.
\end{align*}
We have used the fact that $H_{\r_v}\Omega_{\r,\Lambda} = -\Omega_{\r,\Lambda}$ by definition of $\Omega_{\r, \Lambda}$ and the fact that $H_{\r_v} + I$ is a positive operator to conclude that the above it nonnegative. This verifies that $\omega_\r$ is a ground state for $\Phi_\r$.

In fact, we can show that $\omega_\r$ is the unique ground state for the interaction $\Phi_\r$. First note that for any $\Lambda \in \wp_f(\bbZ^d)$, the derivation $\delta_{\r}$ restricts to a bounded derivation $\delta_{\r, \Lambda}: \fA(\Lambda) \rightarrow \fA(\Lambda)$. If $\tilde\omega_\r:\fA \rightarrow \bbC$ is any ground state for $\Phi_\r$, then for any $\Lambda \in \wp_f(\bbZ^d)$, we have $-i\tilde\omega_{\r, \Lambda}(A^*\delta_{\r, \Lambda}(A)) \geq 0$ for all $A \in \fA(\Lambda)$, where $\tilde\omega_{\r, \Lambda} = \tilde\omega_{\r}|_{\fA(\Lambda)}$. Thus, $\tilde\omega_{\r, \Lambda}$ is a ground state for $\delta_{\r, \Lambda}$. By \cite[Thm.~5.3.37]{BraRobOAQSM2}, the set of ground states for $\delta_{\r, \Lambda}$ is a convex, weakly$^*$ compact, face of $\sS(\fA(\Lambda))$, and is therefore the closed convex hull of the pure ground states. Any pure state of $\fA(\Lambda)$ can be represented by a unit vector in $\bigotimes_{v \in \Lambda} \bbC^2$. It is therefore a finite-dimensional linear algebra problem to show that there exists a unique pure ground state $\delta_{\r, \Lambda}$, and this is indeed the case because the lowest eigenvalue of $H_{\r, \Lambda}$, which is $-\abs{\Lambda}$, has a one-dimensional eigenspace. It follows from the aforementioned result in \cite{BraRobOAQSM2} that there is a unique (not necessarily pure) ground state for $\delta_{\r, \Lambda}$. We conclude that $\omega_\r$ and $\tilde \omega_\r$ are equal for all local operators, hence equal everywhere by density of $\fA_\tn{loc}$.

\newpage

\appendix

% !TEX root = FamiliesGNS.tex
\section{Infinite dimensional manifolds}
For the convenience of the reader we will briefly recapitulate here
some notions from infinite dimensional manifold theory. For more details see
\cite{MilRIDLG,KriMicCSGA,LangIDM,NeebTLTLCG}.

\subsection{Banach and Hilbert manifolds}
\label{sec:banach-hilbert-manifolds}
Assume that $M$ is a Hausdorff topological space.
By a $\sC^\infty$-\emph{chart} or just a \emph{chart} of $M$ \emph{modeled}
on a Banach space $E$ (over the field $\bbK$ of real or complex numbers)
one understands a pair $(U,x)$ consisting of an open subset
$U\subset M$ and a homeomorphism $x:U \to x(U)\subset E$ onto an open
subset of $E$.
Two charts $(U,x)$ and $(V,y)$ modeled on Banach spaces $E_U$ and $E_V$, respectively,
are called % $\sC^\infty$-\emph{compatible}  or just
\emph{compatible} if either $U$ and $V$ are disjoint or if $E_U=E_V$ and the \emph{transition map}
\[ x\circ y|_{U\cap V}^{-1}: y(U\cap V) \to x(U\cap V)\]
is a $\sC^\infty$-diffeomorphism. 
A collection $\sA$ of pairwise compatible
charts of $M$ is called a $\sC^\infty$-\emph{atlas} or just an \emph{atlas} of $M$ if the domains of the
charts contained in $\sA$ cover $M$.
% In case the ground field  is $\bbC$ and all transition maps are biholomorphic, one calls the atlas
% \emph{complex} or \emph{holomorphic} and the charts holomorphically compatible.
An atlas $\sA$ of $M$ is said to be \emph{maximal}
if every chart compatible with all charts in $\sA$ is an element of $\sA$.
A \emph{Banach manifold} or just a \emph{manifold} is then a Hausdorff topological space endowed
with a maximal $\sC^\infty$-atlas of charts modeled in Banach spaces. 
In case the charts of the  maximal atlas of the manifold $M$ are modeled all in Hilbert spaces, one calls
$M$ a Hilbert manifold.

In a similar fashion one defines \emph{real analytic} and \emph{complex} or \emph{holomorphic}
\emph{Banach manifolds}. These are Hausdorff topological spaces endowed with a maximal atlas of charts
so that all transition maps (and their inverses) are real or complex analytic, respectively.
An atlas of charts with biholomorphic transition maps is referred to as a \emph{holomorphic atlas}.

Given a Banach manifold $M$ and an open subset $O\subset M$ the space $\sC^\infty(O)$ of
\emph{smooth functions} on $O$ consists of all maps $g:O\to \R$ such that for each chart $(U,x)$
with $O\cap U \neq \emptyset$ the composition $g \circ x|_{O\cap U}^{-1}: x(O\cap U) \to \R$ is smooth.  
A map $f:N \to M$ between two  manifolds is called of \emph{class} $\sC^\infty$ or
\emph{smooth} if for each open $O\subset M$ and each element $g\in \sC^\infty (O)$ the pullback
$f^*g= g \circ f|_{f^{-1}(O)}$ is an element of $\sC^\infty (f^{-1}(O))$.
The spaces $\sC^\infty(O)$ give rise to a sheaf on $M$ called the \emph{sheaf of smooth functions}
on $M$. Likewise, one defines the sheaf $\sC^\omega$ of \emph{real analytic functions} and
the sheaf $\cO$ of \emph{holomorphic functions} on a real analytic respectively a holomorphic Banach manifold.

\begin{rem}\label{rem:convenient-expeonential-law}
  The notion of infinite dimensional manifolds as defined above can be extended in a natural way 
  to Fr\'echet spaces or even convenient vector spaces; see \cite{KriMicCSGA} for details.
  The virtue of using convenient vector spaces for defining manifolds lies in the fact that
  for any pair of convenient vector spaces $E,F$ the space $\sC^\infty (E,F)$ of smooth functions between
  them is again convenient \cite[1.7.\ Lemma]{KriMicATIDM} and that an exponential law holds
  for smooth mappings  \cite[3.12.\ Thm.\ \& 3.13.\ Cor.]{KriMicCSGA}. The latter means in particular that
  for $G$  a third convenient vector space the natural map
  \begin{equation}
    \label{eq:smooth-exponential-law}
    {}^{\vee}: \sC^\infty  (E\times F , G) \to \sC^\infty \left( E, \sC^\infty (F,G) \right),
    \quad f \mapsto f^{\vee}= \big( v \mapsto f(v,-)  \big)
  \end{equation}
  is a linear diffeomorphism meaning it is invertible, linear, smooth and has a smooth inverse. 
  Note that Banach and Fr\'echet spaces are convenient vector spaces, so the smooth exponential law applies
  to them.
  % A chart in a convenient vector space is a homeomorphism $x:U \to x(U)\subset c^\infty E$ onto a $c^\infty$-open subset of $E$.
\end{rem}

A group $G$ equipped with a manifold structure so that both multiplication and inversion are smooth maps
will be called a \emph{Lie group}. In case the Lie group $G$ is modeled on Banach or Hilbert spaces,
one calls it a \emph{Banach} or a \emph{Hilbert Lie group} respectively.

\subsection{Topological fiber bundles}
\label{sec:fiber-bundles}
We define \emph{topological fiber bundles} as in \cite[\S.~2]{SteTFB} and denote them as
quintuples $(E,B,\pi,F,G)$ where $E$ is the total space, $B$ the base space, $\pi:E\to B$ the projection,
$F$ the typical fiber, and $G$ the structure group. The latter  is assumed to be a topological group acting
continuously and effectively on $F$. 
In case the typical fiber $F$ and the structure group $G$ are clear
from context, we usually denote a fiber bundle just by its projection $\pi:E\to B$. Attached to a fiber
bundle  $(E,B,\pi,F,G)$ is a maximal trivializing atlas.
Such a  \emph{trivializing atlas} consists of pairwise compatible \emph{local trivializations} which are 
homeomorphisms $\varphi : \pi^{-1} (O) \to O \times F$ with $O\subset B$  open such that
$\operatorname{pr}_O \circ \varphi = \pi|_{\pi^{-1} (O)}$ and whose domains are assumed to cover the base $B$.  Two local trivializations
$\varphi_i : \pi^{-1} (O_i) \to O_i \times F$ and $\varphi_j : \pi^{-1} (O_j) \to O_j \times F$
are hereby called \emph{compatible} if the map
\[
 \varphi_{i,\bullet} \circ  \varphi_{j,\bullet} ^{-1}: \: O_i \cap O_j \to \sC (F,F), \quad p \mapsto\varphi_i \circ  \varphi_j^{-1}  (p,-) 
\]
factors through a continuous map $g_{ij}: O_i \cap O_j \to G$ called \emph{transition function}.
In other words this means that the diagram
\[
  \begin{tikzcd}
  O_i \cap O_j \arrow{r}{g_{ij}}  \arrow[swap]{rd}{\varphi_{i,\bullet} \circ  \varphi_{j,\bullet}^{-1}} 
  &  G \arrow{d} \\
    & \sC(F,F)
  \end{tikzcd}
\]
commutes, where $G \hookrightarrow \sC (F,F)$ is the canonical continuous injection and
$\sC (F,F)$ carries the topology of pointwise convergence. Note that $ g_{ij}$ is uniquely
determined since $G$ acts effectively on $E$. 

For convenience, we sometimes write $(\varphi,O)$ for
a local trivialization of the form $\varphi : \pi^{-1} (O) \to O \times F$. A local
trivialization $(\varphi,O)$ defined over some open neighborhood $O$ of a point $p\in B$  gives rise
to a homeomorphism $\varphi_p: E_p \to F$  by putting $\varphi_p (e) = \varphi (p,e)$ for all $e\in E_p$,
where as usual $E_p$ denotes the  fiber $\pi^{-1}(p)$ over $p$.
If the fiber bundle has a \emph{global trivialization}, that is, a trivialization of the form  
$\varphi : E \to B \times F$, the fiber bundle is called \emph{trivial}.  
Instead of \emph{fiber bundle} we therefore sometimes say \emph{locally trivial bundle}.
Note that if the group $G$ is contractible, then a fiber bundle with structure group $G$ is
necessarily trivial, see \cite{SteTFB}. More generally,  the family of transition functions
$(g_{ij})_{i,j\in I}$ associated to a trivializing atlas $(\varphi_i,O_i)_{i\in I}$ of the fiber bundle
defines a topological invariant called the \emph{\v{C}ech cohomology} of the bundle since by construction
the family $(g_{ij})_{i,j\in I}$  satisfies the \v{C}ech cocycle conditions
\[
  g_{ij}\cdot g_{jk} = g_{ij} \quad\text{over } O_{ijk} = O_i\cap O_j\cap O_k \text{ for all } i,j,k\in I \ .
\]
The associated \v{C}ech cohomology class in $\check{H}^1(B, G) = \mathrm{colim}_{\mathcal{U}} \check{H}^1(\mathcal{U}, G)$,
where $\mathcal{U}$ runs through the open covers of $B$, does not depend on the
chosen trivializing atlas, hence is a topological invariant indeed. If the underlying bundle is a $G$-principal bundle,
its \v{C}ech cohomology determines the bundle up to isomorphism.
For more details on the \v{C}ech cohomology of (principal) fiber bundles see \cite[Chpt.~4]{BrylinskiLoopSpaces} and
\cite[Sec.~25.8]{HusemollerJoachimJurcoSchottenloher}.

In case $H \hookrightarrow G$ is a continuous injective homomorphism between
topological groups one says that the structure group of a fiber bundle  $(E,B,\pi,F,G)$ can be \emph{reduced} to $H$
if there exists a trivializing atlas such that for  any pair  of local trivializations
$(\varphi_i,O_i)$ and $(\varphi_j,O_j)$ in this atlas the transition function  $g_{ij}: O_i\cap O_j \to G$ factors through a
continuous map $h_{ij}:O_i\cap O_j \to H$.

Under the assumption that the typical fiber $F$ is a Banach space, and that $G$ coincides with the group
$\Aut(F)_{\textup{s}}$ of isometric automorphism  of $F$ endowed with the strong topology,
a fiber bundle of the form $(E,B,\pi,F,G)$ just corresponds to what is usually called
a \emph{Banach vector bundle}  or \emph{locally trivial Banach bundle} with typical fiber $F$.
Note that $\Aut(F)_{\textup{s}}$ is a topological group, cf.~\cite{SchottenloherUnitaryStrongTopology}. The fiberwise homeomorphisms $\varphi_p: E_p \to F$ then endow each $E_p$ with the structure of a Banach space which is independent of the chosen local trivialization $\varphi$
around $p$. Following \cite{SchottenloherUnitaryStrongTopology}, a Banach fiber bundle is called \emph{norm defined}
if its structure group can be reduced to the group $\Aut(F)_{\textup{n}}$ of isometric automorphisms
endowed with the  norm topology. Still under the assumption that $F$ is a Banach space, a vector bundle $(E,B,\pi,F,G)$
is called \emph{banachable} whenever the structure group $G$ is the topological group $\GL (F)_{\textup{n}}$ of topological
linear isomorphisms of the Banach space $F$. Note that when endowed with the norm topology, $\GL (F)$ becomes a topological group, but
in general not when endowed with the strong topology and $F$ is infinite dimensional. Therefore, a banachable vector bundle is always
\emph{norm defined} by definition. 

In case the typical fiber is a Hilbert space $\cH$, the natural structure
group $G$ is the unitary group $\Unitary (\cH)_\textup{s}$ with the strong topology. A corresponding fiber bundle
will then be referred to as a \emph{Hilbert vector bundle}. % or a \emph{Hilbert fiber bundle}.
We also say that a locally trivial Hilbert bundle is \emph{norm defined} if the structure group can be reduced to the
unitary group $\Unitary (\cH)_\textup{n}$ with the norm topology.
By Kuiper's theorem, the unitary group of an infinite dimensional separable Hilbert space is contractible,
both in the strong and norm topologies, so a Hilbert fiber bundle with an infinite dimensional separable typical fiber
has to be trivial, even when norm defined \cite[Sec.~4]{SchottenloherUnitaryStrongTopology}.

In case the typical fiber $F$ carries some additional structure like the structure of a
Banach algebra or a $C^*$-algebra and the group $G$ is the topological group of automorphisms $F$
endowed with the strong topology, a fiber bundle of the form
$(E,B,\pi,F,G)$  is called a \emph{Banach algebra fiber bundle} or a
\emph{$C^*$-algebra fiber bundle}, respectively. It is called \emph{norm defined} whenever
the structure group $G$ can be chosen to be the automorphism group endowed with the norm
topology. In other words, this means that there exists a trivializing atlas such that
all transition maps are norm continuous. Note that the fibers of a Banach or $C^*$-algebra fiber bundle carry
in a canonical way the structure of a Banach or $C^*$-algebra, respectively.

By  a \emph{subbundle} of a fiber bundle $(E,B,\pi,F,G)$ we mean a fiber bundle of the form
$(\widetilde{E},B,\pi|_{\widetilde{E}},S,H)$ endowed with a
trivializing atlas $\sT$ so that $S \subset F$ is a subspace, $\widetilde{E}$ is a subspace 
of $E$, the restriction $\pi|_{\widetilde{E}}:\widetilde{E}\to B$ is surjective, the group  $H$ of all elements
of $G$ leaving $S$ invariant is a closed subgroup, and for each element
$(\widetilde{\varphi},O) \in \sT$ there is a local trivialization $(\varphi,O)$ of
$(E,B,\pi,F,G)$ such that $\widetilde{\varphi}$ coincides with the restriction of $\varphi$ to
$\widetilde{E}_O =\pi^{-1}(O)\cap \widetilde{E}$. As a slight generalization one sometimes
allows $H$ to be a topological group for which there exists a distinguished injective morphism of topological groups
$H\to G$. 
Note that we have not required in our definition that $S$ is complementable in $F$.
This differs from some definitions, for example, that of \cite[Ch.\ 3, \S 3]{LangIDM}.
The exception are Hilbert bundles, where our definition coincides with the one of \cite{LangIDM}
since every closed subspace of a Hilbert space has a closed complement.

\subsection{Smooth fiber bundles}
\label{sec:smooth-fiber-bundles}
Next, we consider the smooth case. Assume that $E$, $B$ and $F$ are smooth manifolds and that $G$ is a subgroup of the
diffeomorphism group of $F$. Assume further that $G$ is endowed with a Lie group structure so that the canonical map
$G\to \sC^\infty (F,F)$ is smooth.
% , where $\sC^\infty (F,F)$ is endowed with a  smooth structure in the sense of convenient vector spaces.
By the smooth exponential law from Remark \ref{rem:convenient-expeonential-law} this is equivalent to the map
$G\times F \to F$, $(g,v) \mapsto gv$ being smooth. 
By a \emph{smooth structure} on the fiber bundle $(E,B,\pi,F,G)$ with smooth projection $\pi:E\to B$ 
we understand a maximal collection of smooth local trivializations $\varphi  : \pi^{-1} (O) \to O \times F$
so that their domains $O$ cover $B$ and so that the transition function
$g_{ij}= \varphi_{i,\bullet} \circ \varphi_{j,\bullet}^{-1} : O_i\cap O_j \to G$ is smooth
for any pair of smooth local trivializations $(\varphi_i,O_i)$ and $(\varphi_j,O_j)$. A fiber bundle endowed with a 
smooth structure is called a \emph{smooth fiber bundle}. 

Next assume that $F$ is a Banach space and that $G$ is the group $\Aut (F)_{\textup {n}}$ of isometric automorphisms
endowed with the norm topology. This group inherits a canonical Lie group structure
as a closed subgroup of the general linear group $\GL (\fB(F))$ 
which is a Banach Lie group since  $\GL (\fB(F))$
is an open subset of the Banach space $\fB(F)$ of bounded linear endomorphisms of $F$. 
A  \emph{smooth Banach fiber bundle} now is a smooth fiber bundle with structure group
$\Aut (F)_{\textup {n}}$.
%smooth fiber bundle $(E,B,\pi,F,G)$
%then is what one understands by a \emph{smooth Banach fiber bundle}.

%Now assume that $F$ is a Banach space and consider the group $G= \Aut (F)_{\textup {n}}$ of isometric automorphisms
%endowed with the norm topology. The group $G$ carries a canonical Lie group structure inherited as an
%open subset of the Banach space $\fB(F,F)$ of bounded linear operators on $F$. 
%A smooth fiber bundle $(E,B,\pi,F,G)$
%then is what one understands by a \emph{smooth Banach fiber bundle}.
If a Banach or $C^*$-algebra fiber bundle $(E,B,\pi,F,G)$
carries a smooth structure so that its transition maps are smooth into the structure group
$G = \Aut (F)_{\textup{n}}$ endowed with the canonical Lie group structure, we call $(E,B,\pi,F,G)$
a smooth Banach or $C^*$-algebra fiber bundle, respectively.

In case all data are smooth, $S$ is a submanifold of $F$, if $H$ is a Lie subgroup of $G$ or more generally 
if there is a distinguished injective smooth group homomorphism $H\hookrightarrow G$, and finally if the subbundle atlas $\sT$
has smooth transition maps, one calls $(\widetilde{E},B,\pi|_{\widetilde{E}},S,H)$ a \emph{smooth subbundle} 
of $(E,B,\pi,F,G)$.

As an example we briefly discuss the construction of tangent, cotangent and tensor bundles in the
described setting.

\begin{ex}\label{ex:tangent-tensor-bundles}
  Let $E$ be a Banach space. Denote by 
  \[
    \frT^r_s E = \frL ( \underbrace{E^\prime, \ldots ,E^\prime}_{r\text{-times}},  \underbrace{E, \ldots , E}_{s\text{-times}}; \R)
  \]  
  the space of continuous real valued functionals
  $r$-times multilinear in the topological dual $E^\prime = \frL(E;\R)$ and
  $s$-times multilinear in $E$ and call it the space of
  \emph{$r$-fold contravariant} and \emph{$s$-fold covariant} \emph{tensors}
  on $E$. The space $\frT^r_s E$ then becomes a Banach space with norm
  induced by the norm on $E$.  

  Now assume to be given a Banach manifold $M$ modeled on $E$. Let $\sA$ be an
  atlas of $M$ and consider the disjoint unions
  \begin{equation*}
    \begin{split}
      \widetilde{T}M & = \bigsqcup_{(x,U)\in \sA} U \times E  =
      \bigcup_{(x,U)\in \sA} U \times E \times \{x \}\ , \\
      \widetilde{T^*}M & = \bigsqcup_{(x,U)\in \sA} U \times E^\prime
      = \bigcup_{(x,U)\in \sA} U \times E^\prime \times  \{x \}\ , \\
      \widetilde{T^r_s}M & = \bigsqcup_{(x,U)\in \sA} U \times \frT^r_s E
      = \bigcup_{(x,U)\in \sA}   U \times \frT^r_s E \times  \{x \} \ .
    \end{split}
  \end{equation*}
  Given charts $(x,U), (y,V) \in \sA$ call elements $(p,v)_x \in U \times E$
  and $ (q,w)_y\in \times V \times E$ \emph{equivalent}, in signs $(p,v)_x\sim (q,w)_y$, if $p =q$
  and $v = D(x \circ y^{-1})(y(q))(w)$.  Note that hereby we have written $(p,v)_x$ instead of $(p,v)$
  - and likewise for $(q,w)_y$ - to denote that the pair $(p,v)_x$ is actually regarded as lying in
  $U \times E \times \{x \}$.
  The quotient space with respect to the equivalence relation $\sim$
  will be denoted $TM$ and endowed with the unique topology such that for each chart $(x,U)\in \sA$
  the subspace $TU = U\times E \times \{ x\}/\sim$ % $\cong \{x\}\times U\times E/\sim$
  is open and such that the canonical mapping $U\times E \to TM$, $(p,v)\mapsto [(p,v)_x]$ is a homeomorphism onto $TU$.
  Denote by $(Tx,TU) :TU \to U\times E$ the chart which maps the equivalence class 
  $[(p,v)_x]$ to the pair $(p,v)$. By construction, $Tx$ then is a homeomorphism, and for each other chart $(y,V)\in \sA$
  the transition map
  \begin{equation*}
    \begin{split}
      Tx\circ Ty|_{TU\cap TV}^{-1}  :\: & Ty(TU\cap TV) \to  Tx(TU\cap TV), \\
      & \big( y(q),w\big) \mapsto  \big( x(q),D(x\circ y^{-1})(y(q))(w)\big) 
     \end{split}
  \end{equation*}
  is smooth. This endows $TM$ with the structure of a smooth vector bundle modeled on $E\times E$. 
   
  In a somewhat more technical but analogous way one constructs the tensor bundle $T^r_s M$.
  Given again charts $(x,U), (y,V) \in \sA$ one calls elements $(p,\lambda)_x \in U \times \frT^r_s E $
  and $ (q,\mu)_y\in \times V \times \frT^r_s E $ \emph{equivalent}, in signs $(p,\lambda)_x\sim (q,\mu)_y$, if $p =q$
  and
  \[
    \lambda = \mu \circ \left(
    \underbrace{T_{q}(x \circ y^{-1})^* \times \ldots \times  T_{q}(x \circ y^{-1})^*}_{r\text{-times}}
    \times
    \underbrace{T_{p}(y \circ x^{-1}) \times \ldots \times T_{p}(y \circ x^{-1})}_{s\text{-times}}\right) \ ,
  \] 
  where $T_{p}(y \circ x^{-1})$ stands for the linear map $D(y \circ x^{-1})(x(p))$ and  $T_{q}(x \circ y^{-1})^*$
  is the pullback by  $D(x\circ y^{-1})(y(q))$. Analogously as before $T^r_sM$ is now defined as the quotient space
  $\widetilde{T^r_s}M/\sim$ and given the unique topology such that all for $(x,U)\in\sA$  the set 
  $T^r_sU = U \times \frT^r_s E \times \{x\}/\sim$ is open and the map $U \times \frT^r_s E \to T^r_sM$,
  $(p,\lambda) \mapsto [(p,\lambda)_x]$ is a homeomorphism. The maps 
  \[
    T^r_s x : T^r_s U \to U \times \frT^r_s E, \: [(p,\lambda)_x] \mapsto (p,\lambda)
  \]  
  then form an atlas of $T^r_s M$ turning it into a smooth vector bundle modeled on $E\times \frT^r_s E$.
  It is called the \emph{tensor bundle} of \emph{$r$-fold contravariant} and \emph{$s$-fold covariant tensors} on $M$. 
  In case the modeling Banach space is even a Hilbert space, one can express the typical fiber of the tensor bundle $T^r_s M$ 
  as the completed tensor product $E^{\hat{\otimes}r}\hat{\otimes} {E^\prime}^{\hat{\otimes}s}$,
  which explains the term \emph{tensor bundle}.  
\end{ex}

\bibliographystyle{amsalpha}
\bibliography{AQMref,lmlib}

\end{document}